\title{Mixing Rates for Hamiltonian Monte Carlo Algorithms in Finite and Infinite Dimensions}
\author{Nathan E. Glatt-Holtz, Cecilia F. Mondaini \\
  \scriptsize{emails: negh@tulane.edu, cf823@drexel.edu} }
\definecolor{Red}{rgb}{0.7,0,0.1}
\definecolor{Green}{rgb}{0,0.7,0}
\numberwithin{equation}{section}
\newtheorem{Thm}{Theorem}[section]
\newtheorem{Lem}[Thm]{Lemma}
\newtheorem{Prop}[Thm]{Proposition}
\newtheorem{Cor}[Thm]{Corollary}
\newtheorem{Def}[Thm]{Definition}
\newtheorem{Rmk}[Thm]{Remark}
\newtheorem{assumption}[Thm]{Assumption}
\newtheorem*{Thm*}{Theorem}
\newcommand{\beq}{\begin{equation}}
\newcommand{\eeq}{\end{equation}}
\newcommand{\bea}{\begin{eqnarray}}
\newcommand{\eea}{\end{eqnarray}}
\newcommand{\beas}{\begin{eqnarray*}}
\newcommand{\eeas}{\end{eqnarray*}}
\newcommand{\pd}{\partial}
\newcommand{\indFn}[1]{1 \! \! 1_{#1}}
\newcommand{\Prb}{\mathbb{P}}
\newcommand{\E}{\mathbb{E}}
\newcommand{\Wass}{\mathcal{W}}
\newcommand{\Co}{\mathfrak{C}}
\newcommand{\RR}{\mathbb{R}}
\newcommand{\TT}{\mathbb{T}}
\newcommand{\NN}{\mathbb{N}}
\def\H{\mathbb{H}}
\newcommand{\Tr}{\operatorname{Tr}}
\newcommand{\trhoe}{\tilde{\rho}}
\newcommand{\rhoep}{\rho}
\newcommand{\trhob}{\tilde{\rho}_{\beta}}
\newcommand{\rt}{\gamma}
\newcommand{\bv}{\mathbf{v}}
\newcommand{\bu}{\mathbf{u}}
\newcommand{\bz}{\mathbf{z}}
\newcommand{\bw}{\mathbf{w}}
\newcommand{\bq}{\mathbf{q}}
\newcommand{\btq}{\tilde{\mathbf{q}}}
\newcommand{\bff}{\mathbf{f}}
\newcommand{\bfg}{\mathbf{g}}
\newcommand{\bP}{\mathbf{P}}
\newcommand{\bp}{\mathbf{p}}
\newcommand{\tq}{\tilde{q}}
\newcommand{\txi}{\tilde{\xi}}
\newcommand{\bbf}{\mathbf{f}}
\newcommand{\be}{\mathbf{e}}
\newcommand{\npV}{\mathbf{V}}
\newcommand{\bfe}{\mathbf{e}}
\newcommand{\Obs}{\mathcal{O}}
\newcommand{\data}{\mathcal{Y}}
\newcommand{\GP}{\Gamma^{-1/2}}
\newcommand{\FD}{\Lambda}
\newcommand{\bE}{\mathbb{E}}
\newcommand{\cC}{\mathcal{C}}
\newcommand{\mM}{\mathcal{M}}
\newcommand{\mB}{\mathcal{B}}
\newcommand{\mL}{\mathcal{L}}
\newcommand{\mN}{\mathcal{N}}
\newcommand{\cS}{\mathcal{S}}
\newcommand{\bcS}{\boldsymbol{\mathcal{S}}}
\newcommand{\bcR}{\boldsymbol{\mathcal{R}}}
\newcommand{\Uz}{L_0}
\newcommand{\Ua}{L_1}
\newcommand{\Ub}{L_2}
\newcommand{\Uc}{L_3}
\newcommand{\Ud}{L_4}
\newcommand{\Ue}{L_5}
\newcommand{\sma}{\kappa_1}
\newcommand{\smb}{\kappa_2}
\newcommand{\smc}{\kappa_3}
\newcommand{\smd}{\kappa_4}
\newcommand{\sme}{\kappa_5}
\newcommand{\smeb}{\sme(n_0)}
\newcommand{\bPsi}{\boldsymbol{\Psi}}
\newcommand{\tQ}{\widetilde{Q}}
\newcommand{\tsig}{\tilde{\sigma}}
\newcommand{\KL}{D_{\text{KL}}} 
\newcommand{\lM}{\lambda_{\mM}}
\newcommand{\lC}{\lambda_{\cC}}
\newcommand{\LM}{\Lambda_{\mM}}
\newcommand{\LC}{\Lambda_{\cC}}
\newcommand{\tP}{\widetilde{P}}
\newcommand{\norm}[1]{\left\lvert #1 \right\rvert}
\newcommand{\normga}[1]{\left\lvert #1 \right\rvert_{\gamma,\alpha}}
\newcommand{\nga}[1]{\left\lvert #1 \right\rvert_{\gamma}}
\newcommand{\ngad}[1]{\left\lvert #1 \right\rvert_{- \gamma}}
\newcommand{\tv}[1]{\left\| #1 \right\|_{\text{TV}}}
\newcommand{\dlf}{\ell}
\newcommand{\BPhi}{\bar{\Phi}}
\newcommand{\Hp}{\mathbb{V}}
\begin{document}
\markboth{}{}

\maketitle

\begin{abstract}
  We establish the geometric ergodicity of the preconditioned
  Hamiltonian Monte Carlo (HMC) algorithm defined on an
  infinite-dimensional Hilbert space, as developed in
  \cite{BePiSaSt2011}. This algorithm can be used as a basis to sample
  from certain classes of target measures which are absolutely
  continuous with respect to a Gaussian measure. Our work addresses an
  open question posed in \cite{BePiSaSt2011}, and provides an
  alternative to a recent proof based on exact coupling techniques
  given in \cite{BoEbPHMC}. The approach here establishes convergence
  in a suitable Wasserstein distance by using the weak Harris theorem
  together with a generalized coupling argument. We also show that a
  law of large numbers and central limit theorem can be derived as a
  consequence of our main convergence result. Moreover, our approach yields a novel proof of mixing rates for the classical finite-dimensional HMC algorithm. As such, the methodology
  we develop provides a flexible framework to tackle the rigorous
  convergence of other Markov Chain Monte Carlo
  algorithms. Additionally, we show that the scope of our result
  includes certain measures that arise in the Bayesian approach to
  inverse PDE problems, cf. \cite{stuart2010inverse}. Particularly, we
  verify all of the required assumptions for a certain class of
  inverse problems involving the recovery of a divergence free vector
  field from a passive scalar, \cite{borggaard2018bayesian}.
\end{abstract}

{\noindent \small
  {\it {\bf Keywords:} Hamiltonian Monte Carlo (HMC),  Infinite Dimensional
    Hamiltonian Systems, Markov Chain Monte Carlo
    (MCMC), Statistical Sampling, Bayesian Inversion,
    Advection-Diffusion Equations, Passive Scalar Transport. } \\
  {\it {\bf MSC2010:} 62C10, 11K45, 37K99} }

\setcounter{tocdepth}{1}
\tableofcontents

\newpage

\section{Introduction}

It has long been appreciated that Markov chains can be employed as an
effective computational tool to sample from probability measures.
Starting from a desired `target' probability distribution $\mu$ on a
space $\H$ one seeks a Markov transition kernel $P$ for which $\mu$ is
an invariant and which moreover maintains desirable mixing properties
with respect to this $\mu$.  In particular in Bayesian statistics
\cite{kaipio2005statistical, marzouk2007stochastic,
  martin2012stochastic, stuart2010inverse, dashti2017bayesian,
  borggaard2018bayesian} and in computational chemistry
\cite{ChandlerWolynes,
  Miller2005,Miller2005a,Craig2004,Craig2005,Craig2005a,Habershon2008,
  Habershon2013,Lu2018,KoBoMi2019} such Markov chain Monte Carlo
methods (MCMC) play a critical role by efficiently resolving
high-dimensional distributions possessing complex multimodal and
correlation structures which typically arise.  However,
notwithstanding their broad use in a variety of applications, the
theoretical and practical understanding of the mixing rates of these
chains remains poorly understood.

The initial mathematical foundation of MCMC methods was set in the
late 40's by Metropolis and Ulam in \cite{Metropolis49}, and later
improved with the development of the Metropolis-Hastings algorithm in
\cite{metropolis1953equation,hastings1970monte}.  Further notable
developments in the late 80's and 90's derived MCMC algorithms based
on suitable Hamiltonian \cite{DuKePeRo1987, neal1993probabilistic} and
Langevin dynamical systems \cite{grenander1994representations,
  besag1994comments}.  See e.g.  \cite{Betancourt2019, Li2008,
  robert2013monte} for a further general overview of the field.  In
view of exciting applications for the Bayesian approach to PDE inverse
problems and in transition path sampling \cite{hairer2005analysis,
  hairer2007analysis, ReVa2005, HaStVo2009, HSV2011,
  stuart2010inverse, martin2012stochastic, bui2014analysis,
  dashti2017bayesian, PetraEtAl2014, BuiNguyen2016,
  borggaard2018bayesian}, an important recent advance in the MCMC
literature \cite{tierney1998note,BeRoStVo2008, BePiSaSt2011,
  cotter2013mcmc} concerns the development of algorithms which are
well defined on infinite-dimensional spaces.  These methods have the
scope to partially beat the `curse of dimensionality' since one
expects that the number of samples required to effectively resolve the
target distribution to be independent of the degree of numerical
discretization.  However validating such claims of efficacy concerning
this recently discovered class of infinite dimensional algorithms both
in theory and in practice is an exciting and rapidly developing
direction in current research.

This work provides an analysis of mixing rates for one particular
class of methods among the MCMC algorithms mentioned above, known as
Hybrid or Hamiltonian Monte Carlo (HMC) sampling; cf.
\cite{DuKePeRo1987,Li2008,neal2011mcmc,BePiSaSt2011}. For HMC sampling
the general idea consists in taking advantage of a Hamiltonian dynamic
taylored to the structure of the target $\mu$, a distribution which
functions as the marginal onto position space of the Gibbs measure
associated to the dynamics.  As such this `Hamiltonian approach'
produces nonlocal and nonsymmetric moves on the state space, allowing
for more effective sampling from distributions with complex
correlation structures in comparison to more traditional random walk
based methods.  Indeed the efficacy of the HMC approach has led to its
widespread adoption in the statistics community as exemplified for
example by the success of the STAN software package
\cite{GelmanLeeGuo2015, Stan2016}. However, notwithstanding notable
recent work, the theoretical understanding of optimal mixing rates for
HMC based methods remains rather incomplete both in terms of optimal
tuning of algorithmic parameters and in terms of the allowable
structure of the target measure admitted by the theory \cite{BoEbPHMC,
  BoEbZi2018, LivingstoneEtAl2019, BoSaActaN2018, DurmusEtAl2017,
  BeskosEtAl2013, BeGiShiFaStu2017, BeKaPa2013, BuiNguyen2016, BoSa2016, MaPiSmi2018, MaSmi2017, MaSmi2019}.

We are particularly focused here on a version of HMC introduced in
\cite{BePiSaSt2011} where the authors consider a preconditioned
Hamiltonian dynamics in order to derive a sampler which is well
defined in the infinite-dimensional Hilbert space setting. While
recent work \cite{BuiNguyen2016, BeGiShiFaStu2017,
  borggaard2018bayesian} has shown that this `infinite-dimensional'
algorithm can be quite effective in practice, the question of rigorous
justification of mixing rates posed in \cite{BePiSaSt2011} as an open
problem has only very recently been addressed in the work
\cite{BoEbPHMC} in the case of \textit{exact}
(i.e. non-temporally-discretized) and preconditioned HMC. In
\cite{BoEbPHMC}, the authors follow an approach based on an exact
coupling method recently considered in \cite{EbGuZi2016, BoEbZi2018}. Here we develop an alternative approach to establishing
mixing rates for preconditioned HMC based on the so called weak Harris
theorem \cite{hairer2008spectral, hairer2011asymptotic,
  hairer2014spectral, butkovsky2018generalized} combined with suitable
`nudging' in the velocity variable which plays an analogous role to
that provided by the classical Foias-Prodi estimate in the ergodic
theory of certain classes of nonlinear SPDEs;
cf. \cite{mattingly2002exponential, kuksin2012mathematics,
  GlattHoltzMattinglyRichards2016}.  As such we believe the
alternative approach that we consider here to be more flexible in
certain ways providing a basis for further future analysis of MCMC
algorithms. Furthermore, our approach for the exact dynamics developed
here can be modified to derive mixing rates in the more interesting
and practical case for discretized HMC. This later challenge will be
taken up in future work.

Our main results can be summarized as follows. We show exponential
mixing rates for the exact preconditioned HMC with respect to an
appropriate Wasserstein distance in the space of probability measures
on $\mathbb{H}$. For suitable observables, we show that this mixing
implies a strong law of large numbers and a central limit
theorem. In addition, we use very similar arguments to obtain a novel
proof of mixing rates for the finite-dimensional HMC. Finally, the
second part of the paper is concerned with the application of the
theoretical mixing result to the PDE inverse problem of determining
a background flow from partial observations of a passive scalar that
is advected by the flow. A careful analysis of this inverse problem
within a Bayesian framework is carried out in
\cite{borggaard2018bayesian}, where the authors also provide numerical
simulations showing the effectiveness of the infinite-dimensional HMC
algorithm from \cite{BePiSaSt2011} in approximating the target
distribution in this case. Here our task is to show that this example,
for suitable observations of the passive scalar, satisfies all the
conditions needed for our theoretical mixing result to hold, thus
complementing the numerical experiments in
\cite{borggaard2018bayesian} with rigorous mixing rates. In the sequel
we provide a more detailed summary of the results obtained in the bulk
of this manuscript.

\subsection{Overview of the Main Results}
\label{sec:Main:Thm:Overview}

The preconditioned Hamiltonian Monte Carlo algorithm from
\cite{BePiSaSt2011} which we analyze here can be described as
follows. Fix a separable Hilbert space $\H$ with norm $| \cdot |$ and
inner product $\langle \cdot, \cdot \rangle$. Let $\mathcal{B}(\H)$
denote the associated Borel $\sigma$-algebra and let $\Pr(\H)$ denote
the set of Borel probability measures on $\H$. Suppose we wish to consider a
target measure $\mu \in \Pr(\H)$ which is given in the Gibbsian form
\begin{align}\label{eq:mu}
	\mu(d\bq ) \propto \exp( - U(\bq)) \mu_0(d\bq),
\end{align}
where $U: \H \to \RR$ is a potential function.  Here $\mu_0$ is a
probability measure on $\H$ typically corresponding to the prior
distribution when we consider a $\mu$ derived as a Bayesian posterior.
Following a standard formulation in the infinite dimensional setting,
we assume in what follows that $\mu_0$ is a centered Gaussian
distribution on $\H$, i.e.  $\mu_0 = \mN(0, \cC)$, with $\cC$ being a
symmetric, strictly positive-definite, trace-class linear operator on
$\H$.

Consider the following preconditioned Hamiltonian dynamics
\begin{align}
  \label{eq:dynamics:xxx}
  \frac{d\bq_t}{dt} = \bv_t,
  \quad
  \frac{d\bv_t}{dt} = - \bq_t - \cC D U(\bq_t),
  \quad \mbox{ with initial condition }
  (\bq_0, \bv_0) \in \H \times \H,
\end{align}
where $\bv \in \H$ denotes a `velocity' variable, so that
\eqref{eq:dynamics:xxx} describes the evolution of the 
`position-velocity' pair $(\bq, \bv)$ in the extended phase space
$\H \times \H$. Here we adopt the notation $\bq_t$ and $\bv_t$ to
denote the value at time $t$ of the variables $\bq$ and $\bv$,
respectively. The associated Hamiltonian function, a formal invariant
of the flow in \eqref{eq:dynamics:xxx}, is given by
\begin{align*}
  H(\bq, \bv)
  = \langle \cC^{-1} \bq, \bq \rangle + U(\bq)
  + \langle \cC^{-1} \bv, \bv \rangle
  \quad \text{ for suitable } (\bq, \bv) \in \H \times \H.
\end{align*}

The exact preconditioned HMC algorithm works as follows. Starting from
any $\bq_0 \in \H$, draw $\bv_0 \sim \mN(0, \cC)$ and run the
Hamiltonian dynamics with initial condition $(\bq_0, \bv_0)$ for a
chosen temporal duration $T > 0$. Thus a forward step is proposed as
the projection on the $\bq$-coordinate of the solution of
\eqref{eq:dynamics:xxx} starting from $(\bq_0, \bv_0)$ at time $T$,
i.e. $\bq_T(\bq_0, \bv_0)$. The associated Markov transition kernel
$P: \H \times \mathcal{B}(\H) \to [0,1]$ is then given as
\begin{align}
 	P(\bq_0, A) = \Prb( \bq_T(\bq_0, \bv_0) \in A) 
  \quad \text{ with } \bv_0 \sim \mathcal{N}(0,\cC),
  \label{eq:HMC:kernel:overview}
\end{align}
for every $A \in \mB(\H)$. 
We adopt the notation $P^n$ for $n$
steps of the Markov kernel $P$ and recall that $P$ acts as
\begin{align*}
  \nu P(\cdot) = \int P(\bq, \cdot) \nu(d \bq), \quad
  P\Phi(\cdot) = \int \Phi(\bq) P(\cdot, d\bq) 
\end{align*}
on measures $\nu \in \Pr(\H)$ and observables $\Phi: \H \to \RR$,
respectively. This kernel $P$ leaves invariant the desired target
probability measure $\mu$ given in \eqref{eq:mu}, namely
$\mu P = \mu$, as was demonstrated in \cite{BePiSaSt2011} and recalled
in \cref{eq:invariance} below. Clearly, in practice, one is not able
to integrate \eqref{eq:dynamics:xxx} exactly so that one must instead
resort to suitable numerical discretizations. These numerical
integration schemes are designed so as to ensure that fundamental
properties of Hamiltonian dynamics are preserved, such as time
reversibility and volume-preservation or `symplectiness' --see
e.g. \cite{BoSaActaN2018} for a survey. In this work we only analyze
the exact dynamics, as the discretized case requires additional
techniques and will be the subject of future work.

Let us now sketch a simplified version of our main result, given in
rigorous and complete detail in \cref{thm:weak:harris} below. Our
mixing result for the Markov kernel $P$ defined in
\eqref{eq:HMC:kernel:overview} is given with respect to a suitably
constructed Wasserstein distance on $\Pr(\H)$.  Namely, starting from
$\varepsilon > 0$ and $\eta > 0$, consider
$\tilde \rho: \H \times \H \to \mathbb{R}^+$ defined as
\begin{align}
   \tilde{\rho}(\bq, \btq) 
  := \sqrt{\left(
  \frac{| \bq - \btq |}{\varepsilon } \wedge 1 \right)
  \left(1 + \exp(\eta |\bq|^2) + \exp(\eta |\btq|^2) \right)
  }.
  \label{eq:t:rho:def}
\end{align}	
Here $\varepsilon$ corresponds to the small scales at which we can
match small perturbations in the initial position $\bq_0$ with a
corresponding perturbation in the initial velocity $\bv_0$ in
\eqref{eq:dynamics:xxx}. On the other hand, for sufficiently small
$\eta >0$, the function $V(\bq) = \exp(\eta |\bq|^2)$ is a
\emph{Foster-Lyapunov} (or, simply, \emph{Lyapunov}) function for $P$
in the sense of \cref{def:Lyap} and \cref{prop:FL} below.

The mapping $\tilde \rho$ is a \textit{distance-like function} in
$\H$, i.e. it is a symmetric and lower-semicontinuous non-negative
function such that $\tilde \rho(\bq, \btq) = 0$ holds if and only if
$\bq = \btq$. We denote by
$\Wass_{\tilde{\rho}}: \Pr(\H) \times \Pr(\H) \to \mathbb{R}^+ \cup
\{\infty\}$ the following extension of $\tilde \rho$ to $\Pr(\H)$:
\begin{align}
  \Wass_{\tilde{\rho}} ( \nu_1, \nu_2) 
      = \inf_{\Gamma \in \Co(\nu_1, \nu_2)} 
  \int_{\Hp \times \Hp} \tilde \rho(\bq, \btq) \Gamma(d \bq, d \btq),
  \label{eq:Wass:Def}
\end{align}
where $\Co(\nu_1,\nu_2)$ denotes the set of all \emph{couplings} of
$\nu_1$ and $\nu_2$, i.e. the set of all measures
$\Gamma \in \Pr(\H \times \H)$ with marginals $\nu_1$ and $\nu_2$.  We
notice that, on the other hand, the mapping
$\rho(\bq,\btq) = (|\bq - \btq|/\varepsilon) \wedge 1$ defines a
standard metric in $\H$. As such, its associated extension
$\Wass_\rho$ to $\Pr(\H)$ coincides with the usual Wasserstein-1
distance, \cite{villani2008optimal}.

With the above notation, we have the following convergence result. For
the complete, detailed and general formulation, see
\cref{thm:weak:harris} below.

\begin{Thm}
  \label{thm:Main:Result:Overview}
  Suppose that $\cC$ is a symmetric strictly positive-definite trace
  class operator and that $U \in C^2(\H)$ satisfies the global bound
  \begin{align}
    L_1 := \sup_{\bq \in \H} |D^2U(\bq)| < \infty
    \label{eq:intro:global:hessian:c}
  \end{align}
  and the following dissipativity condition
  \begin{align}
    | \bq|^2 + \langle \bq, CDU(\bq)\rangle \geq L_2 |\bq|^2 - L_3
    \quad \mbox{ for all } \bq \in \H,
  \label{eq:disp:cond:over}
\end{align}
for some constants $L_2 > 0$ and $L_3 \geq 0$. Let $\lambda_1$ denote
the largest eigenvalue of $\cC$. 

Then, there exists an integration time $T = T(\lambda_1, L_1, L_2)$
for which the associated Markov kernel $P$ as defined in
\eqref{eq:HMC:kernel:overview} satisfies, with respect to
$\tilde \rho$ defined in \eqref{eq:t:rho:def},
\begin{align}\label{ineq:main:result}
    \Wass_{\tilde \rho}( \nu_1 P^n, \nu_2 P^n)  
  \le c_1 e^{-c_2 n}  \Wass_{\tilde \rho}( \nu_1, \nu_2)
  \quad \mbox{ for any } \nu_1, \nu_2 \in \Pr(\H) 
  \mbox{ and } n \in \mathbb{N},
\end{align} 
for some $\varepsilon > 0$ as in \eqref{eq:t:rho:def} and some
positive constants $c_1, c_2$ which depend only on the integration
time $T > 0$, the constants $L_i$, $i=1,2,3$, associated to the
potential function $U$, and the covariance operator $\cC$. In
particular, \eqref{ineq:main:result} implies that $\mu$ defined in
\eqref{eq:mu} is the unique invariant measure for $P$.  Moreover,
taking $\nu_1 = \delta_{\bq_0}$, the Dirac delta concentrated at some
$\bq_0 \in \H$, and $\nu_2 = \mu$, it follows from
\eqref{ineq:main:result} that $P^n(\bq_0,\cdot)$ converges
exponentially to $\mu$ with respect to $\Wass_{\tilde \rho}$ as
$n \to \infty$. In addition, for any suitably regular observable
$\Phi: \H \to \RR$,
\begin{align*}
\left| P^n \Phi(\bq_0) - \int \Phi(\bq') \mu(dq') \right|
          \leq L_\Phi c_1 e^{-n c_2} \int \sqrt{1 + \exp(\eta |\bq_0|^2) 
    + \exp(\eta |\bq'|^2)} \mu(d \bq') \quad \mbox{ for all } n \in \NN,
\end{align*}
for some $\eta > 0$ and $L_\Phi >0$.

Further, taking $\{Q_n(\bq_0) \}_{n \in \mathbb{N}}$ to be the
process associated to $\{P^n\}_{n \in \mathbb{N}}$ starting from
$\bq_0 \in \H$, i.e. $Q_n(\bq_0) \sim P(Q_{n-1}(\bq_0), \cdot)$ we
have, for any $\bq_0 \in \H$ and any suitably regular observable
$\Phi: \H \to \RR$, that
\begin{align*}
  X_n
  := \frac{1}{n} \sum_{k =1}^n \Phi( Q_k(\bq_0))
  - \int \Phi(\bq) \mu(d \bq)
  \rightarrow 0 \quad \mbox{ as $n \to \infty$ almost surely}
\end{align*}
and that
\begin{align*}
  \Prb( a <  \sqrt{n} X_n \leq b)
  \to \frac{1}{\sqrt{2 \pi \sigma^2}}\int_a^b
  e^{-\frac{x^2}{2 \sigma^2}} dx \quad \mbox{ as $n \to \infty$ for any } 
  a,b \in \RR \mbox{ with } a < b,
\end{align*}  
where $\sigma = \sigma(\Phi)$. In other words,
$\{Q_n(\bq_0) \}_{n \geq 0}$ satisfies a strong law of large numbers
(SLLN) and a central limit theorem (CLT).
\end{Thm}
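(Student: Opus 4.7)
The plan is to cast the proof within the \emph{weak Harris theorem} framework of \cite{hairer2011asymptotic, butkovsky2018generalized}. This reduces the Wasserstein contraction \eqref{ineq:main:result} to verifying three ingredients for $P$: (i) a Foster-Lyapunov drift inequality $PV(\bq) \leq \gamma V(\bq) + K$ for the function $V(\bq) = \exp(\eta|\bq|^2)$ with some $\gamma \in (0,1)$; (ii) a strict $\Wass_\rho$ contraction on pairs at distance $\leq \varepsilon$, uniformly on level sets of $V$; and (iii) a $\rho$-smallness / recurrence property on these level sets. Granted (i)--(iii), the distance $\tilde\rho$ in \eqref{eq:t:rho:def} is precisely the Lyapunov-weighted metric for which the weak Harris theorem delivers exponential contraction in $\Wass_{\tilde\rho}$.

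For the drift inequality (i), I would track the pair $(|\bq_t|^2, |\bv_t|^2)$ along the Hamiltonian flow \eqref{eq:dynamics:xxx}. Using the dissipativity assumption \eqref{eq:disp:cond:over} together with the global Hessian bound \eqref{eq:intro:global:hessian:c}, a Gronwall-type argument yields a quadratic estimate for $|\bq_T|^2$ in terms of $|\bq_0|^2$ and $|\bv_0|^2$. Averaging against $\bv_0 \sim \mN(0, \cC)$ and choosing $T$ away from the resonances $T = k\pi$ of the free quadratic flow $\ddot\bq + \bq = 0$ then produces a strict exponential contraction; here $\eta > 0$ must be taken small enough (in particular smaller than the inverse of the largest eigenvalue $\lambda_1$ of $\cC$ times a constant) so that the Gaussian integration remains finite.

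The heart of the argument, and the step I expect to demand the most care, is establishing the local contraction (ii) together with (iii). Following a "nudging" paradigm analogous to the Foias-Prodi coupling, given two initial positions $\bq_0, \btq_0$ at distance at most $\varepsilon$, I will couple the two runs by drawing $\bv_0 \sim \mN(0, \cC)$ and setting $\tilde\bv_0 = \bv_0 + \bcS(\bq_0 - \btq_0)$ for a carefully chosen linear map $\bcS$ whose image lies in the Cameron-Martin space of $\mN(0, \cC)$. The difference $\bw_t := \bq_t - \btq_t$ then obeys $\ddot\bw_t + \bw_t = -\cC (DU(\bq_t) - DU(\btq_t))$, and $\bcS$ is designed so that the homogeneous and forced contributions nearly cancel at time $T$, forcing $|\bw_T|$ to be a strict fraction of $|\bq_0 - \btq_0|$. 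The Cameron-Martin shift produces a controlled Radon-Nikodym factor between the laws of $\bv_0$ and $\tilde\bv_0$, which, weighted against the Lyapunov function in the second factor of $\tilde\rho$, supplies both the contraction on small $\rho$-balls and the recurrence estimate on sublevel sets of $V$. Calibrating $\varepsilon$, $\eta$, $T$ and the operator norm of $\bcS$ jointly, so that the perturbation fits inside the Cameron-Martin space with uniformly bounded cost while simultaneously yielding a contraction constant smaller than the reciprocal of the Lyapunov growth, is the delicate point.

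Once \eqref{ineq:main:result} is established, the remaining assertions follow by standard arguments. Uniqueness of $\mu$ is immediate, since any two $P$-invariant laws are contracted to zero $\Wass_{\tilde\rho}$-distance. The observable bound is a consequence of the Kantorovich-Rubinstein duality applied with $\nu_1 = \delta_{\bq_0}$ and $\nu_2 = \mu$, once one verifies that a suitably regular $\Phi$ is Lipschitz with respect to $\tilde\rho$. Finally, the SLLN and CLT follow via the martingale approximation of Kipnis-Varadhan / Maxwell-Woodroofe: the geometric $\Wass_{\tilde\rho}$ decay together with $\mu(V) < \infty$ yields $L^2(\mu)$-decay of $P^n(\Phi - \mu(\Phi))$, solvability of the Poisson equation $g - Pg = \Phi - \mu(\Phi)$ in $L^2(\mu)$, and the usual martingale decomposition delivering both limit laws with asymptotic variance $\sigma^2 = \sigma(\Phi)^2$ given by the Green-Kubo formula.
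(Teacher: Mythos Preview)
Your overall architecture matches the paper's: weak Harris via Lyapunov drift plus $\rho$-contractivity and $\rho$-smallness, with the local contraction achieved through a velocity-shift coupling whose Girsanov cost is controlled, and the LLN/CLT deduced from the spectral gap via a martingale decomposition. Two points deserve correction or sharpening.

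First, the integration time $T$ is not merely taken ``away from resonances'' of the free flow; it must be \emph{small}, bounded above by an explicit constant depending on $\lambda_1, L_1, L_2$ (cf.\ \eqref{eq:time:restrict:basic}). Smallness is used both in the Lyapunov estimate, where it makes the nonlinear perturbation subordinate so that the dissipativity \eqref{eq:disp:cond:over} yields genuine contraction of $|\bq_T|^2$ (\cref{prop:FL}), and in the pathwise contraction estimate (\cref{prop:FP:Type:contract}). A non-resonance condition alone would not deliver either.

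Second, the shift $\bcS$ in the paper is not a general bounded map into the Cameron--Martin space but specifically the \emph{finite-rank} operator $T^{-1}\Pi_N$ onto the span of the first $N$ eigenvectors of $\cC$, with $N$ fixed by $\lambda_{N+1} L_1 \leq 1/4$. The issue is that for generic $\bq_0,\btq_0 \in \H$ the natural full shift $T^{-1}(\bq_0-\btq_0)$ is not in $\H_{1/2}$, so one must truncate; but then the unshifted high modes must be shown to contract on their own. This is precisely the Foias--Prodi mechanism you name: the high-mode linear dynamics already contract (via $\cos T < 1$), and the nonlinear feedback from low modes is damped by the smallness of $\lambda_{N+1}$. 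The paper quantifies this through an auxiliary weighted norm (\cref{prop:FP:Type:contract}). Your outline is consistent with this but leaves the mechanism implicit; since this is the crux of the infinite-dimensional argument, it should be spelled out.

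A minor difference: for the LLN/CLT the paper does not solve the Poisson equation in $L^2(\mu)$. Because $\tilde\rho$ is only distance-like, the link from $\Wass_{\tilde\rho}$ contraction to an $L^2(\mu)$ spectral gap is not immediate. Instead the paper works directly with $\tilde\rho$-Lipschitz observables, builds the corrector as the convergent series $\sum_k P^k\bar\Phi$, and verifies the hypotheses of a martingale CLT by hand (Appendix~\ref{sec:LLN:CLT}). Your $L^2$ route is plausible but would need an additional argument.
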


With similar arguments as used in the proof of \cref{thm:Main:Result:Overview} 
(cf. \cref{thm:weak:harris}), we can also provide a new proof of mixing rates 
for the classical finite-dimensional HMC algorithm, as specified by the dynamics 
\eqref{eq:FD:H:dynam}. This is carried out in \cref{thm:main:FD} below, and 
complemented by further comparisons with the assumptions in the main infinite-dimensional 
result in \cref{rmk:FD}.

Having formulated our mixing result for the exact HMC algorithm
associated with \eqref{eq:mu} we would like to be able to demonstrate
that the conditions
\eqref{eq:intro:global:hessian:c}-\eqref{eq:disp:cond:over} which we
impose on the potential $U$ can be verified in concrete examples
specifically as would apply to the Bayesian approach to PDE inverse
problems.  Here, as an illustrative example, we consider the problem of
recovering a divergence free fluid flow $\bq$ from the sparse and
noisy observation of a passive solute $\theta(\bq)$ as was recently
studied in \cite{borggaard2018bayesian, borggaard2018Consistency}.

To be specific let
\begin{align}
  \pd_t \theta + \bq \cdot \nabla \theta = \kappa \Delta \theta,
  \quad \theta(0) = \theta_0
  \label{eq:ad:eqn}
\end{align}
where the solution evolves on the periodic box $\TT^2$, namely
$\theta : [0,\infty) \times \TT^2  \rightarrow \RR$ and $\kappa > 0$ is
a fixed diffusion parameter.  Given a sufficiently regular initial
condition $\theta_0 : \TT^2 \to \RR$, which we take to be known in
advance, we specify the (linear) observation procedure
\begin{align}
  \Obs(\theta) := \left\{  \int_0^\infty \int_{\TT^2} \theta (t,x) K_j(t,x)
  dx dt \right\}_{j = 1}^m
  \label{eq:gen:linear:form}
\end{align}
where $m \geq 1$ represents the number of separate observations of
$\theta$ and $K_j$ are the associated `observation kernels'.  Positing an
additive observation noise $\eta$, we have the following statistical
model linking any suitably regular, divergence free,
$\bq: \TT^2 \to \RR^2$ with a resulting data set $\data$ as
\begin{align*}
  \data = \Obs(\theta(\bq)) + \eta,
\end{align*}
where $\theta(\bq)$ represents the solution of
\eqref{eq:gen:linear:form} corresponding to $\bq$ so that
$\theta(\bq)$ sits in an appropriate solution space which we specify
in rigorous detail below in \cref{def:adr_weak}.

Following the Bayesian statistical inversion formalism
\cite{kaipio2005statistical, dashti2017bayesian}, given a fixed
observation $\data \in \RR^m$ and a prior distribution $\mu_0$ on a
suitable Hilbert space of divergence free, periodic vector fields and
a probability density function $p_\eta: \RR^m \to \RR$ for the
observation noise $\eta$, we obtain a posterior distribution
\begin{align}
  \mu^\data(d\bq) \propto \exp(- U(\bq))\mu_0(d \bq)
  \quad
  \text{ where }
  \quad
  U(\bq) = - \log(p_\eta(\data - \Obs(\theta(\bq))).
  \label{eq:post:passive:scal}
\end{align}
see e.g. \cite{dashti2017bayesian}, \cite[Appendix
C]{borggaard2018bayesian}.  For simplicity of presentation, we focus
here on the typical situation where $\eta \sim N(0, \Gamma)$, with
$\Gamma$ a symmetric, strictly positive definite covariance
operator on $\RR^m$. In this case $U$ takes the form
\begin{align}
  U(\bq) =  | \Gamma^{-1/2}(\data - \Obs(\theta(\bq)))|^2,
  \label{eq:pot:passive:scal:g:n}
\end{align}
where $| \cdot |$ represents the usual Euclidean norm on $\RR^m$.

Our main results here, \cref{prop:DU:DsqU:Bnds} and
\cref{cor:DU:DsqU:Bnd:w:C}, show that in the case of `spectral
observations', i.e. when
\begin{align*}
  |\Obs( \theta)|  \leq c_0 \sup_{t \leq t^*} \int_{\TT^2}|\theta(t,x)|^2 dx, 
\end{align*}
for some $t^* \geq 0$, we can verify the conditions imposed on the
potential function $U$ (cf. \eqref{eq:intro:global:hessian:c} and more
generally \cref{B123} below) and in particular establish suitable
global bounds on $D^2U$.  On the other hand, for the interesting cases
of `point-observations' where
\begin{align}
  |\Obs( \theta)| \leq c_0 \sup_{t \leq t^*, x \in \TT^2} |\theta(t,x)|
  \label{eq:pnt:obs:informal}
\end{align}
for some $t^* \geq 0$, or for observations involving gradients or
other higher order derivatives of $\theta$, we can only show local
bounds on $D^2U$.

\subsection*{Overview of the Proof}

Our proof follows the approach of the weak Harris theorem developed in
\cite{hairer2011asymptotic}, which is an elegant generalization of the
classical Harris mixing results, \cite{harris1956existence,
  meyn2012markov, hairer2011yet}. It establishes necessary conditions
for two point contraction at small, intermediate and large scales in a
fashion well adapted to the Wasserstein metric, a notion of distance
which is crucially needed for many types of processes evolving on
infinite dimensional spaces. We should emphasize the authors in
\cite{hairer2011asymptotic} provide clarity and flexibility in their
approach by developing a class of distance-like functions
(cf. \eqref{eq:t:rho:def}) which allows one to establish global
contractivity directly and thus avoiding the need for intricate
pathwise coupling constructions considered elsewhere in the literature.

As such, the main difficulties here lie in showing that the necessary
assumptions of the weak Harris theorem are valid in our context.
These assumptions amount to showing, with respect to
$\rho: \H \times \H \to [0,1]$ defined as
$\rho (\bq, \btq) = 1 \wedge(|\bq - \btq|/\varepsilon)$, with
$\varepsilon > 0$ fixed, that the following is true: there exists
$m \in \mathbb{N}$ sufficiently large such that
\begin{enumerate}[(i)]
\item $P^m$ is $\rho$-contracting, i.e. there exists
  $0 < \delta_1 < 1$ such that
  \begin{align}
    \label{oview:contr}
    \Wass_\rho(P^m(\bq_0, \cdot), P^m(\btq_0, \cdot))
    \leq \delta_1 \rho(\bq, \btq) \quad \mbox{ for all }
    \bq_0, \btq_0 \in \H \mbox{ with }
    \rho(\bq_0, \btq_0) < 1;
  \end{align}
\item For level sets of the form
  $A_K:= \{\bq \in \H \,: \, | q | \leq K\}$, for $K > 0$, $A_K$ is
  \emph{$\rho$-small} for $P^m$, i.e. there exists $0 < \delta_2 < 1$
  and $m \geq 1$ such that
  \begin{align}\label{oview:small}
    \Wass_\rho (P^m(\bq_0, \cdot), P^m(\btq_0, \cdot)) \leq 1 -
    \delta_2
    \quad \mbox{ for all } \bq_0, \btq_0 \in A_K.
  \end{align}
\end{enumerate}
Finally we need a Lyapunov condition:
\begin{itemize}
\item[(iii)] For a suitable $V: \H \to \RR^+$ that
  \begin{align}\label{oview:Lyap}
    P^n V(\bq) \leq \kappa_V^n V(\bq) + K_V,
  \end{align}
  for every $\bq \in \H$ and $n \geq 1$ where $\kappa_V \in (0,1)$ and
  $K_V > 0$ are independent of $\bq$ and $n$.
\end{itemize}
Roughly speaking the conditions (i)--(iii) correspond to establishing
a two-point contraction at small, intermediate and large
scales respectively.

Following an approach developed in the stochastic PDE literature
\cite{mattingly2002exponential, kuksin2012mathematics,
  GlattHoltzMattinglyRichards2016,
  hairer2008spectral,hairer2011asymptotic, butkovsky2018generalized},
the idea consists in establishing (i) and (ii) above without
explicitly constructing a coupling between $P^m(\bq_0, \cdot)$ and
$P^m(\btq_0, \cdot)$. Instead, we construct an `approximate' coupling
by defining a modified process $\tilde P(\bq_0, \btq_0, \cdot)$ in a
control-like approach.  We define the process $\tP$ by imposing a
suitable `shift' in the initial velocity $\bv_0$ in
\eqref{eq:HMC:kernel:overview} depending on the initial positions
$\bq_0$, $\btq_0$. Namely, for a fixed integration time $T > 0$,
we take
 \begin{align}\label{oview:def:tP} 
  \tP(\bq_0, \btq_0, A) := \mathbb{P} (\bq_T (\btq_0, \tilde\bv_0) \in A) 
   \quad \mbox{ with }\tilde\bv_0 = \bv_0 + \cS(\bq_0, \btq_0), 
   \quad \bv_0 \sim \mN(0, \cC),
\end{align}
for every $A \in \mB(\H)$. Here we consider a shift
$\cS(\bq_0, \btq_0)$ which is inspired by estimates developed in
\cite{BoEbZi2018}; $\cS$ is defined so as to ensure a suitable
contraction between two solutions of \eqref{eq:dynamics:xxx} starting
from $(\bq_0, \bv_0)$ and $(\btq_0, \tilde \bv_0)$ at the final time
$T > 0$.

Since $\rho$ is a metric in $\H$, the corresponding extension
$\Wass_\rho$ is a metric in $\Pr(\H)$ and in fact coincides with the
Wasserstein-1 distance. Thus, by the triangle inequality,
\begin{align}\label{ineq:Wrho:TI}
  \Wass_\rho(P^m(\bq_0, \cdot), P^m(\btq_0, \cdot)) 
  \leq \Wass_\rho(P^m(\bq_0, \cdot), \tP^m(\bq_0, \btq_0, \cdot)) 
    + \Wass_\rho(\tP^m(\bq_0, \btq_0, \cdot), P^m(\btq_0, \cdot)),
\end{align}
where $\tP^m$ denotes the $m$-fold iteration of $\tP$, corresponding
to a sequence $(\bv_0^{(1)}, \ldots, \bv_0^{(m)})$ of initial
velocities drawn from $\mN(0, \cC)$ and shifted as in
\eqref{oview:def:tP} with $\bq_0, \btq_0$ replaced with the starting
positions from each iteration. In view of establishing
\eqref{oview:contr} and \eqref{oview:small}, the first term on the
right-hand side of \eqref{ineq:Wrho:TI} is estimated by first showing
a contraction result between two solutions of \eqref{eq:dynamics:xxx}
starting from $(\bq_0, \bv_0)$ and $(\btq_0, \tilde \bv_0)$ with
respect to $\rho$ in $\H$, which is then extended to $\Wass_\rho$ in
$\Pr(\H)$. Such contraction result follows solely from assumption
\eqref{eq:intro:global:hessian:c} on the potential function $U$
together with a smallness assumption on the integration time $T$; see
\cref{prop:FP:Type:contract} below. Moreover, assumption
\eqref{eq:intro:global:hessian:c} implies that the only possible
source of nonlinearity in the dynamics \eqref{eq:dynamics:xxx},
i.e. $DU$, is Lipschitz, which in particular guarantees the
well-posedness of \eqref{eq:dynamics:xxx} as we detail in
\cref{prop:well:posed}.

The second term on the right-hand side \eqref{ineq:Wrho:TI} represents
a `cost of control' term and in fact the tuning parameter
$\varepsilon$ appearing in $\rho$ specifies the scales at which this
cost does not `become too large'.  We estimate this term with the help
of Girsanov's theorem from which we obtain a bound in terms of the
Radon-Nikodym derivative between the law $\sigma_m$ of the velocity
path $(\bv_0^{(1)}, \ldots, \bv_0^{(m)})$ and the law
$\tilde \sigma_m$ of the associated shifted velocity path
$(\tilde\bv_0^{(1)}, \ldots, \tilde \bv_0^{(m)})$, i.e.  Girsanov
provides us with $d \sigma_m/ d \tilde\sigma_m$. Here we notice that,
in order to guarantee that $d \sigma_m/ d \tilde\sigma_m$ is
well-defined, we define the shift $\cS$ in \eqref{oview:def:tP} to be
in a finite-dimensional subspace of $\H$
(cf. \eqref{def:Phin}). Indeed, looking at the case $m = 1$ for
simplicity, notice that if $\bv_0 \sim \mN(0,\cC)$ then
$\tilde \bv_0 \sim \mN(\cS(\bq_0, \btq_0), \cC)$ and, by the
Feldman-Hajek theorem (see, e.g., \cite[Theorem 2.23]{DaZa2014}),
$\mN(0,\cC)$ and $\mN(\cS(\bq_0, \btq_0), \cC)$ are mutually singular
unless $\cS(\bq_0, \btq_0)$ belongs to the Cameron-Martin space of
$\mN(0,\cC)$. Notably, the Cameron-Martin space of $\mN(0,\cC)$ has
$\mN(0,\cC)$-measure zero when $\H$ is infinite-dimensional. This
illustrates the fact that two measures in an infinite-dimensional
space are frequently mutually singular. However, by considering a
velocity shift $\cS$ that belongs to an N-dimensional subspace
$\H_N \subset \H$, for some $N \in \NN$, we can show that $\sigma_m$
and $\tilde\sigma_m$ are mutually absolutely continuous, with an
estimate of $d\sigma_m/d\tilde \sigma_m$, and thus of the second term
in \eqref{ineq:Wrho:TI}, that depends on the dimension $N$. Here $N$
is chosen so as to obtain a suitable contraction between different
trajectories of \eqref{eq:dynamics:xxx} and hence to provide a useful
estimate of the first term in \eqref{ineq:Wrho:TI} (see
\cref{prop:FP:Type:contract} and \cref{prop:local_contr}). For this
purpose, $N$ must be chosen to be sufficiently large, but is
nevertheless a fixed parameter depending only on the potential
function $U$ through the constant $\Ua$ from
\eqref{eq:intro:global:hessian:c} (see \eqref{cond:t:N:contr} below).
 
The third part of the proof consists in showing that such $V$ is a
Lyapunov function for $P$ as given in \cref{prop:FL} below.  Here, in
addition to quadratic exponential function $V(\bq) = \exp(\eta |q|^2)$
as in \eqref{eq:t:rho:def} we in fact show that any function of the
form $V(\bq) = |\bq|^i$, $i \in \mathbb{N}$, is also a Lyapunov function. The
result of \cref{prop:FL} follows from both assumptions
\eqref{eq:intro:global:hessian:c} and \eqref{eq:disp:cond:over} on the
potential $U$ together with a smallness assumption on the integration
time $T$. Notably, assumption \eqref{eq:disp:cond:over} on $U$ is only
imposed in order to obtain this Lyapunov structure. Indeed, condition
\eqref{eq:disp:cond:over} provides a coercivity-like property for $DU$
in \eqref{eq:dynamics:xxx} which, when complemented with the smallness
assumption on $T$, allows us to show the required exponential decay of
such functions $V$ modulo a constant, thus proving the Lyapunov
property.  


It remains to leverage the spectral gap now established,
\eqref{ineq:main:result}, to prove a Law of Large numbers (LLN) and
Central Limit Theorem (CLT) type result for the implied Markov
process.  While this implication is extensively developed in the
literature, and recently generalized to the situation where the
spectral gap appears in the Wasserstein sense
\cite{komorowski2012central,kulik2017ergodic}, it was not immediately
clear that these results are easily applied as a black box to our
situation. Instead, for clarity of presentation, we provide an
independent proof of the LLN and CLT in an appendix which is carefully
adapted to our situation where the $\tilde{\rho}$ in
\eqref{ineq:main:result} is only distance-like.  While we are in
particular following the road map laid out in
\cite{komorowski2012central}, we believe our proof may be of some
independent interest.

\subsection*{Organization of the Manuscript}

The rest of the manuscript is organized as follows.  In
\cref{sec:Preliminaries} we provide the complete details of our
mathematical setting including the assumptions on the covariance
operator $\cC$ and the potential $U$ in \eqref{eq:dynamics:xxx}.
\cref{sec:apriori} provides certain a priori bounds on
\eqref{eq:dynamics:xxx} and concludes with the low-mode nudging bound
that we use to synchronize the positions of two processes by suitably
coupling their momenta.  Lyapunov estimates on the exact Hamiltonian
Monte Carlo dynamics are given in \cref{sec:f:l:bnd}.  In
\cref{sec:Ptwise:contract:MD} we combine the bounds in the previous
two sections to establish the pointwise contractivity of the Markovian
dynamics, namely the so called $\rho$-small and $\rho$-contractivity
conditions.  The main result on geometric ergodicity is stated
rigorously in \cref{sec:Proof:Main} followed by the proof using the
weak Harris theorem \cite{hairer2011asymptotic}.
\cref{sec:finite:dim:HMC} details how our approach also provides a
novel proof for the finite dimensional setting.  Finally in
\cref{sec:bayes:AD} we establish that the conditions of the main
theorem apply to the Bayesian statistical inversion problem of
estimating a divergence free vector field $\bq$ from the partial
observation of a scalar quantity advected by the flow.
\cref{sec:LLN:CLT} shows how the law of large numbers and the central
limit theorem follow in our setting from our main result on spectral
gaps.

\section{Preliminaries}
\label{sec:Preliminaries}

This section collects various mathematical preliminaries and sets down
the precise assumptions which we use below in the statements of the 
main results of the paper.

\subsection{The Gaussian reference measure}
Let $\H$ be a separable and real Hilbert space with inner product
$\langle \cdot, \cdot \rangle$ and norm $| \cdot |$.  We take
$\mathcal{N}(0, \cC)$ to denote the centered normal distribution on
$\H$ with covariance operator $\cC$.  See
e.g. \cite{bogachev1998gaussian, DaZa2014} for generalities concerning
Gaussian measures on Hilbert space.  In this paper we {\em always}
assume that $\cC$ satisfies the following conditions.
\begin{assumption}
\label{A123}
$\cC: \H \to \H$ is a trace class, symmetric and strictly positive
definite linear operator.  Thus, by the spectral theorem, we have a
complete orthonormal basis $\{ \bfe_i \}_{i \in \mathbb{N}}$ of $\H$
which are the eigenfunctions of $\cC$.  We write corresponding
eigenvalues $\{ \lambda_i \}_{i \in \mathbb{N}}$ in non-increasing order
and note that the trace class condition amounts to
  \begin{align}
    \Tr(\cC) := \sum_i \lambda_i < \infty. 
    \label{eq:Tr:Def}
  \end{align}
\end{assumption} 
\noindent We will also make frequent use of fractional powers of $\cC$
which we define as follows.
\begin{Def}
  \label{def:frack:pow}
  For any $\gamma  \in \RR$, we define fractional power $\cC^\gamma$ 
  of $\cC$ by
  \begin{align*}
    \cC^{\gamma} \bff = \sum_{i} \lambda_i^{\gamma} \langle \bff,  \bfe_i \rangle \bfe_i \;,
  \end{align*} 
  which makes sense for any $\bff \in \H_{\gamma}$. Here $\H_\gamma$ is 
  defined as
  \begin{align}\label{def:Hgamma}
    \H_{\gamma} = \{ \bff \in \H | \;  | f |_{\gamma} < \infty\}
    \quad \text{ where }
     | \bff |_{\gamma}^2
    :=  |\cC^{-\gamma} \bff |^2 
    = \sum_i \lambda_i^{-2\gamma} \langle \bff,  \bfe_i \rangle^2
  \end{align}
  when $\gamma \geq 0$.  For $\gamma < 0$, $\H_\gamma$ is defined as
  the dual of $\H_{-\gamma}$ relative to $\H$. In addition, for every
  $\gamma \in \RR$, we define the inner product
  $\langle \cdot, \cdot \rangle_{\gamma} = \langle \cC^{-\gamma}
  \cdot, \cC^{-\gamma} \cdot \rangle$.
\end{Def}
\noindent
According to  \cref{def:frack:pow}, it follows that
$\H_{-\tilde{\gamma}} \subseteq \H_{-\gamma}$ for every
$\gamma, \tilde{\gamma} \in \RR$ with $\gamma \geq \tilde{\gamma}$.  Moreover,
note that $\H_{1/2}$ is the Cameron-Martin space associated with
$\mathcal{N}(0, \cC)$ with inner product
$\langle \cdot, \cdot \rangle_{1/2} = \langle \mathcal{C}^{-1/2}
\cdot, \mathcal{C}^{-1/2} \cdot \rangle$
and norm $| \cdot |_{1/2} = | \mathcal{C}^{-1/2} \cdot |$;
see \cite[Chapter 2]{DaZa2014}.

In terms of these fractional spaces $\H_\gamma$ we have the following
`Poincar\'e' and `reverse-Poincar\'e' inequalities.  For this purpose
and for later use we define, for $N \geq 1$,
\begin{align}
  \Pi_N \bff = \sum_{j \leq N} \langle \bff, \bfe_j \rangle \bfe_j,
  \quad 
  \Pi^N \bff = \sum_{j > N} \langle \bff, \bfe_j \rangle \bfe_j,
  \label{eq:get:high:low}
\end{align}
namely the projection of $\bff \in \H$ onto `low' and `high' modes.
\begin{Lem}\label{lem:ineq:C}
  Given any $\gamma, \tilde{\gamma} \in \RR$ with
  $\gamma \geq \tilde{\gamma}$, the following hold:
  \begin{align}
    \norm{\cC^{\gamma} \bff} \leq 
    \lambda_1^{(\gamma - \tilde{\gamma})} \norm{\cC^{\tilde{\gamma}} \bff},
    \label{eq:Poincare}
  \end{align}
  when $\bff \in \H_{-\tilde{\gamma}}$. Moreover, for any $N \geq 1$,
  \begin{align}
    \norm{\cC^{\gamma} \Pi^N \bff} 
    \leq \lambda_{N+1}^{(\gamma - \tilde{\gamma})} 
    \norm{\cC^{\tilde{\gamma}} \Pi^N \bff },
    \label{eq:Inv:Poincare}
  \end{align}
  for any $\bff \in \H_{-\tilde{\gamma}}$.
\end{Lem}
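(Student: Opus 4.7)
The plan is to prove both inequalities by expanding $\bff$ in the eigenbasis $\{\bfe_i\}_{i\in\NN}$ of $\cC$ guaranteed by \cref{A123} and then comparing the resulting series termwise. The key observation is that the eigenvalues $\lambda_i$ are arranged in non-increasing order, so $\lambda_i \le \lambda_1$ for all $i$ and $\lambda_i \le \lambda_{N+1}$ for all $i \ge N+1$. Combined with the hypothesis $\gamma - \tilde\gamma \ge 0$, this yields pointwise control of the powers $\lambda_i^{2(\gamma - \tilde\gamma)}$, which is exactly what is needed.

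For \eqref{eq:Poincare}, I would write $\bff = \sum_i f_i \bfe_i$ where $f_i = \langle \bff, \bfe_i\rangle$, and observe, by the definition of $\cC^\gamma$ in \cref{def:frack:pow}, that
\begin{align*}
  |\cC^\gamma \bff|^2
  = \sum_i \lambda_i^{2\gamma} f_i^2
  = \sum_i \lambda_i^{2(\gamma - \tilde\gamma)} \lambda_i^{2\tilde\gamma} f_i^2
  \le \lambda_1^{2(\gamma - \tilde\gamma)} \sum_i \lambda_i^{2\tilde\gamma} f_i^2
  = \lambda_1^{2(\gamma - \tilde\gamma)} |\cC^{\tilde\gamma} \bff|^2,
\end{align*}
where the inequality uses $\lambda_i \le \lambda_1$ and $\gamma - \tilde\gamma \ge 0$. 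Taking square roots yields the claim. The right-hand side is finite precisely because $\bff \in \H_{-\tilde\gamma}$, which by \eqref{def:Hgamma} means $|\cC^{\tilde\gamma}\bff| < \infty$.

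For \eqref{eq:Inv:Poincare}, the same calculation applies with $\bff$ replaced by $\Pi^N \bff = \sum_{i>N} f_i \bfe_i$. Because $\Pi^N$ projects onto the span of $\{\bfe_i\}_{i>N}$ and the $\bfe_i$ are also eigenfunctions of $\cC^\gamma$ (and hence of $\cC^{\tilde\gamma}$), the series for both $|\cC^\gamma \Pi^N \bff|^2$ and $|\cC^{\tilde\gamma} \Pi^N \bff|^2$ only involve indices $i \ge N+1$, so we can replace the bound $\lambda_i \le \lambda_1$ by the sharper $\lambda_i \le \lambda_{N+1}$ throughout, giving the factor $\lambda_{N+1}^{2(\gamma - \tilde\gamma)}$.

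There is no real obstacle here: the statement is a purely spectral fact following immediately from the diagonalization of $\cC$ and the monotonicity $\lambda_i \le \lambda_1$ (resp.\ $\lambda_i \le \lambda_{N+1}$ for $i > N$). The only minor bookkeeping point is ensuring that the hypothesis $\bff \in \H_{-\tilde\gamma}$ is enough to make the series on the right-hand sides convergent, which is automatic from \eqref{def:Hgamma}; the left-hand side series is then automatically finite by the derived inequality itself.
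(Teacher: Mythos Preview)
Your proof is correct and is the natural argument: the paper states this lemma without proof, as it follows immediately from the spectral expansion you have written out. There is nothing to add.
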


In certain applications, one may wish to define the Markovian dynamics
associated to \eqref{eq:dynamics:xxx} only on $\H_\gamma$ for some
$\gamma \in (0, 1/2)$, which is a strict subset of $\H$. For this
reason, in what follows we consider our underlying phase space to be
more generally given by $\H_\gamma$, for some $\gamma \in [0, 1/2)$.
This leads us to introduce the following additional assumption which
will sometimes be imposed:
\begin{assumption}
\label{ass:higher:reg:C}
  For some $\gamma \in [0,1/2)$, $\cC^{1-2\gamma}$ is trace class. Namely,
  \begin{align}
    \Tr(\cC^{1 - 2 \gamma}) := \sum_i \lambda_i^{1- 2\gamma} < \infty. 
    \label{eq:Tr:Def:b}
  \end{align}
\end{assumption}
\noindent Under \cref{ass:higher:reg:C} we have the following regularity
property
\begin{Lem}
  \label{lem:diff:gauss:for:diff:folks}
  Suppose that $\mu_0$ is $\mathcal{N}(0,\cC)$ defined on $\H$ with $\cC$
  under \cref{A123}, \cref{ass:higher:reg:C}.    Then $\mu_0$
  is also $\mathcal{N}(0,\cC^{1 -2\gamma})$ defined on $\H_\gamma$.
\end{Lem}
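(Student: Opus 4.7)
The plan is to unpack the statement in terms of the standard defining properties of a Gaussian measure on a Hilbert space. Concretely, to show that $\mu_0 = \mN(0,\cC^{1-2\gamma})$ when viewed on $\H_\gamma$, I need to establish: (a) $\mu_0(\H_\gamma) = 1$, so that restricting $\mu_0$ to the Borel $\sigma$-algebra of $\H_\gamma$ produces a bona fide probability measure on $\H_\gamma$; (b) for every $f \in \H_\gamma$ the linear functional $\langle f, \cdot\rangle_\gamma$ is a centered normal under $\mu_0$ with variance $\langle \cC^{1-2\gamma} f, f\rangle_\gamma$; and (c) $\cC^{1-2\gamma}$ is a symmetric, positive, trace class operator on $\H_\gamma$ equipped with $\langle\cdot,\cdot\rangle_\gamma$.

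For (a), I will use the Karhunen--Lo\`eve representation of a sample: if $X \sim \mu_0$ then $X = \sum_i \sqrt{\lambda_i}\,\xi_i \bfe_i$ with $\{\xi_i\}$ i.i.d.\ standard normals. Computing in the basis $\{\bfe_i\}$,
\begin{align*}
\mathbb{E}\,|X|_\gamma^2
= \mathbb{E}\sum_i \lambda_i^{-2\gamma} \langle X, \bfe_i\rangle^2
= \sum_i \lambda_i^{1-2\gamma} \mathbb{E}\,\xi_i^2
= \Tr(\cC^{1-2\gamma}),
\end{align*}
which is finite by \cref{ass:higher:reg:C}. Hence $X \in \H_\gamma$ almost surely. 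For (c), I note that $\{\lambda_i^\gamma \bfe_i\}$ is an orthonormal basis of $(\H_\gamma, \langle\cdot,\cdot\rangle_\gamma)$ since $\langle \lambda_i^\gamma \bfe_i, \lambda_j^\gamma \bfe_j\rangle_\gamma = \lambda_i^{2\gamma}\lambda_i^{-2\gamma}\delta_{ij} = \delta_{ij}$. A direct calculation in this basis gives $\Tr_{\H_\gamma}(\cC^{1-2\gamma}) = \sum_i \lambda_i^{1-2\gamma}$, which agrees with $\Tr(\cC^{1-2\gamma})$ in $\H$ and is finite.

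For (b), which is the heart of the matter, I will write out both sides in the eigenbasis. On the one hand,
\begin{align*}
\langle f, X\rangle_\gamma
= \sum_i \lambda_i^{-2\gamma} \langle f, \bfe_i\rangle\langle X, \bfe_i\rangle
= \sum_i \lambda_i^{-2\gamma}\sqrt{\lambda_i}\,\langle f, \bfe_i\rangle\,\xi_i,
\end{align*}
which is a convergent sum of independent centered Gaussians (convergence in $L^2$ is guaranteed once $f \in \H_\gamma$ and $\sum_i \lambda_i^{1-2\gamma} < \infty$), hence itself centered Gaussian with variance $\sum_i \lambda_i^{1-4\gamma}\langle f,\bfe_i\rangle^2$. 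On the other hand, expanding $\langle \cC^{1-2\gamma}f, f\rangle_\gamma = \langle \cC^{-\gamma}\cC^{1-2\gamma}f, \cC^{-\gamma}f\rangle$ in the same basis yields exactly $\sum_i \lambda_i^{1-4\gamma}\langle f,\bfe_i\rangle^2$. Matching the two confirms the covariance identity.

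I do not foresee any serious obstacle here; the proof is essentially a bookkeeping exercise in the eigenbasis of $\cC$. The only point requiring a bit of care is to remember that the inner product on $\H_\gamma$ is the twisted one $\langle \cC^{-\gamma}\cdot, \cC^{-\gamma}\cdot\rangle$, which both changes the orthonormal basis used to compute the trace (introducing a factor $\lambda_i^{2\gamma}$) and simultaneously changes the covariance operator from $\cC$ to $\cC^{1-2\gamma}$, the two effects combining consistently so that the Gaussian structure of $\mu_0$ is preserved across the two Hilbert space viewpoints.
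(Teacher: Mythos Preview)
Your proof is correct, but there is nothing to compare it against: the paper states this lemma without proof. Your argument via the Karhunen--Lo\`eve expansion and direct computation in the eigenbasis $\{\bfe_i\}$ is the natural one and fills the gap cleanly. One minor remark: in step (b), the $L^2$ convergence of the series $\sum_i \lambda_i^{-2\gamma}\sqrt{\lambda_i}\langle f,\bfe_i\rangle\,\xi_i$ follows already from $f \in \H_\gamma$ alone (via the bound $\lambda_i^{1-4\gamma} \leq \lambda_1^{1-2\gamma}\lambda_i^{-2\gamma}$), so the trace condition \eqref{eq:Tr:Def:b} is not needed at that particular point, though of course it is essential for (a) and (c).
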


\begin{Rmk}
  We typically think of the covariance $\cC$ as a `smoothing
  operator'. A simple example of $\cC$ satisfying the above
  assumptions is $A^{-1}$ where $A = - \partial_{xx}$ is the second
  derivative on $[0, \pi]$ endowed with Dirichlet boundary conditions.
  Note that, with this choice of $\cC$, the spaces $\H_\gamma$
  correspond to the usual $L^2$-based Sobolev space $H^{\gamma/2}$
  with the Cameron-Martin space given by $H^1$.  A more involved
  variation on this theme will be considered below in
  \cref{sec:bayes:AD} when we consider an application to a PDE inverse
  problem.
\end{Rmk}

\subsection{Conditions on the potential}
In what follows we impose the following regularity conditions on the
potential energy function $U$ from \eqref{eq:mu}. Note that in particular assumption $(B1)$ below is compatible with the setting imposed in \cite{BePiSaSt2011}; see \cref{rmk:Comp:beskos} below.
\begin{assumption}\label{B123}
 For a fixed value of $\gamma \in [0,1/2)$ the potential in
 \eqref{eq:dynamics:xxx} $U: \H_\gamma \to \RR$ is twice
 Fr\'echet differentiable  and 
 \begin{itemize}
 \item[(B1)] There exists $\Ua >0$ such that
   \begin{align} \label{Hess:bound}
     | D^2 U(\bff) |_{\mL_2(\H_\gamma)}  
     =  | \cC^{\gamma} D^2 U(\bff) \cC^{\gamma}|_{\mL_2(\H)} \leq \Ua 
   \end{align}
   for any $\bff \in \H_\gamma$, where $|\cdot|_{\mL_2(\H_\gamma)}$
   and $| \cdot|_{\mL_2(\H)}$ denote the usual operator norms for real
   valued bilinear operators defined on $\H_\gamma \times \H_\gamma$
   and on $\H \times \H$, respectively.
\item[(B2)] There exists $\Ub >0$ and $\Uc \ge 0$ such that, for this
  value of $\gamma \in [0,1/2)$
  \begin{align} \label{dissip:new}
    \nga{\bff}^2 + \langle  \bff, \cC D U(\bff) \rangle_{\gamma} 
    \ge \Ub \nga{\bff}^2 - \Uc
  \end{align}
  for every $\bff \in \H_\gamma$.
\end{itemize}
\end{assumption}

A number of remarks are in order regarding \cref{B123}:
\begin{Rmk}
  \label{rmk:B123:Consequences}
  \mbox{}
  \begin{itemize}
\item[(i)] \cref{B123} (B1) and the mean value theorem imply that
\begin{align}
  \ngad{DU(\bff)-DU(\bfg)} \le \Ua \nga{\bff - \bfg}
  \quad 
  \label{lipschitz:new}
\end{align}
for any $\bff, \bfg \in \H_\gamma$
and, in particular, 
\begin{align}
  \ngad{D U (\bff)} \leq \Ua \nga{\bff} + \Uz  
  \label{cond:sublin:U}
\end{align}
for every $\bff \in \H_\gamma$, where $\Uz = \ngad{D U (0)}$. Inequalities
\eqref{lipschitz:new} and \eqref{cond:sublin:U} will be used
extensively in the analysis below.

\item[(ii)] If $U$ satisfies, in addition, the following property:
\begin{itemize}
\item[(B3)] There exists $\Ud \in [0,\lambda_1^{-1 + 2 \gamma})$ and
  $\Ue \geq 0$ such that
	\begin{align}
	\ngad{D U(\bff)} \leq \Ud \nga{\bff} + \Ue, 
          \quad \text{ for any } \bff \in \H_\gamma,
          \label{eq:B3a} 
	\end{align}
\end{itemize}
then (B2) is automatically satisfied. Indeed, we have
\begin{align}\label{B3:to:B2}
	\nga{\bff}^2 + \langle \bff, \cC D U(\bff) \rangle_{\gamma} 
	&\geq \nga{\bff}^2 - | \langle \bff, \cC D U(\bff) \rangle_{\gamma}|  
	\geq \nga{\bff}^2 - \nga{\bff} |\cC^{1 - \gamma} D U (\bff)| \notag\\
	&\geq \nga{\bff}^2 - \lambda_1^{1 - 2 \gamma} \nga{\bff} \ngad{D U (\bff)},
\end{align}
where the last inequality follows from Lemma \ref{lem:ineq:C} and the
fact that $\gamma \in [0,1/2)$. Using \eqref{eq:B3a} in
\eqref{B3:to:B2} and Young's inequality, we obtain
\begin{align*}
  \nga{\bff}^2 + \langle \bff, \cC D U (\bff) \rangle_{\gamma}
	 &\geq (1 - \lambda_1^{1 - 2 \gamma} \Ud ) \nga{\bff}^2 
             - \lambda_1^{1 - 2 \gamma} \Ue \nga{\bff} \notag \\
	 &\geq \frac{1 - \lambda_1^{1 - 2 \gamma} \Ud}{2} \nga{\bff}^2 - C,
\end{align*}
where $C \in \RR^+$ is a constant depending on
$\lambda_1^{1 - 2\gamma}$, $\Ud$, $\Ue$.  Notice that, in particular,
if $U$ satisfies (B1) with $\Ua \in [0,\lambda_1^{-1 + 2 \gamma})$,
then (B3) is verified with $\Ud = \Ua$ and $\Ue = L_0$
(cf. \eqref{cond:sublin:U}).
\item[(iii)] Assumptions (B1) and (B2) imply that the constants $\Ua$
  and $\Ub$ satisfy the following relation:
\begin{align}
\label{ineq:K:L}
  \Ub \leq 1 + \lambda_1^{1 - 2 \gamma} \Ua.
\end{align}
Indeed, from (B2), Lemma \ref{lem:ineq:C} and \eqref{cond:sublin:U},
we obtain that
\begin{align*}
  (\Ub - 1) \nga{\bff}^2 - \Uc 
  &\leq \langle \bff, \cC D U (\bff) \rangle_{\gamma} 
  \leq \lambda_1^{1 - 2 \gamma} \nga{\bff} \ngad{D U (\bff)} 
  \leq  \lambda_1^{1 - 2 \gamma} \Ua \nga{\bff}^2 
    + \Uz \lambda_1^{1 - 2 \gamma} \nga{\bff} \\
  &\leq (\delta + \lambda_1^{1 - 2 \gamma} \Ua) \nga{\bff}^2 
    + \frac{(\Uz \lambda_1^{1 - 2 \gamma})^2}{4 \delta}, 
\end{align*}
for any $\delta > 0$, so that
\begin{align*}
  (\Ub - 1 - \lambda_1^{1 - 2 \gamma} \Ua - \delta) \nga{\bff}^2 
  \leq \Uc + \frac{(\Uz \lambda_1^{1 - 2 \gamma})^2}{4 \delta} 
\end{align*}
holds for any $\bff \in \H_{\gamma}$, and every $\delta > 0$,  
which implies \eqref{ineq:K:L}.
\end{itemize}

\end{Rmk}

This paper is concerned with sampling from probability distributions
on $\H$ that have a density with respect to $\mathcal{N}(0, \cC)$
which are of the form \eqref{eq:mu}.  In order that this is indeed the
case and furthermore to ensure the invariance of $\mu$ with respect to
the Markovian dynamics defined with respect to \eqref{eq:dynamics:xxx}, we
assume the following condition.
\begin{assumption}
\label{ass:integrability:cond}
Taking $\gamma \in [0,1/2)$ as in \cref{B123} we suppose that, for 
any $\varepsilon > 0$ there exists an $M = M(\varepsilon) \geq 0$, such that 
\begin{align*}
  U(\bff) \geq M - \varepsilon | \bff |_{\gamma}^2  \quad \text{ for any } \bff
  \in \H_\gamma .
\end{align*}
\end{assumption}

\begin{Rmk}
\label{rmk:Comp:beskos}
We notice that \cref{B123} $(B1)$ and \cref{ass:integrability:cond}
above are equivalent to conditions 3.2 and 3.3 imposed in
\cite{BePiSaSt2011}. Indeed such assumptions are applied there in
order to show the well-posedness of the dynamics in
\eqref{eq:dynamics:xxx} as well as to show that the measure $\mu$
defined in \eqref{eq:mu} is an invariant measure associated to
\eqref{eq:dynamics:xxx}. Such results are recalled in
\cref{prop:well:posed} and \cref{eq:invariance} below,
respectively. However, as pointed out in the introduction, condition
\cref{B123} $(B2)$ is further imposed in our setting in order to
obtain the Lyapunov structure \eqref{oview:Lyap}, which together with
the contractivity and smallness properties
\eqref{oview:contr}-\eqref{oview:small} allows us to obtain our main
convergence result, \cref{thm:weak:harris} below.
\end{Rmk}

\subsection{Well-Posedness of the Hamiltonian Dynamics}
\label{subsec:well:pos}

In the following proposition, we recall a well-posedness result of the
Hamiltonian dynamics in \eqref{eq:dynamics:xxx}, as shown in
\cite{BePiSaSt2011}. We consider the usual norm on the product space
$\H_{\gamma} \times \H_{\gamma}$ with the slight abuse of notation:
\begin{align}
  \label{def:nprod}
    \nga{(\bq,\bv)} := \nga{\bq} + \nga{\bv} \quad \mbox{ for all } 
  (\bq,\bv) \in \H_{\gamma} \times \H_{\gamma}.
\end{align}
\begin{Prop}
  \label{prop:well:posed}
  Suppose $\cC$ satisfies \cref{A123} and that $U$ maintains \cref{B123}, (B1).
  Let $\gamma \in [0,1/2)$ be as in \cref{B123}.
\begin{itemize}
\item[(i)] For any $(\bq_0, \bv_0) \in \H_\gamma \times \H_\gamma$, there exists
  a unique $(\bq,\bv) = (\bq(\bq_0, \bv_0), \bv(\bq_0, \bv_0))$ with
  \begin{align}
    (\bq,\bv) \in C^{1}(\RR; \H_\gamma \times \H_\gamma)
    \label{eq:sol:reg}
  \end{align}
  and obeying \eqref{eq:dynamics:xxx}.  The resulting solution
  operators $\{\Xi_t\}_{t \in \RR}$ defined via
   \begin{align*}
     \Xi_t(\bq_0,\bv_0) = \bq_t(\bq_0,\bv_0)
   \end{align*}
   are all continuous maps from $\H_\gamma \times \H_\gamma$ to $\H_\gamma$.
\item[(ii)] Under the additional restriction on $\cC$ of
  \cref{ass:higher:reg:C} and fixing an integration time $T > 0$
  the random variable
  \begin{align*}
    Q_1(\bq_0) = \bq_T(\bq_0, \bv_0), \quad \bv_0 \sim \mathcal{N}(0,\cC)
  \end{align*}
  is well defined in $\H_\gamma$ for any $\bq_0 \in \H_\gamma$.
  Moreover
\begin{align}
  P(\bq_0, A) := \Prb( Q_1(\bq_0) \in A)
  \label{eq:PEHMC:kernel:def}
\end{align}
defines a Feller Markov transition kernel on $\H_\gamma$.
\end{itemize}
\end{Prop}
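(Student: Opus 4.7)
The plan is to prove part (i) by recasting \eqref{eq:dynamics:xxx} as a first-order ODE on the Hilbert space $\H_\gamma \times \H_\gamma$ and applying the classical Picard--Lindelöf theorem in its Banach-space form, and then to deduce part (ii) from the regularity upgrade provided by \cref{lem:diff:gauss:for:diff:folks} together with standard continuity and measurability arguments.

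For part (i), I would first verify that the vector field $F: \H_\gamma \times \H_\gamma \to \H_\gamma \times \H_\gamma$ defined by $F(\bq,\bv) = (\bv, -\bq - \cC DU(\bq))$ is globally Lipschitz. The only nontrivial piece is $\bq \mapsto \cC DU(\bq)$. By \cref{B123} (B1) and the consequence \eqref{lipschitz:new} recorded in \cref{rmk:B123:Consequences} (i), $DU: \H_\gamma \to \H_{-\gamma}$ is globally Lipschitz with constant $\Ua$. Combined with the Poincaré-type bound \eqref{eq:Poincare} from \cref{lem:ineq:C}, which gives $|\cC \bfg|_\gamma \leq \lambda_1^{1-2\gamma} |\bfg|_{-\gamma}$ for $\gamma \in [0,1/2)$, one concludes $\nga{\cC DU(\bq) - \cC DU(\btq)} \leq \lambda_1^{1-2\gamma} \Ua \nga{\bq - \btq}$. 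Global Lipschitzness of $F$ then yields global existence, uniqueness, and $C^1$-regularity of $(\bq,\bv)$ on all of $\RR$ via the Banach-space Picard iteration, with continuity of $\Xi_t$ in the initial data coming from a standard Grönwall estimate applied to the difference of two solutions.

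For part (ii), the subtle point is that $\bv_0 \sim \mN(0,\cC)$ is a priori defined on the larger space $\H$, whereas part (i) requires $\bv_0 \in \H_\gamma$. This is precisely where \cref{ass:higher:reg:C} enters: \cref{lem:diff:gauss:for:diff:folks} guarantees that, under $\Tr(\cC^{1-2\gamma}) < \infty$, $\mN(0,\cC)$ charges $\H_\gamma$ with full measure, so $\bv_0 \in \H_\gamma$ almost surely. One then applies part (i) pathwise to define $Q_1(\bq_0) = \Xi_T(\bq_0,\bv_0) \in \H_\gamma$ for any $\bq_0 \in \H_\gamma$. Borel measurability of the map $\bv_0 \mapsto \Xi_T(\bq_0,\bv_0)$ (which follows from its continuity on $\H_\gamma$) ensures that $Q_1(\bq_0)$ is a genuine $\H_\gamma$-valued random variable, so $P(\bq_0,A) := \Prb(Q_1(\bq_0)\in A)$ is well-defined and a probability measure for each fixed $\bq_0$.

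Finally, to establish the Feller property it suffices to show that $\bq_0 \mapsto P\Phi(\bq_0) = \E[\Phi(\Xi_T(\bq_0,\bv_0))]$ is continuous for every bounded continuous $\Phi:\H_\gamma \to \RR$. The joint continuity of $(\bq_0,\bv_0) \mapsto \Xi_T(\bq_0,\bv_0)$ from part (i) implies that for each fixed $\bv_0 \in \H_\gamma$, $\bq_0 \mapsto \Phi(\Xi_T(\bq_0,\bv_0))$ is continuous; since $\Phi$ is uniformly bounded, dominated convergence delivers the required continuity, as well as Borel measurability of $\bq_0 \mapsto P(\bq_0,A)$ for any $A \in \mB(\H_\gamma)$ by a monotone-class argument. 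I do not anticipate a serious obstacle here: the global Lipschitz property inherited from \cref{B123} (B1) reduces the dynamics to a well-studied class of ODEs on Hilbert space, and the only genuinely infinite-dimensional subtlety is the regularity-of-the-noise issue in (ii), which is handled cleanly by \cref{lem:diff:gauss:for:diff:folks}.
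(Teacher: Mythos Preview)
Your proposal is correct and takes essentially the same approach as the paper: both establish part (i) via a Banach fixed-point/Picard argument on $C(I;\H_\gamma\times\H_\gamma)$, relying on the same Lipschitz estimate $\nga{\cC DU(\bq)-\cC DU(\btq)}\leq \lambda_1^{1-2\gamma}\Ua\nga{\bq-\btq}$ obtained from \eqref{lipschitz:new} and \eqref{eq:Poincare}, with continuity of $\Xi_t$ following by the same mechanism. The paper in fact omits the details of part (ii) entirely, so your treatment via \cref{lem:diff:gauss:for:diff:folks} and dominated convergence for the Feller property simply fills in what the paper leaves implicit.
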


\begin{proof}
  The first item follows from a standard Banach fixed point argument,
  i.e. it suffices to show that, given any
  $(\bq_0, \bv_0) \in \H_\gamma \times \H_\gamma$ and any
  $t_0 \in \RR$, the mapping
  \begin{align*}
    G(\bp,\bu)(t) := (\bq_0, \bv_0) 
         + \int_{t_0}^{t} (\bu(s), - \bp(s) - \cC DU(\bp(s))ds,
  \end{align*}
  is a contraction mapping on the space of continuous
  $(\H_\gamma \times \H_{\gamma})$-valued functions defined on
  $I := [t_0 -\delta,t_0 + \delta] \subset \RR$, that is on
  $C(I; \H_\gamma \times \H_\gamma)$, for some $\delta> 0$
  sufficiently small independent of $(\bq_0, \bv_0)$ and $t_0$.
  
  Observe that, with \eqref{lipschitz:new} and \eqref{eq:Poincare},
  \begin{align}
    |\cC^{1-\gamma} (DU(\bp) - DU(\tilde{\bp}))| 
    \leq \lambda_1^{1-2\gamma} \Ua |\cC^{-\gamma} (\bp - \tilde{\bp})| 
    \quad \mbox{ for all } \bp, \tilde{\bp} \in \H_{\gamma}.
    \label{eq:NLT:Lip:gam:est}
  \end{align}
  Thus, for any
  $(\bp,\bu), (\tilde{\bp},\tilde{\bu}) \in C(I; \H_\gamma \times
  \H_\gamma)$, using \eqref{def:nprod} and \eqref{eq:NLT:Lip:gam:est},
  \begin{align*}
    \sup_{t \in I}\nga{G(\bp,\bu)(t) - G(\tilde{\bp},\tilde{\bu})(t)}
       \leq \delta (1+\lambda_1^{1-2\gamma} \Ua )
    \sup_{t \in I} 
    \nga{(\bp,\bu) - (\tilde{\bp},\tilde{\bu})}.
  \end{align*}
  Therefore, $G$ is a contraction mapping on
  $C(I; \H_\gamma \times \H_\gamma)$ for
  $\delta < (1+ \lambda_1^{1-2\gamma} \Ua)^{-1}$.  Similar
  argumentation establishes the desired continuity of $\Xi_t$, thus
  completing the proof.
\end{proof}

\subsection{Formulation of Precondition Hamiltonian 
  Monte Carlo Chain}
\label{sec:HMC:def}
Having fixed an integration time $T > 0$, we denote by $Q_n(\bq_0)$ as
a random variable arising as the $n$ step dynamics of the exact
Preconditioned Hamiltonian Monte Carlo (PHMC) chain
\eqref{eq:PEHMC:kernel:def} starting from $\bq_0 \in \H$.  Namely, we
iteratively draw $Q_n(\bq_0) \sim P(Q_{n-1}(\bq_0), \cdot)$ for
$n \geq 1$ starting from $Q_0(\bq_0) = \bq_0$.  We can write
$Q_n(\bq_0)$ more explicitly as a transformation of the sequence of
Gaussian draws for the velocity as follows: Let $\H^{\otimes n}$
denote the product of $n$ copies of $\H$. Given a sequence
$\{\bv_0^{(j)}\}_{j \in \mathbb{N}}$ of i.i.d. draws from
$\mathcal{N}(0,\cC)$, we denote by $\npV_0^{(n)}$ the noise
path
\begin{align}
  \npV_0^{(n)} 
  := (\bv_0^{(1)},\ldots, \bv_0^{(n)}) 
  \sim \mathcal{N}(0,\cC)^{\otimes n},
  \label{eq:noise:path:notation}
\end{align}
where $\mathcal{N}(0,\cC)^{\otimes n}$ denotes the measure
on $\H^{\otimes n}$ given as the product of $n$ copies of
$\mathcal{N}(0,\cC)$. Taking $\mathcal{B}(\H)$ to be the Borel
$\sigma$-algebra on $\H$, we define
$Q_1(\bq_0): \H \to \H$ to be the Borel random variable
defined as
\begin{align*}
    Q_1(\bq_0)(\bv_0^{(1)}) = \bq_t(\bq_0, \bv_0^{(1)}) \quad 
  \mbox{ where } \bv_0^{(1)} \sim \mN(0, \cC).
\end{align*}
Iteratively, we define for every $n \geq 2$ the Borel random variable
$Q_n(\bq_0): \H^{\otimes n} \to \H$ given by
\begin{align}
    Q_n(\bq_0)(\npV_0^{(n)}) = \bq_t(Q_{n-1}(q_0)(\npV_0^{(n-1)}), \bv_0^{(n)}) 
  \quad \mbox{ where } \npV_0^{(n)} \sim \mN(0,\cC)^{\otimes n}.
  \label{eq:n:step:chain:Noise:p}
\end{align}
With these notations we can write the $n$-step iterated transition
kernels as
\begin{align}
  P^n(\bq_0,A) := \Prb(Q_n(\bq_0) \in A)
  \label{eq:n:step:PHMC:kernel}
\end{align}
for any $\bq_0 \in \H_\gamma$ and $A \in \mB(\H_\gamma)$. Or, equivalently, $P^n(\bq_0, \cdot)$ is the push-forward of $\mN(0,\cC)^{\otimes n}$ by the mapping $Q_n(\bq_0)$, i.e.
\begin{align}
    P^n(\bq_0, A) = Q_n(\bq_0)^*\mN(0,\cC)^{\otimes n}(A) = \mN(0,\cC)^{\otimes n}(Q_n(\bq_0)^{-1}(A)) 
    \label{def:Pn:Qn}
\end{align}
for every $\bq_0 \in \H_\gamma$ and $A \in \mB(\H_\gamma)$.

We recall an invariance result for \eqref{eq:mu} from
\cite{BePiSaSt2011} in our setting.
\begin{Prop}
  \label{eq:invariance}
  Under the conditions given in \cref{prop:well:posed} and additionally
  imposing \cref{ass:integrability:cond} we have that
  \begin{align*}
    \mathfrak{M}(d\bq, d\bv) \propto e^{-U(q)}\mu_0(d\bq) \times \mu_0(d\bv) 
  \end{align*}
  defines a probability measure on $\H_\gamma \times \H_\gamma$ which
  is invariant under $\{\Xi_t\}_{t \geq 0}$ namely
  \begin{align*}
    \int_{\H_\gamma \times \H_\gamma} f(\Xi_t(\bq,\bv)) \mathfrak{M}(d\bq, d\bv)
    = \int_{\H_\gamma \times \H_\gamma}f(\bq,\bv) \mathfrak{M}(d\bq, d\bv)
  \end{align*}
  holds for every $f \in C_b( \H_\gamma \times \H_\gamma)$
  and every $t \geq 0$.  As a consequence, $\mu$ given
  in \eqref{eq:mu} is a Borel probability measure on $\H_\gamma$ which is
  invariant for $P$ defined by \eqref{eq:PEHMC:kernel:def}.
\end{Prop}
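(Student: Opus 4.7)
The plan is to first verify that $\mathfrak{M}$ is a finite measure, then establish its invariance under $\{\Xi_t\}$ via Galerkin truncation combined with energy conservation and a finite-dimensional Liouville argument, and finally deduce the invariance of $\mu$ under $P$ by marginalizing out the velocity. For the normalization, I would combine \cref{ass:integrability:cond} with Fernique's theorem: the lower bound $U(\bq) \geq M(\varepsilon) - \varepsilon |\bq|_\gamma^2$ yields $e^{-U(\bq)} \leq e^{-M(\varepsilon)} \exp(\varepsilon |\bq|_\gamma^2)$, and since $\mu_0$ is Gaussian on $\H_\gamma$ by \cref{lem:diff:gauss:for:diff:folks}, Fernique gives $\int \exp(\varepsilon |\bq|_\gamma^2)\mu_0(d\bq) < \infty$ for sufficiently small $\varepsilon$. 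Thus $Z := \int e^{-U}\, d\mu_0 < \infty$, so that $\mathfrak{M}$ is a well-defined Borel probability measure on $\H_\gamma \times \H_\gamma$.

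For the invariance of $\mathfrak{M}$ under the Hamiltonian flow, the guiding observation is that $\mathfrak{M}$ has formal density proportional to $\exp(-H(\bq,\bv))$, where $H(\bq,\bv) = \langle \cC^{-1}\bq,\bq\rangle + U(\bq) + \langle \cC^{-1}\bv,\bv\rangle$ is conserved along trajectories of \eqref{eq:dynamics:xxx} (by a direct chain-rule computation) and the flow is formally symplectic. In finite dimensions, this combined with the classical Liouville theorem immediately yields invariance. To make this rigorous in infinite dimensions, I would use a Galerkin truncation: projecting via $\Pi_N$ onto $\mathrm{span}\{\bfe_1,\ldots,\bfe_N\}$ produces a classical $2N$-dimensional Hamiltonian system with Hamiltonian $H_N(\bq^N,\bv^N) = \langle \cC_N^{-1}\bq^N,\bq^N\rangle + U(\bq^N) + \langle \cC_N^{-1}\bv^N,\bv^N\rangle$ (with $\cC_N = \Pi_N \cC \Pi_N$), whose associated Gibbs measure $\mathfrak{M}_N$, the finite-dimensional analogue of $\mathfrak{M}$, is invariant by Liouville plus conservation of $H_N$. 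The Lipschitz bound \eqref{eq:NLT:Lip:gam:est} together with Gronwall gives uniform convergence of the Galerkin trajectories to $\Xi_t$ on bounded time intervals, while Fernique-based uniform integrability of $e^{-U}$ with respect to $\mu_0$ supplies the dominated convergence needed to pass the invariance identity to the $N \to \infty$ limit. I expect this passage to the limit to be the main technical obstacle, since one must carefully justify uniform-in-$N$ control of the trajectories together with a suitable convergence $\mathfrak{M}_N \to \mathfrak{M}$ compatible with the continuity of $\Xi_t$.

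The invariance of $\mu$ under $P$ then follows by marginalization. For any $f \in C_b(\H_\gamma)$, define $\tilde f(\bq, \bv) := f(\bq)$, which lies in $C_b(\H_\gamma \times \H_\gamma)$. Using the definition \eqref{eq:PEHMC:kernel:def} of $P$, the fact that the $\bq$-marginal of $\mathfrak{M}$ is exactly $\mu$, and the invariance of $\mathfrak{M}$ just established, we obtain
\begin{align*}
  \int_{\H_\gamma} Pf(\bq)\, \mu(d\bq)
  &= \int_{\H_\gamma \times \H_\gamma} f(\bq_T(\bq,\bv))\, \mathfrak{M}(d\bq, d\bv)
  = \int_{\H_\gamma \times \H_\gamma} \tilde f(\Xi_T(\bq,\bv))\, \mathfrak{M}(d\bq, d\bv) \\
  &= \int_{\H_\gamma \times \H_\gamma} \tilde f(\bq,\bv)\, \mathfrak{M}(d\bq, d\bv)
  = \int_{\H_\gamma} f(\bq)\, \mu(d\bq),
\end{align*}
which yields $\mu P = \mu$ as desired.
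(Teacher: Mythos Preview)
The paper does not actually supply its own proof of this proposition: it is stated as a result ``recalled'' from \cite{BePiSaSt2011} (see also \cref{rmk:Comp:beskos}), with no argument given in the text. Your sketch is a faithful reconstruction of the standard route taken in that reference: Fernique plus \cref{ass:integrability:cond} for normalizability, finite-dimensional (Galerkin) truncation where Liouville and energy conservation give invariance of the Gibbs measure, and then a limit passage using the Lipschitz stability of the flow and uniform integrability. The marginalization step deducing $\mu P = \mu$ from invariance of $\mathfrak{M}$ is exactly right.

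One small point worth tightening in your write-up: the Galerkin dynamics should use the projected nonlinearity $\Pi_N \cC DU(\Pi_N \bq)$ rather than simply $\cC DU(\bq^N)$, so that the truncated system is genuinely Hamiltonian on $\RR^{2N}$ with Hamiltonian $H_N$; otherwise the Liouville/energy-conservation argument does not apply cleanly. With that adjustment your outline matches the approach of \cite{BePiSaSt2011}, and your identification of the $N\to\infty$ passage as the main technical step is accurate.
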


\section{A Priori Bounds for the Deterministic Dynamics}
\label{sec:apriori}

This section provides various a priori bounds on the dynamics
specified by \eqref{eq:dynamics:xxx}. The proofs rely solely on the
bound on $D^2 U$ given in \eqref{Hess:bound}. In fact, they are
obtained by using inequalities \eqref{lipschitz:new} and
\eqref{cond:sublin:U}, that follow as a consequence of
\eqref{Hess:bound}.

\begin{Prop}
  \label{prop:basic:b:apriori}
  Impose \cref{A123} and \cref{B123}, (B1) and fix any $T \in \RR^+$
  satisfying
    \begin{align}
    \label{cond:t:apriori}
        T \leq (1 + \lambda_1^{1 - 2\gamma} \Ua)^{-1/2},
    \end{align}
    where the constant $\Ua$ is given in \eqref{lipschitz:new} and
    $\lambda_1$ is the top eigenvalue of $\cC$.  Then the dynamics
    defined by \eqref{eq:dynamics:xxx} maintains the bounds
    \begin{align}
    \label{sup:qs}
        \sup_{t\in [0, T]} \nga{ \bq_t(\bq_0,\bv_0)-(\bq_0+t \bv_0)} 
        \leq (1 + \lambda_1^{1 - 2\gamma} \Ua) 
             T^2 \max\{\nga{q_0},\nga{\bq_0 + T \bv_0} \} 
            + \lambda_1^{1 - 2\gamma} \Uz T^2
    \end{align}
    and
    \begin{multline}
    \label{sup:vs}
        \sup_{t\in [0, T]} \nga{\bv(t) - \bv_0} 
        \leq (1 + \lambda_1^{1 - 2\gamma} \Ua) 
             T [ 1 + (1 + \lambda_1^{1 - 2\gamma} \Ua)T^2] 
              \max \left\{ \nga{\bq_0}, \nga{\bq_0 + T \bv_0} \right\} \\
        + \lambda_1^{1 - 2\gamma} \Uz T 
          [ 1 + (1 + \lambda_1^{1 - 2\gamma} \Ua) T^2 ],
    \end{multline}
    for any $(\bq_0,\bv_0) \in \H_\gamma \times \H_\gamma$, with $\Uz$ as
    given in \eqref{cond:sublin:U}.
\end{Prop}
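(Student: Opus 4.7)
The plan is to integrate \eqref{eq:dynamics:xxx} twice to obtain the integral representations
\begin{align*}
  \bv_t - \bv_0 &= -\int_0^t \bigl(\bq_s + \cC DU(\bq_s)\bigr)\, ds, \\
  \bq_t - (\bq_0 + t\bv_0) &= -\int_0^t\!\!\int_0^s \bigl(\bq_r + \cC DU(\bq_r)\bigr)\, dr\, ds,
\end{align*}
and then to estimate each in $\nga{\cdot}$. The basic pointwise control of the integrand comes from \eqref{cond:sublin:U} combined with \eqref{eq:Poincare} applied with exponents $1-\gamma \geq \gamma$, namely $\nga{\cC DU(\bff)} = |\cC^{1-\gamma}DU(\bff)| \leq \lambda_1^{1-2\gamma}\bigl(\Ua \nga{\bff} + \Uz\bigr)$. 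Setting $\alpha := 1 + \lambda_1^{1-2\gamma}\Ua$, the integrand is therefore bounded by $\alpha \nga{\bq_r} + \lambda_1^{1-2\gamma}\Uz$.

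The central trick is to avoid Gronwall and instead exploit the smallness hypothesis $\alpha T^2 \leq 1$ in \eqref{cond:t:apriori} together with a convexity observation about the free trajectory $s \mapsto \bq_0 + s \bv_0$. Writing $\bq_0 + s\bv_0 = (1 - s/T)\bq_0 + (s/T)(\bq_0 + T\bv_0)$ for $s \in [0,T]$, the triangle inequality gives $\sup_{s \in [0,T]}\nga{\bq_0 + s\bv_0} \leq M$, where $M := \max\{\nga{\bq_0}, \nga{\bq_0 + T\bv_0}\}$ is precisely the quantity appearing on the right-hand side of \eqref{sup:qs}. Defining $\phi(t) := \sup_{s \in [0,t]}\nga{\bq_s - \bq_0 - s\bv_0}$, which is monotone in $t$, and using $\nga{\bq_r} \leq M + \phi(r) \leq M + \phi(t)$ for $r \leq t$, substitution into the second integral representation above yields
\begin{align*}
  \phi(t) \leq \tfrac{t^2}{2}\bigl(\alpha M + \lambda_1^{1-2\gamma}\Uz\bigr) + \tfrac{\alpha t^2}{2}\phi(t).
\end{align*}
Since $\alpha T^2 \leq 1$ gives $\alpha t^2/2 \leq 1/2$ for all $t \leq T$, the $\phi(t)$ term is absorbed to the left, producing $\phi(T) \leq T^2\bigl(\alpha M + \lambda_1^{1-2\gamma}\Uz\bigr)$, which is precisely \eqref{sup:qs}.

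For the velocity bound \eqref{sup:vs}, I substitute the uniform estimate $\nga{\bq_s} \leq M + \phi(T)$ into the first integral representation to obtain $\nga{\bv_t - \bv_0} \leq t\bigl(\alpha(M + \phi(T)) + \lambda_1^{1-2\gamma}\Uz\bigr)$; plugging in $\phi(T) \leq \alpha T^2 M + \lambda_1^{1-2\gamma}\Uz T^2$ from the previous step and collecting the common factor $(1 + \alpha T^2)$ reproduces exactly \eqref{sup:vs}. The argument is not technically hard; the one conceptual subtlety worth flagging is the use of convexity to bound the free trajectory by the endpoint maximum $M$ rather than by the cruder $\nga{\bq_0} + T\nga{\bv_0}$, which is what gives the right-hand sides of the proposition their stated form and will presumably be important downstream when contractivity estimates must be phrased in terms of endpoint data rather than initial velocity magnitudes.
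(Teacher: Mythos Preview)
Your proof is correct and follows essentially the same route as the paper: the same double-integration identity, the same pointwise bound $\alpha\nga{\bq_r}+\lambda_1^{1-2\gamma}\Uz$ on the integrand via \eqref{eq:Poincare} and \eqref{cond:sublin:U}, the same convexity observation $\sup_{s\in[0,T]}\nga{\bq_0+s\bv_0}\leq M$, and the same absorption using $\alpha T^2\leq 1$. The velocity bound is likewise handled identically by substituting the just-established control on $\sup_s\nga{\bq_s}$ into the single integral for $\bv_t-\bv_0$.
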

\begin{proof}
  Integrating the first equation in \eqref{eq:dynamics:xxx} twice and then applying the operator
  $\cC^{-\gamma}$, we obtain
  \begin{align}\label{int:dyn}
    \cC^{-\gamma} \bq_t 
    = \cC^{-\gamma} (\bq_0 + t \bv_0) 
      - \int_0^t \int_0^{s} \left[ \cC^{-\gamma} \bq_\tau 
       + \cC^{1 - \gamma} D U(\bq_\tau)  \right] d \tau ds, 
  \end{align}
  for each $t \in [0,T]$.  From Lemma \ref{lem:ineq:C} and inequality
  \eqref{cond:sublin:U}, we obtain
  \begin{align}
    \nga{\bq_t - (\bq_0 + t \bv_0)}
    \leq& (1 + \lambda_1^{1 - 2\gamma} \Ua) 
      \int_0^t \int_0^{s} \nga{\bq_{\tau}} d \tau d s 
      + \lambda_1^{1 - 2\gamma} \Uz \frac{T^2}{2} 
                \notag\\
    \leq& (1 + \lambda_1^{1 - 2\gamma} \Ua) 
      \int_0^t \int_0^{s} \nga{\bq_\tau - (\bq_0 + \tau \bv_0)} d \tau d s 
          \notag\\
       &\quad \qquad + (1 + \lambda_1^{1 - 2\gamma} \Ua) 
         \int_0^t \int_0^{s} \nga{\bq_0 + \tau \bv_0} d \tau d s 
         + \lambda_1^{1 - 2\gamma} \Uz \frac{T^2}{2} 
              \notag\\
    \leq& (1 + \lambda_1^{1 - 2\gamma} \Ua) \frac{T^2}{2} 
      \sup_{\tau \in [0,T]} \nga{\bq_\tau - (\bq_0 + \tau \bv_0)} 
          \notag\\
      &\quad \qquad + (1 + \lambda_1^{1 - 2\gamma} \Ua) \frac{T^2}{2} 
              \max \{ \nga{\bq_0}, \nga{\bq_0 + T \bv_0}\} 
        + \lambda_1^{1 - 2\gamma} \Uz \frac{T^2}{2}.
        \label{bound:qs}
  \end{align}
  Here note that, using the convexity of the function
  $f(\tau) = \nga{\bq_0 + \tau \bv_0}$, we have
  \begin{align}
    \sup_{\tau \in [0,T]} \nga{\bq_0 + \tau \bv_0}
    \leq \max \{ \nga{\bq_0}, \nga{\bq_0 + T \bv_0}  \}
            \label{eq:stupid:convex:bnd}
  \end{align}
  which we used in the final bound in \eqref{bound:qs}.
  Thus, using assumption \eqref{cond:t:apriori} and taking the
  supremum with respect to $t \in [0,T]$ in \eqref{bound:qs}, we
  conclude the first bound \eqref{sup:qs}.

  Turn next to second bound \eqref{sup:vs}, integrating the second equation in
  \eqref{eq:dynamics:xxx} once and using \cref{lem:ineq:C} and inequality
  \eqref{cond:sublin:U} again, we have
  \begin{align}
  \label{sup:vs:a} 
  \nga{\bv_t - \bv_0} 
    \leq (1 + \lambda_1^{1 - 2\gamma} \Ua) \int_0^t \nga{ \bq_s} d s 
          + \lambda_1^{1 - 2\gamma} \Uz t 
    \leq (1 + \lambda_1^{1 - 2\gamma} \Ua) T \sup_{s \in [0,T]} \nga{\bq_\tau} 
    + \lambda_1^{1 - 2\gamma} \Uz T
  \end{align}
  for every $t \in [0,T]$.  From \eqref{sup:qs}, it follows that
  \begin{align}
  \label{sup:qs:b} 
    \sup_{t\in [0, T]} 
     \nga{\bq_s} \leq [1 + (1 + \lambda_1^{1 - 2\gamma} \Ua) T^2]
      \max\{\nga{\bq_0},\nga{\bq_0 + T \bv_0} \} + \lambda_1^{1 - 2\gamma} \Uz T^2.  
  \end{align}
  Hence, we conclude \eqref{sup:vs} from \eqref{sup:vs:a} and
  \eqref{sup:qs:b}, completing the proof.
\end{proof}

\begin{Prop}\label{lem:contr:1}
  Impose \cref{A123}, \cref{B123}, (B1) and consider any $T \in \RR^+$
  satisfying
  \begin{align}
  \label{cond:t:contr:1}
        T \leq (1 + \lambda_1^{1 - 2 \gamma} \Ua)^{-1/2},
  \end{align}
  where $\Ua$ is as in \eqref{Hess:bound} and $\lambda_1$ is the top
  eigenvalue of $\cC$.  Then, for any
  $(\bq_0,\bv_0), (\tilde \bq_0, \tilde \bv_0) \in \H_\gamma \times
  \H_\gamma$,
  \begin{multline}	
      \sup_{t\in[0, T]} 
        \nga{\bq_t(\bq_0,\bv_0) - \bq_t(\tilde \bq_0, \tilde \bv_0)
                                -(\bq_0-\tilde \bq_0)-t(\bv_0-\tilde \bv_0)} \\
        \leq (1+ \lambda_1^{1 - 2 \gamma} \Ua) T^2 \max\left\{\nga{\bq_0-\tilde \bq_0},
               \nga{(\bq_0-\tilde \bq_0) + T(\bv_0-\tilde \bv_0)}\right\}.
       \label{eq:Hgamma:lip:bnd}
   \end{multline}
\end{Prop}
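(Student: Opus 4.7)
The plan is to mirror the Gr\"onwall/fixed-point style calculation used in the proof of \cref{prop:basic:b:apriori}, but now applied to the \emph{difference} trajectory rather than to a single trajectory. Define
\begin{align*}
  \bw_t := \bq_t(\bq_0,\bv_0) - \bq_t(\tilde\bq_0,\tilde\bv_0),
  \qquad
  \bw_0 = \bq_0 - \tilde\bq_0,
  \qquad
  \dot\bw_0 = \bv_0 - \tilde\bv_0.
\end{align*}
Subtracting the two copies of \eqref{eq:dynamics:xxx} gives
$\ddot\bw_t = -\bw_t - \cC\bigl(DU(\bq_t) - DU(\tilde\bq_t)\bigr)$,
and integrating twice in time, then applying $\cC^{-\gamma}$, yields an integral representation analogous to \eqref{int:dyn}.

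Next, apply \cref{lem:ineq:C} together with the Lipschitz bound \eqref{lipschitz:new} (which is precisely the consequence of \cref{B123}(B1) used in \cref{prop:basic:b:apriori}) to control the nonlinear term:
\begin{align*}
   \nga{\cC^{1-\gamma}\bigl(DU(\bq_\tau) - DU(\tilde\bq_\tau)\bigr)}
   \le \lambda_1^{1-2\gamma}\,\Ua\,\nga{\bw_\tau}.
\end{align*}
This produces
\begin{align*}
   \nga{\bw_t - \bw_0 - t(\bv_0-\tilde\bv_0)}
   \le (1 + \lambda_1^{1-2\gamma}\Ua)
          \int_0^t \int_0^s \nga{\bw_\tau}\, d\tau\, ds,
\end{align*}
which is the exact analogue of the bound used in the proof of \eqref{sup:qs}, but crucially without any of the $L_0$ constant terms since the affine parts from $DU$ cancel in the difference.

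Now split $\nga{\bw_\tau} \le \nga{\bw_\tau - \bw_0 - \tau(\bv_0-\tilde\bv_0)} + \nga{\bw_0 + \tau(\bv_0 - \tilde\bv_0)}$ and apply the convexity inequality \eqref{eq:stupid:convex:bnd} to the second term (with $\bq_0,\bv_0$ replaced by $\bw_0, \bv_0-\tilde\bv_0$). Taking the supremum over $t \in [0,T]$ absorbs one factor of $(1+\lambda_1^{1-2\gamma}\Ua)T^2/2$ into the left-hand side; the smallness condition \eqref{cond:t:contr:1} ensures that this factor is at most $1/2$, so the absorption produces the clean bound \eqref{eq:Hgamma:lip:bnd} with constant $(1+\lambda_1^{1-2\gamma}\Ua)T^2$.

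I do not anticipate a serious obstacle here: the whole argument is essentially identical to the one already established for \cref{prop:basic:b:apriori}, with $\bq_t$ replaced by $\bw_t$, and with the affine constants dropped because Lipschitzness of $DU$ controls the nonlinear difference directly. The only point that needs slight care is the use of the convex envelope bound \eqref{eq:stupid:convex:bnd} on the linear trajectory $\tau \mapsto \bw_0 + \tau(\bv_0 - \tilde\bv_0)$, which accounts for the $\max\{\cdot,\cdot\}$ on the right-hand side of \eqref{eq:Hgamma:lip:bnd}.
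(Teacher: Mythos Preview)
Your proposal is correct and follows essentially the same argument as the paper: define the difference trajectory, derive its second-order ODE, integrate twice, bound the nonlinear term via \cref{lem:ineq:C} and \eqref{lipschitz:new}, then absorb exactly as in the proof of \eqref{sup:qs}. The paper's proof is in fact slightly terser than yours---after reaching the inequality $\nga{\bz_t - (\bz_0 + t\bw_0)} \le (1+\lambda_1^{1-2\gamma}\Ua)\int_0^t\int_0^s \nga{\bz_\tau}\,d\tau\,ds$ it simply says ``the remaining portion of the proof follows analogously as in the proof of \eqref{sup:qs}''---so you have spelled out more detail than the original, but the route is identical.
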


\begin{Rmk}
  \label{rmk:taking:stock:lip_1}
  Observe that, given any $\bq_0, \btq_0, \bv_0 \in \H_\gamma$, by choosing
  \begin{align}
    \tilde{\bv}_0 := \bv_0 + \frac{1}{T} (\bq_0 - \tilde{\bq}_0),
    \label{eq:naive:nudge:IC}
  \end{align}
  then under \eqref{eq:Hgamma:lip:bnd} we obtain 
  \begin{align}
    |\cC^{-\gamma} \left[ \bq_T(\bq_0,\bv_0)-\bq_T(\tilde \bq_0, \tilde \bv_0)\right]| 
     \leq (1+ \lambda_1^{1 - 2 \gamma} \Ua) T^2 |\cC^{-\gamma}(\bq_0-\tilde \bq_0)|,
    \label{eq:full:monte:lip:bnd}
  \end{align}
  which thus yields a contraction when
  $T < (1+ \lambda_1^{1 - 2 \gamma} \Ua)^{-1/2}$.  This observation
  for the initial conditions in \eqref{eq:naive:nudge:IC} has
  previously been employed in \cite{BoEbZi2018} and, in the finite
  dimensional case where $\H = \RR^k$ for some $k \in \NN$, this bound
  can be used directly as a crucial step towards establishing the
  $\rho$-smallness and $\rho$-contraction conditions for the weak
  Harris theorem in \cite{hairer2011asymptotic}, as we illustrate
  below in \cref{sec:finite:dim:HMC}.
  
  The idea behind definition \eqref{eq:naive:nudge:IC} comes from the
  fact that for the simplified version of the dynamics in
  \eqref{eq:dynamics:xxx} where $d\bv_t/dt = 0$, the positions of two
  associated trajectories starting from $(\bq_0, \bv_0)$ and
  $(\btq_0, \tilde \bv_0)$, with $\tilde \bv_0$ as in
  \eqref{eq:naive:nudge:IC}, will coincide at time $T$. With a similar
  line of reasoning, one could consider a slighly better
  approximation of the dynamics in \eqref{eq:dynamics:xxx} by assuming
  instead $U = 0$, in which case the associated dynamics
  $d\bq_t/dt = \bv_t$, $d\bv_t/dt = \bq_t$ describes the motion of a
  simple pendulum. Here by defining
  $\tilde \bv_0 = \bv_0 + (\bq_0 - \btq_0) (\cos T/\sin T) $ one again
  concludes that the positions of two trajectories starting from
  $(\bq_0, \bv_0)$ and $(\btq_0, \tilde \bv_0)$ coincide after time
  $T$. While we could obtain similar results by using the latter
  approach, this would require the same type of assumptions we already
  impose in the first case, thus not showing a significant difference
  at least at the theoretical level. For simplicity, we then chose the
  first approach for our presentation. We remark however that the
  second approach, as being associated to a better approximation of
  \eqref{eq:dynamics:xxx}, could lead to slightly less stringent
  constants on the conditions for the integration time $T$ in
  comparison to \eqref{cond:t:contr:1}.
  
  More generally, we may view \eqref{eq:naive:nudge:IC} as addressing
  a control problem.  In fact, the methodology of the weak Harris
  theorem developed here could in principle allow the use of a wide
  variety of controls. More specifically, we are interested in any
  `reasonable' mapping
  $\Psi: \H_\gamma \times \H_\gamma \times \H_\gamma \to \H_\gamma$
  such that, for any $\bq_0, \btq_0, \bv_0 \in \H_\gamma$ and any
  suitable value of $T>0$, one would have
  \begin{align*}
    \bq_T(\bq_0,\bv_0) \approx \bq_T(\tilde \bq_0, \Psi(\bq_0, \btq_0,\bv_0 )).
  \end{align*}
  In this connection one might hope to make a more delicate use of the
  Hamiltonian dynamics, presumably tailored to the fine properties of
  a particular potential $U$ of interest, to obtain refined results on
  convergence to equilibrium. In particular, we expect that the
  constraints imposed on $T$ by \cref{prop:basic:b:apriori} are
  overzealous, and could potentially be improved by a different type
  of control.
 
  On the other hand, in the infinite dimensional Hilbert space setting
  which we are primarily focused on here, even
  \eqref{eq:naive:nudge:IC} is insufficient for the aim of
  establishing contractivity in the Markovian dynamics, as the law of
  this choice of $\tilde{\bv}_0$ is not generically absolutely
  continuous with respect to the law of $\bv_0$; cf.
  \cref{prop:local_contr} and \cref{prop:smallness} below.  We proceed
  instead by using the refinement \eqref{eq:kinetic_coupling_idea}
  which is shown to produce a contraction in
  \cref{prop:FP:Type:contract}.  Here we are making use of some of the
  intuition and approach to ergodicity in the stochastic fluids
  literature, cf. \cite{mattingly2002exponential,
    kuksin2012mathematics, GlattHoltzMattinglyRichards2016}.  In these
  works one modifies the noise path on low modes with the expectation
  that if one induces a contraction on the large scale dynamics for
  sufficiently many low frequency modes then the high frequencies (or
  small scales) will also contract, being enslaved to the behavior of
  the system at large scales.  This effect, sometimes referred as a
  Foias-Prodi bound \cite{foias1967comportement}, is widely observed
  in the fluids and infinite dimensional dynamical systems
  literature.
\end{Rmk}
\begin{proof}[Proof of \cref{lem:contr:1}]
  Let $\bz_t = \bq_t(\bq_0,\bv_0) - \bq_t(\widetilde{\bq}_0, \widetilde{\bv}_0)$ 
  and $\bw_t = d\bz_t/ dt$. Then, for any $t > 0$,  $\bz_t$ satisfies
  \begin{align}
  \label{d2:zt}
    \frac{d^2 \bz_t}{dt^2} = - \bz_t - \cC g(t)
  \end{align}
  where
  \begin{align}
  \label{def:g}
    g(t) := D U(\bq_t(\bq_0,\bv_0)) 
    - DU(\bq_t(\widetilde{\bq}_0, \widetilde{\bv}_0)).
  \end{align}
  Therefore, for every $t \geq 0$,
  \begin{align*}
    \cC^{- \gamma} \bz_t 
    = \cC^{-\gamma}(\bz_0 + t \bw_0) 
    - \int_0^t \int_0^{s} [\cC^{-\gamma} \bz_\tau + \cC^{1 - \gamma} g(\tau)] d \tau ds.
  \end{align*}
  By using \cref{lem:ineq:C} and inequality \eqref{lipschitz:new}, we obtain
  \begin{align*}
    \nga{ \bz_t - (\bz_0 + t \bw_0)} 
    &\leq \int_0^t \int_0^{s} \left[ \nga{\bz_\tau} 
         + \lambda_1^{1 - 2 \gamma}\ngad{ g(\tau)} \right] d \tau d s \\
    &\leq (1 + \lambda_1^{1 - 2 \gamma} \Ua) \int_0^t \int_0^{s} \nga{ \bz_\tau} d \tau ds.
  \end{align*}
  The remaining portion of the proof follows analogously as in the
  proof of \eqref{sup:qs}.
\end{proof}

In view of \cref{rmk:taking:stock:lip_1} the bounds in \cref{lem:contr:1}
are not sufficient for our application to prove the
$\rho$-contractivity and $\rho$-smallness conditions for the weak
Harris theorem below in \cref{sec:Ptwise:contract:MD}. For this
purpose we consider a modified version of \eqref{eq:naive:nudge:IC}
where the shift only involves a low-modes finite-dimensional approximation of $\bq_0 - \tilde{\bq}_0$.

Before proceeding let us introduce some notation. Split $\H$ into a
space $\H_N := \operatorname{span}\{\be_1, \cdots, \be_N\}$ and its
orthogonal complement $\H^N$; so that $\H = \H_N \oplus \H^N$ where
$N$ satisfies the second condition in \eqref{cond:t:N:contr}, below.
Recall, as in \eqref{eq:get:high:low}, that, given $\bbf \in \H$, we
denote by $\Pi_N \bbf$ and $\Pi^N \bbf$ the orthogonal projections
onto $\H_N$ and $\H^N$, respectively.  This splitting is defined such
that the Lipschitz constant of the projection of $- \cC D U(\bbf)$
onto $\H^N$ is at most $1/4$. 

For any $\gamma \in [0,1/2)$ and $\alpha \in \RR^+$, we consider the
following auxiliary norm:
\begin{align}
 |\bbf|_{\gamma, \alpha} := |\Pi_N \bbf|_{\gamma} + \alpha | \Pi^N \bbf|_{\gamma}, 
    \; \text{for any } \bbf \in \H_{\gamma}.
      \label{eq:wn:gamma:alpha}
\end{align}
\begin{Rmk}
     \label{rmk:equiv:norms}
Notice that $\normga{\,\cdot\,}$ is equivalent to $\nga{\,\cdot\,}$ and 
\begin{align}
  \label{equiv:norms}
  \min \{1,\alpha\} \nga{\bbf} \leq \normga{\bbf} 
  \leq \sqrt{2} \max \{1, \alpha\} \nga{\bbf},
  \; \text{ for all } \bbf \in \H_{\gamma}.
\end{align}
In particular, for $\alpha$ defined as in \eqref{eq:alpha:lip:def} below, we have
\begin{align}
  \label{equiv:norms:b}
  \nga{\bbf} \leq \normga{\bbf} \leq \sqrt{2} \alpha \nga{\bbf},
  \; \text{ for all } \bbf \in \H_{\gamma}.
\end{align}
\end{Rmk}

\begin{Prop}
  \label{prop:FP:Type:contract}
  Impose \cref{A123}, \cref{B123}, (B1). Let
  $(\bq_0,\bv_0), (\tilde \bq_0, \tilde \bv_0) \in \H_\gamma \times
  \H_\gamma$ such that
  \begin{align} 
      \label{eq:kinetic_coupling_idea}
      \Pi^N \tilde \bv_0 =  \Pi^N \bv_0 
      \quad \text{and} \quad  
      \Pi_N \tilde \bv_0 
       = \Pi_N \bv_0 + T^{-1} (\Pi_N \bq_0 - \Pi_N \tilde \bq_0).
  \end{align}
  Assume that $T \in \RR^+$ and $N \in \mathbb{N}$ satisfy
  \begin{align} 
    \label{cond:t:N:contr}
        T \leq \frac{1}{[2(1 + \lambda_1^{1 -2 \gamma} \Ua)]^{1/2}} 
        \quad \mbox{and} \quad 
        \lambda_{N+1}^{1 - 2 \gamma} \leq \frac{1}{4 \Ua},
  \end{align}
  and let 
  \begin{align}
    \label{eq:alpha:lip:def}
        \alpha = 4(1 + \lambda_1^{1 - 2 \gamma} \Ua).
  \end{align}
  Here $\gamma$ is specified in \cref{B123}, $\Ua$ is as in
  \eqref{Hess:bound} and $\lambda_j$ represent the eigenvalues of
  $\cC$ in descending order as in \cref{A123}. Then,
  \begin{align}
        \normga{\bq_T(\bq_0,\bv_0) - \bq_T(\tilde \bq_0, \tilde \bv_0)} 
        \leq \sma
                \normga{\bq_0 - \tilde \bq_0},
      \label{eq:FP:type:cont:Multi}
    \end{align}
    where $|\cdot|_{\gamma, \alpha}$ is the norm defined in \eqref{eq:wn:gamma:alpha} and
    \begin{align*}
        \sma = 1 - \frac{T^2}{12}.
    \end{align*}
\end{Prop}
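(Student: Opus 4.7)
My plan is to follow the Duhamel-plus-projection strategy hinted at in \cref{rmk:taking:stock:lip_1}. Setting $\bz_t := \bq_t(\bq_0,\bv_0) - \bq_t(\tilde\bq_0,\tilde\bv_0)$ and $\bw_t := d\bz_t/dt$, so that $\bz_t$ solves \eqref{d2:zt}, the hypothesis \eqref{eq:kinetic_coupling_idea} rewrites as $\Pi^N\bw_0 = 0$ (no shift on high modes) and $\Pi_N(\bz_0+T\bw_0) = 0$ (the low-mode nudge is engineered to annihilate the free-streaming difference exactly at time $T$). Since $\Pi_N$ and $\Pi^N$ commute with $\cC$, projecting \eqref{d2:zt} decouples it into two inhomogeneous harmonic oscillators on $\H_\gamma$.

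Solving each by Duhamel's principle I obtain
\begin{align*}
  \Pi_N\bz_T &= (\cos T - T^{-1}\sin T)\,\Pi_N\bz_0 - \int_0^T \sin(T-s)\,\Pi_N\cC g(s)\,ds, \\
  \Pi^N\bz_T &= \cos T\,\Pi^N\bz_0 - \int_0^T \sin(T-s)\,\Pi^N\cC g(s)\,ds,
\end{align*}
with $g(s) := DU(\bq_s) - DU(\tilde\bq_s)$. Taylor expansion gives $|\cos T - T^{-1}\sin T| \le T^2/3 + O(T^4)$ (the strong low-mode contraction produced by the nudge) and $|\cos T| \le 1 - T^2/3$ for $T$ in the allowed range (the weaker intrinsic contraction of the harmonic oscillator on high modes). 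I then bound the forcing modewise via \eqref{lipschitz:new} and \cref{lem:ineq:C}: on low modes $|\cC^{1-\gamma}\Pi_N g(s)| \le \lambda_1^{1-2\gamma}\Ua\,\nga{\bz_s}$ from \eqref{eq:Poincare}, whereas on high modes the essential Foias-Prodi-type estimate $|\cC^{1-\gamma}\Pi^N g(s)| \le \lambda_{N+1}^{1-2\gamma}\Ua\,\nga{\bz_s} \le \tfrac{1}{4}\nga{\bz_s}$ holds by \eqref{eq:Inv:Poincare} combined with the spectral cutoff in \eqref{cond:t:N:contr}. Uniform bounds $\sup_{s\in[0,T]}\nga{\bz_s} \lesssim \nga{\bz_0}$ are in turn inherited from \cref{prop:basic:b:apriori} and \cref{lem:contr:1}, whose hypotheses are enforced by the $T$-smallness in \eqref{cond:t:N:contr}.

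The final step is to assemble these pointwise estimates into $\normga{\bz_T} = \nga{\Pi_N\bz_T} + \alpha\,\nga{\Pi^N\bz_T}$ with $\alpha = 4(1+\lambda_1^{1-2\gamma}\Ua)$. The large weight $\alpha$ on high modes is tuned so that the intrinsic oscillator contraction $\alpha(1-T^2/3)\nga{\Pi^N\bz_0}$ absorbs the high-mode contamination carried into the low-mode Duhamel integral (which inherits the potentially large Lipschitz constant $\lambda_1^{1-2\gamma}\Ua$), while the low-mode factor $T^2/3$ controls the $\alpha$-weighted cross-contamination from the high-mode integral (rendered harmless by the $\tfrac{1}{4}$ in the Foias-Prodi bound). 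I expect the main obstacle to lie precisely in this final bookkeeping: the net rate $\sma = 1 - T^2/12$ emerges as the difference between an oscillator contribution of order $T^2/3$ and an $\alpha$-weighted nonlinear perturbation of order $T^2/4$, and the explicit choice $\alpha = 4(1+\lambda_1^{1-2\gamma}\Ua)$ together with the smallness conditions in \eqref{cond:t:N:contr} is what makes this algebraic closure work uniformly in the (possibly large) Lipschitz constant $\Ua$.
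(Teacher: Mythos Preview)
Your approach is essentially the paper's: same coupling identity $\Pi_N(\bz_0+T\bw_0)=0$, $\Pi^N\bw_0=0$, same Duhamel treatment and Foias--Prodi bound $\lambda_{N+1}^{1-2\gamma}\Ua\le\tfrac14$ on the high modes, same uniform bound $\sup_s\nga{\bz_s}\le\tfrac32\nga{\bz_0}$ via \cref{lem:contr:1}, and the same weighted-norm assembly with $\alpha=4(1+\lambda_1^{1-2\gamma}\Ua)$. The one genuine variation is your handling of the low modes: you apply Duhamel there too, producing the explicit prefactor $|\cos T-T^{-1}\sin T|\le T^2/3$ on $\nga{\Pi_N\bz_0}$, whereas the paper simply integrates $\ddot\bz=-\bz-\cC g$ twice (treating $-\bz$ as forcing) so that $\Pi_N(\bz_0+T\bw_0)=0$ kills the free part entirely and yields the cruder but structurally simpler bound $\nga{\Pi_N\bz_T}\le\tfrac{\alpha T^2}{8}\sup_s\nga{\bz_s}$. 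Both routes close to $\sma=1-T^2/12$; your Duhamel version is marginally sharper on low modes but requires slightly more bookkeeping. One caution: in the final assembly do not bound $\cos T$ and $1-\cos T$ separately (e.g.\ $\cos T\le 1-T^2/3$ together with $1-\cos T\le T^2/2$), as this loses the cancellation and fails to close; instead combine them as $\cos T+\tfrac34(1-\cos T)=1-\tfrac14(1-\cos T)\le 1-T^2/12$, or use the paper's sharper $\cos T\le 1-T^2/2+T^4/24$.
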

\begin{proof}
  As in the proof of \cref{lem:contr:1}, let us denote
  $\bz_t := \bq_t(\bq_0,\bv_0) - \bq_t(\tilde \bq_0, \tilde \bv_0)$ and
  $\bw_t = d\bz_t/ dt$, for all $t \geq 0$. Notice that
  \begin{align}
   \label{eq:coupling}
    \Pi_N \bz_0 + T \Pi_N \bw_0 = 0 \quad \mbox{and} \quad \Pi^N \bw_0 = 0.
  \end{align}
  Applying $\cC^{-\gamma}$ to \eqref{d2:zt}, projecting onto $\H_N$
  and integrating, yields
  \begin{align*}
    \cC^{-\gamma} \Pi_N \bz_T 
    =  -\int_0^T \int_0^s 
       \left[ \cC^{-\gamma} \Pi_N \bz_\tau 
              + \cC^{1-\gamma} \Pi_N g(\tau) \right] d \tau ds,
  \end{align*}
  with $g(\cdot)$ defined as in \eqref{def:g}. Thus, using
  \eqref{eq:Poincare} in \cref{lem:ineq:C} and \eqref{Hess:bound} of
  \cref{B123}, we estimate
  \begin{align}
    \nga{ \Pi_N \bz_T} \leq&
       \int_0^T \! \int_0^s \left[ \nga{ \bz_\tau} 
           + \lambda_1^{1 - 2 \gamma} \ngad{g(\tau) } \right] d \tau ds 
    \leq  (1 + \lambda_1^{1 - 2 \gamma} \Ua) 
          \frac{T^2}{2} \sup_{s \in [0,T]} \nga{ \bz_s}
          \notag\\
    =& \frac{\alpha T^2}{8} \sup_{s \in [0,T]} \nga{ \bz_s}.
      \label{est:ztl}
   \end{align}
  
  On the other hand, by Duhamel's formula, we have
  \begin{align*}
    \bz_T = \bz_0 \cos(T) + \bw_0 \sin(T) 
           - \int_0^T \sin(T - s) \, \cC g(s) ds,
  \end{align*}
  and hence, with \eqref{eq:coupling},
  \begin{align*}
    \cC^{-\gamma} \Pi^N \bz_T 
    = \cC^{-\gamma} \Pi^N \bz_0 \cos(T) - 
        \int_0^T \sin(T - s) \, \cC^{1 - \gamma} \Pi^N g(s) ds
  \end{align*}
  Now, using $(ii)$ of \cref{lem:ineq:C} and $(B1)$ of \cref{B123}, we
  estimate
  \begin{align*}
    \nga{\Pi^N \bz_T}
    \leq& \nga{ \Pi^N \bz_0} \cos(T)
          + \lambda_{N+1}^{1 - 2 \gamma} \Ua
               \int_0^T \sin(T - s) \nga{ \bz_s} ds \\
    \leq& \nga{\Pi^N \bz_0} \cos(T)
            + \frac{1 - \cos(T)}{4} \sup_{s \in [0,T]} \nga{\bz_s}.
  \end{align*}
  where for the final inequality we used the second condition in
  \eqref{cond:t:N:contr}.  Therefore, using that
  $\cos(s) \leq 1 - s^2/2 + s^4/24$ and $1 - \cos(s) \leq s^2/2$ for
  every $s \in \RR$, yields
  \begin{align}
    \label{est:zth}
    \nga{\Pi^N \bz_T}
    \leq \left( 1 - \frac{T^2}{2}
          + \frac{T^4}{24} \right)\nga{\Pi^N \bz_0}
          + \frac{T^2}{8} \sup_{s \in [0,T]} \nga{ \bz_s}.
  \end{align}
  
  From \cref{lem:contr:1} and a bound as in
  \eqref{eq:stupid:convex:bnd} it follows that
  \begin{align*}
  \sup_{s \in [0,T]} \nga{ \bz_s }
      \leq [1 + (1 + \lambda_1^{1 -2 \gamma} \Ua) T^2]
         \max \left\{ \nga{ \bz_0}, \nga{\bz_0 + T \bw_0} \right\}.
  \end{align*}
  However from \eqref{eq:coupling} we have
  $\bz_0 + T \bw_0 = \Pi^N \bz_0$, so that
  $\max \{ \nga{ \bz_0}, \nga{\bz_0 + T \bw_0}\} = \nga{ \bz_0}$. With
  this and the first condition in \eqref{cond:t:N:contr}, we therefore
  obtain
  \begin{align}
    \sup_{s \in [0,T]} \nga{ \bz_s }
    \leq [1 + (1 + \lambda_1^{1 -2 \gamma} \Ua)T^2] \nga{ \bz_0}
    \leq \frac{3}{2} \nga{ \bz_0}.
    \label{unif:bound:z}
  \end{align}
  Using \eqref{unif:bound:z} in \eqref{est:ztl} and in
  \eqref{est:zth}, we obtain
  \begin{align*}
    \nga{\Pi_N \bz_T} \leq \frac{3 \alpha T^2}{16} \nga{\bz_0}
  \end{align*}
  and
  \begin{align*}    
    \nga{\Pi^N \bz_T}
       \leq \left( 1 - \frac{T^2}{2} + \frac{T^4}{24} \right)\nga{\Pi^N \bz_0}
              + \frac{3 T^2}{16} \nga{\bz_0},
 \end{align*}
  so that finally
  \begin{align}
    \normga{\bz_T}
        =& \nga{ \Pi_N \bz_T} + \alpha \nga{\Pi^N \bz_T}
       \leq \frac{3\alpha T^2}{8} \nga{ \bz_0}
            +  \alpha \left( 1 - \frac{T^2}{2}
           + \frac{T^4}{24} \right) \nga{ \Pi^N \bz_0}
    \notag\\
    \leq& \frac{3 \alpha T^2}{8} \nga{\Pi_N \bz_0}
      + \alpha \left( 1 - \frac{T^2}{8}
                + \frac{T^4}{24} \right) \nga{ \Pi^N \bz_0}.
    \label{est:normga:zt}
  \end{align}
  From the first condition in \eqref{cond:t:N:contr} and the
  definition of $\alpha$ in \eqref{eq:alpha:lip:def}, it follows in
  particular that $\alpha T^2 \leq 2$ and also $T \leq 1$, so that
  $T^4 \leq T^2$. Therefore, from \eqref{est:normga:zt}, we have
  \begin{align*}
    \normga{\bz_T} \leq
    \frac{3}{4} \nga{\Pi_N \bz_0}
    + \alpha \left( 1 - \frac{T^2}{12} \right) \nga{\Pi^N \bz_0}
      \leq \max \left\{ 1 - \frac{T^2}{12}, \frac{3}{4} \right\} \normga{\bz_0}  
    = \left( 1 - \frac{T^2}{12} \right) \normga{\bz_0},
  \end{align*}
  where the equality above follows again from the fact that
  $T \leq 1$, by the first condition in \eqref{cond:t:N:contr}. This
  completes the proof.
\end{proof}

\section{Foster-Lyapunov Structure}
\label{sec:f:l:bnd}

This section provides the details of the Foster-Lyapunov structure for
the Markov kernel $P$ defined by \eqref{eq:PEHMC:kernel:def} under
\cref{ass:higher:reg:C}, \cref{B123}. First, we recall the underlying
definition:

\begin{Def}\label{def:Lyap}
  We say that $V: \H_{\gamma} \to \mathbb{R}^+$ is a
  \emph{Foster-Lyapunov} (or, simply, a \emph{Lyapunov}) function for
  the Markov kernel $P$ if $V$ is integrable with respect to
  $P^n(\bq, \cdot)$ for every $\bq \in \H$ and $n \in \mathbb{N}$, and
  satisfies the following inequality
  \begin{align}\label{ineq:Lyap}
    P^nV(\bq)  \leq \kappa_V^n V(\bq) +K_V \quad 
    \mbox{ for all } \bq \in \H \mbox{ and } n \in \mathbb{N},
  \end{align}
  for some $\kappa_V \in [0,1)$ and $K_V > 0$. 
\end{Def}

With this definition in hand the main result of this section is as follows:
\begin{Prop}\label{prop:FL}
  Impose \cref{A123}, \cref{ass:higher:reg:C} and \cref{B123} and
  suppose that $T \in \RR^+$ satisfies
  \begin{align}
    \label{cond:t:Lyap}
    T \leq  \min \left\{ 
         \frac{1}{[2(1 + \lambda_1^{1 - 2\gamma} \Ua)]^{1/2}}, 
    \frac{\Ub^{1/2}}{2\sqrt{6} (1 + \lambda_1^{1 - 2\gamma} \Ua)} \right\},
  \end{align}
  where $\Ua$ and $\Ub$ are defined as in \eqref{Hess:bound},
  \eqref{dissip:new}, respectively and $\lambda_1$ is the largest
  eigenvalue of $\cC$. Then, the functions
  \begin{align}\label{eq:FL:1}
        V_{1,i}(\bq) = \nga{\bq}^i, \quad i \in \mathbb{N},
  \end{align}
  and 
  \begin{align}\label{eq:FL:2}
        V_{2, \eta}(\bq) = \exp(\eta \nga{\bq}^2),
  \end{align}
with $\eta \in \mathbb{R}^+$ satisfying
 \begin{align}
    \eta < \left[ c \Tr(\cC^{1 - 2 \gamma}) 
    \left( \Ub^{-1} +  T^2 \right) \right]^{-1},
    \label{cond:eta}
\end{align}
for a suitable absolute constant $c \in \mathbb{R}^+$, are Lyapunov
functions for the Markov kernel $P$ defined in
\eqref{eq:PEHMC:kernel:def}.
\end{Prop}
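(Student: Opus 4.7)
Since \eqref{ineq:Lyap} follows by iteration from a single-step bound $P V(\bq_0) \le \kappa V(\bq_0) + K$ with $\kappa \in (0,1)$, the task reduces to proving such a one-step estimate for $V_{1,i}$ and $V_{2,\eta}$. The core of the argument is a deterministic pointwise bound on $\nga{\bq_T(\bq_0,\bv_0)}^2$ that isolates a contracting term in $\nga{\bq_0}^2$ from a stochastic remainder depending on $\bv_0$.

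Set $f(t) = \nga{\bq_t}^2$. A direct computation using \eqref{eq:dynamics:xxx} yields $f'(0) = 2\langle \bq_0, \bv_0\rangle_\gamma$ and $f''(s) = 2\nga{\bv_s}^2 - 2(\nga{\bq_s}^2 + \langle \bq_s, \cC DU(\bq_s)\rangle_\gamma)$. By Taylor's theorem with integral remainder,
\begin{align*}
\nga{\bq_T}^2 = \nga{\bq_0}^2 + 2T\langle \bq_0, \bv_0\rangle_\gamma + \int_0^T (T-s) f''(s)\,ds.
\end{align*}
The dissipativity assumption \cref{B123}\,(B2) upgrades the bracketed term in $f''$ so that $f''(s) \le 2\nga{\bv_s}^2 - 2\Ub \nga{\bq_s}^2 + 2\Uc$. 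To turn the integrand into a function of $(\bq_0,\bv_0)$, I replace $\nga{\bq_s}$ and $\nga{\bv_s}$ using the a priori bounds of \cref{prop:basic:b:apriori}, together with the elementary inequality $(a-b)^2 \ge \tfrac{1}{2}a^2 - 2b^2$ for the lower bound on $\nga{\bq_s}^2$. Under the smallness of $T$ imposed by \eqref{cond:t:Lyap}, the result is a deterministic estimate of the form
\begin{align}
\label{plan:det:ineq}
\nga{\bq_T}^2 \le (1 - c_1 T^2)\,\nga{\bq_0}^2 + 2T\langle \bq_0, \bv_0\rangle_\gamma + c_2 T^2 \nga{\bv_0}^2 + c_3 T^2,
\end{align}
where $c_1$ depends on $\Ub$ and $c_2, c_3$ depend on $\Uz, \Ua, \Uc$ and $\lambda_1^{1-2\gamma}$.

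Integrating \eqref{plan:det:ineq} against $\bv_0 \sim \mN(0,\cC)$ gives the Lyapunov bound for $V_{1,2}$: the linear term vanishes by symmetry, and $\mathbb{E}\nga{\bv_0}^2 = \Tr(\cC^{1-2\gamma}) < \infty$ by \cref{ass:higher:reg:C} and \cref{lem:diff:gauss:for:diff:folks}. For general $i \in \mathbb{N}$, raise \eqref{plan:det:ineq} to the power $i/2$ (using $(a+b+c+d)^p \le 4^{p-1}(a^p+b^p+c^p+d^p)$, with Jensen handling $i=1$) and then take expectation; all Gaussian moments that appear are finite. For $V_{2,\eta}$, exponentiate \eqref{plan:det:ineq} and pull out the $\bq_0$-dependent factor:
\begin{align*}
\mathbb{E}\exp(\eta\nga{\bq_T}^2) \le e^{\eta(1-c_1 T^2)\nga{\bq_0}^2 + \eta c_3 T^2}\, \mathbb{E}\exp\!\bigl(2\eta T \langle \bq_0,\bv_0\rangle_\gamma + \eta c_2 T^2 \nga{\bv_0}^2\bigr).
\end{align*}
Diagonalizing $\bv_0 = \sum_j \lambda_j^{1/2}\zeta_j \bfe_j$ with iid standard normals $\zeta_j$, the Gaussian factor becomes a product of one-dimensional MGFs $\mathbb{E}e^{a\zeta+b\zeta^2} = (1-2b)^{-1/2}\exp(a^2/(2(1-2b)))$, which converges precisely when $\eta$ meets \eqref{cond:eta} --- with the trace $\Tr(\cC^{1-2\gamma})$ entering via the log-sum, and the factor $\Ub^{-1}+T^2$ arising from keeping both the $(1-c_1 T^2)$ contraction and the $c_2 T^2$ coefficient in check. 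This produces $\mathbb{E}V_{2,\eta}(\bq_T) \le C_\eta V_{2,\eta}(\bq_0)^{\kappa'}$ for some $\kappa' \in (0,1)$, and the Lyapunov inequality follows by splitting on $\{V_{2,\eta}(\bq_0) \ge M\}$ versus its complement, with $M$ large enough that $C_\eta M^{\kappa'-1} < 1$.

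The main obstacle is extracting a genuine contraction coefficient from the deterministic estimate: the dissipativity bound is posed on $\nga{\bq_s}^2$, and the $O(T)$ error in $\nga{\bq_s - \bq_0}$ coming from \cref{prop:basic:b:apriori} couples $\nga{\bq_0}$ and $\nga{\bv_0}$ in a way that, after squaring, threatens to absorb the $-\Ub T^2$ contractive factor. The quantitative threshold \eqref{cond:t:Lyap} is tailored precisely so that this cancellation does not occur. A secondary subtlety is tracking the absolute constant $c$ in \eqref{cond:eta} through the convergence analysis of the infinite MGF product.
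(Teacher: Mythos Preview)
Your deterministic core inequality \eqref{plan:det:ineq} is exactly what the paper derives (their (4.8)), and your treatment of $V_{1,2}$ matches theirs: take expectation, the cross term vanishes, and $\E\nga{\bv_0}^2=\Tr(\cC^{1-2\gamma})$. So the backbone of the argument is correct.

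There is, however, a genuine gap in your step for $V_{1,i}$ with $i\ge 3$. Applying $(a+b+c+d)^{i/2}\le 4^{i/2-1}(a^{i/2}+\cdots)$ to the right side of \eqref{plan:det:ineq} puts the factor $4^{i/2-1}$ in front of the leading term $(1-c_1T^2)^{i/2}\nga{\bq_0}^i$, and since $c_1T^2$ is small this product exceeds $1$ for every $i\ge 3$; no contraction survives. The paper avoids this by first absorbing the cross term $2T\langle\bq_0,\bv_0\rangle_\gamma$ via Young's inequality (their (4.11)), then taking a \emph{square root} using $\sqrt{a+b}\le\sqrt a+\sqrt b$ to obtain $\nga{\bq_T}\le e^{-cT^2}\nga{\bq_0}+C(\nga{\bv_0}+1)$, and only then raising to the $i$th power via the binomial expansion. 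The point is that the binomial expansion leaves the top-order coefficient as $(e^{-cT^2})^{i}$ with no combinatorial prefactor; the lower-order terms $\nga{\bq_0}^j(\nga{\bv_0}+1)^{i-j}$ are then absorbed individually by Young's inequality with a small enough weight. Your convexity bound is too crude to preserve the contraction.

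For $V_{2,\eta}$ your route differs from the paper's: you keep the linear term $2\eta T\langle\bq_0,\bv_0\rangle_\gamma$ inside the Gaussian integral and compute the full MGF by diagonalization, whereas the paper first removes it by Young (again (4.11)) and then applies Fernique's bound to $\E\exp(c\eta\nga{\bv_0}^2)$ alone. Your version does work, but note that the MGF produces an additional $\bq_0$-dependent exponent of order $\eta^2T^2\lambda_1^{1-2\gamma}\nga{\bq_0}^2$, which you must check is dominated by the $-\eta c_1T^2\nga{\bq_0}^2$ contraction; this is where a smallness condition on $\eta$ involving $\Ub$ (through $c_1$) actually enters in your approach, not through the $c_2$ coefficient as you suggest. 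The paper's route is cleaner here and is also what makes the specific form $\Ub^{-1}+T^2$ in \eqref{cond:eta} appear naturally. Finally, both you and the paper pass from $PV_{2,\eta}\le R\,V_{2,\eta}^{\kappa'}$ to the Lyapunov inequality by Young's inequality $R x^{\kappa'}\le \kappa' x+(1-\kappa')R^{1/(1-\kappa')}$, which is slicker than the level-set splitting you sketch.
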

\begin{proof}
  We start by showing that $V_{1,2}(\bq) = \nga{\bq}^2$ is a Lyapunov
  function for $P$. First, notice
  $\frac{d}{dt}\nga{\bq_t}^2 = 2 \langle \bq_t, \bv_t
  \rangle_{\gamma}$ so that
  \begin{align}
    \label{est:norm:q}
    \nga{\bq_T}^2 = \nga{\bq_0}^2 
            + 2 \int_0^T \langle \bq_s, \bv_s \rangle_{\gamma} ds.
  \end{align}
  Moreover, from \eqref{eq:dynamics:xxx}
  \begin{align}\label{eq:der:q:v}
    \frac{d}{ds} \langle \bq_s, \bv_s \rangle_{\gamma} 
    = \nga{\bv_s}^2 - \nga{\bq_s}^2 
       - \langle \bq_s, \cC \nabla U (\bq_s) \rangle_{\gamma}.
  \end{align}
  Hence, using \cref{B123}, (B2),
  \begin{align}\label{est:q:v}
    \langle \bq_s, \bv_s \rangle_{\gamma} 
    &= \langle \bq_0, \bv_0 \rangle_{\gamma} 
      + \int_0^s \left[ \nga{\bv_\tau}^2 - \nga{\bq_\tau}^2 
        - \langle \bq_\tau, \cC \nabla U (\bq_\tau) \rangle_{\gamma} 
          \right] d\tau 
         \nonumber\\
    &\leq \langle \bq_0, \bv_0 \rangle_{\gamma} 
     + \int_0^s \left[ \nga{\bv_\tau}^2 - \Ub \nga{\bq_\tau}^2 
               + \Uc \right] d\tau,
  \end{align}
  for any $s \geq 0$.  Using \eqref{est:q:v} in \eqref{est:norm:q}, we
  obtain
  \begin{align}
  \label{bound:qt}
    \nga{\bq_T}^2 
      \leq \nga{\bq_0}^2 
        + 2T \langle \bq_0, \bv_0 \rangle_{\gamma} 
        + 2 \int_0^T \int_0^s \left[ \nga{\bv_\tau}^2 
             - \Ub \nga{\bq_\tau}^2 + \Uc \right] d\tau ds.
  \end{align}
	
  From \cref{prop:basic:b:apriori}, \eqref{sup:vs} and hypothesis
  \eqref{cond:t:Lyap}, it follows that
  \begin{align*}
    \nga{\bv_\tau} 
       \leq \frac{7}{4}\nga{\bv_0} 
         + \frac{3}{2} (1 + \lambda_1^{1 - 2\gamma} \Ua) \tau \nga{\bq_0} 
         + \frac{3}{2} \lambda_1^{1 - 2 \gamma} \Uz \tau,
  \end{align*}
  so that
  \begin{align}
    \label{bound:vtau}
    \nga{\bv_\tau}^2 
      \leq \frac{49}{8} \nga{\bv_0}^2 
           + 9(1 + \lambda_1^{1 - 2\gamma} \Ua)^2 \tau^2 \nga{\bq_0}^2 
           + 9 (\lambda_1^{1 - 2\gamma} \Uz)^2 \tau^2,
  \end{align}
  which holds for any $\tau \geq 0$.
  Moreover, from \eqref{sup:qs} and using hypothesis
  \eqref{cond:t:Lyap} again, we obtain that
  \begin{align*}
     \nga{\bq_\tau - (\bq_0 + \tau \bv_0)} 
    \leq \frac{\nga{\bq_0}}{2} + \frac{\tau}{2} \nga{\bv_0} 
         + \lambda_1^{1 - 2\gamma} \Uz \tau^2,
  \end{align*}
  so that
  \begin{align*}
    \nga{\bq_\tau} 
    \geq \frac{\nga{\bq_0}}{2} 
         - \frac{3}{2} \tau \nga{\bv_0} 
         - \lambda_1^{1 - 2\gamma} \Uz \tau^2
  \end{align*}
  and, consequently,
  \begin{align*}
    2 \nga{\bq_\tau}^2  
    \geq \frac{\nga{\bq_0}^2}{4} 
        -  9 \tau^2 \nga{\bv_0}^2 
        - 4 (\lambda_1^{1 - 2\gamma} \Uz)^2 \tau^4.
  \end{align*}
  Thus, from \eqref{ineq:K:L} and \eqref{cond:t:Lyap}, it follows that
  \begin{align}
    - 2 \Ub \nga{\bq_\tau}^2  
    &\leq -\frac{\Ub}{4} \nga{\bq_0}^2 + 9 \Ub \tau^2 \nga{\bv_0}^2 
        + 4 \Ub (\lambda_1^{1 - 2\gamma} \Uz)^2 \tau^4 
      \nonumber \\
    &\leq -\frac{\Ub}{4} \nga{\bq_0}^2
         + 9 (1 + \lambda_1^{1- 2 \gamma} \Ua)  \tau^2 \nga{\bv_0}^2 
         + 4 (1 + \lambda_1^{1- 2 \gamma} \Ua) (\lambda_1^{1 - 2\gamma} \Uz)^2 \tau^4 
      \nonumber \\
    & \leq -\frac{\Ub}{4} \nga{\bq_0}^2 + \frac{9}{2} \nga{\bv_0}^2 
      + 2  (\lambda_1^{1 - 2\gamma} \Uz)^2 \tau^2,
      \label{bound:qtau}
  \end{align}
  for any $\tau \geq 0$. Using \eqref{bound:vtau} and
  \eqref{bound:qtau} in \eqref{bound:qt}, yields
  \begin{align}
    \label{bound:qt:2}
    \nga{\bq_T}^2 
       \leq& \left( 1 + \frac{3}{2} (1 + \lambda_1^{1 - 2\gamma} \Ua)^2 T^4 
            - \frac{\Ub}{8} T^2 \right)\nga{\bq_0}^2 
         \notag\\
           &\qquad + 2T \langle \bq_0, \bv_0 \rangle_{\gamma}
             + \frac{67}{8} T^2 \nga{\bv_0}^2 
             + \frac{5}{3} (\lambda_1^{1 - 2\gamma} \Uz)^2 T^4 + \Uc T^2.
  \end{align}
  By hypothesis \eqref{cond:t:Lyap}, we have that
  $3(1 + \lambda_1^{1 - 2\gamma} \Ua)^2 T^4/2 \leq \Ub T^2/16$. Thus,
  \begin{align}
	\label{est:term:qt}
	     1 + \frac{3}{2} (1 + \lambda_1^{1 - 2\gamma} \Ua)^2 T^4 - \frac{\Ub}{8} T^2 
	     \leq 1 - \frac{\Ub}{16}T^2
	     \leq e^{- \frac{\Ub T^2}{16}},
  \end{align}
  where we used the fact that $1 - x \leq e^{-x}$, for every
  $x \geq 0$.  Using \eqref{est:term:qt} in \eqref{bound:qt:2} and
  taking expected values on both sides of the resulting inequality,
  and noting that, by symmetry
  $\E\langle \bq_0, \bv_0 \rangle_{\gamma} = 0$ we obtain
  \begin{align}
	\label{quad:Lyap}
	P V_{1,2}(\bq_0) = \bE \nga{\bq_T}^2 
	\leq  e^{- \frac{\Ub T^2}{16}} \nga{\bq_0}^2 
	+ \left( \frac{67}{8} \Tr(\cC^{1 - 2 \gamma}) 
        + \frac{5}{3} (\lambda_1^{1 - 2\gamma} \Uz)^2 T^2 + \Uc \right)T^2.
  \end{align}
  Hence, after iterating on the result in \eqref{quad:Lyap} $n$ times,
  we have
  \begin{align}
    P^n V_{1,2}(\bq_0) =& \bE \nga{Q_n(\bq_0)}^2 
    \notag\\
       \leq&  e^{- \frac{n \Ub T^2}{16}} \nga{\bq_0}^2 
           + \left( \frac{67}{8} \Tr(\cC^{1 - 2 \gamma}) 
           + \frac{5}{3} (\lambda_1^{1 - 2\gamma} \Uz)^2 T^2 + \Uc \right) T^2  
                   \sum_{j=0}^{n-1}  e^{- \frac{j\Ub t^2}{16}}.
                   \label{quad:Lyap:n}
  \end{align}
  Notice that
  \begin{align*}
    T^2  \sum_{j=0}^{n-1}  e^{- \frac{j\Ub T^2}{16}} 
      \leq \frac{T^2}{1 - e^{- \frac{\Ub t^2}{16}}} 
      \leq \frac{48}{\Ub},
  \end{align*}
  where in the last inequality we used that
  $x/(1 - e^{-x}) \leq e \leq 3$, for every $0 \leq x \leq 1$. Thus,
 \begin{align*}
   P^n V_{1,2}(\bq_0)
   \leq  e^{- \frac{n \Ub T^2}{16}} \nga{\bq_0}^2 
      + \left( \frac{67}{8} \Tr(\cC^{1 - 2 \gamma}) 
             + \frac{5}{3} (\lambda_1^{1 - 2\gamma} \Uz)^2 T^2 
             + \Uc \right) \frac{48}{\Ub},
 \end{align*}
 which shows \eqref{def:Lyap} for $V_{1,2}$. 
	
 We turn now to establish \eqref{def:Lyap} in the general case of
 $V_{1,i}$, for any $i \in \mathbb{N}$.  Here, invoking Young's
 inequality to estimate the term
 $ 2T \langle \bq_0, \bv_0 \rangle_\gamma$ in \eqref{bound:qt:2} as
 \begin{align*}
   2T \langle \bq_0, \bv_0 \rangle_{\gamma} 
   \leq \frac{\Ub T^2}{32} \nga{\bq_0}^2 + \frac{32}{\Ub} \nga{\bv_0}^2,
 \end{align*}
 and using again that $3(1 + \lambda_1^{1 - 2\gamma} \Ua)^2 T^4/2 \leq \Ub T^2/16$, 
 it follows from \eqref{bound:qt:2} that
 \begin{align}
 \label{bound:qt:3}
   \nga{\bq_T}^2 \leq 
    \left( 1 - \frac{\Ub T^2}{32} \right)\nga{\bq_0}^2 
     + \left( \frac{67}{8} T^2 + \frac{32}{\Ub} \right) \nga{\bv_0}^2 
   + \frac{5}{3} (\lambda_1^{1 - 2\gamma} \Uz)^2 T^4 + \Uc T^2.
 \end{align}
 
 Invoking the basic inequalities $1 - x \leq e^{-x}$ and
 $(x + y)^{1/2} \leq x^{1/2} + y^{1/2}$, valid for every $x,y \geq 0$,
 we obtain, for any $i \geq 1$,
 \begin{align}\label{V1i:Lyap}
  \nga{\bq_T}^i 
  &\leq e^{-\frac{\Ub T^2 i}{64}} \nga{\bq_0}^i  
    + C \sum_{j = 1}^i  \left( e^{-\frac{\Ub T^2 }{64}}\nga{\bq_0} \right)^j 
      \left( \nga{\bv_0}^{i-j} + 1 \right) \notag \\
  &\leq  e^{-\frac{\Ub T^2 i}{65}} \nga{\bq_0}^i 
      + \tilde{C} \left( \nga{\bv_0}^i + 1 \right),
 \end{align}
 where in the second inequality we invoked Young's inequality to
 estimate each term inside the sum, and with $C$ and $\tilde C$ being
 positive constants depending on $i, \lambda_1, \gamma, T, \Uz, \Ub$
 and $\Uc$. Since $\bv_0 \sim \mN(0, \cC)$, by Fernique's theorem
 (see, e.g., \cite[Theorem 2.7]{DaZa2014}) we have that
 $\bE \nga{\bv_0}^i < \infty$ for every $i \in \mathbb{N}$. Therefore,
 we conclude the result for $V_{1,i}$ after taking expected values in
 \eqref{V1i:Lyap} and iterating $n$ times on the resulting inequality.

 Finally, let us show \eqref{def:Lyap} for $V_{2,\eta}$ as in
 \eqref{eq:FL:2}.  Multiplying by $\eta$, taking the exponential and
 expected value on both sides of \eqref{bound:qt:3}, it follows that
 \begin{multline}\label{ineq:exp:Lyap}
   P V_2(\bq_0) = \bE \exp \left( \eta \nga{\bq_T}^2 \right) \\
      \leq \exp\left( \eta \left( 1 - \frac{\Ub T^2}{32} \right)  \nga{\bq_0}^2 \right) 
           \exp \left( \frac{5}{3} \eta (\lambda_1^{1 - 2 \gamma} \Uz)^2 T^4
             + \eta \Uc T^2 \right) 
        \bE \exp \left[\eta \left( \frac{32}{\Ub} + \frac{67}{8}  T^2 \right)
              \nga{\bv_0}^2 \right].
  \end{multline}
  Recalling $\bv_0 \sim \mathcal{N}(0, \cC)$ and the assumption
  $\eta < \left[ 2 \Tr(\cC^{1 - 2 \gamma}) \left( \frac{32}{\Ub} +
      \frac{67}{8} T^2 \right) \right]^{-1}$,
  we have, again by Fernique's theorem \cite[Proposition
  2.17]{DaZa2014}, and \cref{lem:ineq:C} that
  \begin{align}
    \label{est:Gaussian}
    \bE \exp \left[ \eta \left(  \frac{32}{\Ub} + \frac{67}{8} T^2 \right) 
                    \nga{\bv_0}^2 \right] 
      \leq \left[ 1 - 2 \eta \left( \frac{32}{\Ub} 
             + \frac{67}{8} T^2  \right) \Tr(\cC^{1 - 2 \gamma}) \right]^{-1/2}.
  \end{align}
  Thus, denoting $\tilde \kappa_2 = 1 - \Ub T^2/32$ and 
  \begin{align*}
    R = \exp \left( \frac{5}{3} \eta (\lambda_1^{1 - 2 \gamma} \Uz)^2 T^4 
                   + \eta \Uc T^2 \right) 
        \left[ 1 - 2 \eta \left( \frac{32}{\Ub} + \frac{67}{8} T^2  \right) 
                 \Tr(\cC^{1 - 2 \gamma}) \right]^{-1/2},
  \end{align*}
  we obtain from \eqref{ineq:exp:Lyap} and \eqref{est:Gaussian} that 
  \begin{align}\label{ineq:exp:Lyap:2}
	P V_{2,\eta}(\bq_0) 
	&\leq R \exp \left( \eta \tilde \kappa_2 \nga{\bq_0}^2 \right) 
          = R \exp \left( \eta \nga{\bq_0}^2 \right)^{\tilde \kappa_2} \notag \\
	&\leq \tilde \kappa_2 V_2(\bq_0)
              + R^{\frac{1}{1- \tilde \kappa_2}} (1 - \tilde \kappa_2)
           = \tilde \kappa_2 V_2(\bq_0)
                + R^{\frac{32}{\Ub T^2}} \frac{\Ub  T^2}{32} \notag \\
	&\leq e^{- \frac{\Ub T^2}{32}} V_2(\bq_0)
                 + R^{\frac{32}{\Ub T^2}} \frac{\Ub  T^2}{32}
  \end{align}
  where the second estimate follows by Young's inequality. We conclude
  \eqref{def:Lyap} for $V_{2,\eta}$ after using \eqref{ineq:exp:Lyap:2} $n$
  times iteratively.  The proof is now complete.
\end{proof}

\section{Pointwise contractivity bounds for 
  the Markovian dynamics}
\label{sec:Ptwise:contract:MD}

This section details two pointwise contractivity bounds for the
Markovian dynamics of the PHMC chain \eqref{eq:PEHMC:kernel:def} in a
suitably tuned Wasserstein-Kantorovich metric. These bounds provide
crucial ingredients needed for the weak Harris theorem, namely the so
called `$\rhoep$-contractivity' and `$\rhoep$-smallness' conditions,
which, together with the Lyapunov structure identified in
\cref{prop:FL}, form the core of the proof of \cref{thm:weak:harris}.

Our contraction results are given with respect to an underlying metric
$\rho: \H_\gamma \times \H_\gamma \to [0,1]$ defined as
\begin{align}
  \label{eq:rho:def}
  \rho(\bq, \btq) := \frac{ \nga{ \bq - \btq  }}{\varepsilon} \wedge 1,
\end{align}
where $\gamma$ is given in \cref{B123}.  On the other hand,
$\varepsilon > 0$ is a tuning parameter which specifies the small
scales in our problem and is determined by \eqref{def:smc} in such a
fashion as to produce a contraction in \eqref{eq:rho:contract}.
Recall that the Wasserstein distance on the space of probability
measures on $\H_\gamma$ induced by $\rho$ is given as in
\eqref{eq:Wass:Def} with $\tilde{\rho}$ replaced by $\rho$, and
denoted by $\Wass_\rho$.

The first result yielding `$\rhoep$-contractivity'
(cf. \cite[Definition 4.6]{hairer2011asymptotic}) is given as follows:
\begin{Prop}
  \label{prop:local_contr}
  Suppose \cref{A123}, \cref{ass:higher:reg:C} and \cref{B123} are
  satisfied and choose an integration time $T >0$ and
  $N \in \mathbb{N}$ maintaining the condition
  \eqref{cond:t:N:contr}. Fix any $\varepsilon > 0$ defining the
  associated metric $\rho$ as in \eqref{eq:rho:def}. Then, for every
  $n \in \mathbb{N}$ and for every $\bq_0, \btq_0 \in \H_{\gamma}$
  such that $\rho( \bq_0, \btq_0) < 1$, we have
\begin{align}
  \Wass_{\rhoep}(P^n(\bq_0, \cdot), P^n(\btq_0, \cdot)) \leq 
  \smc \rhoep(\bq_0, \btq_0)
  \label{eq:rho:contract}
\end{align} 
where recall that $P^n$ is $n$ steps of the PHMC kernel
\eqref{eq:n:step:PHMC:kernel} and $\Wass_\rho$ is the
Wasserstein distance, as in \eqref{ineq:main:result}, associated with $\rho$.
Here
\begin{align}
\label{def:smc}
  \smc = \smc(n)
  := \smb(n)   + \frac{ 2 \sqrt{2} \lambda_N^{-\frac{1}{2} + \gamma}
                              (1 + \lambda_1^{1 - 2 \gamma}\Ua)\varepsilon }
                               {T (1 - \sma^2)^{1/2}}
  =\smb(n)   + \frac{  \sqrt{2} \lambda_N^{-\frac{1}{2} + \gamma}
                              \alpha \varepsilon }
                               {2 T (1 - \sma^2)^{1/2}},
\end {align}
where 
\begin{align}
\label{def:sma:smb}
    \smb(n) := 4 \sqrt{2}(1 + \lambda_1^{1 - 2 \gamma} \Ua) \sma^n 
            = \sqrt{2} \alpha \sma^n,
    \quad 
    \sma:= 1 - \frac{T^2}{12},
 \end{align}
 $T >0$ is the integration time in \eqref{eq:PEHMC:kernel:def}, $\Ua$
 is the Lipschitz constant of $DU$ as in \eqref{Hess:bound} and
 $\lambda_1$ is the largest eigenvalue of $\cC$ and, in regards to
 $\alpha$, recall \eqref{eq:alpha:lip:def}.
\end{Prop}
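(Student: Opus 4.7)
I would follow the `generalized coupling' strategy outlined in \cref{sec:Main:Thm:Overview}: compare $P^n(\bq_0,\cdot)$ and $P^n(\btq_0,\cdot)$ via the intermediate shifted kernel $\tilde P^n(\bq_0,\btq_0,\cdot)$ from \eqref{oview:def:tP} using the triangle inequality
\begin{equation*}
    \Wass_\rho(P^n(\bq_0,\cdot),P^n(\btq_0,\cdot))
      \leq \Wass_\rho(P^n(\bq_0,\cdot),\tilde P^n(\bq_0,\btq_0,\cdot))
        + \Wass_\rho(\tilde P^n(\bq_0,\btq_0,\cdot),P^n(\btq_0,\cdot)).
\end{equation*}
The first term is bounded by an explicit coupling that is contractive by design (giving the $\smb(n)\rho(\bq_0,\btq_0)$ piece of $\smc$); the second is a cost-of-control term, handled by a Cameron--Martin change of measure followed by Pinsker (giving the remaining additive piece of $\smc$).

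\textbf{Contraction piece.} On a common probability space, draw i.i.d.\ $\bv_0^{(1)},\dots,\bv_0^{(n)}\sim\mN(0,\cC)$ and iterate, for $k=1,\dots,n$,
\begin{equation*}
    \bq_k = \bq_T(\bq_{k-1},\bv_0^{(k)}),
    \qquad \btq_k = \bq_T\bigl(\btq_{k-1},\bv_0^{(k)}+\cS_k\bigr),
    \qquad \cS_k := T^{-1}\Pi_N(\bq_{k-1}-\btq_{k-1}),
\end{equation*}
so that $\bq_n\sim P^n(\bq_0,\cdot)$ and $\btq_n\sim\tilde P^n(\bq_0,\btq_0,\cdot)$. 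At each step the velocity shift matches \eqref{eq:kinetic_coupling_idea}, so \cref{prop:FP:Type:contract} produces the pathwise bound $\normga{\bq_k-\btq_k}\leq\sma\normga{\bq_{k-1}-\btq_{k-1}}$. Iterating this estimate, invoking the norm equivalence \eqref{equiv:norms:b}, and using $\nga{\bq_0-\btq_0}=\varepsilon\rho(\bq_0,\btq_0)$ (which holds because $\rho(\bq_0,\btq_0)<1$) yields $\rho(\bq_n,\btq_n)\leq\sqrt{2}\alpha\sma^n\rho(\bq_0,\btq_0)=\smb(n)\rho(\bq_0,\btq_0)$ pathwise, whence $\Wass_\rho(P^n(\bq_0,\cdot),\tilde P^n(\bq_0,\btq_0,\cdot))\leq\E\rho(\bq_n,\btq_n)\leq\smb(n)\rho(\bq_0,\btq_0)$.

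\textbf{Cost-of-control piece.} Both $\tilde P^n(\bq_0,\btq_0,\cdot)$ and $P^n(\btq_0,\cdot)$ are pushforwards of measures on the velocity-path space $\H^{\otimes n}$ through the same $n$-step \emph{un}shifted iteration map started at $\btq_0$; the corresponding velocity-path laws are $\sigma_n=\mN(0,\cC)^{\otimes n}$ (yielding $P^n(\btq_0,\cdot)$) and $\tilde\sigma_n$, the law of $(\bv_0^{(k)}+\cS_k)_{k=1}^n$ (yielding $\tilde P^n(\bq_0,\btq_0,\cdot)$). Since $\rho\leq 1$, the data-processing and Pinsker inequalities give
\begin{equation*}
    \Wass_\rho(\tilde P^n(\bq_0,\btq_0,\cdot),P^n(\btq_0,\cdot))
      \leq \tv{\tilde\sigma_n-\sigma_n}
      \leq \sqrt{\tfrac12\KL(\tilde\sigma_n\|\sigma_n)}.
\end{equation*}
Each $\cS_k$ is predictable with respect to the natural filtration of $(\bv_0^{(j)})$ and lies in the finite-dimensional subspace $\H_N\subset\H_{1/2}$, so a sequential application of Cameron--Martin gives the standard identity $\KL(\tilde\sigma_n\|\sigma_n)=\tfrac12\,\E\bigl[\sum_{k=1}^n|\cS_k|_{1/2}^2\bigr]$. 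Because $\cS_k\in\H_N$, the eigenmode expansion of $\cC$ bounds $|\cS_k|_{1/2}\leq\lambda_N^{-1/2+\gamma}\nga{\cS_k}\leq T^{-1}\lambda_N^{-1/2+\gamma}\nga{\bq_{k-1}-\btq_{k-1}}$; inserting the pathwise contraction from the previous step and summing the geometric series bounds $\sum_{k=1}^n|\cS_k|_{1/2}^2$ deterministically by $\frac{2\alpha^2\lambda_N^{-1+2\gamma}}{T^2(1-\sma^2)}\nga{\bq_0-\btq_0}^2$. Taking square roots and reinserting $\nga{\bq_0-\btq_0}=\varepsilon\rho(\bq_0,\btq_0)$ produces precisely the $\frac{\sqrt{2}\alpha\lambda_N^{-1/2+\gamma}\varepsilon}{2T(1-\sma^2)^{1/2}}\rho(\bq_0,\btq_0)$ contribution to $\smc$, completing the bound.

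\textbf{Main obstacle.} The delicate point is the Cameron--Martin step in infinite dimensions: by Feldman--Hajek the measures $\tilde\sigma_n$ and $\sigma_n$ would be mutually singular unless each $\cS_k$ lies in $\H_{1/2}$, so the low-mode cutoff built into \cref{prop:FP:Type:contract} is structurally necessary (not merely convenient) to keep the KL divergence finite and the Pinsker estimate non-vacuous. The price is the $\lambda_N^{-1/2+\gamma}$ factor in $\smc$, which tightly couples the small-scale tuning parameter $\varepsilon$ appearing in $\rho$ to the dimension $N$ of the finite-dimensional projection, and ultimately to $\Ua$ through the second condition in \eqref{cond:t:N:contr}; one then expects to calibrate $\varepsilon$ small and $n$ large in the subsequent application so that $\smc<1$.
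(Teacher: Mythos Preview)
Your proposal is correct and follows essentially the same route as the paper: the triangle-inequality split via the shifted kernel $\tilde P^n$, the pathwise contraction from \cref{prop:FP:Type:contract} (packaged in the paper as \cref{lem:contract}) for the first term, and the $\Wass_\rho\leq\mathrm{TV}\leq\sqrt{\tfrac12\KL}$ chain for the second, with the KL identity $\KL(\tilde\sigma_n\mid\sigma_n)=\tfrac12\sum_j\E|\cS_j|_{1/2}^2$ obtained via Girsanov and the vanishing of the cross term $\E\langle\cS_j,\bv_0^{(j)}\rangle_{1/2}$ by predictability. The bookkeeping of constants, including the passage through the equivalent norm $\normga{\cdot}$ to pick up the factor $\sqrt{2}\alpha$, matches the paper's computation exactly.
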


\begin{Rmk}
  If $N \in \mathbb{N}$ is the smallest natural number for which the
  corresponding condition in \eqref{cond:t:N:contr} holds, i.e.
  \begin{align*}
  N = \min \left\{ n \in \mathbb{N} \,:\, \lambda_{n+1}^{1 - 2 \gamma} \leq \frac{1}{4 \Ua} \right\},
\end{align*}
then $\smc$ from \eqref{def:smc} above can be given in the more
explicit form
\begin{align*}
  \smc = \smc(n)
  := \smb(n)   + \frac{ 4 \sqrt{2} \Ua^{1/2}
                              (1 + \lambda_1^{1 - 2 \gamma}\Ua)\varepsilon }
                               {T (1 - \sma^2)^{1/2}}
  = \smb(n)   + \frac{ \sqrt{2} \Ua^{1/2} \alpha \varepsilon }
                               {T (1 - \sma^2)^{1/2}},
\end {align*}
with $\smb$ defined exactly as in \eqref{def:sma:smb} above. 
\end{Rmk}

Our second main result corresponding to `$\rho$-smallness'
(cf. \cite[Definition 4.4]{hairer2011asymptotic}) is given as:
\begin{Prop}\label{prop:smallness}
  Assume the same hypotheses from \cref{prop:local_contr}. Let
  $M \geq 0$ and take
  \begin{align*}
    A = \left\{ \bq \in \H_{\gamma} \,: \, \nga{\bq} \leq M \right\}.
  \end{align*}
  Then, for every $n \in \mathbb{N}$ and every $\varepsilon > 0$ we have
  for the corresponding $\rho$ defined by \eqref{eq:rho:def} that
  \begin{align}\label{Wass:small}
    \Wass_{\rhoep} (P^n(\bq_0, \cdot), P^n(\btq_0, \cdot)) \leq 1 - \smd
  \end{align}
  for every $\bq_0, \btq_0 \in A$, where
  \begin{align*}
    \smd = \smd(n) 
      &:= \frac{1}{2} \exp \left( - 
                   \frac{ 256 \Ua (1 + \lambda_1^{1 - 2 \gamma} \Ua)^2 M^2}
                        {T^2 (1 - \sma^2)} 
                          \right) 
         - \frac{2 M \smb(n) }{\varepsilon}  \\
      &= \frac{1}{2} \exp \left( - 
                   \frac{ 16 \Ua \alpha^2 M^2}
                        {T^2 (1 - \sma^2)} 
                          \right) 
         - \frac{2 M \smb(n) }{\varepsilon} ,
  \end{align*}
  with $\sma$ and $\smb$ as defined in \eqref{def:sma:smb}, and $\alpha$ as defined in \eqref{eq:alpha:lip:def}.
\end{Prop}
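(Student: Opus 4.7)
The plan is to implement the approximate-coupling strategy sketched in the overview. At each step $k$ introduce a velocity shift
$\cS_k := T^{-1} \Pi_N(\bq_{k-1} - \btq_{k-1}) \in \H_N$
exactly as in \eqref{eq:kinetic_coupling_idea}, and let $\tilde P^n(\bq_0, \btq_0, \cdot)$ denote the law of the associated shifted chain started at $\btq_0$, coupled synchronously to the unshifted chain started at $\bq_0$ through a common Gaussian noise path. Since $\rho$ is a genuine metric on $\H_\gamma$, the triangle inequality gives
\begin{align*}
\Wass_\rho(P^n(\bq_0,\cdot), P^n(\btq_0,\cdot))
\leq \Wass_\rho(P^n(\bq_0,\cdot), \tilde P^n(\bq_0,\btq_0,\cdot))
+ \Wass_\rho(\tilde P^n(\bq_0,\btq_0,\cdot), P^n(\btq_0,\cdot)),
\end{align*}
and I bound the two terms separately; the first produces the $2M\smb(n)/\varepsilon$ contribution and the second the exponential contribution in $\smd(n)$.

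The first term is handled by iterating \cref{prop:FP:Type:contract} along the coupled chain; this yields deterministically $\normga{\bq_n - \btq_n} \leq \sma^n \normga{\bq_0 - \btq_0}$. Combining with the norm equivalence \eqref{equiv:norms:b} and the hypothesis $\nga{\bq_0 - \btq_0} \leq 2M$, one obtains $\nga{\bq_n - \btq_n} \leq 2\sqrt{2}\,\alpha \sma^n M = 2M \smb(n)$, so that $\Wass_\rho(P^n(\bq_0,\cdot), \tilde P^n(\bq_0,\btq_0,\cdot)) \leq 2M \smb(n)/\varepsilon$.

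For the second term I push the comparison to the underlying noise space. Let $\sigma_n := \mN(0,\cC)^{\otimes n}$ and let $\tilde\sigma_n$ denote the law of the shifted path $(\bv_0^{(k)} + \cS_k)_{k=1}^n$; both measures push forward through the same deterministic iterated Hamiltonian map from $\btq_0$ to produce, respectively, $P^n(\btq_0,\cdot)$ and $\tilde P^n(\bq_0,\btq_0,\cdot)$, so using $\rho \leq 1$,
\begin{align*}
\Wass_\rho(\tilde P^n(\bq_0,\btq_0,\cdot), P^n(\btq_0,\cdot))
\leq \tv{\tilde\sigma_n - \sigma_n}.
\end{align*}
Because the previsible shifts $\cS_k$ live in the finite-dimensional subspace $\H_N$ and hence in the Cameron-Martin space of $\mu_0$, Girsanov gives a well defined Radon-Nikodym derivative $Z_n := d\tilde\sigma_n/d\sigma_n$. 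Applying Cauchy-Schwarz to the identity $\sqrt{Z_n} = \sqrt{Z_n \wedge 1}\cdot\sqrt{Z_n \vee 1}$, together with $\bE_{\sigma_n}[Z_n \vee 1] = 2 - \bE_{\sigma_n}[Z_n \wedge 1]$, yields the Hellinger-type lower bound
\begin{align*}
1 - \tv{\tilde\sigma_n - \sigma_n} \geq \frac{1}{2} \bE_{\sigma_n}[\sqrt{Z_n}]^2.
\end{align*}
Iterating the one-step Gaussian Hellinger affinity $\bE_{\sigma_n}[\sqrt{Z_k} \mid \mathcal{F}_{k-1}] = \exp(-|\cS_k|_{1/2}^2/8)$ (with $|\cdot|_{1/2}$ the Cameron-Martin norm of $\mu_0$ and $\mathcal{F}_{k-1}$ the $\sigma$-algebra generated by the first $k-1$ noise draws) produces $\bE_{\sigma_n}[\sqrt{Z_n}] = \bE[\exp(-\frac{1}{8}\sum_{k=1}^n |\cS_k|_{1/2}^2)]$. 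The individual shift norms are controlled pathwise: the minimality of $N$ in \eqref{cond:t:N:contr} gives $|\Pi_N \bff|_{1/2}^2 \leq \lambda_N^{-1 + 2\gamma} \nga{\bff}^2 < 4 \Ua \nga{\bff}^2$, while \cref{prop:FP:Type:contract} together with $\nga{\Pi_N \cdot} \leq \normga{\cdot}$ and \eqref{equiv:norms:b} bounds $\nga{\Pi_N(\bq_{k-1} - \btq_{k-1})}^2 \leq 8\alpha^2 M^2 \sma^{2(k-1)}$. Summing the resulting geometric series yields $\sum_k |\cS_k|_{1/2}^2 \leq C \Ua \alpha^2 M^2/[T^2(1-\sma^2)]$ deterministically, and the Hellinger-to-TV step then produces the exponential factor in $\smd(n)$.

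The main obstacle is careful accounting of constants through three compounding steps: the Hellinger-to-TV passage, the iterated conditional Cameron-Martin calculation, and the geometric summation of the iteratively contracting shifts, all of which must combine to give the precise form of $\smd(n)$. Conceptually, the essential ingredient is the restriction of $\cS_k$ to the finite-dimensional subspace $\H_N$: by Feldman-Hajek an arbitrary shift in $\H$ would render $\sigma_n$ and $\tilde\sigma_n$ mutually singular, and it is precisely the interplay between $\cS_k \in \H_N$ and the minimality condition \eqref{cond:t:N:contr} (which bounds $|\Pi_N \cdot|_{1/2}^2$ in terms of $\Ua \nga{\cdot}^2$) that makes the Girsanov step both well defined and quantitative.
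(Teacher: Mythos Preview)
Your argument follows the paper's strategy closely: the same triangle-inequality split, the same contraction estimate for the first term via \cref{lem:contract}, and the same reduction of the second term to $\tv{\tilde\sigma_n - \sigma_n}$ via the pushforward through $Q_n(\btq_0)$. The one substantive difference is how you pass from the Girsanov density to a lower bound on $1 - \tv{\tilde\sigma_n - \sigma_n}$. The paper uses the inequality \eqref{ineq:TV:KL:2}, namely $\tv{\nu - \tilde\nu} \leq 1 - \tfrac{1}{2}\exp(-\KL(\tilde\nu\,|\,\nu))$, together with the direct computation \eqref{ineq:KL}--\eqref{eq:D:KL:diff} of $\KL(\tilde\sigma_n\,|\,\sigma_n) = \tfrac{1}{2}\sum_j \bE|\cS_j|_{1/2}^2$. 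You instead bound $1 - \tv{}$ below by $\tfrac{1}{2}$ times the squared Hellinger affinity $\bE_{\sigma_n}[\sqrt{Z_n}]^2$, computing the latter via the tower property and the one-step Gaussian identity. Both routes are standard and correct; your Hellinger route is self-contained (no need to cite \eqref{ineq:TV:KL:2}) and in fact yields a slightly sharper exponent (you obtain $8\Ua\alpha^2 M^2$ where the statement has $16\Ua\alpha^2 M^2$), which of course still implies the stated bound. One small notational point: your conditional identity should read $\bE_{\sigma_n}[\sqrt{Z_k/Z_{k-1}}\,|\,\mathcal{F}_{k-1}] = \exp(-|\cS_k|_{1/2}^2/8)$ rather than $\bE_{\sigma_n}[\sqrt{Z_k}\,|\,\mathcal{F}_{k-1}]$, but the tower argument you intend is clear and correct.
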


Before proceeding with the proofs of \cref{prop:local_contr} and
\cref{prop:smallness}, we introduce some further preliminary
terminology and general background.  Set an integration time $T > 0$
in the definition of the transition kernel $P$ of the PHMC chain,
\eqref{eq:PEHMC:kernel:def}. For each $n \in \mathbb{N}$, let
$\H^{\otimes n}$ denote the space given as the product of $n$ copies
of $\H$. Moreover, given a sequence $\{\bv_0^{(j)}\}_{ j \in \NN}$ of
i.i.d. draws from $\mathcal{N}(0,\cC)$, we denote by
$\npV_0^{(n)} = (\bv_0^{(1)}, \ldots, \bv_0^{(n)})$ the noise path for
the first $n \geq 1$ steps, as in \eqref{eq:noise:path:notation}. We
then have $\npV_0^{(n)} \sim \mN(0,\cC)^{\otimes n}$, with
$\mN(0,\cC)^{\otimes n}$ denoting the product of $n$ independent
copies of $\mN(0, \cC)$.

For simplicity of notation, we set from now on
\begin{align*}
    \sigma := \mN(0, \cC), \quad \sigma_n:= \mN(0,\cC)^{\otimes n}.
\end{align*}
For every $\bq_0, \btq_0 \in \H_\gamma$, with $\gamma$ as in
\eqref{eq:Tr:Def:b}, \eqref{Hess:bound}, and $N \in \mathbb{N}$ as in
\cref{prop:FP:Type:contract}, we consider
$\tQ_1(\btq_0, \bq_0): \H \to \H$ to be the random
variable defined as
\begin{align*}
\widetilde Q_1(\bq_0, \btq_0)(\bv_0^{(1)}) =
   \bq_T(\btq_0, \bv_0^{(1)} + T^{-1} \Pi_N (\bq_0 - \btq_0)) 
\end{align*}
where $\bv_0^{(1)} \sim \sigma$.  Iteratively we
define, for $n \geq 2$, the random variables
$\tQ_n(\bq_0, \btq_0):$  $\H^{\otimes n}$ $\to \H$ as
\begin{align}
  \widetilde{Q}_n(\bq_0, \btq_0)(\npV_0^{(n)}) 
  := q_T(\widetilde{Q}_{n-1}(\bq_0, \btq_0)( \npV_0^{(n-1)}), \bv_0^{(n)} 
  + \cS_n(\npV_0^{(n-1)})),
  \label{eq:shift:RV}
\end{align}
where $\npV_0^{(n)} \sim \sigma_n$, and
\begin{align}
\label{def:Phin}
    \cS_n(\npV_0^{(n-1)})
  := T^{-1}\Pi_N [Q_{n-1}(\bq_0)(\npV_0^{(n-1)})
                          -  \widetilde{Q}_{n-1}(\bq_0, \btq_0)(\npV_0^{(n-1)})].
\end{align}
We therefore obtain the shifted noise path
\begin{align}
  \tilde\npV_0^{(n)}
  = ( \bv_0^1 + \cS_1, \bv_0^{(2)} + \cS_2(\npV_0^{(1)}),
         \ldots,  \bv_0^{(n)} + \cS_n(\npV_0^{(n-1)})),
  \quad \mbox{ where } \cS_1 = T^{-1} \Pi_N (\bq_0 - \btq_0).
  \label{eq:shift:Noise}
\end{align}
Let $\tsig_n := \text{Law}(\tilde \npV_0^{(n)})$. In order to simplify
notation, let us denote
\begin{align}
\label{def:bPhi}
 \bcS_n(\npV_0^{(n)}) = (\cS_1, \cS_2(\npV_0^{(1)}), \ldots, \cS_n(\npV_0^{(n-1)}))
\end{align}
and
\begin{align}
\label{def:bPsi}
 \bcR_n(\npV_0^{(n)}) = \npV_0^{(n)} + \bcS_n(\npV_0^{(n)}),
\end{align}
so that $\tilde\npV_0^{(n)} = \bcR_n(\npV_0^{(n)})$. Thus, $\tsig_n$
is the push-forward of $\sigma_n$ by the mapping
$\bcR_n: \H^{\otimes n} \to \H^{\otimes n}$, i.e.
$\tilde \sigma_n = \bcR_n^\ast \sigma_n$.
Now put, for every $n \in \mathbb{N}$ and $A \in \mB(\H)$,
\begin{align}
  \widetilde{P}^n(\bq_0, \btq_0, A)
  = \widetilde Q_n(\bq_0, \btq_0)^\ast \sigma_n(A)
  = \sigma_n( \widetilde{Q}_n(\bq_0, \btq_0)^{-1}(A)).
  \label{eq:Shifted:MK}
\end{align}
Notice that $\widetilde{P}^n(\bq_0, \btq_0, \cdot)$ can be equivalently written as
\begin{align}
  \widetilde{P}^n(\bq_0, \btq_0, A)
      = Q_n(\btq_0)^\ast (\bcR_n^\ast \sigma_n)(A)
     = Q_n(\btq_0)^\ast \tilde \sigma_n (A).
    \label{eq:tPn}
\end{align}

With these notations in place we have the following estimate 
which we will use several times below in establishing \cref{prop:local_contr}, \cref{prop:smallness}. The proof follows immediately from \cref{prop:FP:Type:contract}
and \cref{rmk:equiv:norms}.
\begin{Lem}\label{lem:contract}
  We are maintaining the same hypotheses as in \cref{prop:local_contr}. Then,
  starting from any $\bq_0, \btq_0 \in \H_{\gamma}$ we have that for all
  $n \geq 1$,
  \begin{align*}
    \nga{Q_n(\bq_0)(\npV_0^{(n)}) - \tQ_n(\bq_0, \btq_0)(\npV_0^{(n)})}
         \leq \smb \nga{\bq_0 - \btq_0} \quad \mbox{ for every }
    \npV_0^{(n)} \in \H^{\otimes n},
  \end{align*}
   where $Q_n$ and $\tQ_n$ are defined as in
   \eqref{eq:n:step:chain:Noise:p} and \eqref{eq:shift:RV},
   respectively, and $\smb$ is as in \eqref{def:sma:smb}. Therefore,
  \begin{align}
    \E \nga{Q_n(\bq_0) - \tilde{Q}_n(\bq_0, \btq_0)} \leq
    \smb \nga{\bq_0 - \btq_0}.
    \label{eq:nudged:ave:contr}
  \end{align}  
\end{Lem}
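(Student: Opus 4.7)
The plan is a direct induction on $n$, using the single-step contraction of \cref{prop:FP:Type:contract} to accumulate factors of $\sma$ in the equivalent norm $\normga{\,\cdot\,}$, and then converting back to $\nga{\,\cdot\,}$ via \eqref{equiv:norms:b} of \cref{rmk:equiv:norms}.

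The key structural observation I would verify first is that the shift $\cS_n$ defined in \eqref{def:Phin} is tailored precisely so that, at every step of the iteration, the coupling hypothesis \eqref{eq:kinetic_coupling_idea} of \cref{prop:FP:Type:contract} is satisfied between the pair of Hamiltonian initial conditions $(Q_{n-1}(\bq_0)(\npV_0^{(n-1)}), \bv_0^{(n)})$ and $(\tQ_{n-1}(\bq_0,\btq_0)(\npV_0^{(n-1)}), \bv_0^{(n)} + \cS_n(\npV_0^{(n-1)}))$. Indeed, since $\cS_n$ takes values in $\Pi_N \H$, automatically $\Pi^N(\bv_0^{(n)}+\cS_n) = \Pi^N \bv_0^{(n)}$, while the very definition of $\cS_n$ yields $\Pi_N(\bv_0^{(n)}+\cS_n) = \Pi_N \bv_0^{(n)} + T^{-1} \Pi_N [Q_{n-1}(\bq_0)-\tQ_{n-1}(\bq_0,\btq_0)]$, as required. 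The base case $n=1$ is exactly \eqref{eq:shift:Noise}.

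Consequently, \cref{prop:FP:Type:contract} applied at step $n$ yields pointwise in $\npV_0^{(n)}$
\[
\normga{Q_n(\bq_0)(\npV_0^{(n)}) - \tQ_n(\bq_0,\btq_0)(\npV_0^{(n)})} \leq \sma \normga{Q_{n-1}(\bq_0)(\npV_0^{(n-1)}) - \tQ_{n-1}(\bq_0,\btq_0)(\npV_0^{(n-1)})}.
\]
Iterating this bound down to $n=0$ (where the right-hand side reduces to $\normga{\bq_0 - \btq_0}$) gives $\normga{Q_n - \tQ_n} \leq \sma^n \normga{\bq_0 - \btq_0}$ pointwise on $\H^{\otimes n}$. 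Finally, invoking \eqref{equiv:norms:b}, namely $\nga{\cdot} \leq \normga{\cdot} \leq \sqrt{2}\alpha \nga{\cdot}$, on each side produces
\[
\nga{Q_n(\bq_0)(\npV_0^{(n)}) - \tQ_n(\bq_0,\btq_0)(\npV_0^{(n)})} \leq \sqrt{2}\alpha \sma^n \nga{\bq_0 - \btq_0} = \smb(n) \nga{\bq_0 - \btq_0},
\]
by definition \eqref{def:sma:smb} of $\smb$. The expected-value bound \eqref{eq:nudged:ave:contr} then follows by integrating with respect to $\sigma_n$. No serious obstacle is expected: the entire argument is bookkeeping around the fact that $\cS_n$ is \emph{designed} to realize the hypothesis \eqref{eq:kinetic_coupling_idea} at each iteration, which is the only nontrivial verification.
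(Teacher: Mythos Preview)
Your proof is correct and is precisely the argument the paper has in mind: the paper states only that the result ``follows immediately from \cref{prop:FP:Type:contract} and \cref{rmk:equiv:norms},'' and you have filled in exactly those details --- the verification that $\cS_n$ enforces \eqref{eq:kinetic_coupling_idea} at each step, the iteration in the $\normga{\cdot}$ norm, and the conversion via \eqref{equiv:norms:b}.
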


We also recall additional notions of distances in the space of Borel
probability measures on a given complete metric space $(X,d)$, denoted
$\Pr(X)$, with the associated Borel $\sigma$-algebra denoted as
$\mB(X)$. Namely, the \textit{total variation} distance is defined as
\begin{align}
  \tv{\nu - \tilde{\nu}} 
  := \sup_{A \in \mathcal{B}(X)} | \nu(A) - \tilde{\nu}(A)|
  \label{def:tv:dist}
\end{align}
for any $\nu, \tilde{\nu} \in \Pr(X)$. On the other hand when
$\tilde{\nu} \ll \nu$, i.e. when $\tilde{\nu}$ is absolutely
continuous with respect to $\nu$, the \textit{Kullback-Leibler
  Divergence} is defined as
\begin{align}
  \KL (\tilde{\nu} | \nu) :=
  \int_{X} \log\left( \frac{d \tilde{\nu}}{d \nu}(\npV) \right) 
           d \tilde{\nu}(d \npV).
  \label{eq:KL:div}
\end{align}
Recall that for the trivial metric
\begin{align*}
  \rho_0(\bq, \btq) :=
  \begin{cases}
    1&\text{ if } \bq \not =  \btq\\
    0&\text{ if } \bq = \btq,
  \end{cases}
\end{align*}
the associated Wasserstein distance $\Wass_{\rho_0}$ coincides with
the total variation distance. On the other hand, Pinsker's inequality
(see e.g. \cite{Tsybakov2009}) states that
\begin{align}
  \tv{\nu - \tilde{\nu} } \leq \sqrt{\frac{1}{2} \KL (\tilde{\nu} | \nu)},
  \label{ineq:TV:KL:1}
\end{align}
for any $\nu, \tilde \nu \in \Pr(X)$, $\tilde{\nu} \ll \nu$. Moreover,
as showed e.g. in \cite[Appendix]{butkovsky2018generalized},
\begin{align}
    \tv{\nu - \tilde{\nu}} \leq 1 
  - \frac{1}{2} \exp\left( - \KL(\tilde \nu | \nu ) \right)
    \label{ineq:TV:KL:2}
\end{align}
for all $\nu, \tilde \nu \in \Pr(X)$, $\tilde{\nu} \ll \nu$.

\begin{proof}[Proof of \cref{prop:local_contr}]
  Fix any $\bq_0, \btq_0 \in \H_{\gamma}$ such that
  $\rhoep( \bq_0, \btq_0) < 1$. Then, recalling the notation
  \eqref{eq:Shifted:MK} and using that $\rho$ is a metric on $\H$ we have
\begin{align}
        \label{Wass:triang:ineq}
  \Wass_{\rhoep}(P^n(\bq_0, \cdot), P^n(\btq_0, \cdot))
  \leq \Wass_{\rhoep}(P^n(\bq_0, \cdot), \widetilde{P}^n(\bq_0,\btq_0, \cdot)) 
  +  \Wass_{\rhoep}( \widetilde{P}^n(\bq_0,\btq_0, \cdot), P^n(\btq_0, \cdot)).
\end{align}
Notice that 
\begin{align}
  \Wass_{\rhoep}(P^n(\bq_0, \cdot), \widetilde{P}^n(\bq_0,\btq_0, \cdot)) 
  \leq& \bE \rhoep(Q_n(\bq_0), \tQ_n(\bq_0, \btq_0))
  \leq \frac{1}{\varepsilon}
          \bE \nga{Q_n(\bq_0) - \tQ_n(\bq_0, \btq_0)}
  \notag\\
  \leq& \frac{\smb}{\varepsilon} \nga{\bq_0 - \btq_0}
  = \smb \rhoep(\bq_0,  \btq_0),
  \label{Wass:P:tP}
\end{align}
where the last inequality follows from Lemma \ref{lem:contract}.  

For the second term in \eqref{Wass:triang:ineq}, it follows from the
coupling lemma (see e.g. \cite[Lemma 1.2.24]{kuksin2012mathematics})
and the fact that $\rho \leq 1$ that
\begin{align}
  \Wass_{\rhoep}( \widetilde{P}^n(\bq_0,\btq_0, \cdot), P^n(\btq_0, \cdot)) 
  \leq  \tv{\widetilde{P}^n(\bq_0,\btq_0, \cdot) - P^n(\btq_0, \cdot)}. 
  \label{Wass:tP:P:0a}
\end{align}
From \eqref{def:Pn:Qn} and \eqref{eq:tPn}, we have
\begin{align*}
  \tv{\widetilde{P}^n(\bq_0,\btq_0, \cdot) - P^n(\btq_0, \cdot)} 
  = \tv{Q_n(\btq_0)^\ast \tsig_n - Q_n(\btq_0)^\ast \sigma_n}.
\end{align*}
Moreover, from the definition of the total variation distance in
\eqref{def:tv:dist} and inequality \eqref{ineq:TV:KL:1}, we infer
\begin{align}
  \tv{Q_n(\btq_0)^\ast \tsig_n - Q_n(\btq_0)^\ast \sigma_n}
  \leq \tv{\tsig_n - \sigma_n}
  \leq \sqrt{\frac{1}{2} \KL (\tsig_n | \sigma_n)}.
    \label{Wass:tP:P:0}
\end{align}
As a consequence of Girsanov's Theorem, we obtain
\begin{align}
  \label{Girsanov}
  \frac{d \sigma_n}{d \tsig_n}(\bcR_n(\npV)) 
  = \exp \left( \frac{1}{2} |\cC^{-1/2} \npV|^2 
              - \frac{1}{2} |\cC^{-1/2} \bcR_n(\npV)|^2 \right) 
  \quad \text{ for any }  \npV \in \H_{1/2}^{\otimes n},
\end{align}
with $\bcR_n$ as defined in \eqref{def:bPsi}. Thus,
\begin{align}
  \KL(\tsig_n | \sigma_n) 
  =& \int \log \left( \frac{d \tsig_n}{d \sigma_n}(\npV) \right) \tsig_n(d\npV) 
  = - \int \log \left( \frac{d \sigma_n}{d \tsig_n}(\npV) \right) \tsig_n(d\npV)
  \notag\\
  =& - \int \log \left( \frac{d \sigma_n}{d \tsig_n}(\bcR_n(\npV)) \right) 
          \sigma_n(d \npV) 
  = \int \left( - \frac{1}{2} |\npV|_{1/2}^2 
         + \frac{1}{2} |\bcR_n(\npV)|_{1/2}^2 \right) \sigma_n(d \npV) 
  \notag\\
  =& \int \left( \langle \bcS_n(\npV), \npV \rangle_{1/2} 
             + \frac{1}{2} |\bcS_n(\npV)|_{1/2}^2 \right) \sigma_n(d \npV)
  = \frac{1}{2} \int |\bcS_n(\npV)|_{1/2}^2 \sigma_n(d \npV) 
  \notag\\
  =& \frac{1}{2} \sum_{j=1}^n \bE |\cS_j(\cdot)|_{1/2}^2.
     \label{ineq:KL}
\end{align}
Here note that, taking $\npV = (\bv_1, \ldots, \bv_n)$
and $\npV^j = (\bv_1, \ldots, \bv_j)$ for $j \leq n$
we have
\begin{align*}
  \int \langle \bcS_n(\npV), \npV \rangle_{1/2} \sigma_n(d \npV)
  =& \sum_{j = 1}^n \int \langle \cS_j(\npV^{j-1}), \bv_j \rangle_{1/2} \sigma_n(d \npV)\\
  =& \sum_{j = 1}^n \int 
         \int \langle \cS_j(\npV^{j-1}), \bv_j \rangle_{1/2} \sigma(d \bv_j)
             \sigma_{j-1}(d\npV^{j-1})
  = 0,
\end{align*}
which justifies dropping this term in \eqref{ineq:KL}.  Now, from the
definition of $\cS_j$ in \eqref{def:Phin}, \eqref{eq:Inv:Poincare}
in \cref{lem:ineq:C} and \eqref{equiv:norms:b} it follows that
\begin{align*}
  |\cS_j(\npV_0^{j-1})|_{1/2}^2 
  \leq& \lambda_N^{-1 + 2 \gamma} \nga{\cS_j(\npV_0^{j-1})}^2 
  \leq \lambda_N^{-1 + 2 \gamma} \normga{\cS_j(\npV_0^{j-1})}^2 
  \leq T^{-2} \lambda_N^{-1 + 2 \gamma} \sma^{2(j-1)} \normga{\bq_0 - \btq_0}^2 \\ 
  \leq& T^{-2} \lambda_N^{-1 + 2 \gamma} \sma^{2(j-1)} 2 \alpha^2 \nga{\bq_0 - \btq_0}^2,
\end{align*}
for each $j \geq 1$, with $\alpha$ as defined in
\eqref{eq:alpha:lip:def}. Therefore,
\begin{align}
  \KL(\tsig_n | \sigma_n) 
  \leq \frac{\lambda_N^{-1 + 2 \gamma} \alpha^2}{T^2} \nga{\bq_0 - \btq_0}^2 \sum_{j=1}^n \sma^{2(j-1)} 
  \leq  \frac{\lambda_N^{-1 + 2 \gamma} \alpha^2}{T^2 (1 - \sma^2)}  \nga{\bq_0 - \btq_0}^2,
          \label{eq:D:KL:diff}
\end{align}
so that, combining this observation with \eqref{Wass:tP:P:0a}-\eqref{Wass:tP:P:0}, and our standing
assumption that $\rho(\bq_0, \btq_0) < 1$,
\begin{align}
\label{Wass:tP:P}
  \Wass_{\rhoep}(\widetilde{P}^n(\bq_0,\btq_0, \cdot), P^n(\btq_0, \cdot)) 
    \leq \frac{\lambda_N^{-\frac{1}{2} + \gamma} \alpha }
              {\sqrt{2} T (1 - \sma^2)^{1/2}} \nga{\bq_0 - \btq_0} 
     = \frac{\lambda_N^{-\frac{1}{2} + \gamma} \alpha \varepsilon }{\sqrt{2} T (1 - \sma^2)^{1/2}} 
                        \rhoep(\bq_0, \btq_0).
\end{align}
We therefore conclude \eqref{eq:rho:contract} from
\eqref{Wass:triang:ineq}, \eqref{Wass:P:tP} and \eqref{Wass:tP:P},
completing the proof of \cref{prop:local_contr}.
\end{proof}

\begin{proof}[Proof of \cref{prop:smallness}]
  We proceed similarly as in the proof of Proposition
  \ref{prop:local_contr} starting with the splitting
  \eqref{Wass:triang:ineq}. Fix any $\bq_0, \btq_0 \in A$. The first
  term after inequality \eqref{Wass:triang:ineq} is estimated
  exactly as in \eqref{Wass:P:tP}, so that 
  \begin{align*}
  \Wass_{\rhoep}(P^n(\bq_0, \cdot), \widetilde{P}^n(\bq_0,\btq_0, \cdot)) 
    \leq \frac{\smb}{\varepsilon} \nga{\bq_0 - \btq_0}
    \leq \frac{2 M \smb }{\varepsilon}.
  \end{align*}
  The second term in  \eqref{Wass:triang:ineq} is estimated by using \eqref{ineq:TV:KL:2} and \eqref{eq:D:KL:diff} as 
  \begin{align*}
    \Wass_{\rhoep}(\widetilde{P}^n(\bq_0,\btq_0, \cdot), P^n(\btq_0, \cdot)) 
     \leq& \tv{\tsig_n - \sigma_n} 
     \leq 1 - \frac{1}{2} \exp \left( - \KL(\tsig_n | \sigma_n) \right) \\
     \leq& 1 - \frac{1}{2} \exp \left( - \frac{\lambda_N^{-1 + 2 \gamma} \alpha^2}{T^2 (1 - \sma^2)}  
              \nga{\bq_0 - \btq_0}^2 \right),
 \end{align*}
 with $\alpha$ as defined in \eqref{eq:alpha:lip:def}. Hence, together
 with \eqref{Wass:triang:ineq} and using that $\bq_0, \btq_0 \in A$,
 we conclude \eqref{Wass:small}.
\end{proof}

\section{Main Result}
\label{sec:Proof:Main}

Having obtained in the previous sections a Foster-Lyapunov structure
\eqref{ineq:Lyap} together with the smallness and contractivity
properties \eqref{eq:rho:contract}-\eqref{Wass:small} for the Markov
kernel $P$ in \eqref{eq:HMC:kernel:overview}, we are now ready to
proceed with the proof of our main result. As pointed out in the
introduction, the spectral gap \eqref{Wass:trhoe} below follows as a
consequence of the weak Harris theorem given the aforementioned
properties.

We provide a self-contained presentation of the weak Harris approach
in this section both for completeness and in order to make some of the
constants in the proof more explicit. We start by noticing that it is
enough to show \eqref{Wass:trhoe} for $\nu_1, \nu_2$ being Dirac
measures, say concentrated at points $\bq_0, \btq_0 \in \H_\gamma$.
The proof is then split into three possible cases for such points:
$\rho(\bq_0, \btq_0) < 1$ (`close to each other');
$\rho(\bq_0, \btq_0) = 1$ with $V(\bq_0) + V(\btq_0) > 4 K_V$ (`far
from the origin'); and $\rho(\bq_0, \btq_0) = 1$ with
$V(\bq_0) + V(\btq_0) \leq 4 K_V$ (`close to the origin'). The first
case follows from the contraction result in \cref{prop:local_contr}
together with the Lyapunov structure from \cref{prop:FL}. The second
case follows entirely from the Lyapunov property. Lastly, the third
case follows by invoking the smallness result in \cref{prop:smallness}
as well as the Lyapunov structure. Finally, the second part of our
main result, namely \eqref{ineq:obs:2}-\eqref{main:thm:CLT}, follows
essentially from the spectral gap \eqref{Wass:trhoe} by invoking
\cref{lem:Kantor:dual}, \cref{prop:CLT:LLN} and \cref{prop:Lip:2:obs},
which are all proved in detail in \cref{sec:LLN:CLT}.

\begin{Thm}\label{thm:weak:harris}
  Fix $\gamma \in [0,1/2)$. Suppose \cref{A123},
  \cref{ass:higher:reg:C}, \cref{B123} and
  \cref{ass:integrability:cond} are satisfied and choose an
  integration time $T >0$ such that
  \begin{align}
    T \leq  \min \left\{ \frac{1}{[2(1 + \lambda_1^{1 - 2\gamma}\Ua)]^{1/2}},
    \frac{\Ub^{1/2}}{2\sqrt{6} (1 + \lambda_1^{1 - 2\gamma} \Ua)} \right\}.
	\label{eq:time:restrict:basic}
  \end{align}
  Here the constants $L_1, L_2$ are as in \eqref{Hess:bound} and
  \eqref{dissip:new} and $\lambda_1$ is the largest eigenvalue of the
  covariance operator $\cC$ defined as in \cref{A123}.  Let
  $V: \H_\gamma \to \mathbb{R}^+$ be a Lyapunov function for the
  Markov kernel $P$ defined in \eqref{eq:PEHMC:kernel:def} of the form
  \eqref{eq:FL:1} or \eqref{eq:FL:2}. Then, there exists
  $\varepsilon > 0$, $C_1 > 0$ and $C_2 > 0$ such that, for every
  $\nu_1, \nu_2 \in \Pr(\H)$ with support included in $\H_{\gamma}$,
  \begin{align}\label{Wass:trhoe}
    \Wass_{\trhoe} (\nu_1 P^n, \nu_2 P^n) \leq
    C_1 e^{-C_2 n} \Wass_{\trhoe}(\nu_1, \nu_2) 
    \quad \mbox{ for all } n \in \mathbb{N},
  \end{align}
  where $\trhoe: \H_{\gamma} \times \H_{\gamma} \to \RR^+$ is the
  distance-like function given by
  \begin{align*}
    \trhoe(\bq, \tilde \bq) =
    \sqrt{\rhoep(\bq,\tilde \bq) (1 + V(\bq) + V(\tilde \bq))}
    \quad \mbox{ for all } \bq, \tilde \bq \in \H_{\gamma}, 
  \end{align*}
  with $\rhoep$ as defined in \eqref{eq:rho:def}.

  Moreover, with respect to $\mu$ defined in \eqref{eq:mu}, i.e. the
  invariant measure for $P$ (cf. \cref{eq:invariance}), the following
  results hold: for any observable $\Phi: \H_\gamma \to \RR$ such that
  \begin{align}\label{main:thm:Lip:const}
    L_\Phi :=        \sup_{q \in \H_\gamma} 
          \frac{\max\{ 
          2   |\Phi(\bq)|, \sqrt{\varepsilon} |D \Phi(\bq)|_{\mL(\H_\gamma)}
          \}}{\sqrt{1+ V(\bq)}} < \infty,
  \end{align}
  with $|\cdot|_{\mL(\H_\gamma)}$ denoting the standard operator
  norm of a linear functional on $\H_\gamma$, we have
  \begin{align}
          \left| P^n \Phi(\bq) - \int \Phi(\bq') \mu(dq') \right|
          \leq L_\Phi C_1 e^{-n C_2} \int \sqrt{1 + V(\bq) + V(\bq')} \mu(d \bq'),
          \label{ineq:obs:2}
  \end{align}
  for every $n \in \mathbb{N}$ and $\bq \in \H_\gamma$.  On the other
  hand, taking $\{Q_k(\bq_0)\}_{k \geq 0}$ to be any process associated
  to $\{P^k(\bq_0, \cdot)\}_{k \geq 0}$ as in \eqref{def:Pn:Qn},
  we have, for any measurable observable maintaining \eqref{main:thm:Lip:const},
  that
  \begin{align}\label{main:thm:LLN}
    \lim_{n \to \infty} \frac{1}{n} \sum_{k=1}^n \Phi(Q_k(\bq)) =  \int \Phi(\bq')
          \mu(d\bq'), \quad \text{ almost surely},
  \end{align}
  for all $\bq \in \H_\gamma$. Furthermore,
  \begin{align}\label{main:thm:CLT}
          \sqrt{n} \left[ \frac{1}{n} \sum_{k=1}^n \Phi(Q_k(\bq))
                              -  \int \Phi(\bq') \mu (d \bq')) \right]
          \Rightarrow \mN(0, \sigma^2(\Phi))
          \quad  \mbox{ as } n \to \infty,
  \end{align}
  for all $\bq \in \H_\gamma$, i.e. the expression in the left-hand
  side of \eqref{main:thm:CLT} converges weakly to a real-valued
  gaussian random variable with mean zero and covariance
  $\sigma^2(\Phi)$, where $\sigma^2(\Phi)$ is specified explicitly as
  \eqref{eq:CLT:sig:def} below, with $\mu^*$ replaced by $\mu$.
\end{Thm}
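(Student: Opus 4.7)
The plan is to establish the Wasserstein contraction \eqref{Wass:trhoe} first, since the observable bound \eqref{ineq:obs:2} and the LLN/CLT statements \eqref{main:thm:LLN}-\eqref{main:thm:CLT} will follow from \eqref{Wass:trhoe} together with the Lyapunov structure through the general machinery already referenced in the statement (Kantorovich duality and the martingale arguments developed in \cref{sec:LLN:CLT}). For the contraction itself, by the coupling/superposition characterization of $\Wass_{\tilde\rho}$ it suffices to prove the pointwise bound
\begin{align*}
  \Wass_{\tilde\rho}(P^n(\bq_0,\cdot), P^n(\btq_0,\cdot)) \le C_1 e^{-C_2 n} \tilde\rho(\bq_0,\btq_0)
\end{align*}
for all $\bq_0,\btq_0 \in \H_\gamma$, and then integrate against an optimal coupling of $\nu_1,\nu_2$. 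I would fix an integer $n_0$ (to be chosen large enough to absorb the various constants) and work throughout with the iterate $P^{n_0}$, establishing an explicit one-step contraction in $\tilde\rho$, then iterating.

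The pointwise bound splits naturally into three regimes based on the value of $\tilde\rho(\bq_0,\btq_0)$. \textbf{Case 1:} $\rho(\bq_0,\btq_0) < 1$. Here I apply \cref{prop:local_contr} to control $\Wass_\rho$, and I use \cref{prop:FL} to bound the Lyapunov contribution $\E(V(Q_n(\bq_0)) + V(Q_n(\btq_0)))$ by $\kappa_V^n(V(\bq_0)+V(\btq_0)) + 2K_V$. Combining via Cauchy-Schwarz against $\tilde\rho$'s square-root structure gives a contraction factor of the form $\sqrt{\kappa_3(n_0)(1 + \kappa_V^{n_0})} + (\text{Lyapunov constant term})$, and the tuning parameters $\varepsilon$ and $n_0$ must be chosen so this is strictly less than $1$. \textbf{Case 2:} $\rho(\bq_0,\btq_0)=1$ and $V(\bq_0)+V(\btq_0) > 4 K_V$. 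Here I simply use the Lyapunov bound \eqref{ineq:Lyap}, together with the trivial bound $\rho \le 1$ and $\Wass_{\tilde\rho} \le \sqrt{\Wass_\rho \cdot (1 + P^nV(\bq_0) + P^nV(\btq_0))}$, using the `far from origin' condition $V(\bq_0)+V(\btq_0) > 4K_V$ to absorb the additive Lyapunov constant into the multiplicative factor $\kappa_V^n$, yielding a contraction. \textbf{Case 3:} $\rho(\bq_0,\btq_0)=1$ and $V(\bq_0)+V(\btq_0) \le 4K_V$. Here I apply \cref{prop:smallness} on the sublevel set $A = \{V \le 4K_V\}$ (which is bounded in $|\cdot|_\gamma$) to get $\Wass_\rho(P^{n_0}(\bq_0,\cdot),P^{n_0}(\btq_0,\cdot)) \le 1 - \kappa_4$, and pair this with the Lyapunov bound on the second factor of $\tilde\rho$, choosing $n_0$ large enough that $\kappa_4$ dominates.

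Once these three cases are assembled into a pointwise contraction in $\tilde\rho$ for $P^{n_0}$, iteration yields \eqref{Wass:trhoe} with explicit $C_1, C_2$. For the observable bound \eqref{ineq:obs:2} I would invoke the Kantorovich duality characterization (\cref{lem:Kantor:dual}) identifying the dual norm of $\Wass_{\tilde\rho}$ with the Lipschitz-like class encoded by \eqref{main:thm:Lip:const}, applied to the pair $(\delta_{\bq} P^n, \mu P^n = \mu)$ and integrated against $\mu$ on the right. The SLLN and CLT \eqref{main:thm:LLN}-\eqref{main:thm:CLT} then follow by appealing to \cref{prop:CLT:LLN}, whose hypotheses --- geometric ergodicity in Wasserstein together with a Lyapunov bound --- have by then been verified.

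The main obstacles I anticipate are twofold. First, choosing the free parameters in a compatible manner: the smallness parameter $\varepsilon$ entering $\rho$ controls both the cost-of-control term in \cref{prop:local_contr} (forcing $\varepsilon$ small) and the smallness constant $\kappa_4$ in \cref{prop:smallness} (which degrades as $\varepsilon$ shrinks because the $2M\kappa_2(n)/\varepsilon$ term blows up); meanwhile $n_0$ must be taken large enough to make $\kappa_2(n_0) = \sqrt{2}\alpha\kappa_1^{n_0}$ small. The resolution is to pick $n_0$ first (depending on $L_1, L_2, T, \cC$), then $\varepsilon$ small enough to balance Case 1, then verify Case 3 still delivers a contraction for this $n_0$ by exploiting the decay of $\kappa_2(n_0)$. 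Second, passing from the pointwise bound to the Wasserstein bound on general $\nu_1,\nu_2$ requires integrating over a coupling while keeping control of the square-root structure in $\tilde\rho$, which is handled cleanly via Jensen's inequality and the concavity of $\sqrt{\cdot}$ along the lines of \cite{hairer2011asymptotic}.
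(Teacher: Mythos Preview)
Your overall architecture matches the paper's proof closely: reduce to the pointwise estimate, split into the same three cases, invoke \cref{prop:local_contr}, \cref{prop:smallness} and \cref{prop:FL} respectively, and then defer \eqref{ineq:obs:2}--\eqref{main:thm:CLT} to the appendix results. That part is fine.

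There is, however, a genuine gap in how you close Case~3 (and, correspondingly, in how you combine the three cases). You propose to work directly with $\tilde\rho$ and to ``choose $n_0$ large enough that $\kappa_4$ dominates.'' This cannot succeed. In Case~3 the Lyapunov bound gives
\[
  1 + P^{n_0}V(\bq_0) + P^{n_0}V(\btq_0) \le 1 + \kappa_V^{n_0}\bigl(V(\bq_0)+V(\btq_0)\bigr) + 2K_V,
\]
whose infimum over $n_0$ is $1 + 2K_V$, not $1$. Pairing this with $\Wass_\rho \le 1 - \kappa_4$ yields a factor $(1-\kappa_4)(1+2K_V)$, and since $\kappa_4 \le \tfrac12$ by its definition, this product is $\ge 1$ as soon as $K_V \ge \tfrac12$. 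The additive constant $K_V$ is fixed by the dynamics and does not shrink with $n_0$ or $\varepsilon$; no choice of $n_0$ makes $\kappa_4$ ``dominate'' it. The same issue contaminates your Case~1, where you write ``$+ (\text{Lyapunov constant term})$'' without a mechanism to make that term small.

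The paper's remedy is to introduce an auxiliary distance $\tilde\rho_\beta(\bq,\btq) = \sqrt{\rho(\bq,\btq)(1+\beta V(\bq)+\beta V(\btq))}$ with a further free parameter $\beta>0$, prove the three-case contraction for $\tilde\rho_\beta$, and only at the end convert back to $\tilde\rho$ via the equivalence $\min\{1,\beta\}^{1/2}\tilde\rho \le \tilde\rho_\beta \le \max\{1,\beta\}^{1/2}\tilde\rho$. The point is that in Case~3 the dangerous factor becomes $(1-\kappa_4)(1+O(\beta K_V))$, which \emph{is} strictly less than $1$ once $\beta$ is taken small enough relative to $K_V$ and $\kappa_4$ (see \eqref{cond:eps:beta}). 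Similarly in Case~1 the factor becomes $\kappa_3(1+2\beta K_V)$, and in Case~2 one obtains $\max\{(1+2\beta K_V)/(1+3\beta K_V),\,4\kappa_V^n\}$. Your parameter-selection discussion (``pick $n_0$ first, then $\varepsilon$'') is on the right track but incomplete: you must also pick $\beta$, and the order in the paper is essentially to fix $\varepsilon$ and $\beta$ via \eqref{cond:eps:beta} and then $n_0$ via \eqref{cond:n0}.
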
	
\begin{proof}
  We claim it suffices to show that there exists $\varepsilon > 0$,
  $C_1 > 0$ and $C_2 > 0$ such that
  \begin{align}\label{Wass:points}
    \Wass_{\trhoe}(P^n(\bq_0,\cdot), P^n(\tilde \bq_0,\cdot))
    \leq C_1 e^{-C_2 n} \trhoe(\bq_0, \tilde \bq_0)
    \quad \mbox{ for all } \bq_0, \tilde \bq_0 \in \H_{\gamma}
    \mbox{ and } n \in \mathbb{N}.
  \end{align}
  Indeed, since $\trhoe$ is lower-semicontinuous and non-negative, it
  follows from \cite[Theorem 4.8]{villani2008optimal} that
  \begin{align*}
    \Wass_{\trhoe}(\nu_1 P^n, \nu_2 P^n ) 
    \leq \int \Wass_{\trhoe}(P^n(\bq_0,\cdot), P^n( \tilde \bq_0, \cdot)) 
                 \Gamma (d \bq_0, d \tilde \bq_0) 
       \quad \mbox{ for all } 
       \Gamma \in \Co(\nu_1 , \nu_2) \mbox{ and } n \in \mathbb{N}.
  \end{align*}
  Clearly, if $\nu_1$ and $\nu_2$ have supports included in
  $\H_{\gamma}$, then $\Gamma \in \Co(\nu_1 , \nu_2)$ has support
  included in $\H_\gamma \times \H_\gamma$. Hence, if
  \eqref{Wass:points} holds then
  \begin{align}\label{Wass:points:2}
    \Wass_{\trhoe}(\nu_1 P^n, \nu_2 P^n) 
    \leq  C_1 e^{-C_2 n} \int \trhoe(\bq_0, \tilde \bq_0) 
         \Gamma(d \bq_0, d \tilde \bq_0) 
    \quad \mbox{ for all } \Gamma \in \Co(\nu_1, \nu_2) 
    \mbox{ and } n \in \mathbb{N},
  \end{align}
  which implies \eqref{Wass:trhoe}.
	
  In order to show \eqref{Wass:points}, we consider an auxiliary
  metric defined as
  \begin{align*}
    \trhob(q, \tq)
    = \sqrt{\rho(\bq,\tilde \bq) (1 + \beta V(\bq) + \beta V(\tilde \bq)},
          \quad \mbox{ for all } \bq, \tilde \bq \in \H_{\gamma},
  \end{align*}
  with the additional parameter $\beta > 0$ to be appropriately chosen
  below; cf. \eqref{cond:eps:beta}. Notice that $\trhoe$ and $\trhob$
  are equivalent. Indeed,
  \begin{align}\label{equiv:trhoe:trhob}
    \left( \min\{1,\beta\}\right)^{1/2} \trhoe(\bq,\tilde \bq) 
    \leq \trhob(\bq, \tilde \bq) 
    \leq \left( \max\{1,\beta\} \right)^{1/2} \trhoe(\bq, \tilde \bq), 
    \quad \mbox{ for all } \bq, \tilde \bq \in \H_{\gamma}.
  \end{align}
  We now show that 
  \begin{align}\label{Wass:trhob}
    \Wass_{\trhob}(P^n(\bq_0, \cdot), P^n(\tilde \bq_0, \cdot)) 
    \leq \sme(n) \trhob(\bq_0, \tilde \bq_0) 
    \quad \mbox{ for all } n \geq 1 
    \mbox{ and } \bq_0, \tilde \bq_0 \in \H_{\gamma},
  \end{align}
  such that, for suitably chosen $\varepsilon > 0$, $\beta > 0$, and
  for $n_0 \in \mathbb{N}$ sufficiently large we have
  $\sme(n) < 1$ for every $n \geq n_0$.  We then subsequently use this
  bound to establish \eqref{Wass:points} as in \eqref{def:C1:C2} below.

  The analysis leading to \eqref{Wass:trhob} is split into three cases:
  
  \noindent\textit{\underline{Case 1:} Suppose that
  $\rhoep(\bq_0, \tilde \bq_0) < 1$, so that
  $\rhoep(\bq_0, \tilde \bq_0) = \nga{\bq_0 - \tilde \bq_0}
  \varepsilon^{-1}$.}  

  By H\"older's inequality, we obtain
  \begin{align}\label{Wass:Holder}
    \Wass_{\trhob}(P^n(\bq_0, \cdot), P^n(\tilde \bq_0, \cdot))^2
    &\leq \inf_{\Gamma \in \Co(\delta_{\bq_0} P^n,\delta_{\tilde \bq_0} P^n)}
      \left\{ \!
      \left( \int \rhoep(\bq, \tilde \bq) \Gamma(d\bq, d\tilde \bq) \right)
      \! \!
      \left( \int (1 + \beta V(\bq) + \beta V(\tilde \bq)) 
             \Gamma(d\bq, d\tilde \bq) \right) 
      \! \right\} \nonumber\\
    &= \left( 1 + \beta P^n V(\bq_0) + \beta P^n V(\tilde \bq_0)\right) 
      \Wass_{\rhoep}(P^n(\bq_0, \cdot), P^n(\tilde \bq_0, \cdot)).
  \end{align}
  From \cref{prop:FL}, and \cref{prop:local_contr}, it
  follows that
  \begin{align}\label{Wass:case:1}
    \Wass_{\trhob}(P^n(\bq_0, \cdot), P^n(\tilde \bq_0, \cdot))^2
    &\leq \left( 1 + \beta \kappa_V^n V(\bq_0) 
                   + \beta \kappa_V^n V(\tilde \bq_0) 
                   + 2 \beta K_V \right) 
          \smc \rhoep(\bq_0, \tilde \bq_0)\nonumber \\
    &\leq \left( 1 + \beta V(\bq_0) 
                   + \beta V(\tilde \bq_0) 
                   + 2 \beta K_V \right) 
          \smc \rhoep(\bq_0, \tilde \bq_0) \nonumber \\
    &\leq  \smc (1 + 2 \beta K_V) 
           \left( 1 + \beta V(\bq_0) 
                    + \beta V(\tilde \bq_0) \right) 
      \rhoep(\bq_0, \tilde \bq_0) \nonumber \\
    &= \smc (1 + 2 \beta K_V) \left( \trhob(\bq_0, \tilde \bq_0) \right)^2.
  \end{align}
  
	\noindent\textit{\underline{Case 2:} Suppose that $\rhoep(\bq_0, \tilde \bq_0) = 1$ and $V(\bq_0) + V(\tilde \bq_0) > 4 K_V$.}
	
	Since $\rhoep(\cdot, \cdot) \leq 1$ and again invoking \cref{prop:FL} we obtain
	\begin{align}\label{Wass:case:2}
		\Wass_{\trhob}(P^n(\bq_0, \cdot), P^n(\tilde \bq_0, \cdot))^2 
		&\leq 1 + \beta P^n V(\bq_0) + \beta P^n V(\tilde \bq_0) \notag \\
		&\leq 1 + \beta \kappa_V^n V(\bq_0) + \beta \kappa_V^n V(\tilde \bq_0) + 2 \beta K_V \notag \\
		&= \frac{1 + 2 \beta K_V}{1 + 3 \beta K_V} (1 + 3 \beta K_V) + \kappa_V^n \beta ( V(\bq_0) + V(\tilde \bq_0)) \notag \\
		&\leq \max \left\{ \frac{1 + 2 \beta K_V}{1 + 3 \beta K_V}, 4 \kappa_V^n \right\} \left( 1 + 3 \beta K_V + \frac{\beta}{4} (V(\bq_0) + V(\tilde \bq_0)) \right) \notag \\
		&< \max \left\{ \frac{1 + 2 \beta K_V}{1 + 3 \beta K_V}, 4 \kappa_V^n \right\} \left( 1 + \beta V(\bq_0) + \beta V(\tilde \bq_0) \right) \notag\\
		&= \max \left\{ \frac{1 + 2 \beta K_V}{1 + 3 \beta K_V}, 4 \kappa_V^n \right\} \left( \trhob(\bq_0, \tilde \bq_0) \right)^2.
	\end{align}
	
	\noindent\textit{\underline{Case 3:} Suppose that
          $\rhoep(\bq_0, \tilde \bq_0) = 1$ and
          $V(\bq_0) + V(\tilde \bq_0) \leq 4 K_V$. }
	
	We proceed as in \eqref{Wass:Holder}, but now use Proposition
        \ref{prop:smallness} to estimate the term
        $\Wass_{\rhoep}(P^n(\bq_0, \cdot), P^n(\tilde \bq_0,
        \cdot))$. First, let $M_V > 0 $ be such that
	\begin{align*}
          \left\{ \bq \in \H_{\gamma} \,:\, V(\bq) \leq 4 K_V \right\} 
             = \left\{ \bq \in \H_{\gamma} \,:\, \nga{\bq} \leq M_V \right\}.
	\end{align*}
	Notice that the specific definition of $M_V$ depends on the
        choice of Lyapunov function $V$ (which defines the constant $K_V$,
        cf. \eqref{eq:FL:1}-\eqref{eq:FL:2}).  Thus, for any
        $\bq_0, \tilde \bq_0 \in \left\{ \bq \in \H_{\gamma} \,:\,
          V(\bq) \leq 4 K_V \right\}$
        from Proposition \ref{prop:smallness}, it follows that
	\begin{align*}
          \Wass_{\rhoep}(P^n(\bq_0, \cdot), P^n(\tilde \bq_0, \cdot)) \leq 1 - \smd,
	\end{align*}
	where
        \begin{align}\label{def:smd}
          \smd = \smd(n) 
              := \frac{1}{2} 
                \exp \left( - \frac{ 16 \Ua \alpha^2 M_V^2}{T^2 (1 - \sma^2)} \right) 
                             - \frac{2 M_V \smb(n)}{\varepsilon},
	\end{align}
	with $\sma$ and $\smb$ as defined in \eqref{def:sma:smb} and
        $\alpha = 4 (1 + \lambda_1^{1 - 2 \gamma} \Ua)$
        (cf. \eqref{eq:alpha:lip:def}). Hence,
	\begin{align}\label{Wass:case:3}
		\Wass_{\trhob}(P^n(\bq_0, \cdot), P^n(\tilde \bq_0, \cdot))^2 
		&\leq (1 - \smd) 
                    \left( 1 + \beta \kappa_V^n 
                       \left( V(\bq_0) + V(\tilde \bq_0) \right) + 2 \beta K_V\right) \nonumber \\
		&\leq (1 - \smd) (1 + 2 (1 + 2 \kappa_V^n) \beta K_V) \nonumber \\
		&\leq (1 - \smd)  (1 + 2 (1 + 2 \kappa_V^n) \beta K_V) 
                         \left( \trhob(\bq_0, \tilde \bq_0) \right)^2.
	\end{align}
	
	From \eqref{Wass:case:1}, \eqref{Wass:case:2} and
        \eqref{Wass:case:3}, we now obtain the bound \eqref{Wass:trhob} with
        $\sme = \sme(n)$ defined as
	\begin{align}
           \biggl(
          \max \biggl\{ (1 + 2 \beta K_V) \smc(n),
          \max
          \left\{\frac{1 + 2\beta K_V}{1 + 3 \beta K_V}, 4
            \kappa_V^n\right\},
          (1 - \smd(n)) (1 + 2 (1 + 2 \kappa_V^n) \beta K_V)
            \biggr\} \biggr)^{1/2}.
            \label{def:C3}
	\end{align}
	We claim that if we now choose $\varepsilon > 0 $, $\beta > 0$
        satisfying
	\begin{align}\label{cond:eps:beta}
	\varepsilon \leq  \frac{T(1 - \sma^2)^{1/2}}{8 \sqrt{2} \alpha \Ua^{1/2}} 
	\quad \mbox{ and } \quad
	\beta \leq \frac{1}{12 K_V} 
          \exp \left(  -  \frac{ 16 \Ua \alpha^2 M_V^2}{T^2 (1 - \sma^2)} \right),
	\end{align}
	and $n_0 \in \mathbb{N}$ satisfying 
	\begin{align}\label{cond:n0}
          \sma^{n_0} \leq 
              \min \left\{  \frac{1}{4 \sqrt{2} \alpha} , \frac{\varepsilon}{8 \sqrt{2} \alpha M_V} 
                  \exp \left( -  \frac{ 16 \Ua \alpha^2 M_V^2}{T^2 (1 - \sma^2)}  \right) \right\} 
		\quad \mbox{ and } \quad 
          \kappa_V^{n_0} \leq \frac{1}{8}, 
	\end{align}
	then indeed we have 
        \begin{align}\label{def:sme}
          \sme(n) \leq \smeb \leq
          \left(\max \left\{ \frac{1 + 2 \beta K_V}{1 + 3 \beta K_V}, 
                1 - \frac{1}{16} \exp \left( -  \frac{ 32 \Ua \alpha^2 M_V^2}
                        {T^2 (1 - \sma^2)} \right) \right\}  \right)^{1/2} < 1 
          \quad \mbox{ for all } n \geq n_0,
	\end{align}
	as we desired in the estimate \eqref{Wass:trhob}.

        To see this bound in \eqref{def:sme} observe that since
        $\sma^{n_0} \leq (4 \sqrt{2} \alpha)^{-1}$ and $\varepsilon$
        satisfies the first inequality in \eqref{cond:eps:beta}, then
        it follows from the definitions of $\smb$ and $\smc$ in
        \eqref{def:sma:smb} and \eqref{def:smc}, respectively, that
	\begin{align}\label{ineq:smc}
	\smb(n) \leq \frac{1}{4} \quad \mbox{ and } 
          \quad \smc(n) \leq \frac{3}{8} \quad \mbox{ for all } n \geq n_0.
	\end{align}
	From \eqref{cond:eps:beta}, we have in particular that
        $\beta \leq (12 K_V)^{-1}$. Together with \eqref{ineq:smc},
        this yields
	\begin{align}\label{ineq:first:C3}
	(1 + 2 \beta K_V) \smc(n) \leq \frac{1}{2} 
          \quad \mbox{ for all } n \geq n_0.
	\end{align}
	Moreover, since $\kappa_V^{n_0} \leq 1/8$, then
	\begin{align}\label{ineq:second:C3}
	\max \left\{\frac{1 + 2\beta K_V}{1 + 3 \beta K_V}, 4 \kappa_V^n \right\} 
          \leq \max \left\{\frac{1 + 2\beta K_V}{1 + 3 \beta K_V}, \frac{1}{2} \right\} 
          = \frac{1 + 2\beta K_V}{1 + 3 \beta K_V}.
	\end{align}
        Also, from the definition of $\smb$ in \eqref{def:sma:smb} and
        the first condition in \eqref{cond:n0}, it follows that
        $\smd$, defined in \eqref{def:smd}, satisfies
	\begin{align}\label{ineq:smd}
	\smd(n) \geq \frac{1}{4} \exp \left(  -  \frac{ 16 \Ua \alpha^2 M_V^2}{T^2 (1 - \sma^2)} \right) 
          \quad \mbox{ for all } n \geq n_0.
	\end{align}
	Thus, with condition \eqref{cond:eps:beta} on $\beta$, we obtain
	\begin{align}\label{ineq:third:C3}
	(1 - \smd(n))(1 + 3 \beta K_V) \leq 1 
          - \frac{1}{16} \exp \left( -  \frac{ 32 \Ua \alpha^2 M_V^2}{T^2 (1 - \sma^2)} \right) 
          \quad \mbox{ for all } n \geq n_0. 
	\end{align}
	Combining now \eqref{def:C3}, \eqref{ineq:first:C3},
        \eqref{ineq:second:C3} and \eqref{ineq:third:C3} we now 
        conclude \eqref{def:sme}.

	We turn now to show that \eqref{Wass:trhob} implies
        \eqref{Wass:points} and, consequently,
        \eqref{Wass:trhoe}. First note that, by the same arguments as
        in \eqref{Wass:points}-\eqref{Wass:points:2} we have that
        \eqref{Wass:trhob} implies
        $\Wass_{\trhob}(\nu_1 P^n, \nu_2 P^n) \leq \sme
        \Wass_{\trhob}(\nu_1 , \nu_2)$
        for all $n \geq n_0$ and $\nu_1 , \nu_2 \in \Pr(\H_\gamma)$
        with support included in $\H_\gamma$.  Now, for any
        $n \in \mathbb{N}$, we can write $n = m n_0 + k$, for some
        $m, k \in \mathbb{N}$ with $k \leq n_0 - 1$. Thus,
	\begin{align*}
		\Wass_{\trhob} (P^n (\bq_0, \cdot), P^n(\tilde \bq_0, \cdot))
		&= \Wass_{\trhob}(P^{m n_0 + k}(\bq_0, \cdot), P^{m n_0 + k}(\tilde \bq_0, \cdot)) 
		\leq \smeb^m \Wass_{\trhob} (P^k (\bq_0, \cdot), P^k (\tilde \bq_0, \cdot))  \nonumber \\
		& \leq \smeb^m  \sme(k) \trhob(\bq_0, \tilde \bq_0) 
		\leq \smeb^{\frac{n}{n_0} -1} \sme(n_0 - 1) \trhob(\bq_0, \tilde \bq_0),
	\end{align*}
	where in the last inequality we used that $\sme$ is a
        non-increasing function of $n$. Moreover, from the equivalence
        between $\trhoe$ and $\trhob$ in \eqref{equiv:trhoe:trhob}, we
        obtain
	\begin{align*}
          \Wass_{\trhoe}(P^n(\bq_0, \cdot), P^n(\tilde \bq_0, \cdot))
          &\leq \left( \frac{\max\{1, \beta\}}{\min\{1, \beta\}} \right)^{1/2} \smeb^{\frac{n}{n_0} -1}  
              \sme(n_0 - 1) \trhoe (\bq_0, \tilde \bq_0) \nonumber \\
          &\leq  \left( \frac{\max\{1, \beta\}}{\min\{1, \beta\}}\right)^{1/2} 
            \frac{ \sme(n_0 - 1)}{\smeb} \exp\left( n \log \left(\smeb^{\frac{1}{n_0}} \right) \right) \trhoe (\bq_0, \tilde \bq_0) 
            \quad \mbox{ for all } n \in \mathbb{N}.
	\end{align*}
	Therefore, with the constants
	\begin{align}\label{def:C1:C2}
		C_1 := \left( \frac{\max\{1, \beta\}}
                              {\min\{1, \beta\}} \right)^{1/2} 
                     \frac{ \sme(n_0 - 1)}{\smeb}
		\quad \mbox{ and } \quad 
		C_2 := - \log \left( \smeb^{\frac{1}{n_0}} \right),
	\end{align}
	\eqref{Wass:points} and consequently \eqref{Wass:trhoe} are
        now established.

        Finally, the second part of the proof, namely
        \eqref{ineq:obs:2}-\eqref{main:thm:CLT} under assumption
        \eqref{main:thm:Lip:const}, follow as a direct consequence of
        \cref{lem:Kantor:dual} and \cref{prop:CLT:LLN} combined with
        \cref{prop:Lip:2:obs}.
\end{proof}

\section{Implications for the Finite Dimensional Setting}
\label{sec:finite:dim:HMC}

The approach given above can be modified in a straightforward fashion
to provide a novel proof of the ergodicity of the exact HMC algorithm
in finite dimensions.  We detail this connection in this section. We
abuse notation and use the same terminology for the analogous
constants and operators from the infinite-dimensional case introduced
in the previous sections.

We take our phase space to be $\H = \RR^k$, $k \in \mathbb{N}$,
endowed with the Euclidean inner product and norm, which are denoted
by $\langle \cdot, \cdot \rangle$ and $|\cdot|$,
respectively. Similarly to \eqref{eq:mu} above we fix a target
probability measure of the form
\begin{align}\label{eq:mu:FD}
	\mu(d\bq ) \propto \exp( - U(\bq)) \mu_0(d\bq) 
  \quad \text{ with } \mu_0 = \mathcal{N}(0,\cC), 
\end{align}
where $\cC$ is a symmetric strictly positive-definite covariance
matrix.  Here we aim to sample from $\mu$ using the dynamics
\begin{align}
  \frac{d \bq}{d t} = \mM^{-1} \bp
  \quad
  \frac{d \bp}{d t} = -\cC^{-1} \bq - DU(\bq)
  \label{eq:FD:H:dynam}
\end{align}
corresponding to the Hamiltonian
\begin{align}
  H(\bq,\bp)
  = \langle \cC^{-1} \bq, \bq  \rangle + U(\bq) + \langle \mM^{-1} \bp, \bp \rangle,
  \label{eq:FD:Ham}
\end{align}
where $\mM$ is a user-specified `mass matrix' which we suppose to be
symmetric and strictly positive definite; and $U: \RR^k \to \RR$ is a
$C^2$ potential function. Let us denote by $\lM$ and $\LM$ the
smallest and largest eigenvalues of $\mM$. Analogously, let $\lC$ and
$\LC$ be the smallest and largest eigenvalues of $\cC$.

We impose the following conditions on the potential function $U$ (cf. \cref{B123}
above):

\begin{assumption}
  \label{ass:potential:FD}
  \begin{itemize}
  \item[(F1)] There exists a constant $\Ua \geq 0$ such that
    \begin{align} \label{Hess:bound:FD}
             | D^2 U(\bff) | \leq \Ua
             \quad 
            \text{ for any } \bff \in \RR^k.
    \end{align}
  \item[(F2)] There exist constants $\Ub > 0$ and $\Uc \ge 0$ such that
              \begin{align} \label{dissip:FD}
            |\mM^{-1/2} \cC^{-1/2} \bff|^2 + \langle \bff,\mM^{-1} D U(\bff) \rangle \ge \Ub |\mM^{-1/2} \cC^{-1/2} \bff|^2 - \Uc
            \quad  
            \text{ for any } \bff \in \RR^k.
          \end{align}
  \end{itemize}
\end{assumption}
Note that under \eqref{Hess:bound:FD}, $U$ is globally Lipschitz so
that \eqref{eq:FD:H:dynam} yields a well defined dynamical system on
$C^1(\RR, \RR^k)$ as above in \cref{prop:well:posed}. Furthermore,
similarly as in \cref{rmk:B123:Consequences}, we have:
\begin{enumerate}[(i)]
    \item From \eqref{Hess:bound:FD}, it follows that
        \begin{align}\label{bound:DU:FD}
            |DU(\bff)| \leq \Ua |\bff| + \Uz \quad \mbox{ for every } \bff \in \RR^k.
        \end{align}
        where $\Uz = |DU(0)|$.
      \item If $|DU(\bff)| \leq \Ud |\bff| + \Ue$ for some
        $\Ud \in [0, \lM (\LM \LC)^{-1})$ and $\Ue \geq 0$, then
        \eqref{dissip:FD} follows.
    \item Assumptions $(F1)$ and $(F2)$ imply that 
        \begin{align}\label{ineq:tUa:tUb}
            \Ub \leq 1 + \LM \LC \lM^{-1} \Ua.
        \end{align}
\end{enumerate}

Fixing an integration time $T > 0$, and under the given conditions on
$\cC, \mM$ and $U$ in \eqref{eq:FD:H:dynam} we have a well-defined Feller
Markov transition kernel defined as
\begin{align}
  P(\bq_0, A) = \Prb( q_T(\bq_0, \bp_0) \in A)
  \label{eq:finite:dim:TK_1}
\end{align}
for any $\bq_0 \in \RR^k$ and any Borel set $A \subset \RR^k$, where
\begin{align}
  \bp_0 \sim N(0,\mM).
  \label{eq:finite:dim:TK_2}
\end{align}
Here, following previous notation, $q_T(\bq_0, \bp_0)$ is the solution of
\eqref{eq:FD:H:dynam} at time $T$ starting from the initial position $\bq_0 \in \RR^k$
and momentum $\bp_0 \in \RR^k$.  The $n$-fold iteration of the kernel $P$ is denoted as $P^n$. 

As in \cref{thm:weak:harris}, we measure the convergence of $P^n$
using a suitable Wasserstein distance. In this case, we take
\begin{align}
  \tilde\rho(\bq, \tilde \bq) = \sqrt{\rhoep( \bq, \tilde \bq) (1 + V(\bq) + V(\tilde \bq))} 
  \quad \text{ where } \quad
  \rhoep( \bq, \tilde \bq) 
  = \frac{| \bq - \tilde \bq |}{\varepsilon} \wedge 1
  \label{eq:semi:met:FD}
\end{align}
and $V$ is a Foster-Lyapunov function defined as either
$V(\bq) = V_{1,i}(\bq) = |\bq|^i$, $i \in \NN$, or as
$V(\bq) = V_{2,\eta}(\bq) = \exp(\eta |\bq|^2)$, with $\eta > 0$
satisfying
\begin{align}\label{cond:eta:FD}
  \eta < \left[ 2 \Tr(\mM) \left( \frac{67}{8}T^2 
  + \frac{32}{\Ub (\LM \LC)^{-1}} \right) \lM^{-2} \right]^{-1}.
\end{align}

We then consider the corresponding Wasserstein distance
$\Wass_{\tilde\rho}$ and prove the theorem below concerning the exact
HMC kernel $P$.

\begin{Thm}\label{thm:main:FD}
  Consider the Markov kernel $P$ defined as
  \eqref{eq:finite:dim:TK_1}, \eqref{eq:finite:dim:TK_2} from the
  dynamics \eqref{eq:FD:H:dynam}.  We suppose that $\mM$ and $\cC$ in
  \eqref{eq:FD:H:dynam} are both symmetric and strictly positive
  definite and we assume that the potential function $U$ satisfies
  \cref{ass:potential:FD}. In addition, we impose the following 
  condition on the integration time $T >0$:
  \begin{align}\label{fin:dim:cond:T}
    T \leq \min \left\{ \frac{1}{\left[ 2 \lM^{-1}(\lC^{-1} + \Ua) \right]^{1/2}}, \frac{\Ub^{1/2} (\LM \LC)^{-1/2}}{2 \sqrt{6} \lM^{-1} (\lC^{-1} + \Ua)} \right\},
  \end{align}
  where $\lM$ and $\LM$ denote the smallest and largest eigenvalues of
  $\mM$, while $\lC$ and $\LC$ denote the smallest and largest
  eigenvalues of $\cC$, respectively.
 
  Then $P$ has a unique ergodic invariant measure given by $\mu$ in
  \eqref{eq:mu:FD}.  Moreover, $P$ satisfies the following spectral
  gap condition with respect to the Wasserstein distance
  $\Wass_{\tilde \rho}$ associated to $\tilde \rho$ defined in
  \eqref{eq:semi:met:FD}: For all $\nu_1, \nu_2$ Borel probability
  measures on $\RR^k$,
  \begin{align}
    \Wass_{\trhoe}(\nu_1 P^n, \nu_2 P^m) \leq
       C_1 e^{- C_2 n} \Wass_{\trhoe}(\nu_1, \nu_2) \quad \mbox{ for all } n \in \mathbb{N},
    \label{eq:Wass:conv:FD}
  \end{align}
  where the constants $C_1, C_2, \varepsilon >0$ are independent of
  $\nu_1, \nu_2$ and $k$, and can be given explicitly as depending exclusively on $\Ua$, $\Ub$, $\Uc$, $T$, $\mM$ and $\cC$.
\end{Thm}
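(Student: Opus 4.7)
The overall plan is to replicate the weak Harris strategy used in the proof of \cref{thm:weak:harris}, adapted to the finite-dimensional dynamics \eqref{eq:FD:H:dynam}. The three ingredients we need are (i) a Foster--Lyapunov structure for $P$, (ii) a $\rhoep$-contractivity bound on pairs of initial points close to each other, and (iii) a $\rhoep$-smallness bound on bounded sets. The key simplification in the finite-dimensional setting is that we can use the \emph{naive} momentum shift $\tilde \bp_0 = \bp_0 + T^{-1} \mM(\bq_0 - \tilde \bq_0)$ from \cref{rmk:taking:stock:lip_1}: the laws of $\bp_0 \sim \mN(0,\mM)$ and $\tilde \bp_0 \sim \mN(T^{-1}\mM(\bq_0 - \tilde \bq_0), \mM)$ are automatically mutually absolutely continuous on $\RR^k$, so no low-mode projection (cf.\ \cref{prop:FP:Type:contract}) is needed.

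First I would derive finite-dimensional analogues of \cref{prop:basic:b:apriori} and \cref{lem:contr:1}, namely bounds on $\sup_{t \leq T} |\bq_t|$ and $\sup_{t \leq T} |\bp_t|$ and on the growth of the difference of two trajectories of \eqref{eq:FD:H:dynam}, using only the Lipschitz bound \eqref{bound:DU:FD} on $DU$ coming from (F1). Taking $\tilde \bp_0 = \bp_0 + T^{-1}\mM(\bq_0 - \tilde \bq_0)$ and running the argument sketched in \cref{rmk:taking:stock:lip_1} then yields, under the first entry in the minimum of \eqref{fin:dim:cond:T}, a pointwise contraction of the form $|\bq_T(\bq_0, \bp_0) - \bq_T(\tilde \bq_0, \tilde \bp_0)| \leq c_0 |\bq_0 - \tilde \bq_0|$ with $c_0 < 1$ depending only on $\lM, \LM, \lC, \Ua$ and $T$.

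Next I would establish the Lyapunov property by mirroring the proof of \cref{prop:FL}, using (F2) in place of (B2) and $\mM$-weighted quantities in place of $\cC$-weighted ones; Fernique/Gaussian moment estimates for $\bp_0 \sim \mN(0,\mM)$ supply the analogues of \eqref{est:Gaussian}, and the second restriction in \eqref{fin:dim:cond:T} plays the same role as the second restriction in \eqref{cond:t:Lyap}. Both $\rhoep$-contractivity and $\rhoep$-smallness are then obtained by the same triangle-inequality splitting \eqref{Wass:triang:ineq} used in \cref{prop:local_contr} and \cref{prop:smallness}, with the shifted kernel $\tilde P^n$ built from the iterated naive shift (the $j$th iterate uses $\tilde \bp_0^{(j)} = \bp_0^{(j)} + T^{-1}\mM(Q_{j-1}(\bq_0) - \tilde Q_{j-1}(\bq_0, \tilde \bq_0))$). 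The first term in the splitting is controlled by iteratively applying the one-step contraction from the previous step. For the second term, the laws of the original and shifted length-$n$ momentum paths are both Gaussians on $(\RR^k)^n$ with common covariance $\mM^{\otimes n}$, so the Kullback--Leibler divergence is an explicit quadratic form in the accumulated shifts and the same geometric-series bound as in \eqref{eq:D:KL:diff} delivers the required estimate, with Pinsker and \eqref{ineq:TV:KL:2} converting it into the two Wasserstein bounds.

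With the three ingredients in hand, the three-case tuning argument (points close together, points far from the origin, points close to the origin) from the proof of \cref{thm:weak:harris} transfers essentially verbatim and produces constants $C_1, C_2, \varepsilon$ depending only on $\Ua, \Ub, \Uc, T, \mM, \cC$ in the manner asserted; uniqueness of $\mu$ as invariant measure is a standard consequence of \eqref{eq:Wass:conv:FD}, while invariance of $\mu$ itself follows from the standard Hamiltonian flow preservation of the Gibbs measure $\mathfrak{M}(d\bq,d\bp) \propto e^{-H(\bq,\bp)} d\bq\, d\bp$ with $H$ as in \eqref{eq:FD:Ham}. The only place where genuine care is required is the bookkeeping of the dependence of all constants on $\lM, \LM, \lC, \LC$ so that, after absorbing these spectral quantities into the statement, the resulting bound depends on $\mM$ and $\cC$ only through its stated quantities; the analytical arguments themselves are direct transcriptions of those developed for the infinite-dimensional case, which is indeed why the weak Harris approach adapted here is flexible enough to cover the finite-dimensional setting at essentially no additional cost.
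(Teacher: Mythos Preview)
Your proposal is correct and follows essentially the same route as the paper: the paper likewise derives finite-dimensional analogues of the a priori bounds, uses the naive full momentum shift $\tilde\bp_0 = \bp_0 + T^{-1}\mM(\bq_0 - \tilde\bq_0)$ (and its iterates) to obtain a one-step contraction $|\bq_T(\bq_0,\bp_0) - \bq_T(\tilde\bq_0,\tilde\bp_0)| \leq \tfrac{1}{2}|\bq_0 - \tilde\bq_0|$, mirrors \cref{prop:FL} for the Lyapunov structure via (F2), bounds the KL divergence of the shifted versus unshifted momentum paths by a geometric series, and then invokes the three-case argument from the proof of \cref{thm:weak:harris}. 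The only detail worth flagging is that the paper tracks the constants so that the contraction factor is exactly $\tfrac{1}{2}$ under \eqref{fin:dim:cond:T}, which makes the subsequent KL and smallness bounds explicit.
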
  
\begin{Rmk}
  Similarly as in \cref{thm:weak:harris}, we can also show that
  \eqref{eq:Wass:conv:FD} implies a convergence result with respect to
  suitable observables as in \eqref{ineq:obs:2}, as well as a strong
  law of large numbers and a central limit theorem analogous to
  \eqref{main:thm:LLN}-\eqref{main:thm:CLT}.
\end{Rmk}
\begin{proof}
  The proof follows very similar steps to the results from Sections
  \ref{sec:apriori}, \ref{sec:f:l:bnd}, \ref{sec:Ptwise:contract:MD}
  and \ref{sec:Proof:Main}, so we only point out the main differences.

From \eqref{eq:FD:H:dynam}, it follows that
\begin{align*}
    \frac{d^2 \bq}{dt^2} = - \mM^{-1}\cC^{-1}\bq - \mM^{-1} D U(\bq),
\end{align*}
so that, after integrating with respect to $t \in [0,T]$ twice, we have
\begin{align}\label{eq:qt:FD}
    \bq_t - (\bq_0 + t \mM^{-1} \bp_0) = - \int_0^t \int_0^s \left( \mM^{-1} \cC^{-1} \bq_{\tau} + \mM^{-1} D U(\bq_{\tau}) \right) d\tau ds
\end{align}
Using that 
\begin{align*}
  | \mM^{-1} \bff| \leq \lM^{-1} |\bff| \quad \mbox{ and }
  \quad |\cC^{-1}\bff| \leq \lC^{-1} |\bff| \quad  \mbox{ for every }\bff \in \RR^k,
\end{align*}
together with \eqref{bound:DU:FD} and the condition
$T \leq [\lM^{-1}(\lC^{-1} + \Ua)]^{-1/2}$, one obtains, analogously
to \eqref{sup:qs} and \eqref{sup:vs},
\begin{align}\label{bound:q:FD}
  \sup_{t \in [0,T]} |\bq_t - (\bq_0 + t \mM^{-1} \bp_0)| 
  \leq \lM^{-1} (\lC^{-1} + \Ua) 
  T^2 \max \left\{ |\bq_0|, |\bq_0 + T \mM^{-1}\bp_0| \right\} + \lM^{-1} \Uz T^2
\end{align}
and
\begin{align}
  \sup_{t \in [0,T]} |\bp_t - \bp_0|
  \leq& (\lC^{-1} + \Ua)t
  \left[ 1 + \lM^{-1}(\lC^{-1} + \Ua)t^2\right]
        \max \left\{ |\bq_0|, |\bq_0 + T \mM^{-1}\bp_0| \right\}
  \notag\\
      &+ \Uz t \left[ 1 + \lM^{-1}(\lC^{-1} + \Ua) t^2 \right].
        \label{bound:p:FD}
\end{align}
Moreover, analogously to \eqref{eq:Hgamma:lip:bnd}, we obtain that for
every
$(\bq_0, \bp_0), (\tilde \bq_0, \tilde \bp_0) \in \RR^k \times \RR^k$,
\begin{multline}
    \sup_{t \in [0,T]} |\bq_t(\bq_0, \bp_0) - \bq_t(\tilde \bq_0, \tilde \bp_0) 
        - [(\bq_0 - \tilde \bq_0) + t \mM^{-1}(\bp_0 - \tilde \bp_0)] | \nonumber \\
    \leq  \lM^{-1} (\lC^{-1} + \Ua) T^2 \max 
          \left\{ |\bq_0 - \tilde \bq_0|, |
            \bq_0 - \tilde \bq_0 + t \mM^{-1} (\bp_0 - \tilde \bp_0)| \right\}.
\end{multline}
In particular, if $\tilde \bp_0 = \bp_0 + \mM (\bq_0 - \tilde \bq_0) T^{-1}$ then
\begin{align}\label{ineq:contr:FD}
    \sup_{t \in [0,T]} |\bq_t(\bq_0, \bp_0) - \bq_t(\tilde \bq_0, \tilde \bp_0)| 
  \leq \lM^{-1} (\lC^{-1} + \Ua) T^2 |\bq_0 - \tilde \bq_0| 
  \leq \frac{1}{2} |\bq_0 - \tilde \bq_0|.
\end{align}

We also show that $V(\bq) = |\bq|^i$ with $i \geq 1$ or
$V(\bq) = \exp(\eta |\bq|^2)$, with $\eta > 0$ satisfying
\eqref{cond:eta:FD}, all verify a Foster-Lyapunov structure as in
\cref{def:Lyap}. The proof follows as in Proposition \ref{prop:FL},
with the difference starting from \eqref{eq:der:q:v}, which is now
written as
\begin{align}\label{eq:der:q:p}
    \frac{d}{ds} \langle \bq_s, \mM^{-1} \bp_s \rangle 
  = |\mM^{-1} \bp_s|^2 - |\mM^{-1/2} \cC^{-1/2} \bq_s|^2 
  - \langle \bq_s, \mM^{-1} D U (\bq_s) \rangle.
\end{align}
Using now $(F2)$ from \cref{ass:potential:FD} and the inequalities
\begin{align*}
    |\mM^{-1/2} \bff| \geq \LM^{-1/2}|\bff| 
  \quad \mbox{ and } \quad |\cC^{-1/2} \bff| \geq \LC^{-1/2} |\bff| 
  \quad \mbox{ for all } \bff \in \RR^k,
\end{align*}
we obtain from \eqref{eq:der:q:p} that
\begin{align}\label{ineq:qT:FD}
    |\bq_T|^2 \leq |\bq_0|^2 + 2 T \langle \bq_0, \mM^{-1} \bp_0 \rangle 
  + 2 \int_0^T \int_0^s \left[ \lM^{-2} |\bp_\tau|^2 - \Ub (\LM \LC)^{-1} |\bq_\tau|^2 
  + \Uc \right] d\tau ds.
\end{align}
Then, with \eqref{ineq:tUa:tUb}, the a priori bounds
\eqref{bound:q:FD}-\eqref{bound:p:FD} and the fact that
$2 \lM^{-1} (\lC^{-1} + \Ua) T^2 \leq 1$ from hypothesis
\eqref{fin:dim:cond:T}, we arrive at
\begin{align}
    |\bq_T|^2 
    \leq& \left( 1 + \frac{3}{2} \lM^{-2} (\lC^{-1} + \Ua)^2 T^4 
        - \frac{\Ub}{8} (\LM \LC)^{-1} T^2 \right) |\bq_0|^2  
        +  2T \langle \bq_0, \mM^{-1} \bp_0 \rangle  
  \notag\\
        &+  \frac{67}{8} \lM^{-2} T^2 |\bp_0|^2  
        + \frac{3}{2} \Uz^2 \lM^{-2} T^4 
        + \frac{\Uz^2}{6} \lM^{-2} T^4 + \Uc T^2.
\label{ineq:qT:fin:dim}
\end{align}
From the second condition in hypothesis \eqref{fin:dim:cond:T} it
follows that
$(3/2)\lM^{-2} (\lC^{-1} + \Ua)^2 T^4 \leq (\Ub/16) (\LM \LC)^{-1}
T^2$,
so that after taking expected values in \eqref{ineq:qT:fin:dim} we
obtain
\begin{align*}
    \bE |\bq_T|^2 
    \leq \exp \left( - \frac{\Ub}{16} (\LM \LC)^{-1} T^2 \right) |\bq_0|^2 
        + \left( \frac{67}{8} \lM^{-2} \Tr(\mM)  
           + \frac{5}{3} \lM^{-2} \Uz^2 T^2  + \Uc \right) T^2.
\end{align*}
Now proceeding analogously as in
\eqref{quad:Lyap:n}-\eqref{ineq:exp:Lyap:2}, we obtain that for
$V: \RR^k \to \RR$ given either as $V(\bq) = |\bq|^i$, $i \in \NN$, or
$V(\bq) = \exp(\eta|\bq|^2)$, with $\eta > 0$ satisfying
\eqref{cond:eta:FD}, there exist constants $\kappa_V \in [0,1)$ and
$K_V > 0$ such that
\begin{align}\label{ineq:FL:FD}
    P^n V(\bq_0) \leq \kappa_V^n V(\bq_0) +  K_V \quad \mbox{ for all } \bq_0 \in \RR^k, \mbox{ for all } n \in \mathbb{N},
\end{align}
i.e. these are Lyapunov functions for $P$.

Let $(\RR^k)^n$ denote the product of $n$ copies of $\RR^k$ and let
$\mN(0, \mM)^{\otimes n}$ denote the product of $n$ copies of
$\mN(0, \mM)$. Analogously to Section \ref{sec:Ptwise:contract:MD},
given $\bq_0 \in \RR^k$ and a sequence
$\{\bp_0^{(j)}\}_{j \in \mathbb{N}}$ of i.i.d. draws from
$\mathcal{N}(0, \mM)$, we denote
$\bP_0^{(n)} = (\bp_0^{(1)}, \ldots, \bp_0^{(n)})$, for all
$n \in \mathbb{N}$, and take $Q_n(\bq_0, \cdot): (\RR^k)^n \to \RR^k$,
according to
\begin{align*}
   Q_1(\bq_0, \bp_0^{(1)}) = \bq_T(\bq_0, \bp_0^{(1)}), 
  \quad 
  Q_n(\bq_0,\bP_0^{(n)}) = \bq_T(Q_{n-1}(\bq_0,\bP_0^{(n-1)}),\bp_0^{(n)}) 
  \quad \mbox{ for all } n \geq 2.
\end{align*}
Similarly given any $\bq_0, \tilde{\bq}_0 \in \RR^k$ we take
$\tQ_n(\bq_0, \tilde \bq_0, \cdot): (\RR^k)^n \to \RR^k$ to be the
random variables starting from
\begin{align*}
  \tQ_1(\bq_0, \tilde \bq_0, \bp_0^{(1)}) 
  = \bq_T(\tilde \bq_0, \bp_0^{(1)} +  T^{-1} \mM(\bq_0 - \tilde \bq_0)),
\end{align*}
then defined for each integer $n \geq 2$ as
\begin{align*}
    \tQ_n(\bq_0, \tilde \bq_0, \bP_0^{(n)}) 
  = \bq_T(\tQ_{n-1}(\bq_0, \tilde \bq_0, \bP_0^{(n-1)}), \bp_0^{(n)} + \cS_n(\bP_0^{(n-1)})) 
\end{align*}
with
\begin{align}\label{def:Phin:FD}
    \cS_n(\bP_0^{(n-1)}) 
  = T^{-1} \mM \left[ Q_{n-1}(\bq_0, \bP_0^{(n-1)}) - \tQ_{n-1} (\bq_0, \tilde \bq_0, \bP_0^{(n-1)}) \right] 
  \quad \mbox{ for all } n \geq 2.
\end{align}
We also denote 
\begin{align*}
  \bcS_n(\bP_0^{(n)}) = (\cS_1, \cS_2(\bP_0^{(1)}), \ldots, \cS_n(\bP_0^{(n-1)})), 
  \quad \mbox{ with } \cS_1 = T^{-1}(\bq_0 - \tilde \bq_0),
\end{align*}
and $\bPsi_n(\bP_0^{(n)}) = \bP_0^{(n)} + \bcS_n(\bP_0^{(n)})$. Thus, by
using inequality \eqref{ineq:contr:FD} $n$ times iteratively, we
obtain that
\begin{align}\label{ineq:contr:FD:2}
    |Q_n(\bq_0,\bP_0^{(n)}) - \tQ_n(\bq_0, \tilde \bq_0,\bP_0^{(n)})| 
       \leq \frac{1}{2^n} |\bq_0 - \tilde \bq_0|,
\end{align}
for all  $\bP_0^{(n)} \in (\RR^k)^n$. 

Let $\sigma_n = \text{Law} (\bP_0^{(n)}) = \mN(0,\mM)^{\otimes n}$ and
$\tilde \sigma_n = \text{Law}(\bPsi_n(\bP_0^{(n)})) = \bPsi_n^\ast
\nu_n$.
Analogously as in Proposition \ref{prop:local_contr} and Proposition
\ref{prop:smallness}, we obtain that the distance-like function $\rho$
defined in \eqref{eq:semi:met:FD} satisfies contractivity and
smallness properties with respect to the Markov operator $P^n$ for $n$
sufficiently large. Here, the main difference lies in the estimate of
Kullback-Leibler Divergence $\KL (\tilde{\sigma}_n | \sigma_n)$,
\eqref{eq:KL:div}.  Proceeding similarly as in
\eqref{Girsanov}-\eqref{ineq:KL}, we arrive at
\begin{align*}
  \KL(\tilde{\sigma}_n | \sigma_n) 
  \leq \frac{1}{2} \sum_{j=1}^n \bE |\mM^{-1/2} \cS_j (\cdot)|^2.
 \end{align*}
 Using \eqref{ineq:contr:FD:2}, it follows that for every
 $j \in \{1, \ldots, n\}$ and $\bP_0^{(j-1)} \in (\RR^k)^{(j-1)}$
\begin{align*}
	|\mM^{-1/2}\cS_j(\bP_0^{(j-1)})|^2 
	\leq& \lM^{-1} |\cS_j(\bP_0^{(j-1)})|^2 
	\leq \lM^{-1} \LM^2 T^{-2} |Q_{j-1} (\bq_0)(\bP_0^{(j-1)}) 
             - \tQ_{j-1} (\bq_0, \tilde \bq_0)(\bP_0^{(j-1)})|^2  \\
	\leq& \frac{\lM^{-1} \LM^2 T^{-2}}{2^{(j-1)2}} |\bq_0 - \tilde \bq_0|^2,
\end{align*}
where in the second inequality we used that $|\mM \cdot|^2 \leq \LM^2 |\cdot|^2$.
Hence,
\begin{align}\label{ineq:KL:FD}
	\KL(\tilde{\sigma}_n | \sigma_n) 
	\leq \frac{\lM^{-1} \LM^2 T^{-2}}{2} |\bq_0 - \tilde \bq_0|^2 \sum_{j=1}^n \frac{1}{2^{(j-1)2}} 
	\leq \frac{4 \LM^2}{\lM T^2} |\bq_0 - \tilde \bq_0|^2.
\end{align}
By using \eqref{ineq:KL:FD}, one obtains analogously as in Proposition \ref{prop:local_contr} that for every $n \in \mathbb{N}$ and for every $\bq_0, \tilde \bq_0 \in \RR^k$ such that $\rhoep(\bq_0, \tilde \bq_0) < 1$, we have
\begin{align}\label{ineq:contr:Wass:FD}
	\Wass_{\rhoep}(P^n(\bq_0, \cdot), P^n(\tilde \bq_0, \cdot)) \leq \left( \frac{1}{2^n} + \frac{\sqrt{2} \LM \varepsilon}{\lM^{1/2} T } \right) \rhoep(\bq_0, \tilde \bq_0).
\end{align} 
Moreover, analogously as in Proposition \ref{prop:smallness}, we
obtain that, given $M \geq 0$, for every
$\bq_0, \tilde \bq_0 \in A := \{ \bq \in \RR^k \,:\, |\bq| \leq M \}$,
it holds:
\begin{align}\label{ineq:small:FD}
  \Wass_{\rhoep} (P^n(\bq_0, \cdot), P^n(\tilde \bq_0, \cdot)) 
  \leq 1 - \frac{1}{2} \exp \left( -\frac{16 \LM^2 M^2}{\lM T^2} \right) 
  + \frac{M}{2^{n-1} \varepsilon}.
\end{align}

The remaining portion of the proof now follows as for Theorem
\ref{thm:weak:harris}, by combining \eqref{ineq:FL:FD},
\eqref{ineq:contr:Wass:FD} and \eqref{ineq:small:FD}.
\end{proof}

\begin{Rmk}\label{rmk:FD}
  From condition the \eqref{fin:dim:cond:T} on the integration time $T$,
  we see how the upper bound could potentially degenerate to zero in
  case the eigenvalues of $\cC$ and/or the eigenvalues of $\mM$ decay
  to zero as the dimension of $\mathbb{R}^k$ increases. Moreover, if
  the eigenvalues of $\mM$ decrease to zero (i.e. $\lM \to 0$) or
  increase to infinity (i.e. $\LM \to \infty$) with respect to $k$,
  then, for fixed $n$, $\varepsilon$ and $T$, the upper bound in
  \eqref{ineq:contr:Wass:FD} increases to infinity, and the first two
  terms in the upper bound in \eqref{ineq:small:FD} increase to
  $1$. This would imply that the convergence rate in
  \eqref{eq:Wass:conv:FD}, which is directly proportional to the upper
  bounds in \eqref{ineq:contr:Wass:FD}-\eqref{ineq:small:FD} and
  inversely proportional to $T$, would become `slower' as $k$
  increases. In other words, the number $n$ of iterations necessary
  for the distance between $\nu_1 P^n$ and $\nu_2 P^n$ to decay within
  a given $\delta > 0$ would increase with the dimension $k$. This
  type of behavior is commonly known as the `curse of dimensionality'.
    
  A natural choice for the mass matrix $\mM$ to avoid such unwanted
  behavior is given by $\mM = \cC^{-1}$ -- this is the idea behind
  preconditioning in \cite{BePiSaSt2011} which leads us to consider
  \eqref{eq:dynamics:xxx} in the infinite dimensional formulation. In
  this preconditioned case, one could use that $\lM = \LC^{-1}$ and
  $\LM = \lC^{-1}$ directly in \eqref{fin:dim:cond:T} to obtain
    \begin{align}\label{cond:T:rough}
        T \leq \min \left\{ \frac{1}{[ 2 \LC (\lC^{-1} + \Ua) ]^{1/2}},  
       \frac{\Ub^{1/2} (\lC^{-1} \LC )^{-1/2}}
           {2 \sqrt{6} \LC (\lC^{-1} + \Ua)}  \right\},
    \end{align}
    where the upper bound actually still degenerates to zero in case
    $\lC \to 0$ as $k \to \infty$ (corresponding to the trace-class
    assumption on $\cC$ in the infinite-dimensional case). However,
    the inequalities that lead to the condition on $T$ as in
    \eqref{cond:T:rough} would in fact be a rough overestimate in this
    case. Indeed, for $\mM = \cC^{-1}$, the term
    $\mM^{-1}\cC^{-1} \bq_{\tau}$ in \eqref{eq:qt:FD} is simply equal
    to $\bq_\tau$ and thus we no longer estimate from above by
    $\lM^{-1}\lC^{-1} |\bq_\tau|$ as in \eqref{bound:q:FD}. Similarly,
    the term $|\mM^{-1/2} \cC^{-1/2} \bq_s|^2$ in \eqref{eq:der:q:p}
    is simply $|\bq_s|^2$ and thus no longer estimated from below by
    $(\LM \LC)^{-1} |\bq_\tau|^2$ as in \eqref{ineq:qT:FD}. With these
    changes, T is required to satisfy instead
    \begin{align*}
        T \leq \min \left\{ \frac{1}{[2 (1 + \LC \Ua)]^{1/2}},  
                \frac{\Ub^{1/2}}{2 \sqrt{6} (1 + \LC \Ua)}  \right\},
    \end{align*}
    which is consistent with condition \eqref{eq:time:restrict:basic}
    for $\LC = \lambda_1$ (when $\gamma = 0$), and thus independent of
    $k$ when $\LC$ is uniformly bounded with respect to $k$.
    
    On the other hand, replacing $\LM$ with $\lC^{-1}$ in
    \eqref{ineq:contr:Wass:FD} and \eqref{ineq:small:FD}, we see that
    the same unwanted behavior is not removed here when $\lC \to 0$ as
    $k \to \infty$; i.e. the convergence rate would still degenerate
    with the dimension $k$. This emphasizes the need for considering
    `shifts' in the momentum (or velocity) paths for the modified
    process $\tQ_n(\bq_0, \btq_0, \cdot)$,
    $\bq_0, \btq_0 \in \mathbb{R}^k$, that are restricted to a fixed
    number of directions in $\mathbb{R}^k$, for every $k$, as done in
    \eqref{def:Phin} through the projection operator $\Pi_N$, with $N$
    sufficiently large but fixed (cf. \eqref{def:Phin:FD}).
\end{Rmk}

\section{Application for the Bayesian estimation 
  of divergence free
  Flows from a passive scalar}
  \label{sec:bayes:AD}

In this section we establish some results concerning the degree of
applicability of \cref{thm:weak:harris} to the PDE inverse problem
of estimating a divergence free flow from a passive scalar as we
described above in the introduction,  cf. \eqref{eq:ad:eqn},
\eqref{eq:gen:linear:form}, \eqref{eq:post:passive:scal}.

For this purpose, according to the conditions required in \cref{B123},
we wish to establish suitable bounds on $U$, $DU$ and $D^2U$.  Of
course such bounds are expected to depend crucially on the form of the
observation operator $\Obs$.  Here, adopting the notations
$ U^{\xi} = \langle DU, \xi \rangle$ and
$U^{\xi,\txi} = \langle D^2U\xi, \txi \rangle$ for directional
derivatives of $U$ with respect to vectors $\xi, \txi$ in the phase space, we
have that
\begin{align}
  U^{\xi} (\bq) 
  = -2\langle \GP (\data - \Obs(\theta(\bq))), 
                      \GP \Obs(\psi^\xi(\bq)) \rangle 
  \label{eq:DD:U:xi}
\end{align}
and
\begin{align}
     U^{\xi, \txi} (\bq) 
   = 2\langle 
      \GP \Obs (\psi^{\txi}(\bq)), 
      \GP \Obs(\psi^\xi(\bq)) 
    \rangle 
     -2\langle \GP (\data - \Obs(\theta(\bq))), 
                       \GP \Obs(\psi^{\xi, \txi}(\bq)) \rangle
  \label{eq:DD:U:xi:txi}
\end{align}
where $\psi^\xi(\bq) = \psi^\xi(t;\bq)$ obeys 
\begin{align}
  \pd_t \psi^\xi + \bq \cdot \nabla \psi^\xi 
  = \kappa \Delta \psi^\xi
     - \xi \cdot \nabla \theta(\bq),
  \quad \psi^\xi(0; \bq) =0
  \label{eq:grad:the:xi}
\end{align}
and $\psi^{\xi, \txi}(\bq) = \psi^{\xi, \txi}(t; \bq)$ satisfies
\begin{align}
  \pd_t \psi^{\xi, \txi} + \bq \cdot \nabla \psi^{\xi, \txi}
  = \kappa \Delta\psi^{\xi, \txi}
     - \txi \cdot \nabla \psi^\xi
     - \xi \cdot \nabla \psi^{\txi},
  \quad \psi^{\xi, \txi} (0; \bq) = 0,
  \label{eq:grad:the:xitxi}
\end{align}
for any suitable $\xi, \txi$.

\subsubsection{Mathematical Setting of the Advection Diffusion
  Equation, Associated Bounds}
\label{sec:AD:Math}
In order to place \eqref{eq:post:passive:scal} in a rigorous
functional setting we adapt some results from
\cite{borggaard2018Consistency, borggaard2018bayesian}.  In view of
\eqref{eq:grad:the:xi}, \eqref{eq:grad:the:xitxi} we consider a
slightly more general version of \eqref{eq:ad:eqn} where we include an
external forcing term $f: [0,T] \times \TT^2 \rightarrow \RR$, namely,
\begin{align}
  \pd_t \phi + \bq \cdot \nabla \phi  = \kappa \Delta \phi + f, 
 \quad \phi(0) = \phi_0.
  \label{eq:ad:forced}
\end{align}
Specially, we need to estimate terms appearing in the gradient and
Hessian of $U$ involving solutions of \eqref{eq:ad:forced} with
certain forcing terms; cf. \eqref{eq:grad:the:xi},
\eqref{eq:grad:the:xitxi} below.

We adopt the notation $H^s(\TT^2)$ for the Sobolev space of periodic
functions with $s \geq 0$ derivatives in $L^2$.  Here we denote
$\FD^s = (- \Delta)^{s/2}$.  Thus, the associated $H^s(\TT^2)$ norms are given by $\| \cdot \|_s = \| \FD^s \cdot \|_0$
where $\| \cdot \|_0$ is the usual $L^2(\TT^2)$ norm.  We also make
use of the negative Sobolev spaces $H^{-s}(\TT^2)$ for $s \geq 0$ defined via
duality relative to $L^2(\TT^2)$ with the norms reading as
\begin{align}
  \| f \|_{-s} = \sup_{\|\xi\|_{s} = 1} \langle f, \xi \rangle
  \label{eq:neg:sob:space}
\end{align}
where $\langle \cdot, \cdot \rangle$ is the usual duality pairing so
that $\langle f, \xi \rangle = \int_{\TT^2} f \xi dx$ when
$f \in L^2(\TT^2)$. All other norms are denoted as $\| \cdot \|_X$
where $X$ is the associated space i.e. $L^\infty$.  We abuse notation
and use the same naming convention $H^s(\TT^2)$ and associated norm
$\| \cdot \|_s$ for periodic, divergence free vector fields with $s$
derivatives in $L^2(\TT^2)$.

We have the following proposition adapted from
\cite{borggaard2018Consistency}:
\begin{Prop}[Well-Posedness and Continuity of the solution map for
  \eqref{eq:ad:forced}]
  \label{def:adr_weak}
  \mbox{}
\begin{itemize}
\item[(i)] Fix any $s \geq 0$ and suppose that $\bq \in H^{s}(\TT^2)$,
  $\phi_0 \in H^{s}(\TT^2) \cap L^\infty(\TT^2)$ and
  $f \in L^2_{loc}([0,\infty);$ $H^{s -1}(\TT^2))$.  Then there exists a
  unique $\phi = \phi(\bq, \phi_0, f)$ such that
  \begin{align}
    \phi &\in L^2_{loc}([0,\infty); H^{s+1}(\TT^2)) 
        \cap L^\infty([0,\infty); H^{s}(\TT^2)),
     \quad
    \frac{\partial \phi}{\partial t} 
           \in L^2_{loc}([0,\infty); H^{s-1}(\TT^2))
           \label{eq:AD:gen:reg:1}
  \end{align}
  so that in particular\footnote{See e.g. \cite[Lemma 3.1.2]{Temam2001}.}
  \begin{align*}
  \phi \in C([0,\infty); H^{s}(\TT^2))
  \end{align*}
  and where $\phi$ solves~\eqref{eq:ad:forced} at least weakly.
  Additionally $\phi$ maintains the bounds
  \begin{align}
    &\frac{d}{dt} \| \phi \|^2_0 + 2\kappa \| \phi \|_1^2 = 2 \int
    f \phi dx,
        \label{eq:L2:balance}\\
    &\sup_{t \in [0,t^*]} \| \phi(t) \|_{L^\infty}
        \leq \| \phi_0 \|_{L^\infty} + \int_0^{t^*} \|f\|_{L^\infty} dt,
     \text{ for any } t^* > 0.
      \label{eq:Linfty:bnd}
    \end{align}
    When $s > 0$ we have
    \begin{align}
   \frac{d}{dt} \| \phi \|^2_s + \kappa \| \phi \|_{s+1}^2 
        \leq c \| \phi \|^2_s \| \bq\|^{a}_s  + 2 \int \FD^s f   \FD^s
      \phi \, dx
        \label{eq:Hs:balance}
  \end{align}
  where the contants $c = c(\kappa, s)$, $a = a(\kappa, s)$ are
  independent of $\bq$.
\item[(ii)] Let $\phi^{(j)} =
  \phi(\bq_j,\phi_{0,j},f_j)$ for $j = 1,2$ be two solutions of \eqref{eq:ad:forced}
  corresponding to data $\bq_j,\phi_{0,j},f_j$ satisfying the conditions
  in part (i).   Then, taking
  $\psi = \phi^{(1)} - \phi^{(2)}$, $\bp = \bq_1 - \bq_2$, we have
  \begin{align}
    \frac{d}{dt} \|\psi  \|_0^2 +\kappa \|\psi  \|_1^2
    \leq  c \| \bp\|_0^2 \|\phi^{(1)}\|_{L^\infty}^2
          + c\| f_1 -  f_2\|^2_{-1}
    \label{eq:L2:cont:est:AD}
  \end{align}
  with $c = c(\kappa)$ independent of $\bq_1, \bq_2$. Furthermore, in
  the case when $s > 0$ we have
  \begin{align}
     \frac{d}{dt}\|\psi  \|_s^2 +\kappa \|\psi  \|_{s+1}^2
    \leq c \|\psi  \|_s^2 \|\bq_1\|_s^{a}
    + c \|\bp\|_s^{2} \|  \phi^{(2)} \|_{s+1}^2 
    + c \|f_1 - f_2\|_{s-1}^2,
        \label{eq:Hs:cont:est:AD}
  \end{align}
  where the constants $c = c(\kappa, s)$, $a = a(\kappa, s)$ are again independent of
  $\bq_1, \bq_2$.
\end{itemize}
\end{Prop}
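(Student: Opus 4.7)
The plan is to establish existence by a Galerkin approximation combined with standard energy a priori bounds and compactness, and to derive the balance relations and continuity estimates by direct energy manipulations on the equation and its linearization.

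For existence, I would build Galerkin approximations $\phi^N$ by projecting \eqref{eq:ad:forced} onto the span of the first $N$ Fourier modes of $-\Delta$ on $\TT^2$. Since $\bq$, $\phi_0$, $f$ are given data and the equation is linear in $\phi$, each $\phi^N$ exists globally on $[0,\infty)$ by a Carath\'eodory argument. The main work is to establish $N$-uniform a priori estimates. Testing the projected equation against $\phi^N$ and invoking the crucial cancellation $\int_{\TT^2}(\bq \cdot \nabla \phi^N) \phi^N \, dx = 0$ available because $\nabla \cdot \bq = 0$ immediately delivers the $L^2$ identity \eqref{eq:L2:balance}. The $L^\infty$ bound \eqref{eq:Linfty:bnd} then follows from the parabolic maximum principle for the advection-diffusion operator with divergence-free drift (equivalently, via $L^p$ estimates sent to the $p \to \infty$ limit). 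For the $H^s$ balance \eqref{eq:Hs:balance} with $s > 0$, apply $\FD^s$ and pair with $\FD^s\phi^N$; using
\begin{align*}
    \int \FD^s(\bq \cdot \nabla \phi)\, \FD^s \phi \, dx
    = \int [\FD^s, \bq \cdot \nabla]\phi \cdot \FD^s \phi \, dx,
\end{align*}
(the remaining piece vanishes by div-free), control the commutator via Kato--Ponce by $\|\bq\|_s$ times interpolated norms of $\phi$ between $\|\phi\|_s$ and $\|\phi\|_{s+1}$, and absorb the top-order contribution into $\kappa \|\phi\|_{s+1}^2$ by Young's inequality; this yields \eqref{eq:Hs:balance} with a suitable exponent $a(\kappa, s)$. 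A bound on $\pd_t \phi^N$ in $H^{s-1}$ is then read directly from the equation. Standard weak-$\ast$ and weak compactness, combined with the Aubin--Lions lemma to obtain strong convergence in $L^2_{loc}([0,\infty); H^{s'}(\TT^2))$ for $s' < s+1$, suffices to pass to the limit in the weak formulation of \eqref{eq:ad:forced} and recover a limit $\phi$ with the regularity \eqref{eq:AD:gen:reg:1}.

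For part (ii), let $\psi = \phi^{(1)} - \phi^{(2)}$ and $\bp = \bq_1 - \bq_2$; subtracting the two equations yields
\begin{align*}
    \pd_t \psi + \bq_1 \cdot \nabla \psi
    = \kappa \Delta \psi - \bp \cdot \nabla \phi^{(2)} + (f_1 - f_2),
    \quad \psi(0) = \phi_{0,1} - \phi_{0,2}.
\end{align*}
For the $L^2$ continuity estimate \eqref{eq:L2:cont:est:AD}, test with $\psi$, kill the transport term using $\nabla \cdot \bq_1 = 0$, and integrate by parts on the bad term to write
\begin{align*}
    -\int (\bp \cdot \nabla \phi^{(2)})\psi \, dx
    = \int \phi^{(2)} (\bp \cdot \nabla \psi)\, dx,
\end{align*}
which is bounded by $\|\phi^{(2)}\|_{L^\infty}\|\bp\|_0 \|\nabla\psi\|_0$ and absorbed via Young's inequality; the $f_1 - f_2$ contribution is handled by $H^{-1}$--$H^1$ duality. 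The symmetric choice of splitting places $\|\phi^{(1)}\|_{L^\infty}$ on the right, matching \eqref{eq:L2:cont:est:AD} precisely. Uniqueness in part (i) is then an immediate corollary. For \eqref{eq:Hs:cont:est:AD}, hit the difference equation with $\FD^s$, pair with $\FD^s \psi$, and estimate: the transport commutator with $\bq_1$ via Kato--Ponce by $\|\psi\|_s \|\bq_1\|_s^{a}$; the forcing $\bp \cdot \nabla \phi^{(2)}$ by fractional Leibniz in $H^{s-1}$ by $\|\bp\|_s \|\phi^{(2)}\|_{s+1}$; and the $f_1 - f_2$ term by $H^{s-1}$--$H^{s+1}$ duality, absorbing the last two into $\kappa \|\psi\|_{s+1}^2$ through Young.

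The main technical obstacle is the delicate balancing of commutator and product inequalities in two dimensions so that $\|\phi^{(2)}\|_{s+1}$ enters \eqref{eq:Hs:cont:est:AD} only to first power, with all other factors controlled in $L^\infty_t H^s$; this matters because \eqref{eq:Hs:balance} only supplies $\phi^{(2)} \in L^2_{loc}(H^{s+1})$, not $L^\infty_t H^{s+1}$, so a quadratic appearance of this norm would preclude Gr\"onwall closure on arbitrary time intervals. An analogous subtlety at the $L^2$ level is precisely what the integration-by-parts trick above resolves by shifting the derivative off $\phi^{(2)}$ and trading $\|\phi^{(2)}\|_1$ for $\|\phi^{(j)}\|_{L^\infty}$, a quantity under global time control via \eqref{eq:Linfty:bnd}.
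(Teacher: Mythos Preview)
The paper does not actually prove this proposition; it is stated as a result ``adapted from \cite{borggaard2018Consistency}'' and used as a black box, with no accompanying argument in the present manuscript. Your Galerkin--energy--compactness outline for part (i) and the direct energy manipulations on the difference equation for part (ii) are the standard and correct route, and indeed the commutator and Kato--Ponce maneuvers you describe are exactly what the paper itself invokes later in the proof of \cref{prop:grad:xi:apriori} when it applies this proposition. Your observation about the symmetric splitting (writing the advective difference as $\bq_2\cdot\nabla\psi + \bp\cdot\nabla\phi^{(1)}$ rather than $\bq_1\cdot\nabla\psi + \bp\cdot\nabla\phi^{(2)}$) to place $\|\phi^{(1)}\|_{L^\infty}$ on the right of \eqref{eq:L2:cont:est:AD} is the right fix, and your closing remark about keeping $\|\phi^{(2)}\|_{s+1}$ to first power so that Gr\"onwall closes against an $L^2_t$ quantity is precisely the point.
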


\cref{def:adr_weak} immediately yields quantitate bounds on
derivatives of $\theta(\bq)$ in its advecting flow $\bq$ which solve
\eqref{eq:grad:the:xi}, \eqref{eq:grad:the:xitxi}.  In turn these
bounds provide the quantitative foundation for the estimates on $DU$
and $D^2U$ below in \cref{prop:DU:DsqU:Bnds},
\cref{cor:DU:DsqU:Bnd:w:C}.

\begin{Prop}
  \label{prop:grad:xi:apriori}
  Fix any $s > 0$ and $\theta_0 \in L^\infty(\TT^2) \cap H^s(\TT^2)$.
  Then the map from $H^s(\TT^2)$ to $C([0,\infty); H^s(\TT^2))$ that
  associates to each $\bq \in H^s(\TT^2)$ the corresponding solution
  $\theta(\bq) := \theta(\cdot; \bq, \theta_0)$ of \eqref{eq:ad:eqn}
  is a $C^2$ function.  Denote $\psi^\xi(\bq)$ and
  $\psi^{\xi,\txi}(\bq)$ as the directional derivatives of $\theta$ in
  the directions $\xi, \txi \in H^s(\TT^2)$.  Then $\psi^{\xi}(\bq)$
  and $\psi^{\xi,\txi}(\bq)$ obey \eqref{eq:grad:the:xi} and
  \eqref{eq:grad:the:xitxi}, respectively, with regularity
  \eqref{eq:AD:gen:reg:1} in the sense of
  \cref{def:adr_weak}. Furthermore,
  \begin{itemize}
  \item[(i)] For any $\bq, \xi \in H^s(\TT^2)$, $t^* > 0$ we have
\begin{align}
  \sup_{t\leq t^*} \| \psi^\xi(t;\bq)\|_0^2 
  + \int_0^{t^*}\| \psi^\xi(t;\bq)\|_1^2 dt
  \leq c t^* \|\xi\|_0^2 
  \label{eq:psi:xi:Bnd:L2}
\end{align}
and
\begin{align}
    \sup_{t\leq t^*} \| \psi^\xi(t;\bq)\|_0^2 
  + \int_0^{t^*}\| \psi^\xi(t;\bq)\|_1^2 dt
  \leq c \|\xi\|_s^2 
  \label{eq:psi:xi:Bnd:L2:b}
\end{align}
where $c = c(\|\theta_0\|_{L^\infty},\kappa)$ is independent of $\bq$,
$\xi$ and $t^*$. Furthermore,
\begin{align}
  \sup_{t \leq t^*} \| \psi^\xi(t; \bq)\|_s^2   
  +  \int_0^{t^*} \| \psi^\xi(t; \bq)\|_{s+1}^2 dt
  \leq  c \|\xi\|_s^2  \exp( c t^* \|\bq\|^{a}_s) 
  \label{eq:psi:xi:Bnd:Hs}
\end{align}
where the constant $c = c(s, \|\theta_0\|_s, \kappa)$ is independent of $\bq$,
$\xi$ and $t^* > 0$; and $a$ is precisely the constant from \eqref{eq:Hs:balance}.
\item[(ii)] On the other hand, given any
  $\bq, \xi, \txi \in H^s(\TT^2)$, $t^* > 0$
\begin{align}
  \sup_{t \leq t^*} \| \psi^{\xi,\txi}(t;\bq)\|_0^2 
  + \int_0^{t^*}\| \psi^{\xi,\txi}(t;\bq)\|_1^2 dt
  \leq c(\|\xi\|_s^4 + \|\txi\|_s^4)
  \label{eq:psi:xi:txiBnd:L2}
\end{align}
where $c= c(s,\|\theta_0\|_{L^\infty}, \|\theta_0\|_s, \kappa)$ is
independent of $\bq, \xi, \txi$ and $t^*$. Moreover,
\begin{align}
  \sup_{t \leq t^*} \| \psi^{\xi,\txi}(t; \bq)\|_s^2 
  \leq c(\|\xi\|^4_s + \|\txi\|_s^4)
  \exp( t^* c \| \bq\|^{a}_s )
  \label{eq:psi:xi:txiBnd:H2}
\end{align}
for a constant
$c = c(s,\|\theta_0\|_{L^\infty}, \|\theta_0\|_s, \kappa)$ independent
of $\bq, \xi, \txi$ and $t^* > 0$.
\end{itemize}
\end{Prop}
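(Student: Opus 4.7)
The plan is to first establish $C^2$-regularity of the solution map $\bq \mapsto \theta(\bq)$ and identify the equations satisfied by the directional derivatives, then derive the stated energy estimates by applying Proposition \ref{def:adr_weak} to these linearized equations with appropriately controlled source terms.

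Formal differentiation of \eqref{eq:ad:eqn} in the direction $\xi \in H^s(\TT^2)$ yields \eqref{eq:grad:the:xi} with forcing $f = -\xi \cdot \nabla \theta(\bq) \in L^2_{loc}([0,\infty); H^{s-1}(\TT^2))$, so Proposition \ref{def:adr_weak}(i) provides the unique $\psi^\xi(\bq)$ with the regularity \eqref{eq:AD:gen:reg:1}. To upgrade this formal calculation to Fr\'echet differentiability, I would set $r_h = h^{-1}(\theta(\bq + h\xi) - \theta(\bq)) - \psi^\xi(\bq)$, note that $r_h$ satisfies an advection-diffusion equation with velocity $\bq + h \xi$ and a forcing that is $O(h)$ in the appropriate negative Sobolev space, and then invoke \eqref{eq:L2:cont:est:AD}--\eqref{eq:Hs:cont:est:AD} to conclude $\|r_h\|_s \to 0$ as $h \to 0$. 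Iterating this argument on $\psi^\xi$ yields $\psi^{\xi,\txi}$ obeying \eqref{eq:grad:the:xitxi}, and the continuity of the resulting derivatives in $(\bq,\xi)$ and $(\bq, \xi, \txi)$ follows from the same contraction bounds, establishing the $C^2$ claim.

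For the bounds in part (i), the $L^2$ balance \eqref{eq:L2:balance} applied to $\psi^\xi$, after integration by parts in the forcing (permitted since $\nabla\cdot\xi = 0$), yields
\begin{align*}
   \tfrac{d}{dt} \|\psi^\xi\|_0^2 + \kappa \|\psi^\xi\|_1^2
   \leq c \kappa^{-1} \|\theta\|_{L^\infty}^2 \|\xi\|_0^2 ,
\end{align*}
and combining with the maximum principle \eqref{eq:Linfty:bnd} and integrating on $[0,t^*]$ gives \eqref{eq:psi:xi:Bnd:L2}. For the $t^*$-uniform bound \eqref{eq:psi:xi:Bnd:L2:b}, I would instead pair the forcing against $H^1$ test functions to obtain $\|\xi \cdot \nabla\theta\|_{-1} \leq c \|\theta - \bar\theta_0\|_0 \|\xi\|_s$ (using a Sobolev embedding and that constants have zero gradient), and then exploit the exponential decay $\|\theta(t) - \bar\theta_0\|_0 \lesssim e^{-c\kappa t}\|\theta_0\|_0$ obtained from Poincar\'e applied to \eqref{eq:L2:balance} with $f=0$; the time integral of $\|\theta - \bar\theta_0\|_0^2$ is then bounded independently of $t^*$. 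The $H^s$ bound \eqref{eq:psi:xi:Bnd:Hs} follows from \eqref{eq:Hs:balance} after estimating $\|\xi\cdot\nabla\theta\|_{s-1}$ by $c\|\xi\|_s \|\theta\|_s$, using the standard $H^s$ bound on $\theta$ given by \eqref{eq:Hs:balance} applied to $\theta$ itself (producing the factor $\exp(c t^* \|\bq\|_s^a)$), and invoking Gr\"onwall.

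For part (ii), identical energy estimates applied to \eqref{eq:grad:the:xitxi} reduce matters to bounding the forcing $\txi \cdot \nabla \psi^\xi + \xi \cdot \nabla \psi^{\txi}$ in $H^{s-1}$ (or $H^{-1}$). Integration by parts combined with the $H^s$ control on $\psi^\xi, \psi^{\txi}$ from part (i) and Sobolev product estimates produces bounds of the form $\|\xi\|_s^2 \|\txi\|_s^2$, yielding \eqref{eq:psi:xi:txiBnd:L2} and, via Gr\"onwall, \eqref{eq:psi:xi:txiBnd:H2}. The main technical obstacle is the $t^*$-uniform estimate \eqref{eq:psi:xi:Bnd:L2:b}, whose proof rests on the subtle interplay between mean preservation of the advection-diffusion dynamics, the exponential $L^2$-decay of the mean-free component, and the choice of pairing dictated by the allowable regularity loss in the negative Sobolev norm; the remaining estimates are systematic applications of Proposition \ref{def:adr_weak} together with Gr\"onwall.
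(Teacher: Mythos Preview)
Your overall strategy---energy identities on the linearized equations, maximum principle for \eqref{eq:psi:xi:Bnd:L2}, Gr\"onwall on the $H^s$ balance---matches the paper. The substantive divergence is your treatment of the $t^*$-uniform bound \eqref{eq:psi:xi:Bnd:L2:b}, which you flag as the main obstacle and attack via mean subtraction and exponential decay of $\|\theta - \bar\theta_0\|_0$. This is more involved than necessary and has a gap for $0 < s \leq 1$: your claimed bound $\|\xi\cdot\nabla\theta\|_{-1} \leq c\|\theta - \bar\theta_0\|_0\|\xi\|_s$ requires, after integration by parts, that $\|\xi\cdot\nabla\phi\|_0 \leq c\|\xi\|_s\|\phi\|_1$ for $\phi \in H^1$; placing $\nabla\phi$ in $L^2$ forces $\xi$ into $L^\infty$, which in two dimensions needs $s > 1$.

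The paper avoids this entirely by estimating the trilinear form directly rather than passing through $\|\cdot\|_{-1}$: H\"older plus Sobolev give
\[
\Bigl|\int \xi\cdot\nabla\theta\,\psi^\xi\,dx\Bigr| \leq \|\xi\|_{L^p}\|\nabla\theta\|_0\|\psi^\xi\|_{L^q} \leq c\|\xi\|_s\|\nabla\theta\|_0\|\psi^\xi\|_1
\]
for any $s > 0$ (choosing $p$ close to $2$ and $q$ large). The $t^*$-independence then comes for free from the $L^2$ balance \eqref{eq:L2:balance} for $\theta$ with $f=0$, which gives $\int_0^{t^*}\|\nabla\theta\|_0^2\,dt \leq \|\theta_0\|_0^2/(2\kappa)$ uniformly in $t^*$. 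No mean subtraction, Poincar\'e, or decay rate is needed. A parallel remark applies to your $H^s$ step: the product bound $\|\xi\cdot\nabla\theta\|_{s-1} \leq c\|\xi\|_s\|\theta\|_s$ also fails for $s \leq 1$; the paper instead uses Kato--Ponce inside the pairing $\int \Lambda^s(\xi\cdot\nabla\theta)\Lambda^s\psi^\xi\,dx$ to extract $\|\xi\|_s\|\theta\|_{s+1}\|\psi^\xi\|_{s+1}$ and then integrates $\|\theta\|_{s+1}^2$ in time via \eqref{eq:Hs:balance}.
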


Before turning to the details of the proof let us recall some useful
inequalities. Firstly the Sobolev embedding theorem in dimension
$d = 2$ is given as
\begin{align}
   \| g \|_{L^p} \leq c \| g \|_{H^{r}}
   \quad \text{ for any } r \geq 1 - \frac{2}{p}, \text{ with } 2 \leq p < \infty,
   \label{eq:sob:embedding}
\end{align}
for any $g: \TT^2 \to \RR$ in $H^r(\TT^2)$, where the
universal constant $c$ depends only on $p$ and $r$.  We also make use
of the Leibniz-Kato-Ponce inequality which takes the general form
\begin{align}
  \| \Lambda^{r} (fg) \|_{L^m} \leq 
  C ( \| \Lambda^{r} f\|_{L^{p_1}} \| g \|_{L^{q_1}} +
  \| f\|_{L^{p_2}} \| \Lambda^{r}  g \|_{L^{q_2}} 
  )
  \label{eq:Kato:Ponce}
\end{align}
valid for any $r \geq 0$, $1 < m < \infty$ and
$1 < p_i, q_i \leq \infty$ with $m^{-1} = p^{-1}_j + q^{-1}_j$ for
$j = 1,2$ and where $C$ is a positive constant depending only on $r, m, p_1, q_1, p_2, q_2$.

\begin{proof}
  The claimed regularity for $\psi^\xi$, $\psi^{\xi,\txi}$ follows from
  \cref{def:adr_weak} and the forthcoming formal estimates leading to
  \eqref{eq:psi:xi:Bnd:L2}--\eqref{eq:psi:xi:txiBnd:H2} which can be
  justified in the context of an appropriate regularization
  scheme. We begin by showing \eqref{eq:psi:xi:Bnd:L2}. From
  \eqref{eq:L2:balance}, namely multiplying \eqref{eq:grad:the:xi} by
  $\psi^\xi$ and integrating we have
\begin{align}
  \frac{1}{2}\frac{d}{dt} \| \psi^\xi\|_0^2  
  + \kappa \| \nabla \psi^\xi\|_0^2
   = - \int_{\TT^2} \xi \cdot \nabla \theta(\bq) \psi^\xi dx.
   \label{eq:energy:psixi}
\end{align}  
Integrating by parts and using that $\xi$ is divergence free
\begin{align}
 \left| \int_{\TT^2} \xi \cdot \nabla \theta(\bq) \psi^\xi dx \right|
  =  \left| \int_{\TT^2} \xi \cdot \nabla \psi^\xi \theta(\bq) dx
  \right|
  \leq \|\theta(\bq) \|_{L^\infty} \|\nabla \psi^\xi\|_0 \|\xi\|_0
  \label{est:int:advec}
\end{align}
Invoking the Maximum principle as in \eqref{eq:Linfty:bnd} we obtain
that
\begin{align}
   \| \theta(t;\bq) \|_{L^\infty} \leq \| \theta_0\|_{L^\infty} \quad
  \text{ for any } t \geq 0,
  \label{eq:Max:Prin:Th}
\end{align}
and hence
\begin{align*}
  \frac{d}{dt} \| \psi^\xi\|_0^2 + \kappa \| \nabla \psi^\xi\|_0^2 
  \leq c\|\xi\|_0^2.
\end{align*}
This immediately implies the first estimate \eqref{eq:psi:xi:Bnd:L2}. For showing \eqref{eq:psi:xi:Bnd:L2:b}, we estimate \eqref{est:int:advec} differently, namely
\begin{align*}
 \left| \int_{\TT^2} \xi \cdot \nabla \theta(\bq) \psi^\xi dx \right|
  \leq \|\xi\|_p \|\nabla \theta(\bq) \|_0 \|\psi^\xi\|_q 
\end{align*}
with $1 < p,q < \infty$ such that $\frac{1}{p} + \frac{1}{q} = \frac{1}{2}$. With the Sobolev inequality \eqref{eq:sob:embedding} and noting that $q \to 2$ when $p
\to \infty$ we can find $p$ and $q$ in this
range such that
\begin{align*}
 \left| \int_{\TT^2} \xi \cdot \nabla \theta(\bq) \psi^\xi dx \right|
  & \leq \|\xi\|_s \|\nabla \theta(\bq) \|_0 \|\nabla \psi^\xi\|_0 \\
  & \leq \frac{\kappa}{2} \|\nabla \psi^\xi\|_0^2 + c \|\xi\|_s^2 \|\nabla \theta(\bq) \|_0^2, 
\end{align*}
which in combination with \eqref{eq:energy:psixi} yields
\begin{align}
    \frac{d}{dt} \| \psi^\xi\|_0^2
  + \kappa \| \nabla \psi^\xi\|_0^2 \leq c\|\xi\|_{s}^2 \|\nabla \theta(\bq)\|_0^2.
  \label{ineq:energy:psixi}
\end{align}
Integrating \eqref{eq:L2:balance} for $f = 0$ with respect to time, we have
\begin{align}
  \sup_{s \leq t^*} \|\theta(\bq)\|^2_0 + \kappa \int_0^{t^*} \|\nabla \theta(\bq)\|^2_0 dt
  \leq \|\theta_0\|^2_0
  \label{eq:stupid:L2:bnd}
\end{align}
Hence from \eqref{ineq:energy:psixi} and \eqref{eq:stupid:L2:bnd}, it follows that
\begin{align*}
     \sup_{t\leq t^*} \|\psi^\xi\|_0^2 
  + \kappa \int_0^{t^*}\| \nabla \psi^\xi\|_0^2 dt
  \leq c \|\xi\|_s^2 \int_0^{t^*} \|\nabla \theta(\bq)\|_0^2 dt \leq c \|\theta_0\|_0^2 \|\xi\|_s^2,
\end{align*}
finishing the proof of \eqref{eq:psi:xi:Bnd:L2:b}.

Turning to $H^s(\TT^2)$ estimates we refer to \eqref{eq:Hs:balance}
which translates to
\begin{align}
\frac{d}{dt} \| \psi^\xi \|^2_s + \kappa \| \nabla \psi^\xi \|_s^2 
        \leq c\| \psi^\xi \|^2_s \| \bq\|^{a}_s  
  - 2 \int \FD^s (\xi \cdot \nabla \theta(\bq))  \FD^s \psi^\xi \, dx.
  \label{eq:Hs:inequal:psi:xi}
\end{align}
Invoking H\"older's inequality and the Leibniz bound
\eqref{eq:Kato:Ponce} we estimate 
\begin{align}
  \left | \int \FD^s (\xi \cdot \nabla \theta(\bq))  
                        \FD^s \psi^\xi  \, dx \right|
  \leq c \| \FD^s \psi^\xi\|_{L^p}
       ( \| \FD^s \xi \|_0 \|\FD^1 \theta(\bq)\|_{L^q} 
        +  \| \xi \|_{L^q} \|\FD^{s+1} \theta(\bq)\|_0)
    \label{eq:K:P:App:1}
\end{align}
valid whenever $1 < p, q < \infty$ and maintains
$1 - \tfrac{1}{p} = \tfrac{1}{2} + \tfrac{1}{q}$ i.e. $q = 2p/(p -2)$.  Again with the Sobolev
inequality \eqref{eq:sob:embedding} and noting that $q \to 2$ when $p
\to \infty$ we can find $p$ and $q$ in this
range such that
\begin{align}
  \left | \int \FD^s (\xi \cdot \nabla \theta(\bq))  
                        \FD^s \psi^\xi  \, dx \right|
  &\leq c \| \FD^{s+1} \psi^\xi\|_0 
             \| \FD^s \xi \|_0 \|\FD^{s+1} \theta(\bq)\|_0
    \notag\\
  &\leq \frac{\kappa}{4} \| \FD^{s+1} \psi^\xi\|_0^2 +
             c\| \FD^s \xi \|_0^2 \|\FD^{s+1} \theta(\bq)\|_0^2.
    \label{eq:K:P:App:2}
\end{align}
Combining this bound with \eqref{eq:Hs:inequal:psi:xi} 
yields the inequality
\begin{align}
  \frac{d}{dt} \| \psi^\xi \|^2_s+ \frac{\kappa}{2} \| \nabla \psi^\xi \|_s^2 
  \leq c\| \psi^\xi \|^2_s \| \bq\|^{a}_s  
         + c \| \xi \|_s^2 \|\theta(\bq)\|_{s+1}^2
  \label{eq:psi:xi:HS:est:1}
\end{align}
so that with the Gronwall inequality we obtain
\begin{align*}
  \sup_{r \leq t^*} \| \psi^\xi \|^2_s
  \leq \|\xi\|_s^2 \exp(c t^* \|\bq\|_s^{a})
           \int_0^{t^*} \|\theta(\bq)\|_{s+1}^2  dt
\end{align*}  
A second application of \eqref{eq:Hs:balance}, this time with $f = 0$, yields
\begin{align}
  \kappa \int_0^{t^*} \|\theta(\bq)\|_{s+1}^2 dt
  &\leq c t^* \| \bq\|^{a}_s  \sup_{t \leq t^*}\|\theta(\bq)\|_{s}^2 
  \leq c t^* \| \bq\|^{a}_s   \exp( ct^*\|\bq\|_s^{a}) \|\theta_0\|_s^2
    \notag\\
  &\leq c \exp( c t^*\|\bq\|_s^{a}) \|\theta_0\|_s^2.
    \label{eq:psi:xi:HS:est:2} 
\end{align}
Combining the previous two bounds we find, for any $t^* \geq 0$,
\begin{align}
  \sup_{t \leq t^*}\| \psi^\xi\|_s^2  
  \leq  c \exp(c t^* \|\bq\|_s^{a}) \|\xi\|_s^2 \|\theta_0\|_s^2.
  \label{eq:psi:xi:HS:est:3}
\end{align}
Integrating \eqref{eq:psi:xi:HS:est:1} in time and invoking
\eqref{eq:psi:xi:HS:est:2}, \eqref{eq:psi:xi:HS:est:3}
\begin{align*}
 \kappa \int_0^{t^*} \| \nabla \psi^\xi \|_s^2 dt  \leq
  c t^* \sup_{t \leq t^*} \| \psi^\xi \|^2_s \| \bq\|^{a}_s   
         + c \| \xi \|_s^2 \int_0^{t^*}\|\theta(\bq)\|_{s+1}^2 dt
  \leq c \|\theta_0\|_s^2 \|\xi\|_s^2 \exp(ct^* \|\bq\|_s^{a})
\end{align*}
and hence we now obtain \eqref{eq:psi:xi:Bnd:Hs}.

We next provide estimates for $\psi^{\xi, \txi}$.  As before we begin
by addressing the $L^2$ case, namely \eqref{eq:psi:xi:txiBnd:L2}. 
We take the inner product in $L^2$ of \eqref{eq:grad:the:xitxi} with $\psi^{\xi, \txi}$ and integrate
to obtain, as in \eqref{eq:L2:balance},
\begin{align}
    \frac{1}{2}\frac{d}{dt} \| \psi^{\xi, \txi}\|_0^2 
  + \kappa \| \nabla \psi^{\xi, \txi}\|_0^2 =
       - \int \txi \cdot \nabla \psi^\xi \psi^{\xi, \txi}
       - \int \xi \cdot \nabla \psi^{\txi} \psi^{\xi, \txi}
  := I.
  \label{eq:L2:xitxi:en:bal}
\end{align}
Integrating by parts and using H\"older's inequality the right hand side is estimated as
\begin{align*}
  |I| \leq
    (\|\xi\|_{L^p} + \| \txi\|_{L^p})
   (  \|\psi^{\xi}\|_{L^q} + \| \psi^{\txi} \|_{L^q}) 
                    \|\nabla\psi^{\xi, \txi}\|_0
\end{align*}                    
for $p^{-1} + q^{-1} =2^{-1}$.  Choosing $p$, $q$ appropriately and then
applying the Sobolev embedding, \eqref{eq:sob:embedding}, we find
\begin{align*}
  |I| \leq
    (\|\xi\|_s + \| \txi\|_s)
   (  \|\psi^{\xi}\|_{1} + \| \psi^{\txi} \|_{1}) 
                    \|\nabla\psi^{\xi, \txi}\|_0 
   \leq c(\|\xi\|_s^2 + \| \txi\|_s^2)  (  \|\psi^{\xi}\|_1^2 + \| \psi^{\txi} \|_1^2) 
         + \frac{\kappa}{2} \|\nabla\psi^{\xi, \txi}\|_0^2.
\end{align*}
Hence, using this bound with \eqref{eq:L2:xitxi:en:bal} and
then applying \eqref{eq:psi:xi:Bnd:L2} we infer
\eqref{eq:psi:xi:txiBnd:L2}.

We turn finally to the $H^s(\TT^2)$ estimates for
$\psi^{\xi, \txi}$.    Here \eqref{eq:Hs:balance} becomes 
\begin{align}
\frac{d}{dt} \| \psi^{\xi, \txi} \|^2_s + \kappa \| \nabla \psi^{\xi, \txi} \|_s^2 
        \leq c\| \psi^{\xi, \txi} \|^2_s \| \bq\|^{a}_s  
  - 2 \int 
  \FD^s (\txi \cdot \nabla \psi^\xi +\xi \cdot \nabla \psi^{\txi})  
  \FD^s \psi^{\xi, \txi}  \, dx.
  \label{eq:psi:xi:txi:Hs:bnd:1}
\end{align}
Estimating the last term above in a similar fashion in 
\eqref{eq:K:P:App:2} above leads to 
\begin{align}
  &\left|
  \int  \FD^s (\txi \cdot \nabla \psi^\xi +\xi \cdot \nabla \psi^{\txi})  
           \FD^s \psi^{\xi, \txi}  \, dx
  \right|
     \leq
   \frac{\kappa}{2} \| \psi^{\xi, \txi} \|_{s+1}^2 
    + c(\|\xi\|^2_s + \|\txi\|_s^2)
        (\|\psi^\xi\|_{s+1}^2 + \| \psi^{\txi}\|_{s+1}^2).
  \label{eq:psi:xi:txi:Hs:bnd:2}
\end{align}
Combining the previous two bounds \eqref{eq:psi:xi:txi:Hs:bnd:1},
\eqref{eq:psi:xi:txi:Hs:bnd:2} and then making use of Gronwall inequality and \eqref{eq:psi:xi:Bnd:Hs} we obtain
\begin{align}
 \sup_{r \leq t^*} \| \psi^{\xi, \txi} \|^2
  \leq& c(\|\xi\|^2_s + \|\txi\|_s^2)
        \exp( ct^* \| \bq\|^{a}_s ) 
  \int_0^{t^*} (\|\psi^\xi\|_{s+1}^2 + \|\psi^{\txi}\|_{s+1}^2) dt
        \notag\\
  \leq& c(\|\xi\|^4_s + \|\txi\|_s^4) \exp( c t^* \| \bq\|^{a}_s ),
        \notag
\end{align}
which establishes the final bound, \eqref{eq:psi:xi:txiBnd:H2}, completing the proof.
\end{proof}

\subsubsection{Bounds on the Potential $U$ and its Derivatives}
\label{sec:d:d2:pot:bnds}

With these preliminary bounds on \eqref{eq:ad:forced} and hence
\cref{prop:grad:xi:apriori} in hand we turn to provide estimates for $U$
defined as in \eqref{eq:post:passive:scal}. Recall that we seek to
determine the extent to which \cref{B123},
\cref{ass:integrability:cond} applies for the certain classes of
potential $U$ which arise in this example, namely
\eqref{eq:pot:passive:scal:g:n} subject to conditions on the
observation operator \eqref{eq:gen:linear:form}. Of course, since $U$
is positive, \cref{ass:integrability:cond} holds
regardless of our assumptions on $\Obs$.

Regarding the assumptions on $\Obs$ we consider the following three
situations.  Fix an observation time window $t^* > 0$.  Firstly, in the
case of spectral observations or that of spatial (volumetric) averages,
we may suppose
\begin{align}
  |\Obs( \phi)|  \leq c_0 \sup_{t \leq t^*} \| \phi(t)\|_0
  \label{eq:obs:cond:spec}
\end{align}
for $\phi \in C([0,t^*]; L^2(\TT^2))$.  On the other hand, the case of
pointwise spatial-temporal measurement yields the condition:
\begin{align}
  |\Obs( \phi)| \leq c_0 \sup_{t \leq t^*} \|\phi(t)\|_{L^\infty}
  \label{eq:obs:cond:node}
\end{align}
for $\phi \in C([0,t^*] \times \TT^2)$. Finally, for estimates
involving gradients or other derivatives of $\phi$ we assume that,
for some $s > 0$,
\begin{align}
  |\Obs( \phi)| \leq c_0 \sup_{t \leq t^*} \|\phi(t)\|_{H^s}
  \label{eq:obs:cond:grad}
\end{align}
valid for $\phi \in C([0,t^*]; H^s(\TT^2))$.

Let us begin with estimates on $DU$ and $D^2U$ in negative Sobolev
space which in turn yield the conditions in \cref{B123} on the
$\H_\gamma$ spaces, \eqref{def:Hgamma}, defined relative to a
covariance operator $\cC$ of the Gaussian prior $\mu_0$ in
\eqref{eq:post:passive:scal}. 

\begin{Prop}
\label{prop:DU:DsqU:Bnds}
Let $U$ be defined as in \eqref{eq:pot:passive:scal:g:n}
for a fixed $\data \in \RR^m$ and $\Gamma$ a symmetric
strictly positive definite matrix.
\begin{itemize}
\item[(i)] When $\Obs$ satisfies \eqref{eq:obs:cond:spec}, $U$ is
  twice Fr\'echet differentiable in $H^{s'}(\TT^2)$ for any $s' > 0$.
  In this case for any $s' \geq 0$
  \begin{align}
    \| D U(\bq)\|_{-s'} \leq M_1 < \infty
    \label{eq:Grad:bnd:spec}
  \end{align}
  for a constant
  $M_1 = M_1(s', \kappa, t^*, \theta_0, c_0, \data, \Gamma)$ which is
  independent of $\bq$.\footnote{Note furthermore that $M_1$ is
    independent of $t^*$ in the case when $s' > 0$, cf. 
    \eqref{eq:psi:xi:Bnd:L2}, \eqref{eq:psi:xi:Bnd:L2:b}.}
  Furthermore, assuming now that $s' > 0$ we have
  \begin{align}
    \| D^2 U(\bq)\|_{\mL_2(H^{s'}(\TT^2))} \leq M_2 < \infty,
    \label{eq:Hess:bnd:spec}
  \end{align}
  where $\|\cdot\|_{\mL_2(H^{s'}(\TT^2))}$ denotes the standard
  operator norm of a real-valued bilinear operator on
  $H^{s'}(\TT^2) \times H^{s'}(\TT^2)$ (see \eqref{eq:C:gamma:Hess:U}), and
  $M_2 = M_2(s', \kappa, \theta_0, c_0, \data, \Gamma)$ is a constant
  independent of $\bq$.
\item[(ii)] In the case \eqref{eq:obs:cond:grad} for $\Obs$ we have
  once again that $U$ is twice Fr\'echet differentiable in
  $H^{s}(\TT^2)$ for the given value of $s > 0$ in
  \eqref{eq:obs:cond:grad}. Here, for any $s' \geq s$,
  \begin{align}
    \| D U(\bq)\|_{-s'} \leq M \exp(c \|\bq\|_{s'}^{a})
    \label{eq:Grad:bnd:point}
  \end{align}
  and
  \begin{align}
   \| D^2 U(\bq)\|_{\mL_2(H^{s'}(\TT^2))}  \leq M \exp(c \|\bq\|_{s'}^{a})
    \label{eq:Hess:bnd:point}
\end{align}
where $c = c(s', \kappa, t^*, \theta_0, c_0, \data, \Gamma)$,
$M = M(s', \kappa, t^*, \theta_0, c_0, \data, \Gamma)$ are independent of $\bq$
and $a> 0$ is precisely the constant appearing in
\eqref{eq:Hs:balance}.
\item[(iii)] Finally under the assumption that $\Obs$ obeys
  \eqref{eq:obs:cond:node}, $U$ is twice Fr\'echet differentiable in
  $H^{s'}(\TT^2))$ for any $s' > 1$.  In this case, when $s' > 1$, we
  again have the bounds \eqref{eq:Grad:bnd:point},
  \eqref{eq:Hess:bnd:point}.
\end{itemize}
\end{Prop}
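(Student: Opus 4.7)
The plan is to bound $\|DU(\bq)\|_{-s'}$ and $\|D^2U(\bq)\|_{\mL_2(H^{s'}(\TT^2))}$ by duality, exploiting the explicit directional formulas \eqref{eq:DD:U:xi}--\eqref{eq:DD:U:xi:txi} together with the a priori estimates in \cref{prop:grad:xi:apriori}. Namely, in view of \eqref{eq:neg:sob:space} we have $\|DU(\bq)\|_{-s'} = \sup_{\|\xi\|_{s'}=1}|U^\xi(\bq)|$, and the Hessian norm equals $\sup_{\|\xi\|_{s'}=\|\txi\|_{s'}=1}|U^{\xi,\txi}(\bq)|$. Using Cauchy--Schwarz on the inner products in \eqref{eq:DD:U:xi} and \eqref{eq:DD:U:xi:txi} and the boundedness of $\GP$ on $\RR^m$, the problem reduces to bounding three ingredients: $|\Obs(\theta(\bq))|$, $|\Obs(\psi^\xi(\bq))|$, and $|\Obs(\psi^{\xi,\txi}(\bq))|$, each controlled by the appropriate observation hypothesis.

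I would treat the three cases in parallel. In case (i), the hypothesis \eqref{eq:obs:cond:spec} combined with the $L^2$ balance \eqref{eq:stupid:L2:bnd} (applied to $\theta(\bq)$ with $f=0$) yields $|\Obs(\theta(\bq))| \leq c_0\|\theta_0\|_0$, independently of $\bq$. The key nonlinear pieces $|\Obs(\psi^\xi(\bq))|$ and $|\Obs(\psi^{\xi,\txi}(\bq))|$ are controlled by $\sup_t\|\psi^\xi\|_0$ and $\sup_t\|\psi^{\xi,\txi}\|_0$, to which I apply \eqref{eq:psi:xi:Bnd:L2:b} and \eqref{eq:psi:xi:txiBnd:L2}; both constants are independent of $\bq$. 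Combining with Cauchy--Schwarz in \eqref{eq:DD:U:xi}--\eqref{eq:DD:U:xi:txi} yields the uniform bounds \eqref{eq:Grad:bnd:spec}--\eqref{eq:Hess:bnd:spec}.

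For case (ii), hypothesis \eqref{eq:obs:cond:grad} with $s' \geq s$ forces me to control $\sup_t\|\theta(\bq)\|_{H^{s'}}$, $\sup_t\|\psi^\xi(\bq)\|_{H^{s'}}$ and $\sup_t\|\psi^{\xi,\txi}(\bq)\|_{H^{s'}}$; these come from Gronwall on \eqref{eq:Hs:balance} (cf.\ \eqref{eq:psi:xi:HS:est:3}) and directly from \eqref{eq:psi:xi:Bnd:Hs}, \eqref{eq:psi:xi:txiBnd:H2}, and all yield the exponential factor $\exp(c t^* \|\bq\|_{s'}^a)$ with $a$ from \eqref{eq:Hs:balance}. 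Multiplying with the $\theta(\bq)$ factor (which inherits the same exponential dependence) and absorbing, the desired bounds \eqref{eq:Grad:bnd:point}--\eqref{eq:Hess:bnd:point} follow. Case (iii) is then reduced to case (ii): since $\TT^2$ has dimension two, the Sobolev embedding \eqref{eq:sob:embedding} gives $\|\cdot\|_{L^\infty} \leq c\|\cdot\|_{H^{s'}}$ for any $s' > 1$, so \eqref{eq:obs:cond:node} implies the hypothesis of case (ii) with some $s \in (1,s')$, and the same bounds propagate.

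Finally, the claim of twice Fr\'echet differentiability in each case follows by showing that the linear functional $\xi \mapsto U^\xi(\bq)$ and the bilinear form $(\xi,\txi) \mapsto U^{\xi,\txi}(\bq)$ agree with the first and second derivatives in the Fr\'echet sense. This is done by combining the solution-map continuity estimate \eqref{eq:Hs:cont:est:AD} with the already-derived uniform bounds on $\psi^{\xi,\txi}$: the continuity estimate shows that $\bq \mapsto \psi^\xi(\bq)$ is itself continuously differentiable with derivative $\psi^{\xi,\txi}$, hence the chain rule applied through the $C^\infty$ quadratic $y \mapsto |\GP(\data - y)|^2$ delivers Fr\'echet differentiability of $U$. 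The main technical obstacle I anticipate is checking the remainder control needed for this last step rigorously in case (iii), because the $L^\infty$ observation is only $H^{s'}$-continuous through the Sobolev embedding, so one must carefully track the constants in \eqref{eq:Hs:cont:est:AD} when passing the difference quotient of $\theta$ through $\Obs$; this is however routine once the a priori bounds from \cref{prop:grad:xi:apriori} are in hand.
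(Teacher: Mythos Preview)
Your proposal is correct and follows essentially the same approach as the paper: both argue by duality via $\|DU(\bq)\|_{-s'} = \sup_{\|\xi\|_{s'}=1}|U^\xi(\bq)|$ and $\|D^2U(\bq)\|_{\mL_2(H^{s'})} = \sup_{\|\xi\|_{s'}=\|\txi\|_{s'}=1}|U^{\xi,\txi}(\bq)|$, bound the inner products in \eqref{eq:DD:U:xi}--\eqref{eq:DD:U:xi:txi} using the observation hypotheses, and then invoke exactly the same a priori estimates from \cref{prop:grad:xi:apriori} (namely \eqref{eq:psi:xi:Bnd:L2:b}, \eqref{eq:psi:xi:txiBnd:L2} for case (i), \eqref{eq:psi:xi:Bnd:Hs}, \eqref{eq:psi:xi:txiBnd:H2} for case (ii), and the Sobolev embedding $H^{s'}\hookrightarrow L^\infty$ for $s'>1$ to reduce (iii) to (ii)). One small point: for the $s'=0$ case of \eqref{eq:Grad:bnd:spec} you should use \eqref{eq:psi:xi:Bnd:L2} rather than \eqref{eq:psi:xi:Bnd:L2:b}, which is why $M_1$ picks up the $t^*$ dependence noted in the footnote; otherwise your argument matches the paper's.
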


\begin{proof}
We start with the proof of \eqref{eq:Grad:bnd:spec}.
Notice that, referring back to \eqref{eq:DD:U:xi} 
and using the condition \eqref{eq:obs:cond:spec}, we have
\begin{align*}
    |U^{\xi} (\bq)| \leq c (1 + \sup_{t \leq t^*} \| \theta(\bq)\|_0) \cdot
    \sup_{t \leq t^*} \| \psi^\xi(\bq) \|_0,
\end{align*}
for any $\bq \in L^2(\TT^2)$, $\xi \in H^{s'}(\TT^2)$ and
$c = c(\GP, \data, c_0)$.  Observe that for any $s' \geq 0$ we have
\begin{align}
   \| D U(\bq) \|_{-s'}
  = \sup_{\|\xi \|_{s'} = 1} |U ^\xi(\bq)|.
  \label{eq:C:gamma:grad:U}
\end{align}
Thus, invoking the bounds \eqref{eq:stupid:L2:bnd},
\eqref{eq:psi:xi:Bnd:L2} when $s' = 0$ or \eqref{eq:psi:xi:Bnd:L2:b}
for the case $s' > 0$, we obtain \eqref{eq:Grad:bnd:spec}.

We turn next to the proof of \eqref{eq:Hess:bnd:spec}.  In this case, working
from \eqref{eq:DD:U:xi:txi} and again making use of the condition
\eqref{eq:obs:cond:spec},
\begin{align*}
  |U^{\xi, \txi} (\bq)| 
      \leq c \sup_{t \leq t^*} \| \psi^{\txi}(\bq)\|_0 
     \cdot \sup_{t \leq t^*} \| \psi^{\xi}(\bq) \|_0 
      + c (1 +\sup_{t \leq t^*} \| \theta(\bq)\|_0 )
      \cdot \sup_{t \leq t^*} \| \psi^{\xi, \txi} (\bq) \|_0
\end{align*}
for any $\xi, \txi \in H^{s'}(\TT^2)$, where $c = c(\GP, \data, c_0)$.  Here
using 
\begin{align}
  \| D^2 U(\bq) \|_{\mathcal{L}_2(H^{s'}(\TT^2))}
   = \sup_{\|\xi \|_{s'} = \|\txi\|_{s'} = 1} |U ^{\xi, \txi}(\bq)|
  \label{eq:C:gamma:Hess:U}
\end{align}
and the bounds \eqref{eq:stupid:L2:bnd},
\eqref{eq:psi:xi:Bnd:L2:b}, \eqref{eq:psi:xi:txiBnd:L2}, the desired
estimate \eqref{eq:Hess:bnd:spec} now follows.

We next address \eqref{eq:Grad:bnd:point}, \eqref{eq:Hess:bnd:point}.
Here \eqref{eq:DD:U:xi} and \eqref{eq:obs:cond:grad} result in
\begin{align}
  |U^{\xi} (\bq)|
        \leq c (1 + \sup_{t \leq t^*} \| \theta(\bq)\|_s) \cdot
  \sup_{t \leq t^*} \| \psi^\xi(\bq) \|_s
  \label{eq:U:xi:Hs:Obs:bnd}
\end{align}
and similarly, with \eqref{eq:DD:U:xi:txi},
\begin{align}
  |U^{\xi,\txi} (\bq)|
  \leq c \sup_{t \leq t^*} \| \psi^{\txi}(\bq)\|_{s} 
     \cdot \sup_{t \leq t^*} \| \psi^{\xi}(\bq) \|_{s} 
      + c (1 +\sup_{t \leq t^*} \| \theta(\bq)\|_{s} )
  \cdot \sup_{t \leq t^*} \| \psi^{\xi, \txi} (\bq) \|_{s}
    \label{eq:U:txi:Hs:Obs:bnd}
\end{align}
for any $\xi,\txi \in H^{s'}(\TT^2)$, $s' \geq s$. Thus, invoking
\eqref{eq:Hs:balance} (with $f \equiv 0$), \eqref{eq:psi:xi:Bnd:Hs},
\eqref{eq:psi:xi:txiBnd:H2} with
\eqref{eq:C:gamma:grad:U}-\eqref{eq:U:txi:Hs:Obs:bnd}, we obtain
\eqref{eq:Grad:bnd:point}, \eqref{eq:Hess:bnd:point} establishing the
second item.

Regarding the final item (iii) observe that \eqref{eq:DD:U:xi},
\eqref{eq:DD:U:xi:txi} and the Sobolev embedding of
$H^s(\TT^2) \subset L^\infty(\TT^2)$ when $s > 1$ we obtain bounds
as in \eqref{eq:U:xi:Hs:Obs:bnd}, \eqref{eq:U:txi:Hs:Obs:bnd}
under \eqref{eq:obs:cond:node} for any $s > 1$.  We therefore
conclude this final item arguing as in the previous case.  The proof
is now complete.  
\end{proof}

Drawing upon \cref{prop:DU:DsqU:Bnds} we now draw certain conclusions
on the scope of applicability of \cref{B123} to
\eqref{eq:post:passive:scal}.  For this purpose suppose $\cC$ is a
symmetric, positive, trace class operator on $L^2(\TT^2)$.  Following
the notations introduced above in \eqref{def:Hgamma} we consider the
fractional powers of $\cC$ and associated spaces $\H_\gamma$ with norm
$|\bq |_{\gamma} = \| \cC^{-\gamma} \bq\|_{0}$ for $\gamma \geq 0$, so
that in particular we have the notation $|\bq| = \|\bq\|_{0}$.  We
have the following corollary:

\begin{Cor} \label{cor:DU:DsqU:Bnd:w:C} Let $\cC$ be a symmetric,
  positive, trace class operator on $L^2(\TT^2)$.  Assume that
  for some $s > 0$, and some $\rt \in (0,1/2)$ there is
  a constant $c_1$ such that
  \begin{align}
    \| \bq\|_s  \leq  c_1 |\bq |_\gamma =  c_1 \|\cC^{-\rt} \bq\|_0
    \quad \mbox{ for all } \bq \in \H_\gamma,
    \label{eq:C:reg:cond:1}
  \end{align}
  so that $\H_\gamma \subset H^s(\TT^2)$.
  \begin{itemize}
  \item[(i)] Under the spectral observation assumption,
    \eqref{eq:obs:cond:spec}, \cref{B123} and
    \cref{ass:integrability:cond} hold for $U$ and the given $\cC$.
    Additionally, if for this value of $\rt$, $\cC^{1-2\gamma}$
    is trace class in the sense of \eqref{eq:Tr:Def:b}, so that
    \cref{ass:higher:reg:C} holds, then
    \cref{thm:weak:harris} applies to \eqref{eq:post:passive:scal}.
  \item[(ii)] Under \eqref{eq:obs:cond:grad}, assuming that
    \eqref{eq:C:reg:cond:1} holds for the value of $s > 0$ in
    \eqref{eq:obs:cond:grad} we have that
    \begin{align}
      |DU(\bq)|_{-\gamma} \leq  M \exp( c |\bq|_{\gamma}^a)
      \label{eq:tran:to:gamma:sp:DU}
    \end{align}
    and that
    \begin{align}
      \|\cC^\gamma D^2U(\bq) \cC^\gamma\|_{\mL_2(\H_0)}
      \leq M \exp( c |\bq|_{\gamma}^a)
            \label{eq:tran:to:gamma:sp:DU:b}
    \end{align}
    where $\|\cdot\|_{\mL(\H_0)}$ here denotes the standard operator
    norm of a real-valued bilinear operator on $\H_0 \times \H_0$, and
    again the constants
    $c = c(s', \kappa, t^*, \theta_0, c_0, c_1 \data, \Gamma)$,
    $M = M(s', \kappa, t^*, \theta_0, c_0, c_1, \data, \Gamma)$ are
    independent of $\bq$ and $a> 0$ is as in \eqref{eq:Hs:balance}.
  \item[(iii)] In the case \eqref{eq:obs:cond:node}, if
    \eqref{eq:C:reg:cond:1} holds for some $s > 1$ then we again have
    the bounds \eqref{eq:tran:to:gamma:sp:DU},
    \eqref{eq:tran:to:gamma:sp:DU:b} for the corresponding values of
    $\gamma$.
  \end{itemize}
\end{Cor}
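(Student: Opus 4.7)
The strategy is to leverage \cref{prop:DU:DsqU:Bnds}, which gives bounds on $DU$ and $D^2U$ in the Sobolev scale $H^{s'}(\TT^2)$, and then to transport these estimates to the fractional scale $\H_\gamma$ via the embedding \eqref{eq:C:reg:cond:1} together with the interpolation inequalities of \cref{lem:ineq:C}.

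The first step I would carry out is the transport of norms. From \eqref{eq:C:reg:cond:1} we have $\| \xi \|_s \leq c_1 |\xi|_\gamma$, so if $|\xi|_\gamma = 1$ then $\|\xi\|_s \leq c_1$. Dualizing gives
\begin{align*}
  |DU(\bq)|_{-\gamma} = \sup_{|\xi|_\gamma = 1} \langle DU(\bq), \xi\rangle
    \leq c_1 \|DU(\bq)\|_{-s},
\end{align*}
and, since $\{\cC^\gamma \xi : \|\xi\|_0 = 1\} = \{\eta : |\eta|_\gamma = 1\}$,
\begin{align*}
  \|\cC^\gamma D^2U(\bq)\cC^\gamma\|_{\mL_2(\H_0)}
    = \sup_{|\eta|_\gamma = |\tilde\eta|_\gamma = 1} |D^2U(\bq)(\eta,\tilde\eta)|
    \leq c_1^2 \|D^2U(\bq)\|_{\mL_2(H^s(\TT^2))}.
\end{align*}
Substituting the estimates of \cref{prop:DU:DsqU:Bnds} (with $s' = s$) into these two bounds immediately yields \eqref{eq:tran:to:gamma:sp:DU}, \eqref{eq:tran:to:gamma:sp:DU:b} for items (ii) and (iii), where in item (iii) one must additionally take $s > 1$ so that $H^s(\TT^2) \hookrightarrow L^\infty(\TT^2)$ and \eqref{eq:obs:cond:node} can be invoked as in \cref{prop:DU:DsqU:Bnds}(iii).

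For item (i) the same transport argument produces uniform (in $\bq$) bounds of the form $|DU(\bq)|_{-\gamma} \leq c_1 M_1$ and $\|\cC^\gamma D^2U(\bq)\cC^\gamma\|_{\mL_2(\H_0)} \leq c_1^2 M_2$. The Hessian bound is exactly \cref{B123}(B1) with $\Ua = c_1^2 M_2$. To deduce \cref{B123}(B2) I would invoke \cref{rmk:B123:Consequences}(ii): since the uniform bound on $DU$ is of the form (B3) with $\Ud = 0$ (trivially in $[0,\lambda_1^{-1+2\gamma})$) and $\Ue = c_1 M_1$, condition (B2) follows automatically. \cref{ass:integrability:cond} is immediate because $U \geq 0$ by \eqref{eq:pot:passive:scal:g:n}, so one may take $M(\varepsilon) = 0$ for all $\varepsilon > 0$. \cref{A123} is assumed outright on $\cC$, and the additional hypothesis that $\cC^{1-2\gamma}$ be trace class supplies \cref{ass:higher:reg:C}; all four hypotheses of \cref{thm:weak:harris} are thus verified.

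The only delicate point, and the one I would be most careful about, is the dualization step: the chain of inequalities $|DU(\bq)|_{-\gamma} \leq c_1 \|DU(\bq)\|_{-s}$ requires that $DU(\bq)$ be tested against elements of $H^s(\TT^2)$ of unit norm and that the continuous embedding $\H_\gamma \hookrightarrow H^s(\TT^2)$ extend by duality to $H^{-s}(\TT^2) \hookrightarrow \H_{-\gamma}$ with the same constant. This is routine but I would make it explicit, since the spaces $\H_\gamma$ are defined spectrally relative to $\cC$ whereas $H^s(\TT^2)$ is defined via $\Lambda^s = (-\Delta)^{s/2}$, and the duality with respect to the $L^2$-pairing is what bridges them. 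Apart from this, the argument is a bookkeeping exercise combining \cref{prop:DU:DsqU:Bnds}, \cref{lem:ineq:C}, and \cref{rmk:B123:Consequences}.
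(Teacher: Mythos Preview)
Your proposal is correct and follows essentially the same approach as the paper: dualize the embedding \eqref{eq:C:reg:cond:1} to obtain $|DU(\bq)|_{-\gamma} \leq c_1 \|DU(\bq)\|_{-s}$ and $\|\cC^\gamma D^2U(\bq)\cC^\gamma\|_{\mL_2(\H_0)} \leq c_1^2 \|D^2U(\bq)\|_{\mL_2(H^s(\TT^2))}$, then feed in the Sobolev-scale bounds from \cref{prop:DU:DsqU:Bnds} and, for item (i), recognize the resulting uniform bound on $DU$ as an instance of (B3) with $\Ud=0$ so that (B2) follows via \cref{rmk:B123:Consequences}(ii). Your explicit mention of \cref{lem:ineq:C} is unnecessary (neither you nor the paper actually uses it here), and your careful remark on the duality step and the verification of \cref{ass:integrability:cond} via $U\geq 0$ are welcome clarifications that the paper handles more tersely or offloads to earlier text.
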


\begin{proof}
  Regarding the first item we proceed to establish the conditions
  \eqref{Hess:bound} and \eqref{dissip:new}. Observe that under
  \eqref{eq:C:reg:cond:1}
  \begin{align}
    c_1^2 \| D^2U (\bq) \|_{\mathcal{L}_2(H^s(\TT^2))} \geq
        \|\cC^\gamma D^2U(\bq) \cC^\gamma \|_{\mathcal{L}_2(\H_0)} 
    \label{eq:obv:upp:bnd:1}
  \end{align}
  so that with \eqref{eq:Hess:bnd:spec} we infer \eqref{Hess:bound}.
  For \eqref{dissip:new} we demonstrate the stronger condition
  \eqref{eq:B3a}.  Again, due to \eqref{eq:C:reg:cond:1} we have
  \begin{align}
    c_1 \| D U(\bq)\|_{-s} \geq | D U(\bq) |_{-\gamma}
        \label{eq:obv:upp:bnd:2}
  \end{align}
  so that \eqref{eq:B3a} follows from \eqref{eq:Grad:bnd:spec}.

  Regarding the second and third items we simply apply
  \eqref{eq:obv:upp:bnd:1}, \eqref{eq:obv:upp:bnd:2}
  now in combination with \eqref{eq:Grad:bnd:point}
  and \eqref{eq:Hess:bnd:point}.    The proof is complete.
\end{proof}

\begin{Rmk}
  \label{rmk:crux:matter}
  Let $A$ be the Stokes operator in dimension $2$ with periodic boundary
  conditions.  Of course for any given $s > 0$ the condition
  \eqref{eq:C:reg:cond:1} is fulfilled when $\cC = (A)^{-\kappa/2}$ for
  any $\kappa$ such that $\kappa \geq s/\gamma$.  Here note, in regards
  to \cref{ass:higher:reg:C},  $\cC = (A)^{-\kappa/2}$ has the
  eigenvalues $\lambda_j \approx |j|^{\kappa/2}$.  Thus
  \eqref{eq:Tr:Def:b} entails the additional requirement 
  $\kappa > 2/(1 - 2 \gamma)$.

  Note however that the examples considered in
  \cite{borggaard2018bayesian} involved a covariance $\cC$ with
  exponentially decaying spectrum so that \eqref{eq:C:reg:cond:1}
  applies for any $s \geq 0$ and \eqref{eq:Tr:Def:b} for any
  $0 \leq \gamma < 1/2$.
\end{Rmk}

\begin{Rmk}[Improved bounds in the time independent case]

  We expect that improved, $\bq$-independent bounds on
  (\ref{eq:grad:the:xi}) and (\ref{eq:grad:the:xitxi}) can be achieved
  through more sophisticated parabolic regularity techniques.  In turn
  this could improve bounds obtainable for $DU$ and $D^2U$ in the case
  of point observations \eqref{eq:obs:cond:node}.  Whatever the
  mechanism, we note that the numerical results in
  \cite{borggaard2018bayesian} suggest good mixing occurs for the
  Hamiltonian Monte Carlo algorithm in this case of point observations
  notwithstanding the fact that our current results do not cover this
  situation.
  
  In this connection it is notable that a global bound on $DU$ and
  $D^2U$ and hence the conditions for \eqref{thm:weak:harris} can be
  achieved for point observations in the time-stationary analogue of
  \eqref{eq:ad:forced} thanks to \cite{Berestycki_etal_09}.  Let
  \begin{align}
    \bq \cdot \nabla \theta =  \kappa \Delta \theta + f
    \label{eq:AD:stat}
  \end{align}
  on $\TT^2$ for a given fixed $f: \TT^2 \to \RR$, $\kappa >0$.  We
  can consider, similarly to above, the statistical inversion problem
  of recovering a divergence free $\bq$ from the sparse observation of
  the resulting solution $\theta: \TT^2 \to \RR$.  In this case,
  following the Bayesian approach we again obtain a posterior measure
  of the form \eqref{eq:post:passive:scal} with $U$ given analogously
  to \eqref{eq:pot:passive:scal:g:n} in the case of Gaussian
  observation noise.

  As previously the task of estimating $DU$ and $D^2U$ entails
  suitable estimates for
  \begin{align*}
    \bq \cdot \nabla \psi^\xi 
  =  \kappa \Delta \psi^\xi 
     - \xi \cdot \nabla \theta(\bq),
  \end{align*}
  and
  \begin{align*}
    \bq \cdot \nabla \psi^{\xi, \txi}
  =  \kappa \Delta\psi^{\xi, \txi}
     - \txi \cdot \nabla \psi^\xi
     - \xi \cdot \nabla \psi^{\txi}.
  \end{align*}
  over suitable directions $\xi, \txi$.

  Suppose that $\phi$ obeys
  \begin{align}
    \bq \cdot \nabla \phi =  \kappa \Delta \phi + g
    \label{eq:gen:AD:stat:eqn}
  \end{align}
  for some $\bq: \TT^2 \to \RR^2$, divergence free and
  $g:\TT^2 \to \RR$.  According to \cite[Lemma
  1.3]{Berestycki_etal_09} we have that\footnote{The result
    \cite{Berestycki_etal_09} is stated for \eqref{eq:AD:stat}
    supplemented with Dirichlet boundary conditions but pursuing the
    proof it is clear that this bound also applies in the spatially
    periodic case.}
  \begin{align}
    \|\phi\|_{L^\infty} \leq c \| g \|_{L^p}
    \label{eq:int:bnd}
  \end{align}
  for any $p >1$ where crucially the constant $c = c(p, \kappa)$ is
  independent of $\bq$.  Applying \eqref{eq:int:bnd} and carrying out
  other standard manipulations we have that
  \begin{align}
    \| \theta(\bq) \|_{L^\infty}^2 + \|\nabla \theta(\bq)\|^2_{0}
    \leq c \|f\|_{0}^2
    \label{eq:th:base:bnd}
  \end{align}
  for $c = c(\kappa)$ independent of $\bq$.  As such a second application of
  \eqref{eq:int:bnd}, Sobolev embedding, \eqref{eq:sob:embedding},
  and \eqref{eq:th:base:bnd}
  yields
  \begin{align}
    \| \psi^{\xi} \|_{L^\infty}
    \leq c \| \xi \|_{s} \|f\|_{0}
        \label{eq:phi:stat:xi:2}
  \end{align}
  for any $s > 0$ where the constant $c = c(s, \kappa)$ is again
  independent of $\bq$.  Moreover, using that $\bq$ is divergence free
  and \eqref{eq:th:base:bnd}
  \begin{align}
     \|\nabla \psi^{\xi} \|_{0} \leq c \|\xi\|_{0} \| f\|_{0}
    \label{eq:phi:stat:xi:3}
  \end{align}
  with $c = c(s,\kappa)$ independent of $\bq$.  Finally
  \eqref{eq:int:bnd} followed by 
  \begin{align}
    \| \psi^{\xi, \txi} \|_{L^\infty}
    \leq c (\|\xi\|_{s}^2 + \| \txi \|^2_{s}).
    \label{eq:phi:stat:xi:4}
  \end{align}
  for any $s > 0$ where $c = c(s, \kappa)$ does not depend on $\bq$.
  Thus, arguing as in \cref{prop:grad:xi:apriori} but making use of
  \eqref{eq:phi:stat:xi:2}, \eqref{eq:phi:stat:xi:4} we can therefore
  conclude that whenever
  \begin{align*}
    |\Obs(\phi)| \leq c_0 \|\phi\|_{L^\infty},
  \end{align*}
  bounds as in \eqref{eq:Grad:bnd:spec},
  \eqref{eq:Hess:bnd:spec} must hold.
\end{Rmk}

\section{Outlook}

This work provides an illustration of the power and efficacy of the
weak Harris theorem as a tool for the analysis of mixing in
infinite-dimensional MCMC methods. Specifically our work addresses a
Hilbert space version from \cite{BePiSaSt2011} of the Hamiltonian
Monte Carlo method.  Notwithstanding recent progress in this setting
of infinite dimensional MCMC algorithms, the understanding of mixing
rates and the relatedly optimal choice of algorithmic parameters
remains in its infancy.  Let us therefore point out a number of
interesting questions remaining to be studied which we plan to address
in future work.

One immediate avenue concerns the analysis of numerically discretized
versions of the HMC algorithm \eqref{eq:PEHMC:kernel:def} which must
be used in practice. Here the Metropolization step, which is used to
correct for the bias introduced by the discretization of
\eqref{eq:dynamics:xxx}, must be accounted for. In a similar vein it
would be useful to have error bounds between the adjusted and
unadjusted versions of the algorithm.

It is also worth noting that there are a number of variations on the
infinite dimensional HMC algorithm from \cite{BePiSaSt2011} now
available in the literature whose mixing properties are poorly
understood, particularly as we regard these different algorithms in
comparative perspective. For example we note the Second-Order Langevin
Hamiltonian (SOLHMC) methods in \cite{ottobre2016} and the Riemannian
(geometric) HMC approach developed in \cite{BeByLiGi2017,
  BeGiShiFaStu2017}.

Although the above analysis is a nontrivial first step towards a
better understanding of \eqref{eq:HMC:kernel:overview} one may
nevertheless view the time step condition
\eqref{eq:time:restrict:basic} as restricting the scope of our
analysis to a perturbation of the linear Gaussian case;
cf. \cref{rmk:taking:stock:lip_1}.  It is notable that similar small
time step condition also appears in all the other recent studies of
the HMC algorithm that we are aware
of\cite{DurmusEtAl2017,LivingstoneEtAl2019,BoEbZi2018, BoEbPHMC}.  We
conjecture that for many problems of interest this restriction on $T$
may be far from optimal from the point of view of mixing rates.  Indeed
this bound on $T$ \eqref{eq:time:restrict:basic} turns on our
treatment of the Lyapunov structure in \cref{prop:FL} and on the
nudging scheme \cref{prop:FP:Type:contract} which could presumably be
improved with a more delicate treatment of the Hamiltonian dynamics
\eqref{eq:dynamics:xxx}.  As a starting point it would be of great
interest to find some simple settings in finite dimensions where this
could be carried out.

As already noted above in the introduction, a primary motivation for
considering infinite dimensional MCMC methods concerns the Bayesian
approach to PDE inverse problems. While several large scale numerical
studies have been carried out for some specific problems a more
systematic gallery of examples on which the performance of algorithms
have been experimentally tested would be desirable. Here our results
presented in \cref{sec:bayes:AD} show that analysis of conditions on
the potential $U$ in \eqref{eq:dynamics:xxx} as arising from the
Bayesian approach to PDE inverse problems can be quite involved.
Indeed, in the case of the advection-diffusion problem we consider
here, it is not clear that we can obtain a global Hessian bound on $U$
for interesting classes of observations, such as space-time point
observations. Thus it would be useful to develop an analysis that only
requires that $U$ is locally Lipschitz. More broadly, further examples
of PDE inverse problems as found in e.g. \cite{stuart2010inverse}
should be analytically studied in this context to obtain a broader
sense of the variety of relevant conditions on $U$.

\setcounter{equation}{0}
\appendix

\section{Consequences for convergence of observables}
\label{sec:LLN:CLT}

Let $P$ be a Markov kernel on a Polish space $\Hp$ and take
$\{Q_n(\bq_0)\}_{n \geq 1, \bq_0 \in \Hp}$ to be the Markov process
associated with $P$ starting from $\bq_0 \in \Hp$.  Suppose that $\mu_*$
is an invariant measure for $P$.  In addition to quantifying various
abstract notions of distance, i.e. the Wasserstein metric, between the
measures $\mu P^n$ and $\mu_*$, we are typically interested in
estimating
\begin{align}
  \left| P^n\Phi(\bq_0) - \int \Phi(\bq') \mu_*(d\bq') \right|
  \label{eq:obs:mean:conv:c1}
\end{align}
and also
\begin{align}
  \left| \frac{1}{n}\sum\limits_{k =1}^n\Phi(Q_k(\bq_0))
  - \int \Phi(\bq') \mu_*(d\bq') \right|
    \label{eq:obs:LLN:conv:c1}
\end{align}
for concrete observables $\Phi: \Hp \to \RR$ and starting from any initial
$\bq_0 \in \Hp$.

Typically, contraction bounds as in \eqref{Wass:trhoe} and
\eqref{eq:Wass:conv:FD} which we demonstrated above can be used to
establish estimates for quantities like \eqref{eq:obs:mean:conv:c1},
\eqref{eq:obs:LLN:conv:c1}.  Indeed, if the $\tilde{\rho}$ appearing
in the bounds \eqref{Wass:trhoe} and \eqref{eq:Wass:conv:FD} was
actually a metric then the Kantorovich-Wasserstein duality would
immediately imply bounds for \eqref{eq:obs:mean:conv:c1}.  Moreover, a
number of results in the literature, e.g.  \cite{hairer2008spectral,
  komorowski2012fluctuations, komorowski2012central,
  kuksin2012mathematics, kulik2017ergodic}, yield a law of large
numbers, central limit theorems type convergence results from
Wasserstein contraction bounds as desired in
\eqref{eq:obs:LLN:conv:c1}. This appendix proceeds to show that useful
bounds for \eqref{eq:obs:mean:conv:c1}, \eqref{eq:obs:LLN:conv:c1} can
still be achieved in our setting without presuming that the
underlying distance $\tilde{\rho}$ is a metric. Notwithstanding
the significant literature on such convergence results we expect
our approach here to be of novel interest even when the underlying
distance is a metric.

In order to proceed, let us recall a few basic definitions:
\begin{Def}
  \label{def:DLF}
  We say that $\dlf: \Hp \times \Hp \to \RR^+$ is a
  \emph{distance-like function} if $\dlf$ is symmetric,
  lower-semicontinuous and it holds that $\dlf(\bq,\btq) = 0$ if and
  only if $\bq = \btq$.  We define $\Wass_\dlf: \Pr(\Hp) \times \Pr(\Hp) \to \RR^+ \cup \{+\infty\}$ to be the following Wasserstein-like extension of $\dlf$ to $\Pr(\Hp) \times \Pr(\Hp)$:
\begin{align*}
  \Wass_\dlf( \nu_1, \nu_2) 
      = \inf_{\Gamma \in \Co(\nu_1, \nu_2)} 
  \int_{\Hp \times \Hp} \dlf(\bq, \btq) \Gamma(d \bq, d \btq),
\end{align*}
where $\Co(\nu_1, \nu_2)$ is the set of all couplings of
$\nu_1, \nu_2 \in Pr(\Hp)$.\footnote{The mapping $\Wass_\dlf$ is also called the `optimal transport cost functional' in the optimal transport literature; see, e.g., \cite{villani2008optimal}.}
\end{Def}
\noindent Relative to a given distance-like function $\dlf$ we define
$\dlf$-Lipschitz in the obvious way as:
\begin{Def}
  Given a distance-like function $\dlf: \Hp \times \Hp \to \RR^+$, we
  say that $\Phi:\Hp \to \RR$ is $\dlf$-Lipschitz with Lipschitz
  constant $L_{\Phi} > 0$ if
\begin{align*}
 |\Phi(\bq) - \Phi(\bq')| \leq L_{\Phi} \dlf(\bq,\bq') 
\end{align*}
for any $\bq,\bq' \in \Hp$.  We denote the set of $\dlf$-Lipschitz
functions as $\mbox{Lip}_{\dlf}$.
\end{Def}
\noindent In order to verify that an observable $\Phi$ is
$\dlf$-Lipschitz for the class of distance like functions employed
above, see \cref{prop:Lip:2:obs} below.

Results for \eqref{eq:obs:mean:conv:c1} can be drawn by using the following proposition.
\begin{Prop}\label{lem:Kantor:dual}
  Let $\dlf: \Hp \times \Hp \to \RR^+$ be a distance-like function as
  in \cref{def:DLF}. Then, for every $\nu_1, \nu_2 \in Pr(\Hp)$ and every
  $\dlf$-Lipschitz function $\Phi: \Hp \to \RR$,
  \begin{align}
    \Wass_{\dlf}(\nu_1, \nu_2) 
    \geq  \frac{1}{L_{\Phi}}
    \left| \int \Phi(\bq ) \nu_1(d\bq) - \int \Phi(\bq') \nu_2(d\bq') \right|,
    \label{eq:hyper:KWD:1}
  \end{align}
  where $L_{\Phi}$ is the Lipschitz constant associated with $\Phi$.
  In particular, for any Markov kernel $P$, 
  \begin{align}
    \left| P^n\Phi(\bq_0) - \int \Phi(\bq) \nu(d\bq) \right|
    \leq L_{\Phi} \Wass_\dlf(P^n(\bq_0, \cdot ), \nu),
    \label{eq:hyper:KWD:2}
  \end{align}
  valid for any measure $\nu \in Pr(\Hp)$, $\bq_0 \in \Hp$ and $\dlf$-Lipschitz function $\Phi$.
\end{Prop}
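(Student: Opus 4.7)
The plan is to establish \eqref{eq:hyper:KWD:1} by a direct coupling argument, and then deduce \eqref{eq:hyper:KWD:2} as an immediate specialization. This is essentially the easy half of the Kantorovich--Rubinstein duality, which does not require $\dlf$ to satisfy the triangle inequality and hence carries over verbatim to the distance-like setting.

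First I would reduce to the case when $\Wass_\dlf(\nu_1,\nu_2) < \infty$, since otherwise \eqref{eq:hyper:KWD:1} is trivial. Fix any coupling $\Gamma \in \Co(\nu_1,\nu_2)$ with $\int \dlf \, d\Gamma < \infty$. Using that $\Gamma$ has marginals $\nu_1$ and $\nu_2$, I would write
\begin{align*}
    \int \Phi(\bq) \nu_1(d\bq) - \int \Phi(\btq) \nu_2(d\btq)
    = \int_{\Hp \times \Hp} \bigl( \Phi(\bq) - \Phi(\btq) \bigr) \Gamma(d\bq, d\btq).
\end{align*}
Here the $\dlf$-Lipschitz assumption on $\Phi$, combined with $\int \dlf \, d\Gamma < \infty$, ensures that $\Phi(\bq) - \Phi(\btq)$ is $\Gamma$-integrable so that this rewriting is legitimate (a small but necessary verification, which uses that a $\dlf$-Lipschitz function differs from a constant by a $\dlf$-bounded multiple; one can first subtract $\Phi(\bq_0)$ for some fixed $\bq_0$ and apply dominated convergence). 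Taking absolute values and invoking the $\dlf$-Lipschitz bound on $\Phi$, I would then estimate
\begin{align*}
    \left| \int \Phi \, d\nu_1 - \int \Phi \, d\nu_2 \right|
    \leq \int_{\Hp \times \Hp} |\Phi(\bq) - \Phi(\btq)| \, \Gamma(d\bq, d\btq)
    \leq L_\Phi \int_{\Hp \times \Hp} \dlf(\bq, \btq) \, \Gamma(d\bq, d\btq).
\end{align*}
Since $\Gamma \in \Co(\nu_1,\nu_2)$ was arbitrary, taking the infimum over all such couplings on the right-hand side and invoking the definition of $\Wass_\dlf$ yields \eqref{eq:hyper:KWD:1}.

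For the second statement \eqref{eq:hyper:KWD:2}, I would simply apply \eqref{eq:hyper:KWD:1} with $\nu_1 = P^n(\bq_0, \cdot)$ and $\nu_2 = \nu$, and note that by definition of the Markov kernel action on observables, $P^n \Phi(\bq_0) = \int \Phi(\bq) P^n(\bq_0, d\bq)$. The only potential obstacle throughout is the integrability issue when $\Hp$ is not bounded and $\Phi$ is only Lipschitz with respect to a distance-like function that may be unbounded, but this is handled cleanly by the $\Wass_\dlf < \infty$ reduction and a truncation/dominated convergence argument as indicated. No deeper tools (such as convex duality or Prokhorov compactness) are needed, since we are proving only the ``easy direction'' of the duality.
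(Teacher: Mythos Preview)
Your proof is correct and follows essentially the same approach as the paper: fix an arbitrary coupling $\Gamma \in \Co(\nu_1,\nu_2)$, bound the difference of integrals by $L_\Phi \int \dlf \, d\Gamma$ using the Lipschitz condition, and take the infimum over couplings. The paper's proof is terser and omits the integrability discussion you include, but the argument is identical.
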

\begin{proof}
Fix $\nu_1, \nu_2 \in \Pr(\Hp)$ and let $\Gamma \in \Co(\nu_1, \nu_2)$. Note that
\begin{align}
    \left|  \int \Phi(\bq ) \nu_1(d\bq) - \int \Phi(\bq') \nu_2(d\bq') \right| 
    \leq \int \left| \Phi(\bq) - \Phi(\bq') \right| \Gamma(d\bq, d\bq') \leq L_\Phi \int l(\bq, \bq') \Gamma(d\bq, d\bq'). 
    \label{eq:hyper:KWD:proof}
\end{align}
Inequality \eqref{eq:hyper:KWD:1} then follows by taking the infimum in \eqref{eq:hyper:KWD:proof} over all $\Gamma \in \Co(\nu_1, \nu_2)$.
\end{proof}

We next present a first version of the strong law of large numbers
(SLLN) relevant for certain classes of mixing Markov processes.
Note that this first result does not require a spectral gap condition
but see \cref{prop:CLT:LLN} below where we additionally establish
criteria for a central limit theorem under the stronger assumption of a
spectral gap.
\begin{Prop}
  \label{prop:gen:LLN}
  Suppose that $P$ is a markov kernel with a unique invariant measure
  $\mu_*$. We denote the associated Markov process as
  $\{Q_k(\bq_0)\}_{k \geq 0, q_0 \in \Hp}$. Let $\dlf$ be a distance-like
  function and introduce the notation
  \begin{align}
    G(\bq_0) := \sum_{k =0}^\infty \Wass_\dlf(P^k(\bq_0, \cdot), \mu_*).
    \label{eq:summation:mix:cond:1}
  \end{align}
  Then, for any $\bq_0 \in \Hp$ such that
  \begin{align}
    G(\bq_0) + \sup_{n \geq 1} \E [G(Q_n(\bq_0))^2] < \infty
    \label{eq:Mix:Mom:Cond}
  \end{align}
  and such that, for some $\bar{\bq} \in \Hp$,
  \begin{align}
    \sup_{n \geq 1} \E [\dlf(Q_n(\bq_0), \bar{\bq})^2] < \infty,
    \label{eq:Mix:Mom:Cond:2}
  \end{align}
  we have that, for each $\Phi \in \mbox{Lip}_{\dlf}$,
  \begin{align}
    \lim_{n \to \infty} \left| \frac{1}{n}\sum\limits_{k =1}^n\Phi(Q^k(\bq_0))
    - \int \Phi(\bq') \mu_*(d\bq') \right| = 0,
    \label{eq:obs:LLN:conclusion}
 \end{align}
 almost surely.\footnote{Note that under \eqref{eq:Mix:Mom:Cond} every
   $\mbox{Lip}_{\dlf} \subset L^1(\mu_*)$ so that
   $\int \Phi(\bq') \mu_*(d\bq')$ is a well defined, finite quantity.}
\end{Prop}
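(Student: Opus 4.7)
The plan is to adapt the classical Poisson equation / martingale decomposition approach for SLLNs to this setting, where $\dlf$ is only distance-like rather than a metric. First, I would introduce the centered observable $\bar\Phi := \Phi - \int \Phi \, d\mu_*$ and define the candidate Poisson solution
\[
\Psi(\bq) := \sum_{k=0}^\infty P^k \bar\Phi(\bq).
\]
Applying \cref{lem:Kantor:dual} term by term gives $|P^k \bar\Phi(\bq)| \leq L_\Phi \Wass_\dlf(P^k(\bq,\cdot),\mu_*)$, so the series converges absolutely wherever $G(\bq)<\infty$ and $|\Psi(\bq)| \leq L_\Phi G(\bq)$. A shift of the summation index yields the Poisson identity $\Psi - P\Psi = \bar\Phi$, which I would then use to telescope the ergodic sum:
\[
\sum_{k=1}^n \bar\Phi(Q_k(\bq_0)) = M_n + P\Psi(\bq_0) - P\Psi(Q_n(\bq_0)),
\quad M_n := \sum_{k=1}^n \bigl[\Psi(Q_k(\bq_0)) - P\Psi(Q_{k-1}(\bq_0))\bigr].
\]
By the Markov property, $M_n$ is a martingale with respect to the natural filtration $\{\mathcal{F}_k\}$ of $\{Q_k(\bq_0)\}$.

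The next step is to show $M_n/n \to 0$ almost surely via the $L^2$ martingale SLLN. Since $|\Psi| \leq L_\Phi G$, the conditional variance of the $k$-th difference obeys $\E[d_k^2 | \mathcal{F}_{k-1}] \leq L_\Phi^2 PG^2(Q_{k-1}(\bq_0))$; taking expectations and using the tower property gives $\E[d_k^2] \leq L_\Phi^2 \E[G^2(Q_k(\bq_0))]$, which is uniformly bounded in $k$ by \eqref{eq:Mix:Mom:Cond}. Hence $\sum_k \E[d_k^2]/k^2 < \infty$, so the martingale $\sum_{k\leq n} d_k/k$ is $L^2$-bounded and converges a.s., and Kronecker's lemma yields $M_n/n \to 0$ a.s. For the boundary term, Jensen's inequality gives $|P\Psi(Q_n(\bq_0))|^2 \leq L_\Phi^2 PG^2(Q_n(\bq_0))$, whence $\E[|P\Psi(Q_n(\bq_0))|^2] \leq L_\Phi^2 \E[G^2(Q_{n+1}(\bq_0))]$ is uniformly bounded. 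A Chebyshev / Borel-Cantelli argument then produces $P\Psi(Q_n(\bq_0))/n \to 0$ a.s., completing the combination.

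The most delicate step, and the one where the auxiliary moment bound \eqref{eq:Mix:Mom:Cond:2} enters, will be verifying that $\int \Phi\,d\mu_*$ (and hence $\bar\Phi$) is well defined in the first place. I would first observe that $G(\bq_0)<\infty$ forces $\Wass_\dlf(P^n(\bq_0,\cdot), \mu_*) \to 0$ by a tail-of-series argument. Then, exploiting lower-semicontinuity of the truncated map $\bq \mapsto \dlf(\bq,\bar{\bq})\wedge M$ combined with a Portmanteau-type passage to the limit (approximating $\mu_*$ by $P^n(\bq_0,\cdot)$ via near-optimal couplings) and monotone convergence in $M$, one should obtain $\int \dlf(\bq,\bar{\bq})\mu_*(d\bq) \leq \sup_n \sqrt{\E[\dlf(Q_n(\bq_0),\bar{\bq})^2]} < \infty$. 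The $\dlf$-Lipschitz bound on $\Phi$ then gives $\Phi \in L^1(\mu_*)$. Making this step rigorous in the $\Wass_\dlf$ setting, where $\dlf$ need not satisfy a triangle inequality, is the main technical hurdle of the argument.
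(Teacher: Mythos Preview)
Your approach is essentially the paper's: both use the Gordin/Poisson martingale decomposition (your $\Psi=\sum_k P^k\bar\Phi$ and the paper's $M_n^\Phi$ yield literally the same martingale and the same boundary term), dispatch the boundary piece by a Borel--Cantelli argument driven by the bound $|\Psi|\le L_\Phi G$ together with \eqref{eq:Mix:Mom:Cond}, and handle the martingale piece via the Chow criterion $\sum_k \E d_k^2/k^2<\infty$.

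The one noteworthy difference is where \eqref{eq:Mix:Mom:Cond:2} enters. The paper uses it inside the increment estimate, writing $|\bar\Phi(Q_n(\bq_0))|\le |\bar\Phi(\bar\bq)|+L_\Phi\,\dlf(Q_n(\bq_0),\bar\bq)$ to control $\E(M_n^\Phi-M_{n-1}^\Phi)^2$; your conditional-variance bound $\E[d_k^2\mid\mathcal F_{k-1}]\le L_\Phi^2\, P G^2(Q_{k-1}(\bq_0))$ is tighter and uses only \eqref{eq:Mix:Mom:Cond}. Conversely, what you flag as the ``most delicate step'' is in fact immediate and does not require \eqref{eq:Mix:Mom:Cond:2} or any Portmanteau argument: since the only coupling of $\delta_{\bq_0}$ and $\mu_*$ is the product, $G(\bq_0)<\infty$ already gives $\int \dlf(\bq_0,\bq')\,\mu_*(d\bq')=\Wass_\dlf(\delta_{\bq_0},\mu_*)<\infty$, and the $\dlf$-Lipschitz bound then yields $\Phi\in L^1(\mu_*)$ directly (this is exactly the paper's footnote).
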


\begin{Rmk}
  The scope of applicability of \cref{lem:Kantor:dual},
  \cref{prop:gen:LLN} reaches beyond \cref{prop:CLT:LLN} below
  which is more specialized to our setting.   See, for
  example, the sub-geometric rates of convergence in the Wasserstein
  distance given in \cite{butkovsky2014subgeometric,
    durmus2015quantitative, durmus2016subgeometric}.  
\end{Rmk}

\begin{proof}
Take $\{\mathcal{F}_n\}_{n \geq 1}$ to be the filtration associated with the Markov
process $\{Q_k(\bq_0)\}_{k \geq 0, q_0 \in \Hp}$.  Given any
$\Phi \in \mbox{Lip}_{\dlf}$, we define  
\begin{align}
  M^{\Phi}_n :=
  \sum_{k = 0}^\infty \left(\E( \BPhi(Q_k(\bq_0)  |\mathcal{F}_n)
  -\E(\BPhi(Q_k(\bq_0) )\right)
      \label{eq:MG:EA:decomposition:0}
\end{align}
where
\begin{align}
  \BPhi(\bq_0) := \Phi(\bq_0)  - \int \Phi(\btq) \mu_*(d\btq)
  \label{eq:mean:zero:Obs}
\end{align}  
Invoking the Markov property,
\begin{align}
  M^{\Phi}_n
  = \sum_{k = 0}^n \BPhi(Q_k(\bq_0))
    + \sum_{k =0}^\infty\left( P^{k+1}\BPhi(Q_n(\bq_0)) - P^k\BPhi(\bq_0) \right),
      \label{eq:MG:EA:decomposition:1}
\end{align}
so that, rearranging, we have
\begin{align}
  \frac{1}{n} \sum_{k = 0}^n \Phi(Q_k(\bq_0)) - \int \Phi(\btq) \mu_*(d\btq)
  &= \frac{1}{n} \sum_{k=0}^\infty
        \left(  P^k\BPhi(\bq_0) - P^{k+1}\BPhi(Q_n(\bq_0))\right)
    +\frac{ M^{\Phi}_n}{n} 
    \notag\\
  &:= T_1^{(n)} + T_2^{(n)}.
    \label{eq:MG:EA:decomposition}
\end{align}
Let us show that, for each of the terms $T_j^{(n)}$,
$\lim_{n \to \infty} T_j^{(n)}=0$ a.s. in order to infer
the desired conclusion.

Start with $T_1^{(n)}$.  Here note that, with \eqref{eq:hyper:KWD:2},
\begin{align}
  |T_1^{(n)}| \leq L_\Phi \frac{G(\bq_0) + G(Q_n(\bq_0))}{n},
  \label{eq:LLN:Bnd:T2:1}
\end{align}
where $L_\Phi$ is the Lipschitz constant associated with $\Phi$.  Form the sets
$A_n := \{|T_1^{(n)}| \geq n^{-1/4}\}$.  With \eqref{eq:LLN:Bnd:T2:1}
and the Markov inequality we find
\begin{align*}
  \sum_{n = 1}^\infty \Prb( A_n) 
  \leq  L_\Phi  \sum_{n = 1}^\infty \frac{\E\left(G(\bq_0)
  +  G(Q_n(\bq_0)\right)^2}{n^{3/2}}
  \leq 2 L_\Phi (G(\bq_0)^2 +\sup_{n \geq 1} \E G(Q_n(\bq_0))^2)
  \sum_{n = 1}^\infty \frac{1}{n^{3/2}}.
\end{align*}
Hence, invoking the Borel-Cantelli lemma and the condition
\eqref{eq:Mix:Mom:Cond}, we infer that
$\Prb(A_n \text{ infinitely often}) = 0$ which amounts to the desired
convergence for $T_1^{(n)}$.

Regarding the second term $T_2^{(n)}$, we claim that $\{M^{\Phi}_n\}_{n \geq 0}$ is a mean zero, square integrable martingale. From the definition of $\{M^{\Phi}_n\}_{n\in \mathbb{N}}$ in \eqref{eq:MG:EA:decomposition} it follows immediately that $M_0 = 0$. Now in view of \eqref{eq:MG:EA:decomposition:1}, notice that
for any $n \geq 1$ the increments $M_n^\Phi - M_{n-1}^\Phi$ have the form
\begin{align}
  M_n^\Phi - M_{n-1}^\Phi
     = \BPhi(Q_n(\bq_0)) 
  + \sum_{k = 0}^\infty
       \left(P^{k+1}\BPhi(Q_n(\bq_0))
          - P^{k+1}\BPhi(Q_{n-1}(\bq_0))\right).
  \label{eq:MP:MG:Diff}
\end{align}
Thus, for any $n \geq 1$, using that $\BPhi \in \mbox{Lip}_{\dlf}$ and
recalling the definition of $G$ we have
\begin{align}
  \E (M_n^\Phi - M_{n-1}^\Phi)^2 
  \leq  4 \BPhi(\bar{\bq})^2 + 4 L_\Phi^2 \left[ \E \dlf(\bar{\bq}, Q_n(\bq_0))^2 + \E G(Q_{n-1}(\bq_0))^2 +  \E G(Q_n(\bq_0))^2 \right]
  \label{eq:inc:est}
\end{align}
where $\bar{\bq} \in \Hp$ is selected as in \eqref{eq:Mix:Mom:Cond:2}. With \eqref{eq:Mix:Mom:Cond}, \eqref{eq:Mix:Mom:Cond:2} and noticing that
\[
    \E M_n^2 = \E \left( \sum_{k=1}^n (M_k - M_{k-1}) \right)^2 \leq c(n) \sum_{k=1}^n \E \left( M_k - M_{k-1} \right)^2,
\]
we conclude
that $\{M_n^\Phi\}_{n \in \mathbb{N}}$ is square integrable. To show that $\{M^\Phi_n\}_{n \in \mathbb{N}}$ is a martingale observe that for any $n \geq 0$, using standard properties of conditional expectations,
\begin{align}
  \E( M^\Phi_{n+1} | \mathcal{F}_n)
  = \sum_{k = 0}^\infty \left(\E (\E( \BPhi(Q_k(\bq_0)  |\mathcal{F}_{n+1})|\mathcal{F}_{n}) 
     -\E( \BPhi(Q_k(\bq_0) )\right)
  = M^{\Phi}_{n}.
    \label{eq:MG:ver}
\end{align}

With this in hand we recall a martingale convergence theorem from
\cite{chow1967strong} (see also \cite[Appendix A.12]{kuksin2012mathematics}) 
which can be stated as follows:  Let $\{M_n\}_{n \in \mathbb{N}}$ be a square
integrable, mean zero martingale. If
\begin{align}
  \sum_{k = 1}^\infty \frac{ \E (M_k - M_{k -1})^2}{k^2} < \infty
  \label{eq:MG:LLN:cond}
\end{align}
then
\begin{align*}
  \lim_{n \to \infty} \frac{M_n}{n} = 0 \quad \text{ almost surely.}
\end{align*}
In view of the bound \eqref{eq:inc:est} and again invoking the
standing conditions \eqref{eq:Mix:Mom:Cond}, \eqref{eq:Mix:Mom:Cond:2}
we find that the condition \eqref{eq:MG:LLN:cond} is satisfied for
$\{M_n^\Phi\}_{n \in \mathbb{N}}$ and hence we infer that
$\lim_{n \to \infty} T_2^{(n)} =0$ almost surely. The proof is now complete.
\end{proof}

In order to obtain rates of convergence for \eqref{eq:obs:LLN:conv:c1}
we can furthermore establish a central limit theorem (CLT) result by now directly imposing a `spectral
gap' condition.  For this stronger convergence result we again
rely on the decomposition \eqref{eq:MG:EA:decomposition:0},
\eqref{eq:MG:EA:decomposition} now in conjunction with a Martingale
central limit result from \cite{komorowski2012central} which we recall
as \cref{thm:MG:CLT} below.
\begin{Prop}
  \label{prop:CLT:LLN}
  Let $P$ be a Markov kernel on a complete metric space $(\Hp,
  \rho)$. Take $\{Q_n(\bq_0)\}_{n \geq 0, q_0 \in \Hp}$ to be the
  associated Markov process. Let $V: \Hp \to \RR^+$ be a function satisfying the following Lyapunov type assumption:
  \begin{align}
    \E [V(Q_n(\bq_0))^{2}]
    \leq \kappa^n V(\bq_0)^{2} + K
  \label{eq:lyapunov:CLT}
  \end{align}
  for some constants $\kappa \in (0,1)$, $K > 0$ independent of $n \geq 0$. Consider the distance-like functions
  \begin{align}
    \dlf_p(\bq, \btq) 
    = \sqrt{[ 1 \wedge \rho(\bq, \btq) ]
           (1 + V(\bq)^p + V(\btq)^p)}
    \label{eq:dlf:HM:gen:p}
  \end{align}
  for $p \geq 1$. We assume that for $p = 1, 2$ the contraction condition
  \begin{align}
    \Wass_{\dlf_p}( \nu_1 P^n, \nu_2 P^n)
    \leq c_1 e^{- c_2 n} \Wass_{\dlf_p}(\nu_1, \nu_2) \quad
    \text{ for any } \nu_1, \nu_2 \in Pr(\Hp),
    \label{eq:spec:gap:CLT}
  \end{align}
  is maintained, where $c_1, c_2$ are constants independent of $n$ but
  which may depend on $p$. 

  For $\Phi \in \mbox{Lip}_{\dlf_1}$, let
  \begin{align*}
    X_n(\Phi)  := \frac{1}{n} \sum_{k =1}^n \Phi( Q_k(\bq_0))
    - \int \Phi(\bq') \mu_*(d\bq'),
  \end{align*}
  where $\mu_*$ is the unique invariant measure for $P$;
  cf. \cref{rmk:covered:options}.  Then, under these circumstances,
  for any $\Phi \in \mbox{Lip}_{\dlf_1}$,
  \begin{align}
    X_n(\Phi) \to 0 \quad \mbox{ as } n \to \infty
    \label{eq:LLN:CLT:restate}
  \end{align}
  almost surely and moreover
  \begin{align}
  \sqrt{n} X_n(\Phi) \Rightarrow N(0, \sigma^2(\Phi)) \quad \mbox{ as } n \to \infty,
  \label{eq:CLT:obs}
  \end{align}
  i.e. $\sqrt{n} X_n(\Phi)$ converges weakly to a real-valued gaussian random variable with mean zero and covariance $\sigma^2(\Phi)$, where $\sigma^2(\Phi)$ is specified explicitly as \eqref{eq:CLT:sig:def} below.
\end{Prop}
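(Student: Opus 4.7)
The plan is to derive both \eqref{eq:LLN:CLT:restate} and \eqref{eq:CLT:obs} via the standard martingale decomposition associated to the Poisson equation $g - Pg = \bar{\Phi}$, where $\bar{\Phi} := \Phi - \int \Phi\,d\mu_*$. I would first set
\[
g(\bq) := \sum_{k=0}^\infty P^k\bar{\Phi}(\bq),
\]
which is well defined since, by \cref{lem:Kantor:dual} and \eqref{eq:spec:gap:CLT} with $p=1$,
\[
|P^k\bar{\Phi}(\bq)| \leq L_\Phi \Wass_{\dlf_1}(P^k(\bq,\cdot),\mu_*) \leq L_\Phi c_1 e^{-c_2 k}\Wass_{\dlf_1}(\delta_\bq,\mu_*),
\]
and a direct estimate together with the invariance of $\mu_*$ and the Lyapunov bound yields $\Wass_{\dlf_1}(\delta_\bq,\mu_*) \leq C\sqrt{1+V(\bq)}$. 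Hence $|g(\bq)| \leq C\sqrt{1+V(\bq)}$ and, by \eqref{eq:lyapunov:CLT}, $\sup_n \E[g(Q_n(\bq_0))^2] < \infty$. The identity $g - Pg = \bar{\Phi}$ produces the telescoping decomposition
\[
\sum_{k=1}^n \bar{\Phi}(Q_k(\bq_0)) \;=\; M_n^\Phi + Pg(\bq_0) - Pg(Q_n(\bq_0)),
\]
where $M_n^\Phi := \sum_{k=1}^n [g(Q_k(\bq_0)) - Pg(Q_{k-1}(\bq_0))]$ is a mean-zero, square-integrable $\{\mathcal{F}_n\}$-martingale.

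For the SLLN \eqref{eq:LLN:CLT:restate}, I would directly invoke \cref{prop:gen:LLN} with $\dlf = \dlf_1$. Condition \eqref{eq:Mix:Mom:Cond} is immediate since $G(\bq_0) \leq \tfrac{c_1}{1-e^{-c_2}}\Wass_{\dlf_1}(\delta_{\bq_0},\mu_*) \leq C\sqrt{1+V(\bq_0)}$, which in turn gives $\sup_n \E[G(Q_n(\bq_0))^2] \leq C(1 + \sup_n \E V(Q_n(\bq_0))) < \infty$ by the Lyapunov assumption. Condition \eqref{eq:Mix:Mom:Cond:2} follows by choosing any $\bar{\bq}$ with $V(\bar{\bq}) < \infty$ and bounding $\dlf_1(\bq,\bar{\bq})^2 \leq 1+V(\bq)+V(\bar{\bq})$. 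Uniqueness of $\mu_*$ is implicit from \eqref{eq:spec:gap:CLT}.

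For the CLT, observe that the remainder $(Pg(\bq_0) - Pg(Q_n(\bq_0)))/\sqrt{n}$ tends to $0$ in probability because $Pg$ is uniformly $L^2$-bounded along the chain. Thus it suffices to show $M_n^\Phi/\sqrt{n} \Rightarrow \mathcal{N}(0,\sigma^2(\Phi))$, which I would obtain by applying \cref{thm:MG:CLT} to the martingale differences $D_k := g(Q_k(\bq_0)) - Pg(Q_{k-1}(\bq_0))$. The Lindeberg-type condition follows from the uniform fourth-moment bound $\E[D_k^4] \leq C(1 + \sup_n \E[V(Q_n(\bq_0))^2]) < \infty$, itself a direct consequence of \eqref{eq:lyapunov:CLT}. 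The conditional quadratic variation satisfies
\[
\frac{1}{n}\sum_{k=1}^n \E[D_k^2 \mid \mathcal{F}_{k-1}] \;=\; \frac{1}{n}\sum_{k=1}^n h(Q_{k-1}(\bq_0)), \qquad h(\bq) := Pg^2(\bq) - (Pg)^2(\bq),
\]
and the candidate limiting variance is $\sigma^2(\Phi) := \int h\, d\mu_* = \int g^2 \,d\mu_* - \int (Pg)^2\,d\mu_*$.

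The main obstacle will be verifying that the right-hand side above converges in probability to $\sigma^2(\Phi)$, since $h$ is not a priori $\dlf_1$-Lipschitz and hence the SLLN from \cref{prop:gen:LLN} does not apply to $h$ directly. This is precisely where the $p=2$ hypothesis in \eqref{eq:spec:gap:CLT} enters: invoking contraction in $\dlf_2$, together with the bound $|Pg(\bq)| \leq C\sqrt{1+V(\bq)}$ giving $|h(\bq)| \leq C(1+V(\bq))$, one has $\Wass_{\dlf_2}(P^n(\bq,\cdot),\mu_*) \leq C e^{-c_2 n}\sqrt{1+V(\bq)^2}$. From this I would deduce that $h$ is Lipschitz with respect to a $V^{1/2}$-weighted variant of $\dlf_1$ covered by $\dlf_2$, upgrade $\E[h(Q_k(\bq_0))] \to \int h\,d\mu_*$ to an exponential rate, and couple it with a Markov-property-based decorrelation bound $\mathrm{Cov}(h(Q_j(\bq_0)),h(Q_k(\bq_0))) \leq C e^{-c_2|k-j|}(1+V(\bq_0)^2)$. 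Together these furnish $L^2$-convergence of the Ces\`aro average of $h(Q_{k-1}(\bq_0))$ to $\sigma^2(\Phi)$, completing all hypotheses of \cref{thm:MG:CLT} and yielding \eqref{eq:CLT:obs}.
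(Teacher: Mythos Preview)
Your proposal is correct and follows essentially the same route as the paper: the same Poisson-equation martingale (your $g$ and $D_k$ coincide with the paper's $M_n^\Phi$ and its increments), the same reduction of the SLLN to \cref{prop:gen:LLN}, the same $L^1$ decay of the remainder, and the same fourth-moment argument for the Lindeberg condition.

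The one place where the paper is sharper concerns condition~(iv) of \cref{thm:MG:CLT}. You correctly identify that the crux is the behavior of $h(\bq)=\E[D_1^2\mid Q_0=\bq]$ (the paper calls this $\Gamma$), and you propose to handle it via covariance decorrelation and $L^2$-convergence of the Ces\`aro average $\tfrac{1}{n}\sum h(Q_{k-1})$. That would work for a standard martingale CLT, but note that condition~(iv) in \cref{thm:MG:CLT} is stated in a block-averaged form, so your argument would need a small reshaping. The paper instead proves directly that $\Gamma$ is $\dlf_2$-Lipschitz by writing $\Gamma(\bq)-\Gamma(\btq)$ as a difference of squares, bounding one factor by $\dlf_1$ and the other by $\sqrt{1+V+V}$, and then using the elementary inequality $\dlf_1(\bq,\btq)\sqrt{1+V(\bq)+V(\btq)}\le 2\,\dlf_2(\bq,\btq)$. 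Once $\Gamma\in\mathrm{Lip}_{\dlf_2}$ is in hand, the $p=2$ spectral gap and \cref{lem:Kantor:dual} give $|P^j\Gamma(\bq)-\sigma^2|\le Ce^{-c_2 j}\sqrt{1+V(\bq)}$, which feeds straight into the block-averaged form of condition~(iv) without any covariance computation. Your plan would also close, but this factorization is the cleanest way to exploit the $\dlf_2$ hypothesis.
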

\begin{Rmk}
  \label{rmk:covered:options}
  The condition \eqref{eq:spec:gap:CLT} ensures the existence and
  uniqueness of the invariant measure $\mu_*$ as observed in
  \cite{hairer2011asymptotic}.  Moreover, \eqref{eq:lyapunov:CLT}
  implies the following moment bound for $\mu_*$
  \begin{align}
    \int V(\bq')^2 \mu_*(d \bq') \leq K < \infty.
    \label{eq:finite:IM:CLT}
  \end{align}
  As such, using that $\Phi \in \mbox{Lip}_{\dlf_1}$ and
  \eqref{eq:dlf:HM:gen:p}, we have
  \begin{align*}
    \int |\Phi(\bq')| \mu_*(d\bq')
    \leq |\Phi(\bar{\bq})| +  L_\Phi \left(1 + \sqrt{V(\bar{\bq})}
              +\int \sqrt{V(\bq')} \mu_*(d\bq')\right)
  \end{align*}
  for any
  $\bar{\bq} \in \Hp$ so that with \eqref{eq:finite:IM:CLT} we are
  guaranteed that $\int |\Phi(\bq')| \mu_*(d\bq') < \infty$.
\end{Rmk}

Our proof relies on the following abstract result from \cite[Theorem
5.1]{komorowski2012central} which we reformulate here for clarity and the
convenience of the reader.
\begin{Thm}
  \label{thm:MG:CLT}
  Let $\{M_n\}_{n \geq 0}$ be a square integrable, mean zero martingale,
  relative to a filtration $\{\mathcal{F}_n\}_{n \geq 0}$.  Assume
  that:
  \begin{itemize}
  \item[(i)] we have the uniform bound
    \begin{align}
      \sup_{n \geq 0} \E (M_{n+1} - M_{n})^2 < \infty.
      \label{eq:CLT:MG:C1}
    \end{align}
  \item[(ii)] For every $\epsilon > 0$
    \begin{align}
      \lim_{n \to \infty} \frac{1}{n} \sum_{m = 0}^{n-1}
      \E [(M_{m+1} - M_{m})^2  \indFn{|M_{m+1} - M_{m}|  \geq \epsilon \sqrt{n}}] =0.
      \label{eq:CLT:MG:C2}
    \end{align}
  \item[(iii)] For every $\epsilon > 0$,
    \begin{align}
      \lim_{k \to \infty} \limsup_{n \to \infty} 
      \frac{1}{nk} \sum_{m = 1}^n 
         \sum_{j = (m -1)k}^{mk -1} 
      \E \left[ (1 + (M_{j+1} - M_{j})^2)  
         \indFn{|M_j - M_{(m-1)k}|  \geq \epsilon \sqrt{nk}} \right] =0.
      \label{eq:CLT:MG:C3}
    \end{align}
  \item[(iv)] There exists a constant $\sigma^2 \geq 0$ such that 
    \begin{align}
      \lim_{k \to \infty} \limsup_{n \to \infty} 
      \frac{1}{n} \sum_{m = 1}^n 
      \E \left|
      \frac{1}{k} \sum_{j = (m-1)k }^{mk-1} \E((M_{j+1} - M_{j})^2 | \mathcal{F}_{(m-1)k})
       - \sigma^2
      \right| = 0.
      \label{eq:CLT:MG:C4}
    \end{align}
  \end{itemize}
  Then, under these four conditions, 
  \begin{align*}
    \frac{M_n}{\sqrt{n}} \Rightarrow N(0, \sigma^2) \quad \mbox{ as } n \to \infty,
  \end{align*}
  (that is in distribution) where $\sigma^2$ is the constant appearing
  in \eqref{eq:CLT:MG:C4}.
\end{Thm}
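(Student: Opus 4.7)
The plan is to verify convergence of characteristic functions via a block-decomposition argument. Fix $\lambda \in \RR$ and set $\varphi_n(\lambda) := \E \exp(i\lambda M_n/\sqrt{n})$. The goal is to establish $\varphi_n(\lambda) \to \exp(-\lambda^2 \sigma^2/2)$ as $n \to \infty$, from which the conclusion follows by L\'evy's continuity theorem. The four hypotheses (i)--(iv) correspond neatly to ingredients in this argument: (i) gives uniform $L^2$ control of the individual increments, (ii) is a classical Lindeberg condition on the increments, (iii) is a block-level Lindeberg condition allowing one to ``freeze'' the conditioning within each block, and (iv) provides the convergence of the averaged conditional quadratic variation to $\sigma^2$.

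The main step is to slice $M_n$ into blocks of length $k$. Writing $n = qk + r$ with $r < k$, we set $N_{m,k} := M_{mk} - M_{(m-1)k}$ so that $\{N_{m,k}\}_{1 \leq m \leq q}$ is a square-integrable martingale difference sequence relative to $\{\mathcal{F}_{mk}\}$. The residual piece $M_n - M_{qk}$ contributes $O(k/\sqrt{n})$ to $\varphi_n$ by (i) combined with Doob's inequality, and so is negligible once we send $n \to \infty$ before $k \to \infty$. Applying the Taylor expansion
\begin{align*}
e^{ix} = 1 + ix - \tfrac{x^2}{2} + R(x), \qquad |R(x)| \leq \tfrac{|x|^3}{6} \wedge x^2,
\end{align*}
to each $\exp(i\lambda N_{m,k}/\sqrt{n})$, then exploiting the martingale property through telescoping conditional expectations, yields the identity
\begin{align*}
\varphi_n(\lambda) = \E\prod_{m=1}^{q} \Bigl[ 1 - \tfrac{\lambda^2}{2n} \E\bigl(N_{m,k}^2 \bigm| \mathcal{F}_{(m-1)k}\bigr) + \mathcal{E}_{m,k,n}(\lambda) \Bigr] + o_k(1),
\end{align*}
where the error $\mathcal{E}_{m,k,n}$ encodes both the Taylor remainder and the replacement of $N_{m,k}^2$ by its conditional expectation.

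The heart of the argument is now to control $\sum_m \mathcal{E}_{m,k,n}$ and to replace the conditional quadratic variation by $\sigma^2$. The Taylor cubic remainder $|N_{m,k}/\sqrt{n}|^3$ is split into a part where $|N_{m,k}| < \varepsilon\sqrt{n}$, bounded by $\varepsilon \cdot (\text{variance})$ via (i), and a part on $\{|N_{m,k}| \geq \varepsilon\sqrt{n}\}$, controlled by (ii). The replacement step---swapping $N_{m,k}^2$ for its conditional expectation inside the product---requires simultaneous control of block-level tails and increment squares, which is exactly the content of (iii). Finally, (iv) gives $\frac{1}{n}\sum_m \E(N_{m,k}^2 \mid \mathcal{F}_{(m-1)k}) \to \sigma^2$ in $L^1$ as $n \to \infty$ followed by $k \to \infty$. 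Once these estimates are assembled one uses the elementary bound $|\prod (1 + a_m) - \exp(\sum a_m)| \leq \sum |a_m|^2 \exp(\sum |a_m|)$ to pass from the product to $\exp(-\lambda^2\sigma^2/2)$. The main obstacle is the delicate ordering of limits: one must send $n \to \infty$ first so that the Lindeberg contributions from (ii)--(iii) vanish, and only then let $k \to \infty$ so that both the averaged quadratic variation (via (iv)) and the residual boundary block become negligible---interchanging these limits would leave genuinely uncontrolled error.
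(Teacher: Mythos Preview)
The paper does not actually prove this theorem. It is introduced with the sentence ``Our proof relies on the following abstract result from \cite[Theorem~5.1]{komorowski2012central} which we reformulate here for clarity and the convenience of the reader,'' and is then invoked as a black box in the proof of \cref{prop:CLT:LLN}. So there is no in-paper argument to compare your proposal against.

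That said, your sketch is a reasonable outline of the characteristic-function/Lindeberg route that underlies results of this type (and is indeed the strategy in the cited reference). The identification of the roles of (i)--(iv) is accurate, and the order-of-limits remark is the genuinely delicate point. Where your write-up is loose is the passage from $\E\exp(i\lambda M_n/\sqrt{n})$ to the displayed product: you cannot simply Taylor-expand each block factor and multiply, because the factors are not independent. The standard device is to condition successively on $\mathcal{F}_{(m-1)k}$ and work with the \emph{conditional} characteristic functions, so that the martingale property kills the linear term at each stage; only then does a product of the form you wrote emerge. Relatedly, the ``elementary bound'' you invoke at the end has random $a_m$'s (they are $\mathcal{F}_{(m-1)k}$-measurable), so passing from the product to $\exp(-\lambda^2\sigma^2/2)$ requires an $L^1$ argument using (iv) rather than a purely deterministic inequality. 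These are fixable, but as written the middle identity is more of a heuristic than a derivation.
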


With this result in hand we turn to the proof of \cref{prop:CLT:LLN}.
\begin{proof}[Proof of \cref{prop:CLT:LLN}]
  To prove \eqref{eq:LLN:CLT:restate} we simply show that
  \eqref{eq:spec:gap:CLT}, \eqref{eq:lyapunov:CLT} imply
  \eqref{eq:Mix:Mom:Cond}, \eqref{eq:Mix:Mom:Cond:2}, with
  $\dlf = \dlf_1$, so that we can directly apply \cref{prop:gen:LLN}.
  Observe that, for any $\bar{\bq} \in \Hp$ we have 
  \begin{align*}
    \sum_{k =0}^\infty \Wass_{\dlf_1}(P^k(\bar{\bq},\cdot),\mu_*)
      \leq \Wass_{\dlf_1}(\delta_{\bar{\bq}},\mu_*) \sum_{k =0}^\infty c_1 e^{-c_2 k}
    \leq c \left(1 + \sqrt{V(\bar{\bq})}
                + \int \! \sqrt{V(\bq')}\mu_*(d\bq')\right).
  \end{align*}
  Noting that, with \eqref{eq:finite:IM:CLT}, we have
  $\int \sqrt{V(\bq')}\mu_*(d\bq') < \infty$ and with
  \eqref{eq:lyapunov:CLT} we infer
  $\sup_{k \geq 0} \E V(Q_k(\bq_0)) < \infty$ so that
  \eqref{eq:Mix:Mom:Cond} holds.  Regarding \eqref{eq:Mix:Mom:Cond:2}
  we have, for any $\bq_0, \bar{\bq} \in \Hp$
  \begin{align*}
    \sup_{n \geq 1} \E \dlf_{1}(Q_n(\bq_0), \bar{\bq})
    \leq c \left(1 + \sup_{n \geq 1} \E \sqrt{V(Q_n(\bq_0))} + \sqrt{V(\bar{\bq})} \right)
    \leq c \left(1 + \sqrt{V(\bq_0)} + \sqrt{V(\bar{\bq})} \right),
  \end{align*}
  where the last inequality again follows from \eqref{eq:lyapunov:CLT}.

  Let us next turn to establish the convergence to normality,
  \eqref{eq:CLT:obs}. Fix $\Phi \in \mbox{Lip}_{\dlf_1}$. Here, working from the identity \eqref{eq:MG:EA:decomposition}, we have
  \begin{align}
  \sqrt{n}  X_n(\Phi)
  &= \frac{1}{\sqrt{n}} \sum_{k=0}^\infty
        \left(  P^k\BPhi(\bq_0) - P^{k+1}\BPhi(Q_n(\bq_0))\right)
    +\frac{ M^{\Phi}_n}{\sqrt{n}} 
    := \bar{T}_1^{(n)}+ \bar{T}_2^{(n)},
    \label{eq:MG:CLT:decomposition}
  \end{align}
  where $M^{\Phi}_n$ is the martingale defined as in
  \eqref{eq:MG:EA:decomposition:1}.  We would like to show that
  $\lim_{n \to \infty} \bar{T}_1^{(n)} = 0$ in probability and that
  $\bar{T}_2^{(n)}$ converges in distribution to a normal random
  variable in order to conclude \eqref{eq:CLT:obs} from the
  `converging together lemma'; cf. \cite{durrett2019probability}.

Regarding the first term $\bar{T}_1^{(n)}$, with \eqref{eq:hyper:KWD:2}
and \eqref{eq:spec:gap:CLT}, it follows 
\begin{align*}
  |\bar{T}_1^{(n)}|
  &\leq  \frac{L_\Phi}{\sqrt{n}}
  \sum_{k=0}^\infty (\Wass_{\dlf_1} (P^k(\bq_0, \cdot), \mu^*)
    +\Wass_{\dlf_1} (P^{k+1}(Q_n(\bq_0), \cdot), \mu^*))\\
  &\leq \frac{c}{\sqrt{n}}
     (\Wass_{\dlf_1} (\delta_{\bq_0},\mu^*) + \Wass_{\dlf_1} (\delta_{Q_n(\bq_0)}, \mu^*))
  \leq  \frac{c\left( 1 + \sqrt{V(\bq_0)} + \sqrt{V(Q_n(\bq_0))}\right)}{\sqrt{n}}
\end{align*}
where we used that $\dlf_1$ has the form \eqref{eq:dlf:HM:gen:p} for the final bound. With this estimate and our assumption \eqref{eq:lyapunov:CLT} we find that 
$\lim_{n \to \infty} \E | T_1^{(n)}| = 0$ so that $T_1^{(n)}$ decays to zero in 
probability as desired.

We address the second term $\bar{T}^{(2)}_n$ by verifying the
conditions of \cref{thm:MG:CLT}. As in \eqref{eq:inc:est},
\eqref{eq:MG:ver}, it is clear that $\{M^\Phi_n\}_{n \geq 0}$ is a mean
zero square integrable martingale.  We therefore proceed to establish
each of the bounds \eqref{eq:CLT:MG:C1}--\eqref{eq:CLT:MG:C4} for
$\{M^\Phi_n\}_{n \geq 0}$ in turn.

Start with \eqref{eq:CLT:MG:C1}. Working from the identity
\eqref{eq:MP:MG:Diff}, we observe that, for any $m \geq 0$,
\begin{align*}
  (M^\Phi_{m+1} - M^\Phi_{m})^4 
  &\leq c \BPhi(Q_{m+1}(\bq_0))^4 
  + c\left(\sum_{k = 0}^\infty P^{k+1}\BPhi(Q_{m+1}(\bq_0)) 
    - P^{k+1}\BPhi(Q_{m}(\bq_0))\right)^4
  \notag\\
  &\leq c (\dlf_1 (Q_{m+1}(\bq_0), 0)^4 + V(Q_{m+1}(\bq_0))^2 + V(Q_{m}(\bq_0))^2 +1)
    \notag\\
  &\leq c (V(Q_{m+1}(\bq_0))^2 + V(Q_{m}(\bq_0))^2 +1)
\end{align*}
where we have used \eqref{eq:hyper:KWD:2} and \eqref{eq:spec:gap:CLT}.
Therefore, invoking \eqref{eq:lyapunov:CLT}, we have now shown
\begin{align}
  \sup_{m \geq 0} \E (M^\Phi_{m+1} - M^\Phi_{m})^4 < \infty
     \label{eq:towards:4th:mom}
\end{align}
so that, in particular, \eqref{eq:CLT:MG:C1} holds.  Furthermore since, for any
$\epsilon > 0$ and any $0 \leq m \leq n$
\begin{align*}
  \E [(M^\Phi_{m+1} - M^\Phi_{m})^2
       \indFn{| M^\Phi_{m+1} - M^\Phi_{m}| \geq \epsilon \sqrt{n}}]
  &\leq \left(\E (M^\Phi_{m+1} - M^\Phi_{m})^4\right)^{1/2} 
  \Prb(| M^\Phi_{m+1} - M^\Phi_{m}| \geq \epsilon \sqrt{n})^{1/2}\\
  &\leq \frac{1}{\epsilon^2 n}\E (M^\Phi_{m+1} - M^\Phi_{m})^4
\end{align*}
we infer \eqref{eq:CLT:MG:C2}.

Regarding \eqref{eq:CLT:MG:C3} we proceed in a similar fashion. For $(m-1)k \leq j \leq mk-1$ and any $m, n, k \geq 1$ we have
\begin{align}\label{est:item:iii}
\E [(1+&(M^\Phi_{j+1} - M^\Phi_{j})^2 )\indFn{| M^\Phi_{j} - M^\Phi_{(m-1)k}| \geq \epsilon \sqrt{nk}}] \notag\\
  \leq& \frac{c}{\epsilon^{1/2} (nk)^{1/4}}
  \left(\E (1+(M^\Phi_{j+1} - M^\Phi_{j})^4) \right)^{1/2}
  \left(\E|M^\Phi_{j} - M^\Phi_{(m-1)k}|\right)^{1/2} 
\end{align}
We estimate the last term between parentheses in \eqref{est:item:iii} as
\begin{align}\label{est:item:iii:b}
    \E|M^\Phi_{j} - M^\Phi_{(m-1)k}| 
    \leq \sum_{l= (m-1)k}^{j-1}\E|M^\Phi_{l+1} - M^\Phi_l|
    \leq c ( j - (m-1)k ) 
    \leq c k,
\end{align}
where in the second inequality we used \eqref{eq:CLT:MG:C1}. Combining \eqref{est:item:iii} and \eqref{est:item:iii:b} now yields \eqref{eq:CLT:MG:C3}, where we notice carefully that having the $\limsup$ as $n \to \infty$ applied first is crucial.

Let us turn to the final bound \eqref{eq:CLT:MG:C4}.  Take 
\begin{align}
  \Psi(\bq, \btq) 
  := \left[\BPhi(\bq) 
    + \sum_{k = 0}^\infty (P^{k+1} \BPhi(\bq) - P^{k+1} \BPhi(\btq))   
  \right]^2
  \label{eq:b:psi:def}
\end{align}
Now for any $j \geq (m-1)k$ and with $m, k \geq 1$
we have
\begin{align*}
  \E ( (M^\Phi_{j+1} - M^\Phi_{j})^2| \mathcal{F}_{(m -1)k})
  =& \E \Psi(Q_{(j+1 - (m-1)k) + (m-1)k}(\bq_0), Q_{(j- (m-1)k) + (m-1)k}(\bq_0)) | \mathcal{F}_{(m -1)k})\\
  =& H_{j - (m-1)k}( Q_{(m-1)k}(\bq_0))
\end{align*}
where we have used the Markov property at the last step.   Here
for any $l \geq 0$
\begin{align*}
  H_{l}(\bq_0) := \E \Psi(Q_{l+1}(\bq_0), Q_{l}(\bq_0))
            = P^l \Gamma(\bq_0)
\end{align*}
with 
\begin{align}
  \Gamma(\bq_0) = \E \Psi(Q_{1}(\bq_0), \bq_0).
  \label{eq:b:gamma:def}
\end{align}
Working from these identities we find, again for any $j \geq (m-1)k$
and with $m, k \geq 1$
\begin{align}
  \frac{1}{k} \sum_{j = (m-1)k }^{mk-1} 
  \E((M_{j+1}^{\Phi} - M_{j}^{\Phi})^2 | \mathcal{F}_{(m-1)k})
  =&   \frac{1}{k} \sum_{j = (m-1)k }^{mk-1}  H_{j - (m-1)k}( Q_{(m-1)k}(\bq_0))
  =   \frac{1}{k} \sum_{j = 0}^{k-1}  H_{j}( Q_{(m-1)k}(\bq_0)) \notag\\
  =&   \frac{1}{k} \sum_{j = 0}^{k-1}  P^j\Gamma( Q_{(m-1)k}(\bq_0)).
     \notag
\end{align}
As such,
\begin{align}
  \frac{1}{n}\sum_{m = 1}^n\E &\left|\frac{1}{k} \sum_{j = (m-1)k }^{mk-1} 
  \E((M_{j+1}^{\Phi} - M_{j}^{\Phi})^2 | \mathcal{F}_{(m-1)k}) - \sigma^2\right|
        \leq \frac{1}{n} \sum_{m = 1}^n 
          P^{(m-1)k}\left(\frac{1}{k}\sum_{j = 0}^{k-1}
                                |P^l\Gamma(\bq_0) - \sigma^2|\right),
        \label{eq:quad:diff:bnd:1}
\end{align}
which is valid for any $0 \leq \sigma^2 < \infty$.

With the aim of once again combining \eqref{eq:hyper:KWD:2}
with \eqref{eq:spec:gap:CLT} we now take 
\begin{align}
  \sigma^2 = \sigma^2(\Phi) := \int \Gamma(\bq) \mu_*(d \bq).
  \label{eq:CLT:sig:def}
\end{align}
with $\Gamma$ as in \eqref{eq:b:gamma:def}.  We will show presently that whenever $\Phi$ is $\dlf_1$-Lipshitz then 
$\Gamma$ is $\dlf_2$-Lipshitz, namely
\eqref{eq:Gamma:2:lip} below. This being so, as in
\eqref{eq:finite:IM:CLT}, it is clear that $\sigma^2(\Phi) < \infty$
for any $\dlf_1$-Lipshitz $\Phi$.  Moreover, invoking once again
\eqref{eq:hyper:KWD:2} and \eqref{eq:spec:gap:CLT} we obtain that
\begin{align}
  \frac{1}{k} \sum_{j = 0}^{k-1}  |P^j\Gamma( \bq_0) - \sigma^2(\Phi)|
  \leq \frac{L_{\Gamma}}{k}
  \sum_{j = 0}^{k -1} \Wass_{\dlf_2}( P^j(\bq_0, \cdot), \mu^*)
  \leq \frac{c( 1 + \sqrt{V(\bq_0)})}{k}.
        \label{eq:quad:diff:bnd:2}
\end{align}
Combining \eqref{eq:quad:diff:bnd:1}, \eqref{eq:quad:diff:bnd:2} with
\eqref{eq:lyapunov:CLT} we find
\begin{align*}
  \frac{1}{n}\sum_{m = 1}^n\E
  &\left|\frac{1}{k} \sum_{j = (m-1)k }^{mk-1} 
  \E((M_{j+1}^{\Phi} - M_{j}^{\Phi})^2 | \mathcal{F}_{(m-1)k}) -\sigma^2(\Phi)
    \right|\\
      \leq& \frac{c}{nk}\sum_{m = 1}^n P^{(m-1)k} (1 + \sqrt{V(\bq_0)})
      \leq \frac{c}{nk}\sum_{m = 1}^n (1 + \alpha^{(m-1)k}\sqrt{V(\bq_0)})
      \leq \frac{c(1+\sqrt{V(\bq_0)})}{k}                                
\end{align*}
which yields the final item \eqref{eq:CLT:MG:C4}.

We therefore conclude the proof by showing that
$\Gamma \in \mbox{Lip}_{\dlf_2}$ whenever
$\Phi \in \mbox{Lip}_{\dlf_1}$.  Observe that
from \eqref{eq:b:gamma:def} we have
\begin{align}
  \Gamma(\bq) - \Gamma(\btq)
  = 
  \E\left[ 
  (\sqrt{\Psi(Q_{1}(\bq), \bq)}- \sqrt{\Psi(Q_{1}(\btq), \btq)})
  (\sqrt{\Psi(Q_{1}(\bq), \bq)} + \sqrt{\Psi(Q_{1}(\btq), \btq)})
  \right]
  \label{eq:crz:bnd:CLT:1}
\end{align}
From \eqref{eq:b:psi:def} and invoking \eqref{eq:hyper:KWD:2},
\eqref{eq:spec:gap:CLT} we have that
\begin{align}
  |&\sqrt{\Psi(Q_{1}(\bq), \bq)}- \sqrt{\Psi(Q_{1}(\btq), \btq)})|
  \notag\\
  &\quad \leq 
  |\BPhi(Q_1(\bq)) - \BPhi(Q_1(\btq))|
   + |\sum_{k = 0}^\infty (P^{k+1} \BPhi(Q_1(\bq)) - P^{k+1} \BPhi(Q_1(\btq)))|
   + |\sum_{k = 0}^\infty (P^{k+1} \BPhi(\bq) - P^{k+1} \BPhi(\btq))|
  \notag\\
   &\quad \leq c(\dlf_1(Q_1(\bq), Q_1(\btq)) + \dlf_1(\bq, \btq)).
  \label{eq:crz:bnd:CLT:2}
\end{align}
On the other hand, again with \eqref{eq:hyper:KWD:2},
\eqref{eq:mean:zero:Obs} and \eqref{eq:spec:gap:CLT} we also obtain
the bound
\begin{multline} \label{eq:crz:bnd:CLT:3}
  |\sqrt{\Psi(Q_{1}(\bq), \bq)} + \sqrt{\Psi(Q_{1}(\btq), \btq)})| \\
 \leq c \left( |\bar{\Phi}(Q_1(\bq))| + |\bar{\Phi}(Q_1(\btq))| + \Wass_{\dlf_1}(\delta_{Q_{1}(\bq)}, \mu_*) + \Wass_{\dlf_1}(\delta_{Q_{1}(\btq)}, \mu_*)
  +\Wass_{\dlf_1}(\delta_{\bq}, \mu_*) + \Wass_{\dlf_1}(\delta_{\btq}, \mu_*) \right)
  \\
 \leq c \left(1 + \sqrt{V(Q_1(\bq))}+\sqrt{V(Q_1(\btq))} + \sqrt{V(\bq)} + \sqrt{V(\btq)}  \right)
  \\
 \leq c \left(\sqrt{1 + V(Q_1(\bq)) + V(Q_1(\btq))} + \sqrt{1 + V(\bq) + V(\btq)} \right)
\end{multline}
Now observe that, for any $\bq, \btq \in \Hp$
\begin{align}
  \dlf_1(\bq,\btq)\sqrt{1 + V(\bq) + V(\btq)}
  \leq  2  \dlf_2(\bq,\btq),
  \label{eq:dlf:relation:1:2}
\end{align}
so that combining this simple observation with \eqref{eq:crz:bnd:CLT:1}--\eqref{eq:crz:bnd:CLT:3} we find
\begin{align}
  |\Gamma(\bq) - \Gamma(\btq)|
  \leq& c 
     \E[ (\dlf_1(Q_1(\bq), Q_1(\btq)) + \dlf_1(\bq, \btq))
      (\sqrt{1 + V(Q_1(\bq)) + V(Q_1(\btq))}
        + \sqrt{1 + V(\bq) + V(\btq)})]
  \notag\\
  \leq& c \E \dlf_2(Q_1(\bq), Q_1(\btq))
        + c\dlf_1(\bq, \btq)
        \E \left(\sqrt{1 + V(Q_1(\bq)) + V(Q_1(\btq))}\right)
  \notag\\
       &+ c\sqrt{1 + V(\bq) + V(\btq)} \E \dlf_1(Q_1(\bq), Q_1(\btq))
         + c\dlf_2(\bq, \btq).
           \label{eq:crz:bnd:CLT:4}
\end{align}
Now notice that, under \eqref{eq:lyapunov:CLT} we have
\begin{align*}
  \E \left(\sqrt{1 + V(Q_1(\bq)) + V(Q_1(\btq))}\right)
  \leq c\sqrt{1 + V(\bq) + V(\btq)}.
\end{align*}
On the other hand, notice that we may take $Q_1(\bq)$ and $Q_1(\btq)$ to be
any coupling of $P(\bq, \cdot)$ and $P(\btq, \cdot)$ in
\eqref{eq:crz:bnd:CLT:4}. As such, with \eqref{eq:crz:bnd:CLT:4}
and these two observations
\begin{multline*}
  |\Gamma(\bq) - \Gamma(\btq)|
  \leq \Wass_{\dlf_2}(P(\bq, \cdot),P(\btq, \cdot))
  + c\dlf_1(\bq, \btq)\sqrt{1 + V(\bq) + V(\btq)} \\
  + c\Wass_{\dlf_1}(P(\bq, \cdot),P(\btq, \cdot))
      \sqrt{1 + V(\bq) + V(\btq)} + c \dlf_2(\bq, \btq),
\end{multline*}
so that with \eqref{eq:dlf:relation:1:2} and a final invocation
of \eqref{eq:spec:gap:CLT}, we have
\begin{align}
  |\Gamma(\bq) - \Gamma(\btq)| \leq c \dlf_2(\bq, \btq).
  \label{eq:Gamma:2:lip}
\end{align}
The proof is now complete.
\end{proof}

We conclude this section with the following proposition which
gives a sufficient condition for a function to be $\dlf$-Lipschitz for
a class of distance-like functions including those appearing in the
main results of this work.
\begin{Prop}
  \label{prop:Lip:2:obs}
  Let $(\Hp, \| \cdot \|)$ be a Banach space and consider
  distance-like functions of the form
  \begin{align}
    \dlf(\bq, \btq) 
    = \sqrt{\left( \frac{\| \bq - \btq\|}{\varepsilon} \wedge 1\right)
           (1 + V(\bq) + V(\btq))}
    \label{eq:dlf:HM:gen}
  \end{align}
  where we suppose that $\varepsilon > 0$ and $V: \Hp \to [0,\infty)$
  is convex.  Given any continuously differentiable function
  $\Phi: \Hp \to \RR$ define
  \begin{align}
   L_{\Phi} := 
      \sup_{\bq \in \Hp} 
    \frac{\max\{ 
         2   |\Phi(\bq)|,
         \sqrt{\varepsilon} \|D \Phi(\bq)\| \}}{\sqrt{1+V(\bq)}}.
    \label{eq:Lip:const}
  \end{align}
  If $L_{\Phi} < \infty$ then $\Phi$ is $\dlf$-Lipschitz with $L_{\Phi}$
  providing a suitable Lipschitz constant.
\end{Prop}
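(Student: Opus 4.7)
The plan is to prove $|\Phi(\bq) - \Phi(\btq)| \leq L_\Phi \dlf(\bq, \btq)$ for every $\bq, \btq \in \Hp$ by unpacking the definition \eqref{eq:Lip:const} of $L_\Phi$ into the pointwise estimates $|\Phi(\bq)| \leq (L_\Phi/2)\sqrt{1+V(\bq)}$ and $\|D\Phi(\bq)\| \leq (L_\Phi/\sqrt{\varepsilon})\sqrt{1+V(\bq)}$, and then splitting into the two regimes $\|\bq - \btq\| \geq \varepsilon$ and $\|\bq - \btq\| < \varepsilon$ that naturally arise from the minimum in \eqref{eq:dlf:HM:gen}.

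In the far regime $\|\bq - \btq\| \geq \varepsilon$ we have $\dlf(\bq, \btq) = \sqrt{1+V(\bq)+V(\btq)}$, and I would combine the triangle inequality with the $|\Phi|$ bound and the elementary estimate $\sqrt{1+V(\bq)} + \sqrt{1+V(\btq)} \leq 2\sqrt{1+V(\bq)+V(\btq)}$, which follows since each summand on the left is pointwise bounded by $\sqrt{1+V(\bq)+V(\btq)}$ thanks to the positivity of $V$.

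In the near regime $\|\bq - \btq\| < \varepsilon$, where $\dlf(\bq, \btq) = \sqrt{(\|\bq - \btq\|/\varepsilon)(1+V(\bq)+V(\btq))}$, I would parametrize the segment $\bq_t := t\bq + (1-t)\btq$ and invoke the fundamental theorem of calculus. The convexity of $V$ together with $V \geq 0$ delivers the uniform bound $V(\bq_t) \leq tV(\bq) + (1-t)V(\btq) \leq V(\bq) + V(\btq)$ on $t \in [0,1]$, which combined with the pointwise gradient bound produces $\|D\Phi(\bq_t)\| \leq (L_\Phi/\sqrt{\varepsilon})\sqrt{1+V(\bq)+V(\btq)}$. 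Integrating yields $|\Phi(\bq) - \Phi(\btq)| \leq L_\Phi \sqrt{1+V(\bq)+V(\btq)} \cdot \|\bq-\btq\|/\sqrt{\varepsilon}$, which I would then reduce to the desired form through the elementary inequality $\|\bq-\btq\|/\sqrt{\varepsilon} \leq \sqrt{\|\bq-\btq\|/\varepsilon}$.

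The main subtlety lies in this last elementary comparison: squaring reveals it amounts to $\|\bq - \btq\| \leq 1$, which in the applications driving the proposition is automatic since $\varepsilon$ is chosen sufficiently small (cf. \eqref{cond:eps:beta}) so that $\|\bq - \btq\| < \varepsilon \leq 1$ throughout the near regime. Under this mild implicit restriction on $\varepsilon$, assembling the two cases then delivers the full $\dlf$-Lipschitz estimate claimed in the proposition.
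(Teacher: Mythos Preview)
Your proof is correct and follows essentially the same approach as the paper: the same two-case split according to whether $\|\bq - \btq\|$ exceeds $\varepsilon$, the triangle inequality plus the $|\Phi|$ bound in the far regime, and the fundamental-theorem-of-calculus argument combined with convexity of $V$ along the segment in the near regime. You have in fact been more careful than the paper on one point: the inequality $\|\bq - \btq\|/\sqrt{\varepsilon} \leq \sqrt{\|\bq - \btq\|/\varepsilon}$, which you correctly flag as requiring $\|\bq - \btq\| \leq 1$ (hence $\varepsilon \leq 1$), is used in the paper's proof without comment, and as you observe it is harmless in the intended applications.
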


\begin{proof}
  Fix any $\bq, \btq \in \Hp$.  We consider separately the cases when
  $\|\bq - \btq\| > \varepsilon$ and when $\| \bq- \btq\| \leq \varepsilon$.
  In the first situation when $\|\bq - \btq\| > \varepsilon$ we estimate
  \begin{align*}
    |\Phi(\bq) - \Phi(\btq)| 
    \leq \sqrt{1+V(\bq) + V(\btq)} 
        \left(\frac{|\Phi(\bq)|}{\sqrt{1+V(\bq)}} 
           +  \frac{|\Phi(\btq)|}{\sqrt{1+V(\btq)}}\right)  
    \leq L_{\Phi} \dlf(\bq, \btq).
  \end{align*}
  Now consider the case when $\| \bq- \btq\| \leq \varepsilon$.  Let
  $\bq_s = \bq+s (\btq - \bq)$, for $s \in [0,1]$ and observe that
  \begin{align*}
    |\Phi(\bq) - \Phi(\btq)|
    &\leq \|\bq - \btq\| \int_0^1 \| D \Phi( \bq_s ) \| ds
      \notag\\
    &\leq \int_0^1
      \sqrt{\left(\frac{\| \bq - \btq\|}{\varepsilon} \right) ( 1+ V(\bq_s))} 
        \cdot \frac{\sqrt{\varepsilon} \| D \Phi( \bq_s ) \|}{\sqrt{1+ V(\bq_s)}}  ds 
    \notag \\
    &\leq L_{\Phi} \int_0^1
      \sqrt{\left(\frac{\| \bq - \btq\|}{\varepsilon} \right)
                   (1+sV(\bq) + (1-s) V(\btq))}  ds 
      \leq L_{\Phi} \dlf(\bq, \btq)
\end{align*}
where we have used the convexity of $V$ for the penultimate bound.
The proof is complete.
\end{proof}

\section*{Acknowledgements}

This work would not have been able to be completed in its current
form without the extensive feedback and input of Nawaf Bou-Rabee
during two visits to Tulane university in the spring and summer of 2019.  We
would also like to acknowledge D. Albritton, G. Didier, J. Foldes,
E. T. Holtzchetti, J. Krometis, S. McKinley, J. Mattingly, S. Punshon-Smith,
G. Richards, V. Sverak for further fruitful discussions surrounding on
this work.  Our efforts here were partially supported under the grants
DMS-1313272 (NEGH), DMS-1816551 (NEGH), and the Simons Foundation
travel grant under 515990 (NEGH).

\addcontentsline{toc}{section}{References}
\begin{footnotesize}
\bibliographystyle{alpha}
\bibliography{bibphmc}
\end{footnotesize}

\vspace{1in}
\begin{multicols}{2}
\noindent
Nathan E. Glatt-Holtz\\ {\footnotesize
Department of Mathematics\\
Tulane University\\
Web: \url{http://www.math.tulane.edu/~negh/}\\
Email: \url{negh@tulane.edu}} 

\columnbreak

\noindent Cecilia F. Mondaini \\
{\footnotesize
  Department of Mathematics\\
  Drexel University\\
  Web: \url{https://drexel.edu/coas/faculty-research/faculty-directory/mondaini-cecilia/}\\
  Email: \url{cf823@drexel.edu}}\\[.2cm]
 \end{multicols}

\end{document}